\let\tempone\itemize
\let\temptwo\enditemize
\renewenvironment{itemize}{\tempone \vspace{5pt}\addtolength{\itemsep}{0.5\baselineskip}}{\vspace{5pt} \temptwo}
\let\tempenum\enumerate
\let\tempenumtwo\endenumerate
\renewenvironment{enumerate}{\tempenum \vspace{5pt} \addtolength{\itemsep}{0.5\baselineskip}}{ \vspace{5pt} \tempenumtwo}
\let\origsection\section
\renewcommand\section{\@ifstar{\starsection}{\nostarsection}}
\newcommand\nostarsection[1]
\sectionprelude\origsection{#1}\sectionpostlude}
\newcommand\starsection[1]
\newcommand\sectionprelude{%
	\vspace{1em} 
}
\newcommand\sectionpostlude{%
	\vspace{1em}   
}
\newcommand\Item[1][]{%
  \ifx\relax#1\relax  \item \else \item[#1] \fi
  \abovedisplayskip=0pt\abovedisplayshortskip=0pt~\vspace*{-\baselineskip}}
\let\origsubsection\subsection
\renewcommand\subsection{\@ifstar{\starsubsection}{\nostarsubsection}}
\newcommand\nostarsubsection[1]
\sectionprelude\origsubsection{#1}\subsectionpostlude}
\newcommand\starsubsection[1]
\newcommand\subsectionprelude{%
	\vspace{0.25em} 
}
\newcommand\subsectionpostlude{%
	\vspace{0.0em}   
}
\m@th\displaystyle{##}$\hfil}{$\m@th\displaystyle{##}$\hfil}{\lbrace}{.}
\newcounter{pos} 
\tikzset{									
	initcounter/.code={\setcounter{pos}{0}},
	style between/.style n args={3}{
		postaction={
			initcounter,
			decorate,
			decoration={
				show path construction,
				curveto code={
					\addtocounter{pos}{1}
					\pgfmathtruncatemacro{\min}{#1 - 1}
					\ifthenelse{\thepos < #2 \AND \thepos > \min}{
						\draw[#3]
						(\tikzinputsegmentfirst)
						..
						controls (\tikzinputsegmentsupporta) and (\tikzinputsegmentsupportb)
						..
						(\tikzinputsegmentlast);
					}{}
				}
			}
		},
	},
}
\tikzset{
    clip even odd rule/.code={\pgfseteorule}, 
    invclip/.style={
        clip,insert path=
            [clip even odd rule]{
                [reset cm](-\maxdimen,-\maxdimen)rectangle(\maxdimen,\maxdimen)
            }
    }
}
\newcommand{\colim@}[2]{%
	\vtop{\m@th\ialign{##\cr
			\hfil$#1\operator@font colim$\hfil\cr
			\noalign{\nointerlineskip\kern1.5\ex@}#2\cr
			\noalign{\nointerlineskip\kern-\ex@}\cr}}%
}
\newcommand{\colim}{%
	\mathop{\mathpalette\colim@{\rightarrowfill@\textstyle}}\nmlimits@
}
\DeclareMathOperator{\Hom}{Hom}
\DeclareMathOperator{\End}{End}
\DeclareMathOperator{\Aut}{Aut}
\DeclareMathOperator{\Perf}{Perf}
\newcommand{\Dfd}[1]{\mathsf{D}_{\operatorname{fd}}(#1)}
\DeclareMathOperator{\Img}{Im}
\DeclareMathOperator{\DPic}{\mathcal{D}Pic}
\DeclareMathOperator{\DPicScheme}{\mathscr{DPic}}
\DeclareMathOperator{\Out}{Out}
\DeclareMathOperator{\OutO}{Out_{\circ}}
\DeclareMathOperator{\RHom}{RHom}
\newcommand{\invex}{{\scriptstyle \text{\rm !`}}}
\newcommand{\Ob}[1]{\operatorname{Ob}(#1)}
\newcommand{\filt}[2]{{#1}^{(#2)}}
\DeclareMathOperator{\ad}{ad}
\DeclareMathOperator{\PGL}{PGL}
\DeclareMathOperator{\rad}{rad}
\tikzset{
	set arrow inside/.code={\pgfqkeys{/tikz/arrow inside}{#1}},
	set arrow inside={end/.initial=>, opt/.initial=},
	/pgf/decoration/Mark/.style={
		mark/.expanded=at position #1 with
		{
			\noexpand\arrow[\pgfkeysvalueof{/tikz/arrow inside/opt}]{\pgfkeysvalueof{/tikz/arrow inside/end}}
		}
	},
	arrow inside/.style 2 args={
		set arrow inside={#1},
		postaction={
			decorate,decoration={
				markings,Mark/.list={#2}
			}
		}
	},
}
\tikzset{commutative diagrams/.cd,arrow style=tikz,diagrams={>=latex'}}\tikzset{join/.code=\tikzset{after node path={%
			\ifx\tikzchainprevious\pgfutil@empty\else(\tikzchainprevious)%
			edge[every join]#1(\tikzchaincurrent)\fi}}}
\tikzset{>=stealth',every on chain/.append style={join},
	every join/.style={->}}
\tikzset{every loop/.style={min distance=25mm,in=50,out=100,looseness=5}}
\newtheorem{prf}{Proof}[section]
\theoremstyle{remark}
\newtheoremstyle{ownTheoremStyle}
{1em}
{1em}
{\itshape}
{}
{\bfseries}
{.}
{ }
{}
\newtheoremstyle{ownDefinitionStyle}
{1em}
{1em}
{}
{}
{\bfseries}
{.}
{ }
{}
\theoremstyle{ownTheoremStyle}
\newtheorem{thm}[prf]{Theorem}
\newtheorem{Introthm}{Theorem}
\newtheorem{lem}[prf]{Lemma}
\newtheorem{prp}[prf]{Proposition}
\newtheorem{conjecture}[prf]{Conjecture}
\newtheorem{cor}[prf]{Corollary}
\theoremstyle{ownDefinitionStyle}
\newtheorem{exa}[prf]{Example}
\newtheorem{definition}[prf]{Definition}
\newtheorem{rem}[prf]{Remark}
\newcommand{\quotient}[2]{{\left.\raisebox{.2em}{$#1$}\middle/\raisebox{-.2em}{$#2$}\right.}}
\numberwithin{equation}{section}
\newcommand{\cC}{\mathcal{C}}
\newcommand{\cD}{\mathcal{D}}
\newcommand{\cE}{\mathcal{E}}
\newcommand{\cF}{\mathcal{F}}
\newcommand{\cG}{\mathcal{G}}
\newcommand{\cH}{\mathcal{H}}
\newcommand{\cK}{\mathcal{K}}
\newcommand{\cL}{\mathcal{L}}
\newcommand{\cM}{\mathcal{M}}
\newcommand{\cU}{\mathcal{U}}
\newcommand{\cV}{\mathcal{V}}
\newcommand{\cW}{\mathcal{W}}
\newcommand{\cY}{\mathcal{Y}}
\newcommand{\bA}{\mathbb{A}}
\newcommand{\bB}{\mathbb{B}}
\newcommand{\bD}{\mathbb{D}}
\newcommand{\bE}{\mathbb{E}}
\newcommand{\bU}{\mathbb{U}}
\newcommand{\bV}{\mathbb{V}}
\newcommand{\bW}{\mathbb{W}}
\newcommand{\op}{\operatorname{op}}
\DeclareMathOperator{\MCG}{\operatorname{MCG}}
\newcommand{\HHH}{\operatorname{HH}}
\newcommand{\HH}{\operatorname{H}}
\newcommand{\gr}{\operatorname{gr}}
\newcommand{\Fuk}{\operatorname{Fuk}}
\DeclareMathOperator{\Hqe}{\mathsf{Hqe}}
\DeclareMathOperator{\dgcat}{\mathsf{dgcat}}
\DeclareMathOperator{\Acatc}{\mathsf{A_{\infty}-cat^c}}
\DeclareMathOperator{\HAcat}{\mathsf{Ho(A_{\infty}-cat^c})}
\DeclareMathOperator{\qrep}{\mathsf{qrep}^r}
\DeclareMathOperator{\Fun}{\mathsf{Fun}}
\newcommand{\h}{\mathsf{h}}
\DeclareMathOperator{\sh}{sh}
\newcommand{\sfZ}{\mathsf{Z}}
\newcommand{\BCH}[2]{\operatorname{BCH}(#1, #2)}
\newcommand{\nBCH}{\operatorname{BCH}}
\newcommand{\IsoInfty}{\operatorname{Iso}^{\infty}}
\newcommand{\Iso}{\operatorname{Iso}}
\newcommand{\Graph}{\mathsf{\cG_{\Bbbk}}}
\newcommand{\cocat}{\mathsf{cocat}}
\newcommand{\coder}{\operatorname{coder}}
\newcommand{\res}{\operatorname{res}}
\newcommand{\MC}{\mathsf{MC}} 
\title[Integration, derived Picard groups and Uniqueness of lifts]{Integration of Hochschild cohomology, derived Picard groups and uniqueness of lifts}
\author{Sebastian Opper}
\address{Sebastian Opper, Charles University, Faculty of Mathematics and Physics, Ke Karlovu 3, 121 16 Praha 2, Czech Republic}
\email{opper@karlin.mff.cuni.cz}
\begin{document}
\begin{abstract}The paper introduces a partial integration map from the first Hochschild cohomology of any cohomologically unital $A_\infty$-category over a field of characteristic zero to its derived Picard group. We discuss useful properties such as injectivity, naturality and its relation with the Baker-Campbell-Hausdorff formula. 
Based on the image of the integration map we propose a candidate for the identity component of the derived Picard group in the case of finite-dimensional graded algebras. As a first application of the integration map it is shown that the vanishing of its domain is a necessary condition for the uniqueness of lifts of equivalences from the homotopy category to the $A_\infty$-level. The final part contains applications to derived Picard groups of wrapped and compact Fukaya categories of cotangent bundles and their plumbings and an outlook on applications to derived Picard groups of partially wrapped Fukaya categories after Haiden-Katzarkov-Kontsevich.
\end{abstract}
\maketitle
\setcounter{tocdepth}{1}
\tableofcontents

\section*{Introduction}
\noindent The fruitful interplay between Lie groups and Lie algebras permeates many areas of modern mathematics from rational homotopy theory to the deformation theory of algebraic objects such as algebras, schemes and operads. In the classical case, the tangent space to any Lie group $G$ at its neutral element is naturally a Lie algebra. It possible to pass from the Lie algebra back to the original group through integration: to ``explore'' the Lie group, one identifies an element in its Lie algebra with a vector field on $G$ and transports the neutral element under the corresponding flow. If $G$ is compact or nilpotent, then the set of points which are reachable through this procedure coincides with the identity component $G_{\circ} \subseteq G$. Formally this yields a surjection from the Lie algebra onto $G_{\circ}$ which generalises the real exponential $e^{-}:\mathbb{R} \rightarrow \mathbb{R}_{\geq 0}$ and the matrix exponential. In general, the exponential fails to be injective and for nilpotent $G_{\circ}$, its kernel is measured by the fundamental group of $G_{\circ}$. 
The pullback of the group structure under the exponential can be expressed locally through the Baker-Campbell-Hausdorff (BCH) formula. It states that the product of elements $x$ and $y$ in the Lie algebra is expressed by an infinite series
\begin{displaymath}
\BCH{x}{y}=x+y - \frac{1}{2} [x,y] + \frac{1}{12}\big[x, [x,y]\big] + \cdots,
\end{displaymath}
\noindent involving iterated brackets of $x$ and $y$.

In an algebraic setting, a similar connection between Lie algebras and groups exists between the Hochschild cohomology $\HHH^{\bullet}(A,A)$ of an algebra $A$ and its derived Picard group $\DPic(A)$. The former is a well-studied invariant and is formally defined as
\begin{displaymath}
\HHH^{\bullet}(A,A)=\Hom^{\bullet}_{\cD(A^{\op} \otimes A)}(A,A),
\end{displaymath}
\noindent the graded endomorphism algebra of $A$ which is considered as an object in the the derived category of $A$-bimodules. In addition to its multiplication, $\HHH^{\bullet}(A,A)$ carries a compatible Lie bracket which was first introduced by Gerstenhaber \cite{GerstenhaberCohomologyStructure}, now commonly referred to as the \textit{Gerstenhaber bracket}. On the other hand, the derived Picard group takes role of the derived symmetry group of the derived category $\cD(A)$. On a formal level it is defined as the group of isomorphism classes in $\cD(A^{\op} \otimes A)$ which are invertible with respect to the derived tensor product and its elements induce autoequivalences of $\cD(A)$. The fact that derived categories are quite large by nature makes derived Picard groups generally rather elusive to computations. 

The connection between $\HHH^{\bullet}(A,A)$ and $\DPic(A)$ was established in work of Keller \cite{KellerInvarianceHigherStructures} which states that $\HHH^{\bullet}(A,A)$, equipped with its Gerstenhaber Lie bracket, is the graded Lie algebra of $\DPic(A)$.  Because of their role as symmetry groups of triangulated categories from algebra and geometry, the study of derived Picard groups is of considerable interest but even to compute them partially usually requires a rather good understanding of the whole category. In contrast to that, many examples in the literature show that computations for Hochschild cohomology are much more viable, partially due to the fact that they can be performed by means of any set of generators instead of the entire category. In this paper we develop, in analogy with the classical situation, a procedure to recover parts of the ``identity component'' of $\DPic(A)$ through Hochschild cohomology.

Our first result shows that, over fields of characteristic zero, there is indeed a general exponential which converges on a subspace of the first Hochschild cohomology defined in terms of its weight (tensor length) filtration.
\begin{Introthm}[\Cref{TheoremNaturalityExponentialQuasiIsos} \&  \Cref{CorollaryExponentialInjectiveMainTheorem}]\label{IntroTheoremA}
Let $\bA$ be a cohomologically unital $A_\infty$-category over a field $\Bbbk$ of characteristic $0$. There exists a group homomorphism
\begin{displaymath}
	\begin{tikzcd}
\exp_{\bA}: \HHH^1_+(\bA, \bA) \arrow{r} & \DPic(\bA). 
\end{tikzcd}
\end{displaymath}
\noindent  Here, $\HHH^1_+(\bA, \bA)$ denotes the classes in the first Hochschild cohomology group of $\bA$ with weight greater than $1$ equipped with the group multiplication defined by the BCH series.  The map $\exp_{\bA}$ is injective if the image of the characteristic morphism $\Pi^0:\HHH^{0}(\bA, \bA) \rightarrow Z^0_{\gr}(\HH^{\bullet}(\bA))$ into the degree $0$ part of the graded center of the graded  homotopy category $\HH^{\bullet}(\bA)$ contains all units.

\end{Introthm}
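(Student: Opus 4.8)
\textit{Strategy.} The plan is to realise $\exp_{\bA}$ by hand on weight-$>1$ Hochschild $1$-cocycles through the bar construction, to check that it produces $A_\infty$-equivalences inducing the identity on $\HH^{\bullet}(\bA)$, that it descends to $\HHH^1_+(\bA,\bA)$ and turns the BCH product into composition, and finally to prove injectivity under the hypothesis on $\Pi^0$. I would first fix the identification of the Hochschild cochain complex of $\bA$ with the complex of (shifted) coderivations $\coder(B\bA)$ of the bar coalgebra, under which the Gerstenhaber bracket becomes the graded commutator and the $A_\infty$-structure $m$ is encoded by the codifferential $b$. If $\phi$ is a Hochschild $1$-cocycle of weight $>1$ — i.e. the components of the corresponding coderivation $D_\phi$ of tensor length $\le 1$ vanish — then $D_\phi$ strictly lowers the tensor-length grading of $B\bA$ and is locally nilpotent; since $\operatorname{char}\Bbbk=0$ the series $\exp(D_\phi)=\sum_{k\ge 0}\tfrac{1}{k!}D_\phi^{\,k}$ converges to a coalgebra endomorphism of $B\bA$ with inverse $\exp(-D_\phi)$. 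From $d\phi=0$ one gets $[D_\phi,b]=\pm D_{d\phi}=0$, hence $\exp(D_\phi)\, b\, \exp(-D_\phi)=e^{\ad_{D_\phi}}(b)=b$, so $\exp(D_\phi)$ is an invertible DG-coalgebra endomorphism, i.e.\ an $A_\infty$-equivalence of $\bA$; as it is the identity on objects and on linear parts it induces the identity functor on $\HH^{\bullet}(\bA)$. One then sets $\exp_{\bA}(\phi)$ to be the class of this equivalence — equivalently of its graph bimodule — in $\DPic(\bA)$.

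\textit{Descent and the homomorphism property.} If a weight-$>1$ cocycle is a coboundary $d\psi$ of a weight-$>1$ cochain $\psi$, then $D_{d\psi}=\pm[b,D_\psi]$ with $D_\psi$ again locally nilpotent, and a short manipulation with $e^{\ad}$ exhibits $\exp(D_{d\psi})$ as $A_\infty$-homotopic to $\mathrm{id}_{\bA}$, hence trivial in $\DPic(\bA)$; this makes $\exp_{\bA}$ depend only on the class in $\HHH^1_+(\bA,\bA)$. For multiplicativity I would use that $\phi\mapsto D_\phi$ is an embedding of graded Lie algebras, so it carries $\BCH{\phi}{\psi}$ (formed with the Gerstenhaber bracket) to $\BCH{D_\phi}{D_\psi}$ (formed with the commutator); the latter is again a strictly length-lowering coderivation, and in the pro-nilpotent group of endomorphisms of $B\bA$ of the form ``identity plus strictly length-lowering'' the classical identity $\exp(a)\exp(b)=\exp(\BCH{a}{b})$ applies. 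Hence $\exp(D_\phi)\circ\exp(D_\psi)=\exp(D_{\BCH{\phi}{\psi}})$ as endomorphisms of $B\bA$, and passing to graph bimodules — where composition of equivalences is the group law of $\DPic(\bA)$ — gives $\exp_{\bA}(\BCH{\phi}{\psi})=\exp_{\bA}(\phi)\exp_{\bA}(\psi)$. Naturality in $\bA$ (\Cref{TheoremNaturalityExponentialQuasiIsos}) is then immediate, since every step above commutes with pushforward along $A_\infty$-quasi-isomorphisms.

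\textit{Injectivity.} Suppose $\exp_{\bA}(\phi)$ is the identity of $\DPic(\bA)$, i.e.\ the graph bimodule of $F:=\exp(D_\phi)$ is quasi-isomorphic to the diagonal. Via the fully faithful Yoneda embedding this produces a closed, invertible $A_\infty$-natural transformation $\eta\colon\mathrm{id}_{\bA}\Rightarrow F$. Since $\eta$ is closed and $F$ induces the identity on $\HH^{\bullet}(\bA)$, the family of cohomology classes $\{[\eta^0_X]\}$ is a natural transformation of the identity functor of $\HH^{\bullet}(\bA)$, i.e.\ an element of $Z^0_{\gr}(\HH^{\bullet}(\bA))$, and it is invertible, hence a unit. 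By hypothesis this unit equals $\Pi^0(c)$ for some $c\in\HHH^0(\bA,\bA)$; the corresponding $A_\infty$-natural endomorphism $\tilde c\colon\mathrm{id}_{\bA}\Rightarrow\mathrm{id}_{\bA}$ has invertible $0$-component, hence is itself invertible, and after replacing $\eta$ by $\eta\circ\tilde c^{-1}$ and invoking cohomological unitality one may assume, up to homotopy of natural transformations, that $\eta^0_X=e_X$ for every object $X$. Then one runs a leading-weight argument: if $w$ is the smallest tensor length with $\phi^w\ne 0$, the lower components of $F$ are trivial, so $\eta^j$ can be taken to vanish for $0<j<w-1$, and the weight-$w$ component of the equations for $\eta$ collapses to $\phi^w=d(\eta^{w-1})$, a Hochschild coboundary; subtracting it replaces $\phi$ by a cohomologous weight-$>1$ cocycle of strictly larger leading weight. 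Iterating and using completeness of the weight filtration on $\HHH^1_+(\bA,\bA)$ yields $[\phi]=0$. The hard part is precisely this last stage — organising the $A_\infty$-natural-transformation identities so that the leading term becomes a genuine Hochschild coboundary, and, crucially, being allowed to absorb the central unit $[\eta^0]$, which is exactly what the condition on the image of $\Pi^0$ secures and without which $\phi$ could be nonzero while $F$ is still equivalent to the diagonal bimodule.
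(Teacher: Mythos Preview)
Your overall architecture matches the paper's, but the step you label ``descent'' conceals the main difficulty. You assert that ``a short manipulation with $e^{\ad}$ exhibits $\exp(D_{d\psi})$ as $A_\infty$-homotopic to $\mathrm{id}_{\bA}$.'' An $A_\infty$-homotopy between functors $F,G$ is a specific degree $-1$ $(F,G)$-coderivation $h$ with $\mu^1_{\Fun}(h)^d=G^d-F^d$ (cf.\ \eqref{EquationDifferentialIsotopyCategory}), and no $e^{\ad}$ identity produces this directly: conjugating the codifferential $b$ by $\exp(tD_\psi)$ yields a one-parameter family of $A_\infty$-\emph{structures}, not a homotopy of functors in the required sense. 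The paper explicitly flags this as the non-obvious step and devotes \Cref{SectionIntegrationHochschildClasses} to it. One first identifies $A_\infty$-isotopies with Maurer--Cartan elements of the $A_\infty$-algebra $C(\bA)$ (\Cref{PropositionBijectionMaurerCartanIsotopies}), then proves by an explicit computation that the tautological relation $\sim_\infty$ (defined precisely so that $\exp(\text{coboundary})\sim_\infty\operatorname{Id}$) coincides with Quillen homotopy of these MC elements (\Cref{TheoremWeakEquivalenceGroupoids}). Even then one only obtains $A_\infty$-homotopy for \emph{semi-free dg categories}, via the chain Quillen homotopy $\Leftrightarrow$ gauge equivalence $\Rightarrow$ dg gauge equivalence $\Leftrightarrow$ $A_\infty$-homotopy (\Cref{LemmaIsotopiesVSMaurerCartanElements}, \Cref{LemmaDGGaugeEquivalenceGaugleEquivalence}); the step from gauge to dg gauge equivalence genuinely uses semi-freeness. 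The general case then requires the naturality machinery of \Cref{TheoremNaturalityExponentialQuasiIsos}, which is not ``immediate'' either: it needs both Keller's $B_\infty$-invariance and the Briggs--Gelinas filtration invariance, together with a verification that the two constructions are compatible (\Cref{PropositionMapsAgree}).

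Your injectivity argument also diverges from the paper's and is underspecified. The paper uses the $\Pi^0$ hypothesis to normalise $\eta^0_X$ to the identity exactly as you do (\Cref{PropositionComparisonHomotopyWeakEquivalence}), but then simply observes that in a strictly unital minimal model such an $\eta$ witnesses an $A_\infty$-\emph{homotopy} between $\operatorname{Id}$ and $G$; thus weak equivalence implies homotopy for isotopies, and injectivity of $\exp_{\bA}$ follows from the already-established bijectivity of $\exp^h_{\bA}$ (\Cref{CorollaryExponentialInjectiveHomotopyClassesFunctors}). Your leading-weight induction instead tries to read off $\phi^w=d(\eta^{w-1})$ from the closedness of $\eta$, but the weight-$w$ part of $\mu^1_{\Fun}(\eta)=0$ is not the same equation as $(G-\operatorname{Id})^w=d(\eta^{w-1})$; the passage between them is precisely what $\eta^0=e$ together with strict unitality provides---at which point you have recovered the paper's argument and no longer need the induction.
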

\noindent  The generality of \Cref{IntroTheoremA} allows potential applications in a wide range of scenarios from  algebra and geometry. For example, it applies to Fukaya categories over a Novikov field of characteristic zero. Such categories rarely have strict identity morphisms but are nevertheless cohomologically unital. This flexibility allows computations without the need to alter the structure of the original category.  The injectivity condition in \Cref{IntroTheoremA} is satisfied in a number of different scenarios and, for example, holds in either of the following cases:
\begin{itemize}
	\item $\bA$ is formal, that is, is quasi-isomorphic to a graded $\Bbbk$-linear category.
	
	\item $Z^0_{\gr}(\HH^0(\bA)\big)\cong \prod_{X \in \Ob{\bA}}\Bbbk$, e.g.~if $\End_{\HH^0(\bA)}(X)\cong \Bbbk$ for all $X \in \Ob{\bA}$. For example this holds for all proper $A_\infty$-categories over an algebraically closed field, whose objects form a (possibly infinite) simple-minded collection in the sense of \cite{KoenigYangSilting}.
	\item $\bA$ is an $E_{\infty}$-algebra, e.g.~any commutative, minimal $A_\infty$-algebra or $\bA=C^{\ast}(X, \Bbbk)$, the dg algebra of singular cochains on a topological space $X$;
	\item more generally, $\bA$ is an $E_2$-algebra, e.g.~the Hochschild complex of any $A_\infty$-category.
\end{itemize}
\noindent  We remark that the characteristic morphism $\Pi^0$ is part of a long exact sequence which can aid the verification of the injectivity conditions in \Cref{IntroTheoremA}, see \Cref{RemarkConnectingMorphism}. The image of the exponential consists, roughly speaking, of all possible enhancements of the identity functor of $\HH^0(\bA)$ to an element of $\DPic(\bA)$. Formally, such elements can be represented in the form of special $A_\infty$-endofunctors of $\bA$ enhancing the identity and a consequence is that the image of $\exp_{\bA}$ purely captures higher information which is invisible on the level of homotopy categories. We note however that the associated  autoequivalences of $\cD(\bA)$ are non-trivial and triviality is merely an artifact of restriction to the subcategory $\HH^0(\bA) \subseteq \cD(\bA)$. 

While the full Hochschild cohomology is a Morita invariant, this is no longer true for the subspace $\HHH^1_+(\bA, \bA)$ and as a result, different generators of an enhanced triangulated category will generally witness different portions of the derived Picard group through their exponential maps. We expect that the image of $\exp_{\bA}$ lies in the identity component of $\DPic(\bA)$ and describe a candidate subgroup of $\DPic(\bA)$ for this component in the case of formal $A_\infty$-categories. If $\bA$ is an (ordinary) algebra, then the candidate group reduces to the identity component of the outer automorphism group of $A$ which coincides with the identity component of $\DPic(\bA)$ as shown by Yekutieli \cite{Yekutieli}. In general however our candidate group is larger and contains the image of the exponential map $\exp_{\bA}$. We conjecture the Morita invariance of the candidate group under certain conditions, see \Cref{Conjecture} and \Cref{RemarkWeakerConditionsConjecture}.

 While \Cref{IntroTheoremA} is flexible enough to work with a given $A_\infty$-category directly, computations of Hochschild cohomology often become easier when the original $A_\infty$-category is replaced by a quasi-equivalent one. The following naturality result shows that such simplifications are compatible with the integration of Hochschild classes.

\begin{Introthm}[ \Cref{TheoremNaturalityExponentialQuasiIsos}]\label{IntroTheoremNaturality}
	The exponential map is natural with respect to quasi-equivalences: every quasi-equivalence $F: \bA \rightarrow \bB$  between cohomologically unital $A_\infty$-categories induces a commutative square of group homomorphisms
	\begin{displaymath}
		\begin{tikzcd}[row sep=4em]
			\HHH^1_+(\bA, \bA) \arrow{rr}{\exp_{\bA}} \arrow{d}[swap]{\sim} && \DPic(\bA) \arrow{d}{\sim} \\
			\HHH^1_+(\bB, \bB) \arrow{rr}{\exp_{\bB}} & &\DPic(\bB).
		\end{tikzcd}
	\end{displaymath} 
	\noindent The vertical isomorphisms are compatible with the composition of functors and only depend on the \textit{weak equivalence class} of $F$, that is, its isomorphism class in the homotopy category of the functor category $\Fun(\bA, \bB)$. 
\end{Introthm}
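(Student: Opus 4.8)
The argument has two genuinely distinct parts: constructing the two vertical isomorphisms, and checking that the square commutes. The remaining assertions — compatibility with composition and dependence only on the weak equivalence class of $F$ — will then be essentially formal.

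\textbf{The vertical maps.} A quasi-equivalence $F\colon\bA\to\bB$ induces a triangulated equivalence $\cD(\bB)\xrightarrow{\sim}\cD(\bA)$, and transport of structure (conjugation) along it carries invertible bimodules to invertible bimodules; this produces a group isomorphism $\DPic(\bA)\xrightarrow{\sim}\DPic(\bB)$. On the Hochschild side, $F$ induces an isomorphism of Gerstenhaber algebras $\HHH^{\bullet}(\bA)\xrightarrow{\sim}\HHH^{\bullet}(\bB)$; the point requiring care is that it restricts to an isomorphism $\HHH^1_+(\bA,\bA)\xrightarrow{\sim}\HHH^1_+(\bB,\bB)$. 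Here one genuinely uses that $F$ is a quasi-equivalence and not merely a derived equivalence, since the weight filtration is not a Morita invariant. The cleanest way I see is to observe that the pushforward of Hochschild cochains along an $A_\infty$-functor is an $L_\infty$-morphism whose induced map on $\HHH^1$ is computed by its linear part, and that a contribution of tensor length $1$ to the image of a cochain supported in lengths $\geq 2$ would require at least two insertions of components of the $A_\infty$-functor, hence total input arity $\geq 2$ --- a contradiction; so weight $>1$ is preserved, and invertibility of $F$ makes this an isomorphism onto $\HHH^1_+(\bB,\bB)$. Finally, since this map is a Lie algebra isomorphism and is filtered for the weight filtration --- on $\HHH^1_+$ the BCH series converges because iterated brackets strictly increase weight --- it is automatically a homomorphism for the BCH group structures.

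\textbf{Commutativity.} This is the heart. I would unwind the construction of $\exp_{\bA}$ from the body of the paper: a class $\xi\in\HHH^1_+(\bA,\bA)$ is integrated to (the class in $\DPic(\bA)$ of) a distinguished $A_\infty$-endofunctor of $\bA$ enhancing $\mathrm{id}_{\HH^0(\bA)}$, produced as a convergent series built universally from the Hochschild dg Lie algebra with its weight filtration and from the action of the relevant pro-nilpotent group on the subgroup of enhancements inside $\DPic(\bA)$. Since $F$ induces a compatible morphism of \emph{all} of this data --- the Hochschild $L_\infty$-structure, the weight filtration on $\HHH^1_+$, and the enhancement subgroup of $\DPic$ --- naturality follows by applying $F$ to the defining formula. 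Equivalently, and more invariantly, one can argue by uniqueness of integration: $\exp_{\bB}^{-1}\circ\DPic(F)\circ\exp_{\bA}$ and $\HHH^1(F)$ are two filtered group homomorphisms $\HHH^1_+(\bA,\bA)\to\HHH^1_+(\bB,\bB)$ with the same linear term at the origin (because conjugation by $F$ on the enhancement subgroup has derivative $\HHH^1(F)$ by the computation above); since on both sides the exponential is a filtered bijection onto its image carrying BCH to the group law, the two homomorphisms coincide.

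\textbf{The remaining assertions and the main obstacle.} Compatibility with composition, $\DPic(G\circ F)=\DPic(G)\circ\DPic(F)$ and likewise on Hochschild cohomology, is the functoriality of transport of structure and of Hochschild invariance, and combined with the commuting square it yields the compatibility of the exponentials with composition. Dependence only on the weak equivalence class of $F$ holds because homotopic $A_\infty$-functors induce the same map on Hochschild cohomology, and naturally isomorphic derived equivalences induce the same automorphism of $\DPic$ (the associated invertible bimodules being isomorphic); thus every choice made descends to the homotopy category of $\Fun(\bA,\bB)$. I expect the main obstacle to be the preparation in the second part: making rigorous, for a possibly non-strictly-unital $\bA$ and $\bB$, that the Hochschild-cohomology pushforward $\HHH^1(F)$ --- a priori visible only through a zig-zag of quasi-isomorphisms --- both respects the weight filtration on $\HHH^1_+$ and is literally the infinitesimal form of the conjugation action of $F$ on $\DPic(\bA)$, so that the uniqueness-of-integration argument applies verbatim. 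This is where one must track $F$ through the construction of $\exp$ carefully enough to realise both sides of the square as the same universal series, presumably after passing to minimal cohomologically unital models where $F$ becomes an $A_\infty$-isomorphism onto a cohomology-essentially-surjective full subcategory.
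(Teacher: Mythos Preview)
Your outline has the right shape, but the commutativity argument contains a genuine gap. Your ``uniqueness of integration'' route writes $\exp_{\bB}^{-1}\circ\DPic(F)\circ\exp_{\bA}$ and compares it to $\HHH^1(F)$; this presupposes that $\exp_{\bB}$ is injective, which is \emph{not} part of the hypotheses of the theorem (injectivity is only established under the additional condition on the characteristic morphism in Theorem~A). So this argument does not go through as stated. Your alternative suggestion---``apply $F$ to the defining formula''---is closer in spirit but too vague: the exponential is built from the pre-Lie (equivalently, brace) operations on the Hochschild complex, and a Lie algebra or $L_\infty$-morphism has no reason to intertwine the pre-Lie exponential on the cochain level.

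The paper's approach resolves both of these issues by a different mechanism. Rather than pushing cochains forward along $F$, it forms the glued category $\bD=\bA\cup_F\bB$ and uses the \emph{restriction} maps $\res(\iota_{\bA})\colon C(\bD)\to C(\bA)$ and $\res(\iota_{\bB})\colon C(\bD)\to C(\bB)$. These are honest morphisms of \emph{brace algebras} (not merely of dg Lie algebras), are trivially weight-preserving, and are quasi-isomorphisms since $\iota_{\bA},\iota_{\bB}$ are quasi-equivalences. Because the pre-Lie exponential is defined entirely in terms of the brace/pre-Lie product, a brace algebra morphism commutes with $\exp$ and with the $\odot$-product on the nose at the level of cochains; this gives a strictly commutative diagram relating $Z^0(C_+(\bA))$, $Z^0(C_+(\bD))$, $Z^0(C_+(\bB))$, the corresponding groups of group-like elements, and the monoids $\End^{\infty}(-)$. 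Commutativity of the square in the theorem is read off from that diagram, and the same diagram is what lets one \emph{transport} the fact that $\exp$ descends to cohomology from the semi-free dg case (where it is proved via Maurer--Cartan and gauge equivalences) to arbitrary $c$-unital $\bA$. Your weight-filtration argument for a direct pushforward is not what is needed: the induced map on $\HHH^{\bullet}$ is a zig-zag, and the paper handles weight preservation by identifying Keller's span with the Briggs--Gelinas span $\widetilde{C}(F)$ (Proposition~\ref{PropositionMapsAgree}), thereby getting filtered isomorphisms and $B_\infty$-compatibility simultaneously.
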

  
\subsection*{Outline of the proof strategy}\ \medskip 

\noindent Before we proceed to describe some implications of \Cref{IntroTheoremA} and \Cref{IntroTheoremNaturality} we would like to provide an overview of the methods which are involved in their proof. In order to define the exponential map we resort to the theory of pre-Lie algebras. Loosely speaking, these are objects between the notions of algebras and Lie algebras which are equipped with a non-associative product. A well-known example is the Hochschild complex of an $A_\infty$-category with its composition product. Over a field of characteristic zero and under suitable completeness assumptions, one can use the product of a pre-Lie algebra to define an exponential self-map. Previously, this exponential has been successfully used in the study of the deformation theory of algebras over operads in work by Dotsenko-Shadrin-Vallette \cite{DotsenkoShadrinVallettePreLie}. In the present paper, we apply it to establish the existence of the integration map of \Cref{IntroTheoremA}.

 The first step is to use the pre-Lie exponential on the Hochschild complex to construct an \textit{injective} intermediate  exponential $\HHH^1_+(\bA, \bA) \hookrightarrow \Aut^{\infty, h}(\bA)$ through which the map $\exp_{\bA}$ from \Cref{IntroTheoremA} factors as follows:

\begin{equation}\label{EquationIsomorphismHomotopyIntegration}
	\begin{tikzcd}
		\HHH^1_+(\bA, \bA) \arrow{rr}{\exp_{\bA}} \arrow[hookrightarrow]{dr}[swap]{} && \DPic(\bA), \tag{$\ast$} \\
		& \Aut^{\infty, h}(\bA) \arrow{ur}[swap]{\pi}.
	\end{tikzcd}
\end{equation}
\noindent The codomain of this intermediate exponential are homotopy classes of certain $A_\infty$-endofunctors of $\bA$. In the context of derived Picard groups however, weak equivalence of functors (see \Cref{IntroTheoremA}) is the correct notion and our injectivity condition in \Cref{IntroTheoremA} results from a comparison between the two. The connecting morphism $\Pi^0$ appears naturally in the proof of the injectivity condition and draws a connection to the notion of \textit{$A_\infty$-centre} due to Briggs-Gelinas \cite{BriggsGelinas}. In three of the four above example classes for which $\exp_{\bA}$ is injective, the fact that the injectivity condition of \Cref{IntroTheoremA} are satisfied can be derived from observations from \cite{BriggsGelinas}. In analogy to the classical case, we view the kernels of the maps  $\exp_{\bA}$ and $\pi$ from \eqref{EquationIsomorphismHomotopyIntegration} as analogues of the fundamental group of a Lie group. 
 
Using properties of the pre-Lie exponential from \cite{DotsenkoShadrinVallettePreLie}, we show that it induces a bijection between certain $A_\infty$-functors and Hochschild cocycles.  At this point it is not clear that pre-Lie exponential descends to cohomology and hence maps cohomologous cocycles to homotopic $A_\infty$-functors. The reason is that these two equivalence relations are a priori not related in any obvious way and to overcome the issue we translate it to a problem in deformation theory. Functors in the image of the pre-Lie exponential are in bijection with certain Maurer-Cartan elements of the Hochschild complex which is equipped with its natural structure of an $A_\infty$-algebra. This well-known structure is determined through the \textit{brace algebra structure} on the Hochschild complex which extends the pre-Lie product by operations of higher arity. The strategy is now to link cohomology and homotopy through various notions of equivalence for Maurer-Cartan elements.

 For Hochschild complexes of dg categories, we see that the relations of homotopy and weak equivalence of $A_\infty$-functors correspond to the notions of \textit{dg gauge equivalence} and \textit{homotopy gauge equivalence} for the associated Maurer-Cartan elements. These notions were studied in recent work of Chuang-Holstein-Lazarev \cite{ChuangHolsteinLazarev} where the one first appears under the name ``gauge equivalence''.  For Hochschild complexes of ``good'' dg categories\footnote{These dg categories usually go under the name of ``semi-free'' or ``quasi-free'' dg categories.}, we show that dg gauge equivalence agrees with the classical and widely studied notion of \textit{gauge equivalence} for Maurer-Cartan elements. Gauge equivalence is known to be equivalent to the fourth equivalence relation of \textit{homotopy} (or \textit{Quillen homotopy}) and after a number of calculations we show that two Hochschild $1$-cocycles are cohomologous if and only if their associated Maurer-Cartan elements are Quillen homotopic. This establishes the existence and injectivity of the intermediate exponential in \eqref{EquationIsomorphismHomotopyIntegration} for ``good'' dg categories. The BCH formula is a consequence of the same formula for pre-Lie exponentials.
 
 The second step is to show that the injectivity and existence of the intermediate exponential are invariant properties under quasi-isomorphism. From here the general case of \Cref{IntroTheoremA} follows from the fact that every cohomologically unital $A_\infty$-category is quasi-isomorphic to a ``good'' dg category. The proof of the second step is tightly linked with the proof of naturality of $\exp_{\bA}$ as stated in \Cref{IntroTheoremNaturality}. At its core, naturality is a consequence of Keller's result \cite{KellerInvarianceHigherStructures} that the Hochschild complex of a dg category and its higher structure are Morita invariant and hence invariant under quasi-equivalence. This includes the $A_\infty$-algebra and brace algebra structure on the Hochschild complex which are bundled together in the form of a $B_\infty$-structure. In contrast to this, the weight filtration $\HHH^1(\bA, \bA)$ which is used to define the subspace $\HHH^1_+(\bA, \bA)$, is not Morita invariant but is known to be invariant under quasi-isomorphisms of $A_\infty$-algebras as shown by Briggs-Gelinas \cite{BriggsGelinas}. The strategy in both \cite{KellerInvarianceHigherStructures} and \cite{BriggsGelinas} is to construct a quasi-isomorphism between Hochschild complexes which preserve one the respective structures. However, the two constructions do not agree and it is not clear whether there exists a \textit{single} quasi-isomorphism which preserves both the higher structure and the weight filtration simultaneously. After an extension of their results to $A_\infty$-categories and quasi-equivalences, it is shown that  Keller's quasi-isomorphisms are compositions of quasi-isomorphisms from \cite{BriggsGelinas} and hence preserve both structures. From there we are able to carry out the second step and establish naturality of the exponential and, in a more restrictive form, also for the intermediate exponential (\Cref{TheoremNaturalityExponentialQuasiIsos}).
 
Finally, we would like to mention that most of the methods in this paper apply equally to certain $B_\infty$-algebras which, like the Hochschild complex, are $A_\infty$-algebras with a compatible dg Lie algebra structure. The proofs suggest that the intermediate exponential reflects a close relationship between the Maurer-Cartan space of the underlying ``Gerstenhaber'' dg Lie algebra of such an object and the Maurer-Cartan space of its underlying $A_\infty$-structure, see \Cref{RemarkMaurerCartanSpaces}.
Nonetheless, we see the concreteness of the exponential map in this paper as an advantage as it can be used to produce explicit $A_\infty$-functors instead of abstract equivalence classes of such. Examples where this proves helpful are discussed in \Cref{SectionPartiallyWrapped}.
\medskip
\subsection*{Applications}\ \medskip

\noindent As a first application of \Cref{IntroTheoremA} we show that the vanishing of $\HHH^1_+(\bA, \bA)$ is a necessary condition for the uniqueness of lifts of equivalences between homotopy categories. A \textit{lift} of a $\Bbbk$-linear functor $f$ between homotopy categories of $A_\infty$-categories is an $A_\infty$-functor $F$ such that $\HH^0(F)$ and $f$ are isomorphic. It is now known that such lifts do not always exist even in natural situations but do exist under favourable circumstances, see \cite{CanonacoStellariTourExistenceUniqueness} for an overview. A related question with a generally negative answer is to which extent lifts of $f$ are unique up to weak equivalence. The problem is strongly connected to the uniqueness problem for Fourier-Mukai kernels. Given well-behaved schemes $X$ and $Y$, every object $\cE \in \cD(X \times Y)$ in the derived category of of their product defines a \textit{Fourier-Mukai transform} in the form of a functor
\begin{displaymath}
	\begin{tikzcd}
	\Phi_{\cE}: \Perf(X) \arrow{r} & \cD(Y).
\end{tikzcd}
\end{displaymath}
\noindent The object $\cE$ is referred to as \textit{a Fourier-Mukai kernel} of $\Phi_{\cE}$ and it is natural to ask whether $\Phi_{\cE}$ determines $\cE$ uniquely up to isomorphism. Under suitable assumptions on $X$ and $Y$, e.g.~ when they are quasi-projective, the problem turns out to be equivalent to the uniqueness problem of lifts for $\Phi_{\cE}$ \cite{GenoveseUniquenessLifting}. The interested reader can find overviews on the subject in \cite{GenoveseUniquenessLifting, CanonacoStellariTourExistenceUniqueness}. A sufficient criterion for the uniqueness of $A_\infty$-lifts of certain functors  was given in work by Genovese \cite{GenoveseUniquenessLifting}. In particular, his theorem applies to fully-faithful functors and it is not difficult to see that the domain $\bA$ of a functor in his theorem satisfies $\HHH^1_+(\bA, \bA)=0$. Using the exponential map we show that this is a very frequent consequence of uniqueness, at least in characteristic $0$.
 \begin{Introthm}[{\Cref{TheoremUniquenessLiftingVanishing}}]\label{IntroTheoremC}
Let $\bA, \bB$ be cohomologically unital $A_\infty$-categories over a field of characteristic $0$ such that $\exp_{\bA}$ is injective (e.g.~under the condition in \Cref{IntroTheoremA}). If $f: \HH^{0}(\bA) \rightarrow \HH^{0}(\bB)$ is a $\Bbbk$-linear equivalence which admits a lift and whose lifts are all weakly equivalent, then $\HHH^1_+(\bA, \bA)=0$.
 \end{Introthm}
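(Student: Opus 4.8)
The plan is to exploit the injectivity of $\exp_{\bA}$ together with the fact that its image consists of autoequivalences enhancing the identity of $\HH^{0}(\bA)$: such an autoequivalence can be absorbed into any lift of $f$ without changing the functor it lifts, so the uniqueness of lifts forces it to be trivial, hence forces its preimage under $\exp_{\bA}$ to vanish.

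Concretely, I would first fix, using the hypothesis, a lift $F\colon \bA \to \bB$ of $f$, so $\HH^{0}(F)\cong f$. Since $f$ is an equivalence, $F$ is a quasi-equivalence, hence invertible up to weak equivalence; fix a weak inverse $\bar F\colon \bB\to\bA$, so that $\bar F\circ F$ is weakly equivalent to $\mathrm{id}_{\bA}$. Now take any $\xi\in\HHH^{1}_{+}(\bA,\bA)$ and set $G:=\exp_{\bA}(\xi)$. By the construction of $\exp_{\bA}$ through the intermediate exponential of \eqref{EquationIsomorphismHomotopyIntegration}, the class $G$ is represented by an $A_\infty$-endofunctor of $\bA$ enhancing the identity, so $\HH^{0}(G)\cong\mathrm{id}_{\HH^{0}(\bA)}$. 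Therefore $\HH^{0}(F\circ G)=\HH^{0}(F)\circ\HH^{0}(G)\cong f$, i.e.\ $F\circ G$ is again a lift of $f$. By the uniqueness hypothesis, $F\circ G$ is weakly equivalent to $F=F\circ\mathrm{id}_{\bA}$. Composing on the left with $\bar F$ and using that weak equivalence of $A_\infty$-functors is compatible with composition, I obtain $G\simeq \bar F\circ F\circ G\simeq \bar F\circ F\simeq\mathrm{id}_{\bA}$, that is, $\exp_{\bA}(\xi)=[\mathrm{id}_{\bA}]$ is the unit of $\DPic(\bA)$. Since $\exp_{\bA}$ is injective, $\xi=0$; as $\xi$ was arbitrary, $\HHH^{1}_{+}(\bA,\bA)=0$.

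The step I expect to be the crux is the cancellation of $F$: it presupposes both that a lift of an equivalence is invertible in the appropriate homotopy-categorical sense and that pre- and postcomposition with a fixed $A_\infty$-functor descend to the homotopy categories of functor categories. Neither point is deep, but both belong to the weak homotopy theory of $A_\infty$-functors assembled for \Cref{IntroTheoremNaturality}, and I would invoke that machinery rather than reprove it; the genuinely new — and entirely elementary — observation is simply that composing a lift of $f$ with an element of the image of $\exp_{\bA}$ yields another lift of $f$, which is exactly what converts the uniqueness hypothesis into a constraint on $\HHH^{1}_{+}(\bA,\bA)$.
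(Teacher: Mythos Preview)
Your proof is correct and follows essentially the same approach as the paper's: both observe that for any $\xi\in\HHH^1_+(\bA,\bA)$ the composite $F\circ\exp_{\bA}(\xi)$ is again a lift of $f$, then use weak invertibility of $F$ together with injectivity of $\exp_{\bA}$ to conclude $\xi=0$. The paper merely phrases this contrapositively (assume $h\neq 0$ and derive a contradiction), but the logical content is identical.
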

\noindent In other words,  \Cref{IntroTheoremC} shows that  in characteristic zero, there is a ``global'' necessary condition for the uniqueness of lifts of equivalences which is independent of the functor. We expect \Cref{IntroTheoremC} to admit generalizations to arbitrary liftable $\Bbbk$-linear functors $f: \HH^0(\bA) \rightarrow \HH^0(\bB)$ in characteristic zero. A sketch of a possible proof is given in \Cref{RemarkGeneralizationNecessaryCondition}. Interestingly, if $\bA$ is a graded algebra, the vanishing of $\HH^1_+(\bA, \bA)$ also appears as a sufficient condition for the affineness of its moduli space of minimal $A_\infty$-structures, cf.~\cite[Corollary 3.2.5]{PolishchukModuliofCurves}. 

 The final part of the paper discusses applications to derived Picard groups of  Fukaya categories. First, we consider the \textit{wrapped Fukaya category}  $\cW$ of the cotangent bundle $T^{\ast}M$ of a smooth manifold $M$ in the sense of \cite{AbouzaidSeidelOpenStringAnalogue, GanatraPardonShendeCovariantlyFunctorialWrappedFukayaCategory}. This is an $A_\infty$-category whose objects are possibly non-compact Lagrangian submanifolds of $T^{\ast}M$ endowed with additional data and satisfying various conditions. It contains the \textit{compact Fukaya category} $\cF \subseteq \cW$ of $T^{\ast}M$ which consists of compact Lagrangian submanifolds.
Under suitable assumptions on $M$, results by  Abouzaid \cite{AbouzaidGenerationFukayaCategory, AbouzaidFukayaCategoryPlumbings}, Chantraine-Dimitroglou Rizell-Ghiggini-Golovko \cite{ChantraineDimitroglouRizellGhigginiGolovko} and  Ganatra-Pardon-Shende \cite{GanatraPardonShendeSectorialDescent} show that these categories are generated by a single cotangent fibre whose endomorphism algebra in $\cW$ is quasi-isomorphic to the Pontryagin algebra of chains $C_{-\ast}=C_{-\ast}(\Omega_q M, \Bbbk)$ on the based (Moore) loop space at any point $q \in M$. This is dual to the generation result in \cite{FukayaSeidelSmithCategoricalViewpoint} which implies that the zero section $Z \subseteq T^{\ast}M$ generates the compact Fukaya category $\cF$ of $T^{\ast}M$, again under suitable assumptions on $M$. The endomorphism algebra of $Z$ is quasi-isomorphic to the dg algebra of cochains $C^{\ast}=C^{\ast}(M, \Bbbk)$ on $M$, equipped with the cup product.  We recall that if $M$ is simply-connected, then $C_{-\ast}$ and $C^{\ast}$ are Koszul dual and there exists an isomorphism of Lie algebras $\HHH^{\bullet}(C_{-\ast}, C_{-\ast}) \cong \HHH^{\bullet}(C^{\ast}, C^{\ast})$ which exchanges the weight filtration on either side of the isomorphism with the so-called \textit{shearing filtration}  from \cite{BriggsGelinas} on the opposite side. One can use \Cref{IntroTheoremA} to show the following.
\begin{Introthm}[\Cref{CorollaryTheoremE}]\label{IntroTheoremE} Assume that $\Bbbk$ is a field of characteristic $0$. Let $M$ be a smooth compact connected manifold and let $\cW$ (resp.~$\cF$) denote the wrapped (resp.~compact) Fukaya category of $T^{\ast}M$. There exists an embedding of groups
\begin{displaymath}
	\begin{tikzcd}
\HHH^1_+\!(C_{-\ast}, C_{-\ast}) \arrow[hookrightarrow]{r} &  \DPic(\cW).
\end{tikzcd}
\end{displaymath}
\noindent If $M$ is further simply-connected and closed, there also exists an embedding
\begin{displaymath}
	\begin{tikzcd}
	\HHH^1_+\!(C^{\ast}, C^{\ast}) \arrow[hookrightarrow]{r} &  \DPic(\cF).
	\end{tikzcd}
\end{displaymath}
\end{Introthm}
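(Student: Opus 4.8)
The plan is to deduce the two embeddings from \Cref{IntroTheoremA} applied to the endomorphism dg algebras of the generators of $\cW$ and $\cF$, after identifying the corresponding derived Picard groups. By the generation theorems recalled above, the cotangent fibre $L$ generates $\cW$ and the zero section $Z$ generates $\cF$ (the latter once $M$ is simply connected and closed), and one has quasi-isomorphisms of $A_{\infty}$-algebras $\End_{\cW}(L)\simeq C_{-\ast}$ and $\End_{\cF}(Z)\simeq C^{\ast}$. Hence the full $A_{\infty}$-subcategory of $\cW$ on $L$ is quasi-equivalent to the dg algebra $C_{-\ast}$ and shares its derived category with $\cW$, and likewise $Z$, $C^{\ast}$ and $\cF$. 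Since the derived Picard group depends only on the derived category --- it is invariant under derived, hence under $A_{\infty}$-, equivalence --- this yields isomorphisms $\DPic(\cW)\cong\DPic(C_{-\ast})$ and $\DPic(\cF)\cong\DPic(C^{\ast})$, and \Cref{IntroTheoremNaturality} makes these compatible with the exponential maps, so that the constructions below are independent of all choices. It is essential here to work with the generators rather than with $\cW$ and $\cF$ directly, because $\HHH^1_+(-,-)$ is \emph{not} a Morita invariant and different generators only witness different parts of the derived Picard group.

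Since $C_{-\ast}$ and $C^{\ast}$ are dg algebras over a field of characteristic $0$, in particular cohomologically unital $A_{\infty}$-algebras, \Cref{IntroTheoremA} supplies group homomorphisms
\begin{displaymath}
	\exp_{C_{-\ast}}\colon\HHH^1_+(C_{-\ast},C_{-\ast})\longrightarrow\DPic(C_{-\ast}),\qquad \exp_{C^{\ast}}\colon\HHH^1_+(C^{\ast},C^{\ast})\longrightarrow\DPic(C^{\ast}),
\end{displaymath}
with the BCH group structure on the domains. Composing with the two identifications above gives the maps in the statement, and it remains to verify that the injectivity hypothesis of \Cref{IntroTheoremA} holds for $C_{-\ast}$ and $C^{\ast}$.

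For $C^{\ast}=C^{\ast}(M,\Bbbk)$ this is immediate: singular cochains form an $E_{\infty}$-algebra, the third of the sufficient conditions listed after \Cref{IntroTheoremA}; equivalently, since $M$ is connected the degree-$0$ cohomology of $C^{\ast}$ is $H^0(M,\Bbbk)=\Bbbk$, so the unique object of the homotopy category $\HH^0(C^{\ast})$ has endomorphism ring $\Bbbk$ and the second condition applies. Either way $\exp_{C^{\ast}}$ is injective, giving the second embedding. The same two arguments handle $C_{-\ast}$ when $M$ is simply connected, since then the degree-$0$ cohomology of $C_{-\ast}$ is $H_0(\Omega_q M,\Bbbk)=\Bbbk$.

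The one point requiring real work is $\exp_{C_{-\ast}}$ for a general compact connected $M$. Here the unique object of $\HH^0(C_{-\ast})$ has endomorphism ring $\Bbbk[\pi_1(M)]$, and $C_{-\ast}(\Omega_q M)$ is in general neither formal nor $E_{\infty}$, so one must check the injectivity hypothesis by hand: the characteristic morphism $\Pi^0\colon\HHH^0(C_{-\ast},C_{-\ast})\to Z^0_{\gr}(\HH^{\bullet}(C_{-\ast}))$ must have all units in its image. The target is the subspace of $Z(\Bbbk[\pi_1(M)])$ consisting of those central elements that remain graded-central in the whole loop-space homology algebra $H_{-\ast}(\Omega_q M,\Bbbk)$, so it suffices to see that $\Pi^0$ surjects onto this subspace; the plan is to obtain this from the identification of $\HHH^{\bullet}(C_{-\ast},C_{-\ast})$ with the Hochschild cohomology of $\cW$ (via generation together with Keller's invariance) --- which in degree $0$ is the homology of the free loop space of $M$ --- or, equivalently, from the long exact sequence of \Cref{RemarkConnectingMorphism} expressing $\coker(\Pi^0)$ through $A_{\infty}$-centres. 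Granting this, $\exp_{C_{-\ast}}$ is injective, and composing with $\DPic(C_{-\ast})\cong\DPic(\cW)$ produces the first embedding. I expect this surjectivity of $\Pi^0$ --- the lifting of homotopy-level central units of $\Bbbk[\pi_1(M)]$ to genuine Hochschild $0$-cocycles --- to be the main obstacle, everything else being a formal consequence of \Cref{IntroTheoremA}, \Cref{IntroTheoremNaturality} and the quoted generation results.
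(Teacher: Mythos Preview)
Your overall strategy coincides with the paper's: reduce to the endomorphism algebras of the generators via the cited generation results, identify the derived Picard groups accordingly, and then apply \Cref{IntroTheoremA} after verifying its injectivity criterion. For $C^{\ast}$ both you and the paper invoke the $E_{\infty}$-structure on singular cochains; your alternative check via $\HH^0(C^{\ast})\cong\Bbbk$ is also valid and equivalent.

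The interesting divergence is in the treatment of $C_{-\ast}$. The paper's entire argument there is one sentence: it asserts that $\HH^0(C_{-\ast})\cong\Bbbk$ and that $C_{-\ast}$ is concentrated in non-positive degrees (whence the cotangent fibre is silting and part of a Calabi--Yau triple), so the second sufficient condition after \Cref{IntroTheoremA} applies. You have correctly observed that $\HH^0(C_{-\ast})\cong H_0(\Omega_q M,\Bbbk)\cong\Bbbk[\pi_1(M)]$, which equals $\Bbbk$ only when $M$ is simply connected. Thus the paper's stated verification of the injectivity hypothesis, taken literally, covers only the simply-connected case, even though the first embedding is claimed for all compact connected $M$. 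Your caution here is warranted: the non-simply-connected case does not fall under any of the four listed sufficient conditions, and the paper does not supply a separate argument for it. Your proposed route via \Cref{RemarkConnectingMorphism} is reasonable but, as you acknowledge, incomplete; the paper does not close this gap either.
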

\noindent \Cref{IntroTheoremE} also extends in a certain form to Fukaya categories of plumbings of cotangent bundles along a tree, see  \Cref{SectionApplicationsFukayaCategories}. The second embedding of \Cref{IntroTheoremE} is particularly interesting if $M$ is \textit{formal} in the sense that $C^{\ast}$ is quasi-isomorphic to its cohomology. Well-known  examples are compact, simply-connected Lie groups as well as compact K\"{a}hler manifolds over a field of characteristic zero as shown in work by  Deligne-Griffiths-Morgan-Sullivan \cite{DeligneGriffithsMorganSullivan}. As a final remark to \Cref{IntroTheoremE} we also mention the relationship of $\HHH^{\bullet}(C_{-\ast},C_{-\ast})$ with string topology and symplectic homology. For simply-connected $M$, there is an isomorphism
\begin{displaymath}
\HHH^{\bullet}(C_{-\ast},C_{-\ast}) \cong \HH_{\bullet+\dim M}(\cL M),
\end{displaymath}
between the Hochschild cohomology of $C_{-\ast}$ and the singular homology of the free loop space $\cL M$ index shifted by the dimension of $M$ as shown in work by Cohen-Jones \cite{CohenJones}. Moreover, as follows from \cite{FelixThomas} the algebra and Lie algebra structure on the left hand side corresponds on the right hand side to the loop operations  due to Chas-Sullivan \cite{ChasSullivanStringTopology}. Ganatra's thesis \cite[Theorem 1.1]{GanatraPhDThesis} (see also \cite[Theorem 1.4]{ChantraineDimitroglouRizellGhigginiGolovko}) provides yet another interpretation and shows that  $\HHH^{\bullet}(C_{-\ast}, C_{-\ast})$ is isomorphic to the symplectic homology of $T^{\ast}M$. It is an interesting problem to determine the image of the weight and shearing filtrations on $\HHH^{\bullet}(C_{-\ast},C_{-\ast})$ under the above isomorphisms as well as their topological and geometric interpretations.

 \Cref{IntroTheoremA} is also useful for the understanding of derived Picard groups of partially wrapped Fukaya categories of surfaces in the sense of Haiden-Katzarkov-Kontsevich \cite{HaidenKatzarkovKontsevich}. With the exception of the ``fully wrapped'' case, these categories are generated by an object whose endomorphism algebra belongs to the class of so-called \textit{graded gentle algebras}. The derived Picard group of an \textit{ungraded} gentle algebra was previously studied by the author in \cite{OpperDerivedEquivalences} where it was shown to be an extension of a mapping class group by an algebraic group which can be seen to agree with the identity component of the derived Picard group. The group from \Cref{Conjecture} and the integration map from this paper are an essential tool in the study of the graded case in \cite{OpperGradedGentle} which is discussed in \Cref{SectionPartiallyWrapped}.

\subsection*{Organization of the paper}\ \medskip

\noindent The first three sections are mostly of expository nature and review the relevant portion of the theory of $A_\infty$-categories, cocategories as well as pre-Lie algebras and brace algebras. \Cref{SectionDerivedPicardGroups} then recalls some results regarding the homotopy category of dg categories alongside its connection to $A_\infty$-functor categories and derived Picard groups. There we also discuss certain subgroups of the derived Picard group which conjecturally take the role of the identity component. In \Cref{IntegrationTheoryCompletePreLieAlgebras} we recall the results by Dotsenko-Shadrin-Vallette on pre-Lie exponentials and establish a first connection between the cohomology of a pre-Lie algebra (e.g.~Hochschild cohomology) and certain equivalence classes of $\infty$-isotopies (e.g.~certain $A_\infty$-functors). These results are then expanded on in  \Cref{SectionIntegrationHochschildClasses} in the context of brace algebras such as the Hochschild complex of an $A_\infty$-category. In particular, we establish a relationship between the Hochschild cohomology and homotopy classes of Maurer-Cartan elements of the Hochschild complex with its natural $A_\infty$-structure. \Cref{SectionIntegrationHochschildClasses} also contains the proofs of \Cref{IntroTheoremA} and \Cref{IntroTheoremNaturality}. The final section discusses applications to derived Picard groups of Fukaya categories.

\subsection*{Acknowledgment}
I like to thank Benjamin Briggs and Mattia Ornaghi for answering my questions on their work, Severin Barmaier for his explanations of deformation theory and Matthew Habermann for sharing his knowledge about Fukaya categories and pointing me to several examples. I also like to express my gratitude to Isaac Bird, Matthew Habermann, Jordan Williamson and Alexandra Zvonareva for feedback on an earlier draft of this paper. While working on this project, I was supported by the Charles University Research Center programs UNCE/SCI/022 and  UNCE24/SCI/022 as well as the Primus grant PRIMUS/23/SCI/006. I was also partially supported by the Cooperatio program of Charles University.

\section{Pre-Lie algebras, brace algebras and symmetric brace algebras}
\noindent We recall the notions of pre-Lie algebras and their higher relatives, \textit{brace algebras}, as well as their connection to their symmetric counterparts. These types of structure are naturally present on the Hochschild complex of an $A_\infty$-category and subsume many well-known structures on the Hochschild complex of an ordinary algebra such as the differential, the Gerstenhaber Lie bracket and the cup product.
\label{SectionPreLieAlgebrasExponentialMaps}

\subsection{Pre-Lie algebras}\label{SectionPreLieAlgebras}\ \medskip

\noindent Throughout the paper we apply the Koszul sign rule to tensor products of graded maps, that is, if $f: P \rightarrow U$ and $g:Q \rightarrow V$ are homogeneous graded linear maps between graded vector spaces, then for all homogeneous $p \in P$, $q \in Q$,
\begin{displaymath}
(f \otimes g)(p \otimes q) \coloneqq (-1)^{|g|\cdot |p|} f(p) \otimes g(q),
\end{displaymath}
\noindent where $|{-}|$ denotes the degree of an element.

\begin{definition}A (left unital) \textbf{pre-Lie algebra} $(V, \star, \mathbf{1})$ is a graded vector space $V$ equipped with a bilinear multiplication $\star:V \otimes V \rightarrow V$ of degree $0$ subject to the condition
	\begin{equation}\label{EquationPreLieAlgebra}
		u \star (v \star w) - (u \star v) \star w = (-1)^{|v||w|} \big(u \star (w \star v) - (u \star w) \star v\big),
	\end{equation}
	and an element $\mathbf{1} \in V$ such that $\mathbf{1} \star v=v$ for all $v \in V$. A \textbf{morphism} of pre-Lie algebras $f: V \rightarrow W$ is a graded linear map of degree $0$ such that $f(\mathbf{1})=\mathbf{1}$ and which commutes with the $\star$-products on $V$ and $W$.
\end{definition}
\noindent In other words, a pre-Lie algebra is equipped with a non-associative multiplication whose associator $\nabla(u,v,w)$,  the left hand side of \eqref{EquationPreLieAlgebra}, is graded symmetric with respect to $v$ and $w$.  Graded algebras are identified with pre-Lie algebras with $\nabla=0$. Every pre-Lie algebra gives rise to a graded Lie algebra through anti-symmetrisation, that is,
\begin{equation}
	[u, v] \coloneqq u \star v - (-1)^{|u||v|} v \star u,
\end{equation}
\noindent defines a graded Lie bracket on $V$. Hence, pre-Lie algebras sit in between graded algebras and graded Lie algebras.

\subsection{Example: the Hochschild space of a vector space}\label{SectionHochschildComplexPreLieAlgebra}\ \medskip

\noindent For a graded $\Bbbk$-vector space $V$, we consider the graded vector space $C(V)$ whose $n$-th homogeneous component is given by
\begin{displaymath}
\begin{array}{ccc} \displaystyle C(V)^n = \prod_{p=0}^{\infty} \Hom_{\Bbbk}^{n}\Big({\big(V[1]\big)}^{\otimes p}, V[1]\Big), & & {V[1]}^{\otimes 0}\coloneqq \Bbbk.
\end{array}
\end{displaymath}

\noindent  The space $C(V)$ is a special case of the Hochschild space of a $\Bbbk$-graph which we recall later and plays a vital role for the theory of $A_\infty$-categories. The slightly awkward looking choice of grading through $V[1]$ has the advantage that we can hide and ignore the inevitable signs when dealing with $A_\infty$-categories for the majority of the paper. The vector space $C(V)$ becomes a pre-Lie algebra under the \textbf{composition product} $\star$ defined for homogeneous $f:{V[1]}^{\otimes m} \rightarrow V[1]$, $g: {V[1]}^{\otimes n} \rightarrow V[1]$, via the formula
	\begin{equation}
		f \star g \coloneqq \sum_{i=0}^{m-1} f\Big(\operatorname{Id}_{V[1]}^{\otimes i} \otimes \, g \otimes \operatorname{Id}_{V[1]}^{\otimes (m-1-i)}\Big) \in \Hom_{\Bbbk}({V[1]}^{\otimes (m+n-1)}, V[1]\big).
	\end{equation}
	\noindent Note that the Koszul sign rule introduces additional signs upon evaluation of $f \star g$ at elements. The fact that $(C(V), \star)$ defines a pre-Lie algebra, follows for example from the graded symmetry of the right side of the equation \cite[Lemma 1.2.]{GetzlerCartanHomotopy}. It has a left unit $\mathbf{1}$ given by the identity of $V[1]$.

\subsection{Brace algebras and symmetric brace algebras}\ \medskip

\noindent The pre-Lie algebra structure on $C=C(V)$ from \Cref{SectionHochschildComplexPreLieAlgebra} is the lowest order operation of the richer structure of a \textit{brace algebra} and that of its associated \textit{symmetric brace algebra}. Rewriting $f \star g$ as $f\{g\}_1$, it forms part of a hierarchy of multilinear brace operations 
\noindent defined on $f \in \Hom_{\Bbbk}({V[1]}^{\otimes m}, V[1])$ and functions $g_1, \dots, g_r$ with $g_i \in \Hom_{\Bbbk}({V[1]}^{\otimes n_i}, V[1])$ as
\begin{equation}\label{EquationDefinitionBracesHochschildComplex}
	f\{g_1, \dots, g_r\}_r \coloneqq \sum  f\Big(\operatorname{Id}_{V[1]}^{\otimes i_1} \otimes \, g_1 \otimes \operatorname{Id}_{V[1]}^{\otimes i_2} \otimes \, g_2 \otimes \cdots \otimes g_r \otimes \operatorname{Id}_{V[1]}^{\otimes i_r}\Big),	
\end{equation}
\noindent where the sum is taken over all integer-valued tuples $(i_1, \dots, i_r)$  such that $i_j \geq 0$ and $\sum_{j=1}^r i_j=m-r$. If $r$ is clear from the context, we drop it from the notation and simply write $f\{g_1, \dots, g_r\}$.  As for its pre-Lie algebra structure, additional signs are introduced upon evaluation according to the Koszul sign rule. The brace operations satisfy the quadratic relations
\begin{multline}\label{EquationIdentifiesBraceAlgebra}
	f\{g_1, \dots, g_r\}\{h_1, \dots, h_s\} =  \sum \varepsilon \cdot f\big\{h_1, \dots, h_{i_1}, g_1\{h_{i_1+1}, \dots, h_{j_1}\}, h_{j_1+1}, \dots, h_{i_r}, \\  g_r\{h_{i_r+1}, \dots, h_{j_r}\}, h_{j_r+1}, \dots, h_{s} \big\},
\end{multline}
\noindent where the sum is taken  over all sequences $0 \leq i_1 \leq j_1 \leq \cdots \leq i_r \leq j_r \leq s$ and where $\varepsilon$ denotes the Koszul sign of the permutation
\begin{multline*}
	(g_1, \dots, g_r, h_1, \dots, h_s) \mapsto  \big(h_1, \dots, h_{i_1}, g_1, h_{i_1+1}, \dots, h_{j_1}, h_{j_1+1}, \dots, h_{i_r}, g_{r}, h_{i_r+1}, \dots, h_{j_r},\\ h_{j_r+1}, \dots, h_{s}\big).
\end{multline*}
\noindent Abstracting the previous definitions leads to the following notion as given in \cite{GerstenhaberVoronov}.
\begin{definition}
	A (left unital) \textbf{brace algebra} is a graded vector space together with an element $\mathbf{1} \in V^0$ and a collection of multi-linear brace operations $-\{\cdots\}_r: V^{\otimes (r+1)} \rightarrow V$ of degree $0$ for each $r \geq 1$ which satisfy the identities \eqref{EquationIdentifiesBraceAlgebra} as well as $\mathbf{1}\{\cdots\}_r=0$ for all $r \geq 2$ and $\mathbf{1}\{g\}=g$ for all $g \in V$.  A \textbf{morphism} of brace algebras is a graded linear map $f: V \rightarrow W$ of degree $0$ such that $f(\mathbf{1})=\mathbf{1}$ and $f(v\{v_1, \dots, v_r\})=f(v) \{f(v_1), \dots, f(v_r)\}$ for all $v,v_1, \dots, v_r \in V$.  

\end{definition}
\noindent There is a  more compact way to rephrase the notion of brace algebras in the language of tensor coalgebras which we revisit in \Cref{SectionBInfinityAlgebras}. The sign conventions and degree of the brace operations in \cite{GerstenhaberVoronov} are different from ours which agree with the conventions in \cite{LadaMarkl}.\medskip  

\noindent Upon symmetrisation of the brace operations we obtain a hierarchy of symmetric brace operations $-\langle-, \dots, - \rangle_r: V^{\otimes (r+1)} \rightarrow V$ defined on homogeneous elements via the formula
\begin{equation}
	f\langle g_1, \dots, g_r  \rangle_r \coloneqq \sum_{\sigma \in S_r} \epsilon \cdot f\{g_{\sigma(1)}, \dots, g_{\sigma(r)}\}_r,
\end{equation}
\noindent where $\varepsilon$ denotes the Koszul sign of the permutation $(g_1, \dots, g_r) \mapsto (g_{\sigma(1)}, \dots, g_{\sigma(r)})$. In particular, $f\{g\}=f\langle g \rangle_1$ and, more generally, for $n \geq 2$ we have
\begin{equation}\label{EquationPowerSymmetrisation}
	f\langle g, \dots, g \rangle_n = n!\cdot  f\{g, \dots, g\}_n,
\end{equation}
\noindent if $|g|$ is even and  
\begin{displaymath}
	f\langle g, \dots, g\rangle_n=0,
\end{displaymath}
\noindent whenever $|g|$ is odd. The symmetric braces satisfy similar identities as their non-symmetric counterparts, see \cite[Definition 2]{LadaMarkl}, leading to the notion of a \textit{symmetric brace algebra} introduced by Lada and Markl \cite[Definition 2]{LadaMarkl}. An interesting and non-trivial result by Oudom and Guin is the following.

\begin{thm}{\cite{OudomGuin}}
	Any symmetric brace algebra $V$ is uniquely determined by its first order operation $- \star - \coloneqq -\langle-\rangle_1$ which endows $V$ with the structure of a pre-Lie algebra. This correspondence induces an equivalence between the category of symmetric brace algebras and the category of pre-Lie algebras.  
\end{thm}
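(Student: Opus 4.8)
The plan is to construct mutually inverse functors: $\Phi$ from symmetric brace algebras to pre-Lie algebras, obtained by forgetting down to the lowest operation, and $\Psi$ in the opposite direction, obtained from the Oudom--Guin enveloping-coalgebra construction; the content of the theorem is then the identity $\Psi\Phi=\mathrm{id}$. First I would verify that $\Phi$ is well defined. Given a symmetric brace algebra $(V,\langle\cdots\rangle_r,\mathbf 1)$, set $u\star v\coloneqq u\langle v\rangle_1$. Specialising the symmetric brace identities of \cite[Definition 2]{LadaMarkl} to a single input on each side gives $u\langle v\rangle\langle w\rangle = u\langle v\langle w\rangle\rangle + u\langle v,w\rangle_2$, that is
\[
 (u\star v)\star w - u\star(v\star w) = u\langle v,w\rangle_2 .
\]
Since $\langle-,-\rangle_2$ is graded symmetric in its two arguments by construction, the associator of $\star$ is graded symmetric in $v$ and $w$, which is exactly \eqref{EquationPreLieAlgebra}; left unitality $\mathbf 1\star v=v$ is an axiom. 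A morphism of symmetric brace algebras preserves $\langle-\rangle_1$ and $\mathbf 1$, hence is a morphism of pre-Lie algebras, so $\Phi$ is a functor.

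Next I would build $\Psi$. Given $(V,\star,\mathbf 1)$, the Oudom--Guin construction extends $\star$, via a recursion involving the coproduct of the graded symmetric coalgebra $S(V)$ on $V$, to an associative product $\odot$ on $S(V)$, making $S(V)$ a connected Hopf algebra with space of primitives $V$. Transporting the right regular action of $S(V)$ on itself to $V$ along the projection $S(V)\twoheadrightarrow V$ produces operations which one declares to be the $\langle\cdots\rangle_r$; concretely they are forced by $f\langle g\rangle_1 = f\star g$ together with the recursion
\[
 f\langle g_1,\dots,g_{r-1},g_r\rangle_r = \bigl(f\langle g_1,\dots,g_{r-1}\rangle_{r-1}\bigr)\star g_r \;-\; \sum_{i=1}^{r-1}\pm\, f\langle g_1,\dots,g_i\star g_r,\dots,g_{r-1}\rangle_{r-1},
\]
the signs being Koszul signs. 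One then has to check that this data satisfies all of the Lada--Markl identities; in the Hopf-algebraic packaging every such identity follows from associativity of $\odot$ and from its compatibility with the coproduct, which is the substantive point of \cite{OudomGuin}. Because the higher braces are polynomial expressions in $\star$, any pre-Lie morphism automatically commutes with them, so $\Psi$ is functorial, and by construction the first operation of $\Psi(V,\star)$ equals $f\star g$, i.e. $\Phi\Psi=\mathrm{id}$.

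It remains to show $\Psi\Phi=\mathrm{id}$, the uniqueness half, which I would prove by induction on $r$: on an arbitrary symmetric brace algebra, the brace identity with a single input $g_r$ on the right, applied to $f\langle g_1,\dots,g_{r-1}\rangle_{r-1}$, expresses $f\langle g_1,\dots,g_r\rangle_r$ as $\bigl(f\langle g_1,\dots,g_{r-1}\rangle_{r-1}\bigr)\star g_r$ minus braces of arity $r-1$ (the terms in which $g_r$ is absorbed into one of the remaining slots). By the inductive hypothesis these are determined by $\star$, so $\langle\cdots\rangle_r$ is as well, and the resulting recursion is precisely the one defining $\Psi$; consistency, meaning independence of which argument is singled out as $g_r$, is guaranteed by the brace axioms themselves. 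Hence $\Phi$ and $\Psi$ are mutually inverse, yielding the asserted equivalence (in fact an isomorphism of categories).

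The main obstacle is the verification hidden inside the construction of $\Psi$: that the recursion above genuinely assembles into a symmetric brace algebra rather than merely a partial structure. Checking this by brute force is combinatorially forbidding; the clean resolution, and the actual theorem of Oudom--Guin, is that a pre-Lie product always extends to an associative product on $S(V)$, which collapses the infinite family of Lada--Markl identities to associativity and coassociativity. The induction needed for $\Psi\Phi=\mathrm{id}$ is then the comparatively routine bookkeeping showing that this extension is forced by $\star$.
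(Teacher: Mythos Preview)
The paper does not prove this statement: it is quoted as a theorem of Oudom--Guin \cite{OudomGuin} and the reader is referred to \cite[Proposition 4.4]{OudomGuin} for the explicit formulas expressing the higher symmetric braces in terms of the $\star$-product. Your sketch is a faithful outline of the Oudom--Guin argument itself---the forgetful functor $\Phi$, the enveloping Hopf algebra construction on $S(V)$ for $\Psi$, and the induction on arity showing $\Psi\Phi=\mathrm{id}$---so there is nothing to compare against within the paper. Your identification of the crux (that the recursion genuinely produces a symmetric brace algebra, which is packaged as associativity of the Oudom--Guin product on $S(V)$) is accurate.
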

\noindent In \cite[Proposition 4.4]{OudomGuin} the interested reader can find explicit formulas for the operations of the associated symmetric brace algebra structure in terms of the  $\star$-product.

\section{Graphs, cocategories and Hochschild spaces}\label{SectionCocategories}
\noindent We recall the notion of $\Bbbk$-graphs which are essentially ``many-object'' versions of a vector space. Afterwards we discuss cocategories along with the properties of the tensor cocategory associated with a $\Bbbk$-graph. Finally, we tend to the Hochschild space of a $\Bbbk$-graph which carries the structure of a brace algebra much as in the previous case of a vector space. This provides us with a convenient setup for the following section on $A_\infty$-categories. This section is purely expository.

\subsection{Graphs and cocategories}\ \medskip

\noindent We recall the basic notions for cocategories. We will be brief and the reader may consult \cite{CanonacoOrnaghiStellari, KellerInvarianceHigherStructures} for further details. A compact introduction to the special but analogous case of coalgebras is found in \cite[Section 1.2]{LodayValletteAlgebraicOperads}.
\begin{definition}
	A \textbf{$\Bbbk$-graph} (or \textbf{$\Bbbk$-quiver}) $\cG$ consists of a set of objects $\Ob{\cG}$ together with graded $\Bbbk$-vector space  $\cG(X,Y)$ for each pair $(X,Y) \in \Ob{\cG}^2$. A \textbf{morphism} $\varphi: \cG \rightarrow \cH$ between $\Bbbk$-graphs is the datum of a map $\varphi:\Ob{\cG} \rightarrow \Ob{\cH}$ of sets together with a $\Bbbk$-linear map $\varphi_{XY}:\cG(X,Y) \rightarrow \cH(\varphi(X), \varphi(Y))$ for each pair $(X,Y) \in \Ob{\cG}^2$.
\end{definition}
\noindent Morphisms of $\Bbbk$-graphs are composed in the natural way.

\begin{definition}
	A (non-unital) \textbf{cocategory} is a $\Bbbk$-graph $\cC$ equipped with $\Bbbk$-linear maps (``cocomposition'')
	\begin{displaymath}
		\begin{tikzcd}
		\Delta_{\cC}: \cC(X,Y) \arrow{r} & \bigoplus_{Z \in \cC}\cC(Z,Y) \otimes \cC(X, Z),
		\end{tikzcd}
	\end{displaymath}
	\noindent for every pair $X, Y \in \Ob{\cC}$, which satisfy the coassociativity condition 
	\begin{displaymath}
(\Delta_{\cC} \otimes \operatorname{Id}_{\cC}\big) \circ \Delta_{\cC} = (\operatorname{Id}_{\cC} \otimes \Delta_{\cC} \big) \circ \Delta_{\cC}.
	\end{displaymath}
\noindent  A \textbf{cofunctor} $F: \cC \rightarrow \cD$ is a morphism of degree $0$ between the underlying $\Bbbk$-graphs such that 
	\begin{displaymath}
		\big(F \otimes F\big) \circ \Delta_{\cC} =\Delta_{\cD} \circ  F.
	\end{displaymath}
\end{definition}
\noindent As for algebras, there are dual notions of counital and coaugmented cocategories as well as counital and coaugmented graphs which are defined in the obvious way. If $\cC$ and $\cD$ are coaugmented, then a cofunctor $F$ is \textbf{counital} if it commutes with the counits. Every coaugmented cocategory $\cC$ defines a non-unital cocategory $\overline{\cC}$ with
\begin{displaymath}
\overline{\cC}(X,Y) \coloneqq \begin{cases}\cC(X,Y) & \text{if $X \neq Y$}; \\ \quotient{\cC(X,X)}{\Bbbk \cdot \mathbf{1}_X} & \text{if $X=Y$}, \end{cases}
\end{displaymath}
\noindent where $\mathbf{1}_X \in \cC(X,X)$ denotes the image of the coaugmentation and the natural cocomposition induced by $\Delta_{\cC}$. Equivalently, $\overline{C}(X,Y)$ is canonically identified with a direct summand of $\cC(X,Y)$ given by the respective kernel of the counit, cf.~\cite[Section 1.2]{LodayValletteAlgebraicOperads}. Under this identification, the cocomposition $\Delta_{\overline{\cC}}(f)$ of $f \in \overline{\cC}(X,Y)$ can be expressed as  $\Delta_{\overline{\cC}}(f)=\Delta_{\cC}(f) - f \otimes \mathbf{1}_X +\mathbf{1}_Y \otimes f$.

 A non-unital cocategory $\cC$ is \textbf{conilpotent} if for all $X, Y \in \Ob{\cC}$ and all $f \in \cC(X,Y)$, $\Delta_{\cC}^{(n)}(f)=0$ for $n \gg 0$, where $\Delta_{\cC}^{(1)}=\operatorname{Id}_{\cC}$, $\Delta_{\cC}^{(2)}=\Delta_{\cC}$ and 
\begin{displaymath}
\Delta_{\cC}^{(n+1)}\coloneqq \big(\Delta_{\cC}^{(n)} \otimes \operatorname{Id}\big) \circ \Delta_{\cC}=\big(\operatorname{Id} \otimes \Delta_{\cC}^{(n)}\big) \circ \Delta_{\cC}.
\end{displaymath}
\noindent A coaugmented cocategory $\cC$ is conilpotent if $\overline{\cC}$ is conilpotent. We denote by $\cocat$ the category of small coaugmented conilpotent cocategories with counital cofunctors as morphisms.

\subsection{Tensor cocategories}\ \medskip

\noindent There is a forgetful functor $\cG(-)$ from the category of small $\Bbbk$-linear categories to $\Graph$ which forgets all composition maps. It has a left adjoint which assigns to a $\Bbbk$-graph $\cG$ the free graded $\Bbbk$-linear category $T^a(\cG)$ (also known as  \textbf{path category} or \textbf{tensor category}) with objects $\Ob{\cG}$ which is generated by the graded vector spaces $\cG(X,Y)$. In particular,
\begin{equation}\label{EquationDefinitionPathCategory}
	\Hom_{T^a(\cG)}(X,Y) \coloneqq \delta_{X,Y} \cdot \Bbbk \oplus \bigoplus_{n=1}^{\infty} \bigoplus_{X=U_0, \dots, U_n=Y} {\cG(U_{n-1},U_n) \otimes \cdots \otimes \cG(U_{0}, U_1)},
\end{equation}
\noindent with the induced grading and with the composition given by tensor concatenation. The symbol $\delta_{XY}$ denotes the Kronecker delta and the unit in $\delta_{XX} \cdot \Bbbk$ serves as the the identity morphism of $X \in \Ob{\cG}$ which we will denote by $\mathbf{1}_X \in T(\cG)(X,X)$. If $\cG$ has a single object, $T^{a}(\cG)$ recovers the tensor bialgebra of a vector space. Analogously, for any $\cG$, the underlying $\Bbbk$-graph of $T^a(\cG)$ admits the structure of a cocategory $T(\cG)$, called the \textbf{tensor cocategory} of $\cG$. Its cocomposition law is given by
\begin{displaymath}
	\begin{tikzcd}[row sep=0.5em]
		\Delta: T(\cG)(X,Y) \arrow{r} & \bigoplus_{Z \in \Ob{\cG}} T(\cG)(Z,Y) \otimes T(\cG)(X, Z), \\
		f_1 \otimes \cdots \otimes f_n \arrow[mapsto]{r} & \sum_{i=1}^{n} (f_n \otimes \cdots \otimes f_{i+1}) \otimes (f_{i} \otimes \cdots \otimes f_1),
	\end{tikzcd}
\end{displaymath}
\noindent where $\Delta(f)=\mathbf{1}_Y \otimes f + f \otimes \mathbf{1}_X$ and $\Delta(\mathbf{1}_X) = \mathbf{1}_X \otimes \mathbf{1}_X$ for all $X \in \Ob{\cG}$ and all $f \in \cG(X,Y)$. The tensor cocategory is naturally coaugmented by the inclusions $\Bbbk \cdot \mathbf{1}_X \hookrightarrow T(\cG)(X,X)$ with counits provided by the projections $T(\cG)(X,X) \rightarrow \Bbbk \cdot \mathbf{1}_X$. 

\noindent  The cocategory $T(\cG)$ is weight graded by tensor length, that is, the parameter $n$ in \eqref{EquationDefinitionPathCategory} and the homogeneous part of weight $0$ is spanned by all $\{\mathbf{1}_X \, | \, X \in \Ob{\cG}\}$. As a result, $T(\cG)$ is filtered by $\Bbbk$-graphs $W_iT(\cG)$, $i \geq 0$.  For all $X, Y \in \Ob{\cG}$, the corresponding term $W_iT(\cG)(X,Y)$ in the resulting filtrations 
\begin{displaymath}
	T(\cG)(X,Y)=W_0T(\cG)(X,Y) \supseteq W_1T(\cG)(X,Y) \supseteq \dots,
\end{displaymath} 
\noindent is defined by restricting to elements of weight $n \geq i$. The is also a corresponding ascending filtration of of $T(\cG)(X,Y)$ by sub cocategories $\overline{W}_iT(\cG)$, $i \geq 0$ of $T(\cG)$ defined by all elements of weight $n \leq i$. The assignment $\cG \mapsto T(\cG)$ (resp.~$\cG \mapsto \overline{T}(\cG)$) can be extended to a functor from the category of $\Bbbk$-graphs to $\cocat$ (resp.~the category of non-unital cocategories). When working with tensor cocategories one often resorts to the following universal property.
\begin{prp}[Universal property]\label{PropositionUniversalPropertyTensorCocategory} The construction $T(-)$ 
is the right adjoint to the forgetful functor $\cocat \rightarrow \Graph$, $\cC \mapsto \overline{\cC}$.
\end{prp}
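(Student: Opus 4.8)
The plan is to establish the adjunction $\Hom_{\Graph}(\overline{\cC}, \cG) \cong \Hom_{\cocat}(\cC, T(\cG))$ by constructing the counit of the adjunction explicitly and verifying its universal property. First I would identify the candidate counit: for a $\Bbbk$-graph $\cG$, the cocategory $T(\cG)$ comes with a canonical projection $p_{\cG}: \overline{T(\cG)} \rightarrow \cG$ of $\Bbbk$-graphs, namely the map which is the identity on objects and, on morphism spaces, picks out the weight-$1$ component $\cG(X,Y) \subseteq T(\cG)(X,Y)$ (equivalently, kills $\mathbf{1}_X$ and all tensors of length $\geq 2$). The claim is that this $p_{\cG}$ is universal: given any coaugmented conilpotent cocategory $\cC$ and any morphism of $\Bbbk$-graphs $\varphi: \overline{\cC} \rightarrow \cG$, there is a unique counital cofunctor $\widetilde{\varphi}: \cC \rightarrow T(\cG)$ with $p_{\cG} \circ \overline{\widetilde{\varphi}} = \varphi$.

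The key step is to write down $\widetilde{\varphi}$ and prove uniqueness. Since a cofunctor into $T(\cG)$ must be compatible with the tensor-length decomposition of the cocomposition, its weight-$n$ component is forced: on $f \in \overline{\cC}(X,Y)$ one must have the weight-$n$ part of $\widetilde{\varphi}(f)$ equal to $\varphi^{\otimes n} \circ \Delta_{\overline{\cC}}^{(n)}(f)$, where $\Delta_{\overline{\cC}}^{(n)}$ is the iterated reduced cocomposition; on the coaugmentation one sets $\widetilde{\varphi}(\mathbf{1}_X) = \mathbf{1}_X$. The full map is then the sum $\widetilde{\varphi}(f) = \mathbf{1}_Y \otimes \mathbf{1}_X \cdot (\text{counit part}) + \sum_{n \geq 1} \varphi^{\otimes n} \circ \Delta^{(n)}_{\overline{\cC}}(f)$; here conilpotence of $\cC$ guarantees this sum is finite for each $f$, so $\widetilde{\varphi}$ is well-defined. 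One then checks that $\widetilde{\varphi}$ is a cofunctor, i.e.\ $(\widetilde{\varphi} \otimes \widetilde{\varphi}) \circ \Delta_{\cC} = \Delta_{T(\cG)} \circ \widetilde{\varphi}$, which reduces to the coassociativity identities $\Delta^{(n)}_{\overline{\cC}} = (\Delta^{(i)}_{\overline{\cC}} \otimes \Delta^{(n-i)}_{\overline{\cC}}) \circ \Delta_{\overline{\cC}}$ matched against the explicit formula for $\Delta$ on $T(\cG)$ given in the excerpt, together with bookkeeping of the coaugmentation/counit terms using $\Delta_{\overline{\cC}}(f) = \Delta_{\cC}(f) - f \otimes \mathbf{1}_X + \mathbf{1}_Y \otimes f$. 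Naturality of the resulting bijection in both $\cC$ and $\cG$ is then routine to verify from the formulas.

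The main obstacle I expect is the bookkeeping around coaugmentations and counits: one has to be careful that $\widetilde{\varphi}$ is genuinely \emph{counital} and that the passage between $\cC$ and $\overline{\cC}$ (and between $T(\cG)$ and $\overline{T(\cG)}$) is handled consistently, since the cofunctor condition on $\cC$ and the morphism condition on $\overline{\cC}$ differ precisely by the coaugmentation terms. Concretely, proving that the $\cocat$-morphism condition for $\widetilde{\varphi}$ is equivalent to the $\Graph$-morphism condition for $\varphi = p_{\cG} \circ \overline{\widetilde{\varphi}}$ requires untangling the relation $\Delta_{\overline{\cC}}(f) = \Delta_{\cC}(f) - f\otimes\mathbf{1}_X + \mathbf{1}_Y\otimes f$ at every weight. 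The conilpotence hypothesis does the essential work of making the otherwise-infinite defining sum for $\widetilde{\varphi}$ terminate, so this is where one uses that $\cocat$ consists of conilpotent cocategories and not arbitrary ones; without it the right adjoint would not land in a well-defined cocategory. Everything else — the cofunctor axiom, uniqueness, naturality — is a direct but somewhat lengthy unwinding of the definitions, of the type that is standard for tensor (co)algebra adjunctions (compare \cite[Section 1.2]{LodayValletteAlgebraicOperads}), so I would state the formula for $\widetilde{\varphi}$, note the finiteness from conilpotence, and relegate the remaining identity-chasing to a brief verification.
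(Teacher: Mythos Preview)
Your approach is correct and is essentially the standard one. Note, however, that the paper does not actually give a proof of this proposition: the section is expository and the statement is recorded as a known fact, with the reader referred to \cite{CanonacoOrnaghiStellari, KellerInvarianceHigherStructures} and \cite[Section 1.2]{LodayValletteAlgebraicOperads} for details. What the paper does provide is the explicit formula $\pi_n \circ \widetilde{\Phi} = \Phi^{\otimes n} \circ \Delta^{(n)}$ in the corollary immediately following the proposition, which is exactly your formula for $\widetilde{\varphi}$ specialised to $\cC = T(\cG)$. So your sketch is consistent with, and more detailed than, what the paper offers.
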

\noindent  For $i \geq 0$, we denote by $\pi_i$ the projection onto the factor assigned to $n=i$ in \eqref{EquationDefinitionPathCategory}. The following instance of \Cref{PropositionUniversalPropertyTensorCocategory} will be used throughout the paper.

\begin{cor}
Let $\cG, \cH$ be $\Bbbk$-graphs. There exists a bijection
\begin{displaymath}
 \Hom_{\Graph}\big(\overline{T}(\cG), \cH\big) \cong \Hom_{\cocat}\big(T(\cG), T(\cH)\big).
\end{displaymath}
The image of a $\Bbbk$-graph morphism $\Phi: \overline{T}(\cG) \rightarrow \cH$  is the unique counital cofunctor $\widetilde{\Phi}: T(\cG) \rightarrow T(\cH)$ such that $\pi_1 \circ \widetilde{\Phi}$ restricts to $\Phi$ on $\overline{T}(\cG)$.
\end{cor}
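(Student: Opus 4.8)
The plan is to deduce this directly from the adjunction in \Cref{PropositionUniversalPropertyTensorCocategory}. That proposition provides, for every coaugmented conilpotent cocategory $\cC$ and every $\Bbbk$-graph $\cH$, a natural bijection
\[
\Hom_{\cocat}(\cC, T(\cH)) \;\xrightarrow{\ \sim\ }\; \Hom_{\Graph}(\overline{\cC}, \cH),\qquad F \longmapsto \varepsilon_{\cH}\circ\overline{F},
\]
where $\varepsilon_{\cH}\colon \overline{T}(\cH)\to\cH$ is the counit. First I would specialize to $\cC = T(\cG)$. The non-unital cocategory underlying $T(\cG)$ is exactly $\overline{T}(\cG)$: the object sets coincide, the hom-spaces are $\cC(X,Y)$ for $X\neq Y$ and $\cC(X,X)/\Bbbk\mathbf{1}_X$ for $X=Y$ --- i.e.\ the weight $\geq 1$ summand of \eqref{EquationDefinitionPathCategory} --- and the cocomposition is the one induced by $\Delta$, as recalled in the description of $\Delta_{\overline{\cC}}$ above. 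Substituting this into the adjunction bijection yields the asserted identification $\Hom_{\cocat}(T(\cG), T(\cH)) \cong \Hom_{\Graph}(\overline{T}(\cG), \cH)$.

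It remains to match the explicit description. Here I would identify the counit $\varepsilon_{\cH}$ with the restriction of $\pi_1$ to $\overline{T}(\cH)$: by \eqref{EquationDefinitionPathCategory} the weight $1$ summand of $T(\cH)(X,Y)$ is canonically $\cH(X,Y)$, so $\pi_1$ does define a graph morphism $\overline{T}(\cH)\to\cH$, and the fact that a counital cofunctor into a tensor cocategory is determined by its weight $1$ corestriction $\pi_1\circ(-)$ is precisely the concrete form of the universal property, forcing $\varepsilon_{\cH}=\pi_1|_{\overline{T}(\cH)}$ (equivalently, apply the bijection to $\operatorname{Id}_{T(\cH)}$). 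Under the bijection a counital cofunctor $F\colon T(\cG)\to T(\cH)$ is therefore sent to $(\pi_1\circ F)|_{\overline{T}(\cG)}$, and conversely a graph morphism $\Phi\colon\overline{T}(\cG)\to\cH$ corresponds to the unique counital cofunctor $\widetilde\Phi$ with $(\pi_1\circ\widetilde\Phi)|_{\overline{T}(\cG)}=\Phi$. Finally I would note that this last condition is genuinely equivalent to requiring $\pi_1\circ\widetilde\Phi$ to restrict to $\Phi$ on $\overline{T}(\cG)$ as stated: on the complementary weight $0$ part $\delta_{XY}\Bbbk\cdot\mathbf{1}_X$, counitality of $\widetilde\Phi$ gives $\widetilde\Phi(\mathbf{1}_X)=\mathbf{1}_{\widetilde\Phi(X)}$, and $\pi_1(\mathbf{1}_{\widetilde\Phi(X)})=0$ because units have weight $0$, so no further condition is imposed there.

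I do not expect any real obstacle: the corollary is a formal instance of \Cref{PropositionUniversalPropertyTensorCocategory}, and the work reduces to two routine identifications --- that of the non-unital cocategory underlying $T(\cG)$ with $\overline{T}(\cG)$, and that of the adjunction counit at $\cH$ with the weight $1$ projection $\pi_1$ --- both of which are immediate from the definitions of $\overline{\cC}$, of $T(\cG)$ and of $\pi_1$. The remaining content is bookkeeping with the coaugmentation, handled in the second paragraph.
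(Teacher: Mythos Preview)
Your proposal is correct and follows essentially the same route as the paper: the corollary is stated there as an immediate instance of \Cref{PropositionUniversalPropertyTensorCocategory} with $\cC=T(\cG)$, and no further proof is given beyond the explicit formula for $\widetilde{\Phi}$. Your argument simply spells out the identifications (of $\overline{T(\cG)}$ with $\overline{T}(\cG)$ and of the counit with $\pi_1$) that the paper leaves implicit.
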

\noindent Explicitly, for all $n \geq 1$, one has $\widetilde{\Phi}(\mathbf{1}_{X})=\mathbf{1}_{\Phi(X)}$ for all $X \in \Ob{\cG}$ and

\begin{displaymath}
	\pi_n\circ \widetilde{\Phi} = \Phi^{\otimes n} \circ \Delta_{T(\cG)}^{(n)},
\end{displaymath}
\noindent where $\Phi$ is canonically extended to $T(\cG)$ so that $\Phi(\mathbf{1}_X)=\mathbf{1}_{\Phi(X)}$.
\begin{definition}
Let $F, G:\cC_1 \rightarrow \cC_2$ be cofunctors between coaugmented conilpotent cocategories. An \textbf{$(F,G)$-derivation} of degree $m$ is a collection of graded linear maps $D:\cC_1(X,Y) \rightarrow  \cC_2(X,Y)$ of degree $m$ such that $D(\mathbf{1}_X)=0$ for all $X \in \Ob{\cC}$ and
\begin{displaymath}
	\Delta_{\cC_2} \circ D=\big(G \otimes D + D \otimes F\big) \circ \Delta_{\cC_1}.
\end{displaymath}
We denote by $\coder(F, G)$ the graded vector space of $(F, G)$-derivations and write $\coder(F)$ for $\coder(F,F)$ whose elements we simply call $F$-derivations.
\end{definition}
\noindent The space $\coder(F,G)$ inherits a descending filtration 
\begin{displaymath}
\coder(F,G)=W_0\coder(F,G) \supseteq W_1\coder(F,G) \supseteq \cdots,
\end{displaymath} 
\noindent from any ascending filtration  $0=\overline{W}_{-1}\cC_1 \subseteq \overline{W}_0\cC_1 \subseteq \cdots$ of $\cC_1$ by sub cocategories with $W_i\coder(F,G)$ consisting of all coderivations which vanish on $\overline{W}_{i-1}\cC_1$. In particular,  if $\cC_1=T(\cU)$ is a tensor cocategory, then $\coder(F,G)$ inherits a canonical descending filtration from the subcocategories $\overline{W}_iT(\cU)$ discussed above. Pre- and post-composition with a cofunctor $H$ induce graded linear maps
\begin{equation}\label{EquationNaturalityCoderivations}
\begin{tikzcd}[row sep=1em]
H^{\ast}:\coder(F, G) \arrow{r}  & \coder(F \circ H, G \circ H), \\  H_{\ast}:\coder(F, G) \arrow{r} & \coder(H \circ F, H \circ G).
\end{tikzcd}
\end{equation} 
\noindent Similar to cofunctors, tensor cocategories enjoy the following coextension property with respect to coderivations.
\begin{prp}\label{PropositionExtensionToDerivations}Let $F, G: \cC \rightarrow T(\cG)$ be cofunctors of coaugmented cocategories. There is a natural bijection between $\coder(F, G)$ and the vector space consisting of collections of linear maps 
	\begin{displaymath}
		\big\{\phi_{XY}: \cC(X,Y) \rightarrow \cG(X,Y) \big\}_{X,Y \in \Ob{\cC}}.
	\end{displaymath}
\end{prp}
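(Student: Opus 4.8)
\emph{Proof idea.} This is the coderivation counterpart of the universal property of the tensor cocategory (\Cref{PropositionUniversalPropertyTensorCocategory} and the corollary following it), and the plan is to prove it in the same spirit: a coderivation of $T(\cG)$ is cofreely cogenerated by its weight-one component. The candidate bijection sends an $(F,G)$-derivation $D$ to the collection $\phi \coloneqq \pi_1 \circ D$, where $\pi_1 : T(\cG) \to \cG$ is the weight-one projection; since $D$ kills the coaugmentation, so does $\phi$, and this is the announced type of data. Linearity in $D$ is clear, so the task is to produce an inverse.

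For the inverse, iterate the defining identity $\Delta_{T(\cG)}\circ D = (G\otimes D + D\otimes F)\circ\Delta_{\cC}$, using at each step that $F$ and $G$ are cofunctors, to obtain for every $n\geq 1$
\[
  \Delta^{(n)}_{T(\cG)}\circ D \;=\; \sum_{i=1}^{n}\bigl(G^{\otimes(i-1)}\otimes D\otimes F^{\otimes(n-i)}\bigr)\circ\Delta^{(n)}_{\cC}.
\]
Composing with $\pi_1^{\otimes n}$ and invoking the standard identity $\pi_1^{\otimes n}\circ\Delta^{(n)}_{T(\cG)}=\pi_n$ for the weight-$n$ projection of a tensor cocategory gives
\[
  \pi_n\circ D \;=\; \sum_{i=1}^{n}\bigl(g^{\otimes(i-1)}\otimes\phi\otimes f^{\otimes(n-i)}\bigr)\circ\Delta^{(n)}_{\cC},
\]
where $f\coloneqq\pi_1\circ F$ and $g\coloneqq\pi_1\circ G$. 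Because $f$, $g$ and $\phi$ all vanish on the coaugmentation, only the reduced part of $\Delta^{(n)}_{\cC}$ contributes, and conilpotence of $\cC$ makes the sum over $n$ finite on each morphism space; hence for an arbitrary prescribed collection $\phi$ the right-hand side defines a graph map $D_\phi:\cC\to T(\cG)$ (set to $0$ on units). The displayed formula also shows that $D\mapsto\phi\mapsto D_\phi$ is the identity, so it only remains to check that $D_\phi$ is genuinely an $(F,G)$-derivation; granting this, $D\mapsto\phi$ and $\phi\mapsto D_\phi$ are mutually inverse linear bijections.

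The one nonformal point is this last verification: one must confirm that $D_\phi$ satisfies $\Delta_{T(\cG)}\circ D_\phi=(G\otimes D_\phi + D_\phi\otimes F)\circ\Delta_{\cC}$, which is done weight by weight by tracking, after one further application of $\Delta_\cC$, the position of the unique copy of $\phi$ among the copies of $f$ and $g$, and matching the outcome against $G\otimes D_\phi + D_\phi\otimes F$ via coassociativity of $\Delta_\cC$ and the cofunctor identities for $F$ and $G$; the Koszul signs are all dictated by the $[1]$-grading conventions and cause no difficulty. Naturality in $\cG$, and compatibility with the pre- and post-composition maps $H^\ast$, $H_\ast$ of \eqref{EquationNaturalityCoderivations}, then follow by inspection of the explicit formula. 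In the case $\cC = T(\cU)$ used throughout the paper, the bijection takes the concrete form that an $(F,G)$-derivation is the same datum as a sequence of multilinear maps $\cU(U_{n-1},U_n)\otimes\cdots\otimes\cU(U_0,U_1)\to\cG(U_0,U_n)$ for $n\geq 1$, exactly as for cofunctors in the corollary to \Cref{PropositionUniversalPropertyTensorCocategory}.
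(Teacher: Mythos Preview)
Your argument is correct and is the standard one; the paper in fact omits the proof entirely and only records the explicit formula $\pi_n\circ D_\phi=\sum_{i=0}^{n-1}\big((\pi_i\circ G)\otimes\phi\otimes(\pi_{n-i-1}\circ F)\big)\circ\Delta_\cC^{(3)}$, which is equivalent to yours via coassociativity and the cofunctor identity $\pi_i\circ G=g^{\otimes i}\circ\Delta_\cC^{(i)}$. One small remark: you correctly observe that $\phi=\pi_1\circ D$ must kill the coaugmentation, which the paper's statement leaves implicit.
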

\noindent Given a family $\phi=\{\phi_{XY}\}$, we denote by $D_{\phi}: \cC \rightarrow T(\cG)$ the corresponding $(F, G)$-coderivation. It is the unique $(F, G)$-coderivation such that $\pi_1 \circ D_{\phi}=\phi$ and is explicitly given by
\begin{displaymath}
	\pi_n \circ D_\phi = \sum_{i=0}^{n-1} \Big( (\pi_i\circ G) \otimes \phi \otimes (\pi_{n-i-1}\circ F) \Big) \circ \Delta_{\cC}^{(3)}.
\end{displaymath}
\noindent As a direct consequence of this formula, we see that for any cofunctor $H:T(\cG) \rightarrow T(\cH)$ and all $\phi=\{\phi_{XY}\}$ which vanish on $\overline{W}_mT(\cG)$ we have
	\begin{equation}\label{EquationInducedMapsInteractionWeightFiltrations}
\pi_1 \circ H_{\ast}(D_{\phi})=\sum_{n \geq 1}  H^n \circ \bigg(\sum_{i=0}^{n-1} \Big((\pi_i\circ G) \otimes \phi \otimes (\pi_{n-i-1}\circ F) \Big)  \bigg) \circ \Delta_{\cC}^{(3)}.
	\end{equation}
\noindent Thus $H_{\ast}(D_{\phi})=D_{\psi}$ for some $\psi$ which vanishes identically on $\overline{W}_mT(\cG)$. On the other hand, since  $H$ is compatible with the weight filtrations on its domain and its codomain, so is $H^{\ast}$.
\begin{cor}\label{CorollaryNaturalMapsDerivationPreserveWeightFiltrations} 
	If $H$ is a cofunctor between tensor cocategories, then $H_{\ast}$ and $H^{\ast}$ are morphisms of filtered objects in the category of graded vector spaces with respect to the canonical filtrations.
\end{cor}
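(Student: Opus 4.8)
The plan is to prove the two halves of the statement — filtration-compatibility of $H_\ast$ and of $H^\ast$ — separately, since the two maps interact with the filtration in rather different ways. Recall that for a cofunctor $H : T(\cG_1) \to T(\cG_2)$ between tensor cocategories, $H_\ast$ is postcomposition with $H$ (it alters the codomain of a coderivation but not its source cocategory), whereas $H^\ast$ is precomposition with $H$ (it replaces the source $T(\cG_2)$ by $T(\cG_1)$). The canonical filtration only depends on the source: $W_i\coder(F,G)$ is the subspace of coderivations vanishing on $\overline{W}_{i-1}$ of their source. So for $H_\ast$ the source is untouched and the claim is almost a tautology, while for $H^\ast$ the content is that $H$ is weight non-increasing.

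For $H_\ast$ I would argue as follows. Let $F, G : T(\cU) \to T(\cG_1)$ be cofunctors and $D \in W_i\coder(F,G)$, so that $D$ vanishes on $\overline{W}_{i-1}T(\cU)$. Then $H_\ast(D) = H \circ D$ is an $(H\circ F, H\circ G)$-coderivation with the same source $T(\cU)$, and it too vanishes on $\overline{W}_{i-1}T(\cU)$ — simply because $D$ does, and $H$ is then applied to zero. Hence $H_\ast(D) \in W_i\coder(H\circ F, H\circ G)$. One may also read this off directly from formula \eqref{EquationInducedMapsInteractionWeightFiltrations}: writing $H_\ast(D_\phi) = D_\psi$, every summand of $\psi$ contains one occurrence of $\phi$ evaluated on a tensor-subword of the input, so the vanishing of $\phi$ on $\overline{W}_{i-1}$ propagates to $\psi$. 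No real computation is needed here.

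The substance of the corollary lies in the $H^\ast$ part, and the key lemma I would isolate is that every cofunctor $H : T(\cG_1) \to T(\cG_2)$ between tensor cocategories satisfies $H\big(\overline{W}_mT(\cG_1)\big) \subseteq \overline{W}_mT(\cG_2)$ for all $m \ge 0$. To prove this I would invoke the explicit description coming from the universal property, $\pi_n \circ H = \Phi^{\otimes n} \circ \Delta^{(n)}_{T(\cG_1)}$ with $\Phi = \pi_1 \circ H$ extended by $\Phi(\mathbf{1}_X) = \mathbf{1}_{\Phi(X)}$: for an element $x$ of weight at most $m$, the components of $\Delta^{(n)}_{T(\cG_1)}(x)$ that survive under $\Phi^{\otimes n}$ into weight $n$ are precisely those that decompose $x$ into $n$ tensor-subwords of positive weight, which is impossible once $n > m$, while the identity-padded terms contribute only to strictly smaller weight. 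Hence $\pi_n(H(x)) = 0$ for $n > m$, i.e.~$H(x) \in \overline{W}_mT(\cG_2)$. Granting the lemma, if $D$ is an $(F,G)$-coderivation with source $T(\cG_2)$ lying in $W_i\coder(F,G)$, then $D$ vanishes on $\overline{W}_{i-1}T(\cG_2)$, and since $H$ carries $\overline{W}_{i-1}T(\cG_1)$ into $\overline{W}_{i-1}T(\cG_2)$, the composite $H^\ast(D) = D \circ H$ vanishes on $\overline{W}_{i-1}T(\cG_1)$; thus $H^\ast(D) \in W_i\coder(F\circ H, G\circ H)$.

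I do not expect a genuine obstacle here: the only step that requires care is the weight count establishing the weight non-increasing property of $H$, together with the purely bookkeeping observation that extending $\Phi$ by identities does not spoil the estimate. Since $H_\ast$ and $H^\ast$ were already seen in \eqref{EquationNaturalityCoderivations} to be graded linear of degree $0$, the two preservation statements above are exactly what is needed to conclude that $H_\ast$ and $H^\ast$ are morphisms of filtered objects in the category of graded vector spaces.
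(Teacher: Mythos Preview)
Your proof is correct and follows essentially the same approach as the paper. The paper's argument, given in the paragraph immediately preceding the corollary, handles $H_\ast$ via the explicit formula \eqref{EquationInducedMapsInteractionWeightFiltrations} (which you also mention) and dispatches $H^\ast$ with the one-line observation that ``$H$ is compatible with the weight filtrations on its domain and its codomain''; you simply supply the details of this weight non-increasing property that the paper leaves implicit.
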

\noindent  Moreover an easy calculation shows the following well-known fact.
\begin{lem}Let $\cC$ be a cocategory. Then  $\coder(\operatorname{Id}_{\cC})$ is a graded Lie algebra with bracket
\begin{displaymath}
	[D_1, D_2] \coloneqq D_1 \circ D_2 - (-1)^{|D_1||D_2|} D_2 \circ D_1.
\end{displaymath}
\end{lem}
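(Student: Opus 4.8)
The plan is to verify directly that the commutator bracket $[D_1, D_2] = D_1 \circ D_2 - (-1)^{|D_1||D_2|} D_2 \circ D_1$ lands in $\coder(\operatorname{Id}_{\cC})$ and that it satisfies graded antisymmetry and the graded Jacobi identity. Graded antisymmetry is immediate from the definition of the bracket, and the Jacobi identity is the standard formal computation valid in any graded associative algebra under the commutator bracket (here the relevant associative structure is composition of graded endomorphisms of the graph underlying $\cC$); since this holds before imposing the coderivation condition, the only genuine content is closure: if $D_1, D_2$ are $\operatorname{Id}_{\cC}$-coderivations, then so is their graded commutator.

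For closure, first I would record that the composite of two coderivations is \emph{not} a coderivation in general, but the failure is symmetric enough to cancel in the commutator. Concretely, write $\Delta = \Delta_{\cC}$ and use the defining relation $\Delta \circ D_i = (\operatorname{Id} \otimes D_i + D_i \otimes \operatorname{Id}) \circ \Delta$ for $i = 1, 2$. Applying this twice gives
\begin{displaymath}
\Delta \circ D_1 \circ D_2 = \big(\operatorname{Id} \otimes D_1 + D_1 \otimes \operatorname{Id}\big)\big(\operatorname{Id} \otimes D_2 + D_2 \otimes \operatorname{Id}\big) \circ \Delta,
\end{displaymath}
and expanding the product of the two middle operators (carefully tracking Koszul signs when $D_1$ in the left tensor factor is moved past $D_2$ in the right factor) produces four terms: $\operatorname{Id} \otimes (D_1 D_2)$, $(D_1 D_2) \otimes \operatorname{Id}$, and the two ``cross'' terms $\pm D_1 \otimes D_2$ and $\pm D_2 \otimes D_1$. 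Writing out the analogous expansion for $\Delta \circ D_2 \circ D_1$ and subtracting with the sign $(-1)^{|D_1||D_2|}$, the cross terms cancel pairwise, leaving exactly $\big(\operatorname{Id} \otimes [D_1, D_2] + [D_1, D_2] \otimes \operatorname{Id}\big) \circ \Delta$, which is the coderivation condition for $[D_1, D_2]$. One also checks $[D_1, D_2](\mathbf{1}_X) = 0$, which is immediate since each $D_i$ kills the coaugmentation and hence so does any composite.

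The main obstacle — really the only place care is needed — is the sign bookkeeping in the cross terms: the Koszul rule dictates that $(D_1 \otimes \operatorname{Id})(\operatorname{Id} \otimes D_2) = D_1 \otimes D_2$ while $(\operatorname{Id} \otimes D_1)(D_2 \otimes \operatorname{Id}) = (-1)^{|D_1||D_2|} D_2 \otimes D_1$, and one must confirm that these, together with their counterparts coming from $D_2 D_1$, combine with the $(-1)^{|D_1||D_2|}$ in the bracket so as to cancel. This is the ``easy calculation'' alluded to in the statement; no structural input beyond coassociativity and the Koszul sign rule is required, so I would present it compactly rather than in full detail.
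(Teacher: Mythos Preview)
Your proposal is correct and is exactly the ``easy calculation'' the paper alludes to without writing out; the paper gives no proof beyond that phrase, so your direct verification of closure via the coderivation identity (with the cross-term cancellation you describe) together with the standard graded-commutator argument for antisymmetry and Jacobi is precisely what is intended.
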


\subsubsection{The differential graded case}\ \medskip

\noindent All definitions and results that were mentioned in the previous sections, e.g.~$\Bbbk$-graphs, tensor cocategories and their properties, have straightforward generalisations in the world of cochain complexes. The presence of differentials on the spaces $\cG(X,Y)$ of a $\Bbbk$-graph $\cG$ turn $T(\cG)$ into a dg cocategory, that is, a cocategory $\cC$ equipped with a coderivation $d \in \coder(\operatorname{Id}_{\cC})$ of degree $1$ such that $d \circ d=0$. Dg cofunctors $F: \cC \rightarrow \cD$ have to commute with the differentials, i.e.~$F: \cC(X,Y)\rightarrow \cD(X,Y)$ is a cochain map. For every pair of dg cofunctors $(F, G)$, the graded vector space $\coder(F, G)$ inherits its differential via the usual formula
\begin{displaymath}
\begin{tikzcd}
D \arrow[mapsto]{r} & 	d_{\cD} \circ D - (-1)^{|D|} D \circ d_{\cC}.
\end{tikzcd} 
\end{displaymath}
\noindent As before, dg cofunctors induce filtered cochain maps between spaces of coderivations. Dual to the case of dg functors, there is a natural notion of homotopy for dg cofunctors.
\begin{definition}
Let $F, G: \cC \rightarrow \cD$ be dg cofunctors between dg cocategories. Then, $F$ and $G$ are \textbf{homotopic} if $F(X)=G(X)$ for all $X \in \Ob{\cC}$ and there exists an $(F, G)$-coderivation $H$ of degree $-1$ such that
\begin{displaymath}
F - G = d_{\cD} \circ H + H \circ d_{\cC}.
\end{displaymath}
That is, for all $X, Y \in \Ob{\cC}$, $H: \cC(X,Y) \rightarrow \cD(X,Y)$ provides a cochain homotopy between the maps $F, G: \cC(X,Y) \rightarrow \cD(X,Y)$. 
\end{definition}

\subsection{The Hochschild space of a $\Bbbk$-graph}\label{SectionHochschildSpaceGraph}\ \medskip

\noindent For any $\Bbbk$-graph $\cG$ and any integer $n \in \mathbb{Z}$, we denote by $\cG[n]$ the $\Bbbk$-graph with the same objects and graded vector spaces $\cG(X,Y)[n]$, $X, Y \in \Ob{\cG}$.

\begin{definition}\label{DefinitionHochshildSpaceGraph}
	Let $\cG$ be a $\Bbbk$-graph. Its \textbf{Hochschild space} $C(\cG)$ is the graded vector space whose $n$-th homogeneous component is
	\begin{displaymath}
		\begin{aligned}
			C^n(\cG)  & = \prod_{p=0}^{\infty} \prod_{U_0, \dots, U_p \in \Ob{\cG}} \Hom_\Bbbk^{n}\Big(\cG(U_0, U_1)[1] \otimes \cdots \otimes \cG(U_{p-1}, U_p)[1], \cG(U_0,U_p)[1]\Big).
		\end{aligned}
	\end{displaymath}
\end{definition}
\noindent A homogeneous function $f: \cG(U_0, U_1) \otimes \cdots \otimes \cG(U_{d-1}, U_d) \rightarrow \cG(U_0,U_d)$ of degree $n$ from a morphism space between the \textit{unshifted} tensor categories is canonically identified with an element in $C(\cG)$ of degree $d-1+n$. We refer to elements in $C(\cG)$ of this kind as \textbf{elementary} and to the integer $w(f)\coloneqq d$ as their \textbf{weight}.  Elements in $C(\cG)$ are determined by their components, which are elementary functions. The bijection $\phi \mapsto D_{\phi}$ from  \Cref{PropositionExtensionToDerivations} yields an isomorphism of graded vector spaces 
\begin{displaymath}
	\coder(\operatorname{Id}_{T(\cG[1])}) \cong C(\cG),
\end{displaymath}
\noindent under which  the canonical filtration on $\coder(\operatorname{Id}_{T(\cG)[1]})$ corresponds to the filtration
\begin{displaymath}
	C(\cG)=W_0C(\cG) \supseteq W_1C(\cG) \supseteq \dots,
\end{displaymath}
\noindent called the \textbf{weight filtration}. In explicit terms, $W_iC(\cG)$ consists of all products of elementary functions of weight at least $i$. In particular, the filtration endows $C(\cG)$  with the structure of a complete normed topological vector space.

\subsection{The brace algebra structure on the Hochschild space}\label{SectionBraceAlgebraStructureHochschildSpace}\ \medskip

\noindent As in the case of a single vector space, the Hochschild space of a $\Bbbk$-graph carries the structure of a brace algebra. The general case is reduced to this special one. To do so, let $\Bbbk[\cC]$ denote the category algebra associated to a graded $\Bbbk$-linear category $\cC$, namely
\begin{displaymath}
	\Bbbk[\cC] \coloneqq \bigoplus_{X,Y \in \Ob{\cC}} \Hom_{\cC}(X,Y),
\end{displaymath}
\noindent which inherits an obvious multiplication from the composition in $\cC$. If $\cC$ is just a $\Bbbk$-graph, the same definition still makes sense and provides a functor from $\Graph$ into the category of graded vector spaces. The collection of projections $\pi^{XY}:\Bbbk[\cG] \twoheadrightarrow \cG(X,Y)$ and inclusions $\iota^{XY}: \cG(X,Y) \hookrightarrow \Bbbk[\cG]$ induce a canonical injection\footnote{This is already explained in \cite{KellerInvarianceHigherStructures}.} of graded vector spaces
\begin{equation}\label{EquationInclusionHochschildCategoryAlgebra}
	C(\cG) \hookrightarrow C(\Bbbk[\cG]),
\end{equation}
\noindent which maps every elementary function $f: \cG(U_0, U_1) \otimes \cdots \otimes \cG(U_{d-1}, U_d) \rightarrow \cG(U_0,U_d)$ to $\iota^{U_0U_d} \circ f \circ (\pi^{U_0U_1} \otimes \cdots \otimes \pi^{U_{d-1} U_d})$.
\begin{definition}Let $\cG$ be a $\Bbbk$-graph. The brace algebra structure on $C(\cG)$ is the restriction of the brace algebra structure on $C(\Bbbk[\cG])$ via \eqref{EquationDefinitionBracesHochschildComplex} along the inclusion\footnote{Because $\pi^{XY}$ and $\iota^{XY}$  preserve degrees, the inclusion does not introduce additional signs to the Koszul signs in \eqref{EquationDefinitionBracesHochschildComplex}.} $C(\cG) \subseteq C(\Bbbk[\cG])$.
\end{definition}  
\noindent It follows that $C(\cG)$ is a pre-Lie algebra by virtue of the composition product $f \star g=f \{g\}$. Its graded Lie algebra has another incarnation in terms of coderivations.

\begin{lem}\label{LemmaIsomorphismLieAlgebrasStarProductDerivations}
For every $\Bbbk$-graph $\cG$, the linear isomorphism $\coder(\operatorname{Id}_{T(\cG[1])}) \cong C(\cG)$ from  \Cref{PropositionExtensionToDerivations} is an isomorphism of graded Lie algebras.
\end{lem}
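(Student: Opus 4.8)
The proof is a direct calculation, using two structural facts recalled above. By \Cref{PropositionExtensionToDerivations}, a coderivation of the tensor cocategory $T(\cG[1])$ is uniquely determined by its weight-one corestriction $\pi_1\circ(-)$, and by the lemma immediately preceding the statement the commutator $[D_\phi,D_\psi]$ of two coderivations is again a coderivation. It therefore suffices to show, for homogeneous $\phi,\psi\in C(\cG)$, that $\pi_1\circ[D_\phi,D_\psi]$ equals the weight-one corestriction of $D_{[\phi,\psi]}$, where $[\phi,\psi]=\phi\star\psi-(-1)^{|\phi||\psi|}\psi\star\phi$ is the anti-symmetrisation of the composition product. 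Note that the degree of $\phi$ as an element of $C(\cG)$ coincides with the degree of $D_\phi$, so the two brackets carry the same Koszul sign.

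The plan is to first reduce to the one-object case, mirroring the way the brace structure was defined. The brace structure (hence the pre-Lie, hence the Lie structure) on $C(\cG)$ is by definition the restriction along \eqref{EquationInclusionHochschildCategoryAlgebra} of the one on $C(\Bbbk[\cG])$; moreover the span of the composable paths inside $T(\Bbbk[\cG][1])$ is a sub-cocategory on which the coderivation attached to the image of $\phi$ under \eqref{EquationInclusionHochschildCategoryAlgebra} restricts to $D_\phi$, so composition of such coderivations, and hence their commutator, is compatible with \eqref{EquationInclusionHochschildCategoryAlgebra}. It is thus enough to treat $\cG=\Bbbk[\cG]$, i.e.\ the Hochschild space $C(V)$ of a single graded vector space as in \Cref{SectionHochschildComplexPreLieAlgebra}, where the assertion is the classical identification of the commutator of coderivations of $T(V[1])$ with the Gerstenhaber bracket (cf.\ \cite{GerstenhaberVoronov, LadaMarkl}). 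For completeness I would include the short computation: from $\pi_n\circ D_\phi=\sum_{i=0}^{n-1}(\pi_i\otimes\phi\otimes\pi_{n-1-i})\circ\Delta^{(3)}$ one sees that $\pi_1\circ D_\phi\circ D_\psi$ vanishes on tensors of length $\neq w(\phi)+w(\psi)-1$, while on tensors of that length $D_\psi$ inserts $\psi$ into every admissible window and $\phi$ is then applied to the outcome; this is precisely $\phi\{\psi\}_1=\phi\star\psi$. Subtracting the symmetric term with sign $(-1)^{|D_\phi||D_\psi|}$ yields $\pi_1\circ[D_\phi,D_\psi]=[\phi,\psi]$ and hence $[D_\phi,D_\psi]=D_{[\phi,\psi]}$.

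The main — and really the only — obstacle is the sign bookkeeping: the coderivation formula of \Cref{PropositionExtensionToDerivations}, the iterated cocomposition $\Delta^{(3)}$, and the defining formula \eqref{EquationDefinitionBracesHochschildComplex} of the braces each carry Koszul signs, and one must verify they agree. This is exactly the place where the uniform use of the shift $\cG[1]$, which forces all structure maps to have degree $0$, does its job, as is standard in the $A_\infty$-literature; concretely the cross term $\pi_i\otimes\phi\otimes\pi_{n-1-i}$ composed with $D_\psi$ produces the sign of transposing $\psi$ past the first $i$ arguments of $\phi$, which is the $r=1$ instance of the Koszul sign appearing in \eqref{EquationDefinitionBracesHochschildComplex}. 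Everything else — that a coderivation is pinned down by its weight-one part, and the reduction from $\Bbbk[\cG]$ to $\cG$ — is formal from \Cref{PropositionExtensionToDerivations} and \Cref{PropositionUniversalPropertyTensorCocategory}.
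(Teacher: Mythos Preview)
Your proof is correct and follows essentially the same route as the paper: both hinge on the identity $\pi_1\circ(D_\phi\circ D_\psi)=\phi\star\psi$, from which the compatibility of the brackets follows since a coderivation is determined by its weight-one corestriction. The paper simply invokes the displayed formula \eqref{EquationInducedMapsInteractionWeightFiltrations} (with $F=G=H=\operatorname{Id}$) to obtain this identity in one line, whereas you add an intermediate reduction to the one-object case via the inclusion $C(\cG)\hookrightarrow C(\Bbbk[\cG])$; this reduction is harmless but unnecessary, since the explicit coderivation formula from \Cref{PropositionExtensionToDerivations} already works verbatim for an arbitrary $\Bbbk$-graph and yields $\pi_1\circ(D_\phi\circ D_\psi)=\phi\star\psi$ directly.
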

\begin{proof}
Follows from \eqref{EquationInducedMapsInteractionWeightFiltrations} which  shows that  $\pi_1 \circ (D_\phi \circ D_\psi) = \phi \star \psi$ for the $\operatorname{Id}_{T(\cG)}$-coderivations associated to sets of functions $\phi$ and $\psi$. 
\end{proof}
\noindent Finally, for later reference we record the following observation which follows by inspection of the definitions.
\begin{lem}\label{LemmaHochschildSpaceBraceRespectFiltration}
The brace operations respect the index shifted weight filtration of the Hochschild complex, that is, for $W_i^{[1]}C(\cG)\coloneqq W_{i+1}(\cG)$ and  $g_0, \dots, g_r \in C(\cG)$ with $g_j \in W_{i_j}^{[1]}C^{n_j}(\cG)$, one has
\begin{displaymath}
g_0 \big\{g_1, \dots, g_r\big\} \in W_{i_0+ \cdots + i_r}^{[1]}C^{n_0 + \cdots + n_r}(\cG).
\end{displaymath}
\end{lem}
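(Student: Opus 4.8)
The plan is a reduction to the one-object case followed by an elementary bookkeeping of weights. First I would invoke the very definition of the brace structure on $C(\cG)$: it is the restriction of the brace structure on $C(\Bbbk[\cG])$ along the inclusion \eqref{EquationInclusionHochschildCategoryAlgebra}. Since that inclusion sends an elementary function of weight $d$ to an elementary function of weight $d$, it is a morphism of filtered vector spaces; moreover, because the weight of an element of the Hochschild space is detected on its elementary components, one has $W_iC(\cG)=C(\cG)\cap W_iC(\Bbbk[\cG])$. Hence it suffices to prove the statement for the Hochschild space $C(V)$ of a single graded vector space $V=\Bbbk[\cG]$, where the brace operations are given by the explicit formula \eqref{EquationDefinitionBracesHochschildComplex}.

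Next I would reduce to the case of elementary inputs. The brace operations are multilinear, and each homogeneous component of $g_0\{g_1,\dots,g_r\}$ is a finite sum of composites of components of $g_0,\dots,g_r$; so it is enough to verify the weight estimate when every $g_j$ is elementary, say $g_j\colon V[1]^{\otimes w_j}\to V[1]$. Then $g_j\in W_{w_j}C(V)=W_{w_j-1}^{[1]}C(V)$, and the hypothesis $g_j\in W_{i_j}^{[1]}C(V)$ forces $w_j\ge i_j+1$. The degree count is immediate, since the brace operations have degree $0$, so $g_0\{g_1,\dots,g_r\}\in C^{n_0+\dots+n_r}(\cG)$. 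For the weight, reading off \eqref{EquationDefinitionBracesHochschildComplex} shows that $g_0\{g_1,\dots,g_r\}_r$ is a sum of composites $g_0\big(\operatorname{Id}^{\otimes a_1}\otimes g_1\otimes\dots\otimes g_r\otimes\operatorname{Id}^{\otimes a_r}\big)$ over tuples with $a_j\ge 0$ and $\sum_j a_j=w_0-r$; each such composite is elementary of arity, and hence weight, $a_1+\dots+a_r+w_1+\dots+w_r=w_0+w_1+\dots+w_r-r$. The inequalities $w_j\ge i_j+1$ then give
\begin{displaymath}
w_0+w_1+\dots+w_r-r\ \ge\ (i_0+1)+(i_1+1)+\dots+(i_r+1)-r\ =\ i_0+i_1+\dots+i_r+1,
\end{displaymath}
so every elementary component of $g_0\{g_1,\dots,g_r\}$ has weight at least $i_0+\dots+i_r+1$, i.e.\ $g_0\{g_1,\dots,g_r\}\in W_{i_0+\dots+i_r+1}C(V)=W_{i_0+\dots+i_r}^{[1]}C(V)$. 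Undoing the reduction of the first paragraph then yields the claim for $C(\cG)$.

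The argument is really just this arithmetic, as the phrase ``follows by inspection of the definitions'' anticipates. The only point that deserves attention — the main obstacle, such as it is — is the reduction to elementary inputs: one must observe that the brace operations are compatible with the decomposition of the Hochschild space into its elementary components, equivalently that they are continuous for the topology defined by the weight filtration, so that the weight estimate may legitimately be checked on components rather than on the (possibly infinite) products directly. Once this is granted, the computation above is the entire proof.
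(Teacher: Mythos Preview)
Your argument is correct and is precisely the ``inspection of the definitions'' that the paper alludes to without writing out: reduce to $C(\Bbbk[\cG])$ via the weight-preserving inclusion, then read off from \eqref{EquationDefinitionBracesHochschildComplex} that an elementary brace $g_0\{g_1,\dots,g_r\}$ has weight $w_0+\cdots+w_r-r$, which together with $w_j\ge i_j+1$ gives the claimed bound. The paper offers no further proof, so there is nothing to compare beyond this.
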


\subsection{Restricted naturality}\ \medskip

\noindent  The Hochschild space satisfies a restricted naturality property with respect to inclusions. Let $\iota: \cG \rightarrow \cH$ be a fully faithful inclusion of $\Bbbk$-graphs, by which we mean a morphism which is injective on objects and for which the associated maps $\iota_{XY}: \cG(X,Y) \rightarrow \cH(X,Y)$ are isomorphisms for all $X, Y \in \Ob{\cG}$. Then, one obtains a $\Bbbk$-linear graded restriction map
\begin{displaymath}
\begin{tikzcd}
	\res(\iota): C(\cH) \arrow[twoheadrightarrow]{r} & C(\cG),
	\end{tikzcd}
\end{displaymath}
\noindent which is in fact a morphism of brace algebras due to the commutative diagram
\begin{equation}
	\begin{tikzcd}
		C(\cH) \arrow{d}[swap]{\res(\iota)} \arrow[hookrightarrow]{r} &  C(\Bbbk[\cH]) \arrow{d}{\res(\Bbbk[\iota])} \\
		C(\cG) \arrow[hookrightarrow]{r} &  C\big(\Bbbk[\cG]\big).
	\end{tikzcd}
\end{equation}

\section{\texorpdfstring{Reminder on $A_\infty$-categories}{Reminder on A-infinity categories}}

\noindent We recall the relevant theory of $A_\infty$-categories including functor categories and Hochschild complexes.
\subsection{$A_\infty$-categories via Maurer-Cartan elements in Hochschild spaces}\label{SectionHochschildComplex} \label{SectionAInfinityCategories}\ \medskip

\begin{definition}
Let $V$ be a pre-Lie algebra. An element $\mu \in V^1$ is a \textbf{Maurer-Cartan element} if $\mu \star \mu=0$.
\end{definition}
\begin{definition}A (flat) \textbf{$A_\infty$-structure} on a $\Bbbk$-graph $\cG$ is a Maurer-Cartan element $\mu \in W_1C^1(\cG)$ of the underlying pre-Lie algebra. A (small) \textbf{$A_\infty$-category} is a pair $(\bA, \mu_{\bA})$ consisting of an $A_\infty$-structure $\mu_{\bA}$ on a $\Bbbk$-graph $\bA$.
\end{definition}
\noindent By \Cref{LemmaIsomorphismLieAlgebrasStarProductDerivations}, then $\mu \in W_1C(\cG)$ is an $A_\infty$-structure on $\cG$ if and only if the corresponding degree $1$ coderivation $D_{\mu} \in \coder(\operatorname{Id}_{T(\cG[1])})$ satisfies $D_{\mu} \circ D_{\mu}=0$. Hence one obtains the following.
\begin{cor}
The correspondence $\mu \mapsto \widehat{\mu}$ induces a bijection between $A_\infty$-structures on a $\Bbbk$-graph $\cG$ and dg cocategory structures on $T(\cG[1])$. 
\end{cor}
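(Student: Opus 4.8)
The statement repackages the equivalence recorded just before it — $\mu\star\mu=0 \iff D_\mu\circ D_\mu=0$ for $\mu\in W_1C^1(\cG)$ — into a bijection of sets, so the plan is to check that $\mu\mapsto\widehat\mu\coloneqq D_\mu$ is injective, lands in the set of dg cocategory structures on $T(\cG[1])$, and is surjective onto it. Throughout I would take for granted the linear correspondence $\phi\mapsto D_\phi$ between families of elementary maps and $\operatorname{Id}_{T(\cG[1])}$-coderivations from \Cref{PropositionExtensionToDerivations}, with inverse $d\mapsto\pi_1\circ d$, together with its compatibility with the composition product, $\pi_1\circ(D_\phi\circ D_\psi)=\phi\star\psi$, established in the proof of \Cref{LemmaIsomorphismLieAlgebrasStarProductDerivations}. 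Injectivity is then immediate, since $\mu=\pi_1\circ D_\mu$.

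For well-definedness I would verify that $D_\mu\circ D_\mu=D_{\mu\star\mu}$. The point is that $D_\mu\circ D_\mu$ is again an $\operatorname{Id}_{T(\cG[1])}$-coderivation: applying $\Delta$ and using the coderivation identity for $D_\mu$ twice, the only potential deviation from the coderivation relation is the cross term $D_\mu\otimes D_\mu+(-1)^{|D_\mu||D_\mu|}D_\mu\otimes D_\mu$, which vanishes because $|D_\mu|=1$ is odd. Hence $D_\mu\circ D_\mu$ is uniquely determined by its weight-$1$ component, which is $\mu\star\mu$, so $D_\mu\circ D_\mu=D_{\mu\star\mu}$. In particular, if $\mu$ is an $A_\infty$-structure (i.e.\ $\mu\star\mu=0$) then $D_\mu$ is a degree $1$ coderivation with $D_\mu\circ D_\mu=0$, that is, a dg cocategory structure on $T(\cG[1])$.

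For surjectivity, given a dg cocategory structure $d$ on $T(\cG[1])$ — a degree $1$ coderivation with $d\circ d=0$ — I would set $\mu\coloneqq\pi_1\circ d$, so that $d=D_\mu$ by the uniqueness clause of \Cref{PropositionExtensionToDerivations}. Since $d$ annihilates every $\mathbf{1}_X$ (this is part of being a coderivation), the weight-$0$ (``curvature'') component of $\mu$ vanishes, i.e.\ $\mu\in W_1C^1(\cG)$; and $D_{\mu\star\mu}=D_\mu\circ D_\mu=d\circ d=0$ forces $\mu\star\mu=0$ by injectivity of $\phi\mapsto D_\phi$. Thus $\mu$ is an $A_\infty$-structure on $\cG$ with $\widehat\mu=d$. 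Combining the three steps yields the asserted bijection, with inverse $d\mapsto\pi_1\circ d$.

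I do not anticipate a genuine obstacle here: the whole statement is a formal consequence of the tensor-cocategory dictionary of \Cref{SectionPreLieAlgebrasExponentialMaps} and \Cref{SectionCocategories}, and the only step requiring an actual computation is showing that the composite $D_\mu\circ D_\mu$ of the odd coderivation $D_\mu$ with itself is again a coderivation — this is what lets one test the square-zero condition on the single weight-$1$ slice instead of on infinitely many components. The sign bookkeeping is exactly what the Koszul-sign convention and the $[1]$-shifts fixed at the start of \Cref{SectionPreLieAlgebrasExponentialMaps} are designed to suppress.
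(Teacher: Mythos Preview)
Your proof is correct and follows the same route as the paper: the corollary is stated immediately after the sentence ``$\mu$ is an $A_\infty$-structure if and only if $D_\mu\circ D_\mu=0$'', which the paper derives from \Cref{LemmaIsomorphismLieAlgebrasStarProductDerivations} (more precisely, from the identity $\pi_1\circ(D_\phi\circ D_\psi)=\phi\star\psi$ in its proof). Your write-up is simply a careful unpacking of this, including the explicit verification that $D_\mu\circ D_\mu$ is again a coderivation because $|D_\mu|$ is odd, and the observation that surjectivity and the condition $\mu\in W_1C^1(\cG)$ come from the coextension property of \Cref{PropositionExtensionToDerivations}.
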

\noindent If $\bA$ is an $A_\infty$-category we occasionally also write $\Hom_{\bA}(-,-)$ for $\bA(-,-)$. The structure element $\mu_{\bA}$ is equivalent to a collection of functions 
\begin{displaymath}
\mu^i_{\bA}: \bA(A_{i-1}, A_i)[1] \otimes \cdots \otimes \bA(A_0, A_1)[1] \rightarrow \bA(A_0, A_i)[1],
\end{displaymath}
\noindent  of degree $1$ for each sequence of objects $A_0, \dots, A_i \in \bA$, $i \geq 1$. The Maurer-Cartan condition translates into the well-known quadratic associativity relations
\begin{displaymath}
0 =	\sum_{0 \leq s < r \leq n} \mu_{\bA}^{n+s-r+1}\Big(\mathbf{1}_{n-1, n} \otimes \cdots \otimes \mathbf{1}_{r-1, r} \otimes \mu_{\bA}^{r-s-1} \otimes \mathbf{1}_{s-1, s} \otimes  \cdots \otimes \mathbf{1}_{0, 1}\Big), 
\end{displaymath}
\noindent for each sequence of objects $A_0, \dots, A_n \in \bA$, where $\mathbf{1}_{i, i+1}$ is short for $\operatorname{Id}_{\bA(A_i, A_{i+1})[1]}$.  After desuspension the operations $\mu_{\bA}^i$ are equivalent to functions 
\begin{displaymath}
m_{\bA}^i: \bA(A_{i-1}, A_i) \otimes \cdots \otimes \bA(A_0, A_1) \rightarrow \bA(A_0, A_i),
\end{displaymath}
\noindent of degree $2-i$ which satisfy the $A_\infty$-equations in the sign conventions of \cite{KellerAInfinityAlgebrasInRepresentationTheory, GetzlerJones} after evaluation, see \cite[Section 1.2, Lemma]{KellerAInfinityAlgebrasInRepresentationTheory}. Due to the Koszul sign rule, $\mu_{\bA}^i$ and $m_{\bA}^i$ are related by
\begin{equation}\label{EquationSignRelationDesuspendedOperations}
\mu_{\bA}^i(a_i, \dots, a_1) = (-1)^{\sum_{n=1}^{i}{(n-1)|a_n|}} m_{\bA}^i\big(a_i[-1], \dots, a_1[-1]\big)[1].
\end{equation}
\noindent In the literature, the structure provided by the $\mu_{\bA}^i$ is sometimes referred to as a \textit{shifted $A_\infty$-algebra} whereas the collection of their desuspension form what is usually known as an $A_\infty$-structure. For our purposes, shifted structures are more convenient to work with and to distinguish the two, we occasionally use the term \textit{unshifted} to refer to the latter structure provided by the operations $m_{\bA}^i$.

\begin{exa}[Dg categories and $\Bbbk$-linear categories] The composition law and differentials in a dg category $\bD$ correspond to the desuspended operations $m^2_{\bD}$ and $m^1_{\bD}$ associated to an $A_\infty$-category structure $\mu$ on $\bD$ with $\mu^i=0$ for all $i \geq 3$.  Likewise, graded $\Bbbk$-linear categories correspond to the vanishing of all operations except the binary one.
\end{exa}

\begin{exa}[$A_\infty$-algebra of a brace algebra]\label{ExampleAInfinityStructureFromBraceAlgebra}
Let $V$ be a  brace algebra and let $\mu \in V^1$ be a Maurer-Cartan element, that is, $\mu \star \mu=0$. 
Then as observed by Getzler \cite{GetzlerCartanHomotopy}, $V$ inherits the structure of a (shifted) $A_\infty$-algebra with operations 
\begin{equation}\label{EquationAInfinityStructureHochscchildComplex}
\begin{aligned}
\mu_V^1(f) & \coloneqq \big[\mu, f\big] =\mu \star f - (-1)^{|f|} f \star \mu,\\
\mu_V^i(f_1, \dots, f_i) & \coloneqq \mu\{f_1, \dots, f_i\}, \, i \geq 2.
\end{aligned}
\end{equation}
\noindent In other words, every Maurer-Cartan element in a brace algebra endows it with two compatible structures: one of a dg Lie algebra and one of an $A_\infty$-algebra. 
\end{exa}

\begin{definition}[Minimal $A_\infty$-categories]
An $A_\infty$-category $\bA$ is \textbf{minimal} if $\mu_{\bA}^1=0$ identically.
\end{definition}

\begin{definition}[Strict unitality]
An $A_\infty$-category $\bA$ is \textbf{strictly unital} if for all $A \in \Ob{\bA}$, there exists an element $\operatorname{Id}_{A} \in \bA(A,A)^0$ such that for all $f \in \bA(A,A')$, $m_{\bA}^2(f, \operatorname{Id}_{A})=f=m_{\bA}^2(\operatorname{Id}_{A'}, f)$ and $m_{\bA}^i(f_n, \dots, f_1)=0$ whenever $i\geq 3$ and one of the entries is the form $\operatorname{Id}_{A}$. 
\end{definition}
\noindent The identity morphisms in a strictly unital $A_\infty$-category are necessarily unique.

\subsection{Homotopy categories and $c$-unitality}\ \medskip

\noindent For any $A_\infty$-category $\bA$ and every pair $A,A' \in \Ob{\bA}$, the operation $m_{\bA}^1: \bA(A,A') \rightarrow \bA(A,A')$ turns $\bA(A,A')$ into a cochain complex and the non-associative composition law $\mu_{\bA}^2$ descends to cohomology with respect to $m_{\bA}^1$ and becomes associative.
\begin{definition}
The \textbf{homotopy category} $\HH^0(\bA)$ of an $\bA$ is the non-unital $\Bbbk$-linear category with objects $\Ob{\bA}$ and morphisms $\HH^0(\bA)(A,A')\coloneqq\HH^0(\bA(A,A'))$ for all $A, A' \in \Ob{\bA}$. Its composition law is the one inherited from $m_{\bA}^2$. The \textbf{graded homotopy category} $\HH^{\bullet}(\bA)$ is the graded $\Bbbk$-linear category defined analogously with $\HH^{\bullet}(\bA)(A,A')\coloneqq\HH^{\bullet}(\bA(A,A'))$.
\end{definition}
\begin{rem}\label{RemarkUnderlyingGradedCategory}
If $\mu_{\bA}^3$ or $\mu_{\bA}^1$ vanish identically, then $m_{\bA}^2$ is associative and $\bA$ itself is a graded $\Bbbk$-linear category by forgetting all other operations which agrees with its homotopy category.
\end{rem}

\noindent The homotopy category of a strictly unital $A_\infty$-category $\bA$ is unital and hence an ordinary graded $\Bbbk$-linear category. Often this is the case even when $\bA$ itself is not strictly unital. Examples of such kind are Fukaya categories of symplectic manifolds which usually only admit units up to homotopy. 
\begin{definition}
An $A_\infty$-category $\bA$ is \textbf{cohomologically unital} (or \textbf{$c$-unital}) if $\HH^0(\bA)$ is unital.
\end{definition}
\noindent As we recall in \Cref{PropositionYonedaRectification}, the difference between strict and $c$-unitality is more important from a practical than a theoretical perspective.

\subsection{$A_\infty$-functors and functor categories}\ \medskip

\noindent We recall the notions of $A_\infty$-functors between $A_\infty$-categories. If the domain category is small they form the objects of an $A_\infty$-category. We also recall several notions of equivalences between $A_\infty$-functors. 

\subsubsection{$A_\infty$-functors}\ \medskip 

\noindent Throughout this section let $\bA$ and $\bB$ denote $A_\infty$-categories.
\begin{definition}
An $A_\infty$-functor $F: \bA \rightarrow \bB$ is a dg cofunctor $B(F):T(\bA[1]) \rightarrow T(\bB[1])$.
\end{definition}
\noindent Explicitly, an $A_\infty$-functor $F$ amounts to a map of sets $F^0:\Ob{\bA} \rightarrow \Ob{\bB}$ along with maps 
\begin{displaymath}F^i: \bA(A_{i-1}, A_i)[1] \otimes \cdots \bA(A_0, A_1)[1] \rightarrow \bB\big(F^0(A_0, F^0(A_i)\big)[1],
\end{displaymath}
\noindent of degree $0$ for all $i \geq 1$ and $A_0, \dots, A_i \in \Ob{\bA}$. The collection of the various maps $F^i$ for all sequences $A_0, \dots, A_i \in \Ob{\bA}$ is referred to the \textbf{$i$-th Taylor coefficient} of $F$ and is simply denoted by $F^i$. The condition that $B(F)$ commutes with differentials is equivalent to the condition that for all $n \geq 1$ and all $A_0, \dots, A_n \in \Ob{\bA}$,
\begin{equation}\label{EquationAInfinityFunctors}
	\begin{aligned}
\sum \mu_{\bA}^{d}\big(F^{i_1} \otimes \cdots \otimes F^{i_d}\Big)  & = \sum_{0 \leq s < r \leq n} F^{n+s-r+1}\Big(\mathbf{1}_{n-1,n} \otimes \cdots \otimes  \mathbf{1}_{r-1,r} \otimes \mu_{\bA}^{r-s-1}  \otimes \\ & \phantom{=} \mathbf{1}_{s-1,s} \otimes \cdots \otimes \mathbf{1}_{0,1}\Big),
\end{aligned} 
\end{equation}
\noindent where $\mathbf{1}_{i,i+1}=\operatorname{Id}_{\bA(A_i, A_{i+1})[1]}$ as before and the sum on the left hand side ranges over all $d, i_1, \dots, i_d \geq 1$ such that $\sum_{i_j}=n$. We generally refer to an $A_\infty$-functor $F: \bA \rightarrow \bB$ through its associated Taylor coefficients $\{F^i\}_{i \geq 0}$. Every $A_\infty$-functor $F: \bA \rightarrow \bB$ induces cochain maps 
\begin{equation}\label{EquationCochainMapsFunctor}
	F^1: \bA(A,A') \rightarrow \bB\big(F^0(A),F^0(A')\big),
\end{equation}
\noindent for all $A, A' \in \Ob{\bA}$ and hence non-unital functors $\HH^{\bullet}(F): \HH^{\bullet}(\bA) \rightarrow \HH^{\bullet}(\bB)$ and $\HH^{0}(F): \HH^{0}(\bA) \rightarrow \HH^{0}(\bB)$.

\begin{definition}[Composition of Functors]
	Let  $F: \bU \rightarrow \bV$, $G:\bV \rightarrow \bW$ be $A_\infty$-functors. Then their composition  is the $A_\infty$-functor $G \circ F: \bU \rightarrow \bW$ whose Taylor coefficients are ${(G \circ F)}^0=G^0 \circ F^0$ and 
	\begin{equation}\label{EquationCompositionFunctors}
		(G \circ F)^d \coloneqq \sum {G^d\left( F^{i_1} \otimes \cdots \otimes F^{i_d}\right)},
	\end{equation}

	\noindent for all $n \geq 1$, where the sum ranges over all $d, i_1, \dots, i_d \geq 1$, $\sum_{j=1}^d{i_j}=n$ and where as usual the definition is applied to all sequences $A_0, \dots, A_n \in \Ob{\bA}$.
\end{definition}

\noindent Of course, the cofunctor $B(G \circ F)$ associated to $G \circ F$ is nothing but the composition of the cofunctors $B(G)$ and $B(F)$.  Subsequently, we recall a number of properties of $A_\infty$-functors which will appear throughout the paper.

\begin{definition}
Let $F: \bA \rightarrow \bB$ be an $A_\infty$-functor between $A_\infty$-categories.
\begin{enumerate}
	\item The functor $F$ is \textbf{strict} if $F^i=0$ for all $i \geq 2$.
	\item  If $\bA$ and $\bB$ are strictly unital, then $F$ is \textbf{strictly unital} if $F^1(\operatorname{Id}_A)=\operatorname{Id}_{F^0(A)}$ for all $A \in \Ob{\bA}$ and for all $i \geq 2$, $F^i$ vanishes on all elementary tensors which contain at least one identity morphism.
	\item If $\bA$ and $\bB$ are cohomologically unital, then $F$ is \textbf{cohomologically unital} (or \textbf{$c$-unital}) if $\HH^{0}(F)$ is unital. 
	\item   An $A_\infty$-functor $F$ is a \textbf{quasi-isomorphism} if $\HH^{\bullet}(F)$ is an isomorphism of non-unital categories. A cohomologically unital $A_\infty$-functor $G$ is a \textbf{quasi-equivalence} if  $\HH^{\bullet}(G)$ is an equivalence of categories.
\end{enumerate}
\noindent We denote by $\Acatc$ the category of small $c-$unital $A_\infty$-categories with $c$-unital $A_\infty$-functors as morphisms.
\end{definition}
\noindent Equivalently, $G$ is a quasi-equivalence if \eqref{EquationCochainMapsFunctor} is a quasi-isomorphism for all $A, A' \in \Ob{\bA}$ and $\HH^0(G)$ is essentially surjective.

\subsubsection{Rectification of $A_\infty$-categories}\ \medskip 

\noindent  We collect two results which allow one to replace $A_\infty$-categories by ``better'' ones up to a notion of equivalence.

\begin{prp}[{\cite[Corollary 2.14]{SeidelBook}}]\label{PropositionYonedaRectification}Every cohomologically unital $A_\infty$-category is quasi-isomorphic to a dg category.
\end{prp}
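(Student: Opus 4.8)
The plan is to realize an arbitrary $c$-unital $A_\infty$-category $\bA$ inside the dg category of $A_\infty$-modules over it via a Yoneda-type construction, and then observe that this target is a genuine dg category while the embedding is a quasi-isomorphism onto its image. Concretely, first I would fix the dg category $\bA\text{-mod}$ of (strictly unital) right $A_\infty$-modules over $\bA$: its objects are $A_\infty$-modules, and the morphism complexes $\Hom_{\bA\text{-mod}}(M,N)$ carry a genuine differential and a strictly associative composition, so $\bA\text{-mod}$ is honestly a dg category. This step is standard but needs the $c$-unitality hypothesis to guarantee that the relevant module categories behave well (e.g.\ that quasi-isomorphisms of modules are the right notion of weak equivalence).

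Next I would construct the Yoneda $A_\infty$-functor $\mathcal{Y}\colon \bA \to \bA\text{-mod}$ sending each object $A$ to the representable module $\bA(-,A)$, with higher Taylor coefficients built from the structure maps $\mu_{\bA}^i$; verifying that the $A_\infty$-functor equations \eqref{EquationAInfinityFunctors} hold for $\mathcal{Y}$ reduces, after unwinding, to the associativity relations for $\mu_{\bA}$ themselves. Then I would check that $\mathcal{Y}$ is \emph{cohomologically full and faithful}: the induced cochain map $\mathcal{Y}^1\colon \bA(A,A') \to \Hom_{\bA\text{-mod}}\big(\bA(-,A),\bA(-,A')\big)$ is a quasi-isomorphism. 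This is the classical $A_\infty$-Yoneda lemma; its proof uses a contracting homotopy coming from the strict unit of the representable modules, and it is precisely here that $c$-unitality of $\bA$ is essential, since one first passes to a homotopy-equivalent strictly unital model in order to have honest units available for the homotopy.

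Finally, let $\bA'$ be the full dg subcategory of $\bA\text{-mod}$ on the objects $\{\mathcal{Y}(A) : A \in \Ob{\bA}\}$. Since $\mathcal{Y}$ is the identity on objects up to this relabeling and $\mathcal{Y}^1$ is a quasi-isomorphism on all hom-complexes, the corestriction $\bA \to \bA'$ is a quasi-isomorphism of $A_\infty$-categories in the sense defined above, and $\bA'$ is a dg category as a full subcategory of one. I expect the main obstacle to be the bookkeeping around unitality: the cleanest route is to invoke (or first prove) that every $c$-unital $A_\infty$-category is $A_\infty$-quasi-isomorphic to a \emph{strictly} unital one, perform the Yoneda embedding there, and transport the conclusion back along that quasi-isomorphism; absent a strict unit, writing down the homotopy that witnesses cohomological fullness of $\mathcal{Y}$ directly is the technically delicate point. (This is, of course, exactly Seidel's argument \cite[Corollary 2.14]{SeidelBook}, so I would follow that reference for the sign-careful details.)
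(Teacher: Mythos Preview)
Your proposal is correct and matches the paper's own explanation essentially verbatim: the paper does not give a self-contained proof but simply remarks that the statement follows from an $A_\infty$-version of the Yoneda embedding which produces a quasi-isomorphism between $\bA$ and its image, which is a dg category. Your outline of the Yoneda functor into $\bA$-modules, the $A_\infty$-Yoneda lemma for cohomological full faithfulness, and restriction to the image is exactly Seidel's argument that the paper defers to.
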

\noindent The proposition follows from an $A_\infty$-version of the Yoneda embedding which produces a quasi-isomorphism between an $A_\infty$-category and its image which is a dg category. In particular, every cohomologically unital $A_\infty$-category is quasi-isomorphic to a strictly unital one. In the other direction, we also have the following.

\begin{prp}[Minimal models]\label{PropositionExistenceMinimalModel}Every $A_\infty$-category $\bA$ is quasi-isomorphic to a minimal $A_\infty$-category.
\end{prp}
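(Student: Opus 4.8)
\textbf{Proof proposal for \Cref{PropositionExistenceMinimalModel} (existence of minimal models).}

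The plan is to run the classical homological perturbation / homotopy transfer argument, but phrased in the Maurer--Cartan language that the paper has set up. First I would fix, for every pair of objects $A, A' \in \Ob{\bA}$, a decomposition of the cochain complex $\big(\bA(A,A'), m_{\bA}^1\big)$ into its cohomology plus a contractible part. Concretely, over the field $\Bbbk$ one can choose graded subspaces so that $\bA(A,A') = \HH^{\bullet}(\bA)(A,A') \oplus \Img(m_{\bA}^1) \oplus L_{A,A'}$ with $m_{\bA}^1$ restricting to an isomorphism $L_{A,A'} \xrightarrow{\sim} \Img(m_{\bA}^1)$; this yields a $\Bbbk$-graph quasi-isomorphism $\iota: \bH \hookrightarrow \bA$ (with $\bH$ the graph $\HH^{\bullet}(\bA)$), a retraction $p: \bA \to \bH$, and a contracting homotopy $h$ of degree $-1$ on $\bA$ with $\iota p - \operatorname{Id} = m_{\bA}^1 h + h m_{\bA}^1$, $p\iota = \operatorname{Id}$, and the side conditions $h^2 = 0$, $ph = 0$, $h\iota = 0$ (the last three can always be arranged). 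Since $\HH^{\bullet}(\bA)(A,A')$ carries zero differential, the transferred $A_\infty$-structure on $\bH$ will automatically be minimal, i.e.\ $\mu_{\bH}^1 = 0$.

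Next I would produce the transferred structure together with the functor in one stroke. Recall from \Cref{ExampleAInfinityStructureFromBraceAlgebra} and the discussion around it that an $A_\infty$-structure on $\bH$ is a Maurer--Cartan element in $W_1 C^1(\bH)$, equivalently a dg-cocategory differential on $T(\bH[1])$. The homotopy transfer formulas express $\mu_{\bH}^n$ as a sum over planar binary trees with $n$ leaves, where internal edges are decorated by $h$, the root edge by $p$, the leaves by $\iota$, and the vertices by $\mu_{\bA}^{\geq 2}$; simultaneously one gets Taylor coefficients $F^n$ of an $A_\infty$-functor $F: \bH \to \bA$ by the same tree sum with the root edge decorated by $h$ instead of $p$ (and $F^1 = \iota$). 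That these satisfy the $A_\infty$-equations \eqref{EquationAInfinityFunctors} and the Maurer--Cartan equation $\mu_{\bH} \star \mu_{\bH} = 0$ is the standard verification: it follows from the contraction identities by an induction on arity, conveniently organized using the brace operations of \Cref{SectionBraceAlgebraStructureHochschildSpace} (one checks that the tree sums assemble into a fixed-point equation of the form $\mu_{\bH} = p \, \mu_{\bA}\{\,\cdot\,\}$ applied to the resolvent $(1 - h\mu_{\bA})^{-1}\iota$, which converges because each application of a tree vertex strictly increases weight, cf.\ \Cref{LemmaHochschildSpaceBraceRespectFiltration}). Finally, one reads off from the linear term that $\HH^{\bullet}(F)$ is induced by $p\iota = \operatorname{Id}$ on cohomology, hence an isomorphism of non-unital graded categories, so $F$ is a quasi-isomorphism in the sense of the preceding definition, and $\bH$ is the desired minimal model.

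The main obstacle is purely bookkeeping rather than conceptual: assembling the tree sums so that the signs (dictated by the Koszul rule on the shifted spaces $\bH[1]$, $\bA[1]$) come out consistently, and checking convergence of the infinite sums defining $\mu_{\bH}^n$ and $F^n$ for each fixed $n$. The convergence is in fact immediate here because each $\mu_{\bH}^n$ is a \emph{finite} sum (there are finitely many planar binary trees with $n$ leaves), so there is no analytic subtlety; the only real work is the sign-careful induction establishing the $A_\infty$-relations, which one can shortcut by invoking the cocategory reformulation: it suffices to build a dg-cocategory differential $D$ on $T(\bH[1])$ and a dg-cofunctor $B(F): T(\bH[1]) \to T(\bA[1])$ lifting $\iota$, which is the content of the standard perturbation lemma applied to the induced contraction $T(\bH[1]) \rightleftarrows T(\bA[1])$ of tensor cocategories. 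I would cite this in the form already available in the literature (e.g.\ \cite{SeidelBook}) and only indicate the contraction data explicitly, since the paper treats this proposition as a recollection.
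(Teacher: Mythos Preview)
Your proposal is correct and is precisely the standard homotopy transfer argument one would give here. The paper itself provides no proof of this proposition: it is stated as a recollection of a well-known fact, so there is nothing to compare against beyond noting that your sketch (and your suggestion to cite \cite{SeidelBook}) is exactly how one would justify it.
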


\subsubsection{Functor categories}\label{SectionFuntorCategories}\ \medskip

\noindent We fix $A_\infty$-categories $\bA, \bB$ throughout this section. The class of  $A_\infty$-functors from $\bA$ to $\bB$ forms itself an $A_\infty$-category which we denote by  $\Fun(\bA, \bB)$. The morphism space $\Fun(\bA,\bB)(F,G)$ between $A_\infty$-functors $F$ and $G$ consists of the prenatural transformations defined below. To facilitate better readability of formulas we will employ the following notation.  Given a family of $\Bbbk$-linear maps $\cF=\{f^d\}_{d \in \mathbb{N}}$, e.g.~$A_\infty$-functors or the structure maps of an $A_\infty$-structure, and $s \geq 1$, we write $\cF^{[s]}$ for the sum
\begin{displaymath}
\sum f^{t_1} \otimes \cdots \otimes f^{t_l},
\end{displaymath}
\noindent which ranges over all $l \geq 1$ and all $t_1, \dots, t_l$ such that $\sum t_j=s$. 

\begin{definition}\label{DefinitionFunctorCategory} Let $F, G: \bA \rightarrow \bB$ be $A_\infty$-functors. A \textbf{prenatural transformation} $\eta: F \rightarrow G$ of degree $m$ is a collection $(\eta^i)_{i \geq 0}$ of $\Bbbk$-linear maps 
	\begin{displaymath}
		\begin{tikzcd}
			\eta^i: \bA(X_{i-1}, X_i)[1] \otimes \cdots \otimes \bA(X_0, X_1)[1] \arrow{r} & \bB\left(F^0(X_0), G^0(X_d)\right)[1],
		\end{tikzcd}
	\end{displaymath}
	\noindent of degree $m$ for all $A_0, \dots, A_i \in \Ob{\bA}$ and all $i \geq 1$ and a map $\eta^0$, which associates to every $A \in \Ob{\bA}$ an element  $\eta^0(A) \in \bB\big(F^0(A), G^0(A)\big)$ of degree $m$. A prenatural transformation $\eta$ is \textbf{strict} if $\eta^d=0$ for all $d > 0$.
	\end{definition}
\noindent In particular, we obtain a canonical bijection of graded vector spaces
\begin{equation}\label{EquationBijectionMorphismsFunctorCategoryCoderivations}
	\Fun(\bA, \bB)(F,G) \cong \coder(F, G).
\end{equation}
\noindent The $A_\infty$-structure on the functor category is defined as follows.
\begin{definition}
 For $F, G \in \Ob{\Fun(\bA, \bB)}$ and $\eta: F \rightarrow G$ a prenatural transformation, set \begin{equation}\label{EquationDifferentialPrenaturalTransformation}
	\begin{aligned}
		{\mu_{\Fun(\bA,\bB)}^1(\eta)} & \coloneqq \sum  \mu_{\bB}^r\Big(G^{[s]} \otimes \mathbf{1} \otimes F^{[t]} \Big)(\eta) - (-1)^{|\eta|} \cdot \eta \star \mu_{\bA},
	\end{aligned}
\end{equation}
\noindent where the sum ranges over all $r \geq 1$ and all $s, t$ such that $s+t=r-1$.  
Similarly, for all $i \geq 2$ and functors $F_0, \dots, F_i \in \Ob{\Fun(\bA, \bB)}$, one defines
\begin{equation}\label{EquationDefinitionHigherMultiplicationFunctorCategory}
	\mu_{\Fun(\bA,\bB)}^i \coloneqq \sum \mu_{\bB}^r\Big(F_i^{[s_i]} \otimes \mathbf{1}_{i-1, i} \otimes F_{i-1}^{[s_{i-1}]} \otimes \cdots \otimes \mathbf{1}_{0,1} \otimes F_0^{[s_0]} \Big),
\end{equation}
\noindent where the sum ranges over all $r \geq 1$ and all $s_1, \dots, s_i \geq 0$ such that $\sum s_j=r-i$ and $\mathbf{1}_{j,j+1}$ denotes the identity map of $\Hom_{\Fun(\bA,\bB)}(F_j, F_{j+1})$. 
\begin{itemize}
	\item A \textbf{natural transformation} of degree $m$ is a cocycle of degree $m$.
	\item If $\bB$ is cohomologically unital, then a natural transformation $\eta: F \rightarrow G$ of degree $0$ is a \textbf{natural quasi-isomorphism} if $\eta^0(A)$ is a quasi-isomorphism for all $A \in \Ob{\bA}$.
\end{itemize} 
\end{definition}
\noindent With the given $A_\infty$-structure on $\Fun(\bA, \bB)$, the bijection \eqref{EquationBijectionMorphismsFunctorCategoryCoderivations} becomes an isomorphism of cochain complexes.   We note that if $\bB$ is strictly (resp.~cohomologically unital), then so is $\Fun(\bA, \bB)$, see \cite[Section 2b]{SeidelBook}.\medskip

\noindent Equation \eqref{EquationDifferentialPrenaturalTransformation} implies that for every natural transformation $\eta: F \rightarrow G$ with $F,G \in \Fun(\bA, \bB)$ and all $X, Y \in \Ob{\bA}$, one has the following homotopy commutative square in the category of cochain complexes:

\begin{equation}\label{EquationNaturalTransformationSquare}
	\begin{tikzcd}[row sep=3em]
		\bA(X,Y) \arrow{rrr}{F^1} \arrow{d}[swap]{G^1}&&& \bB(F^0(X), F^0(Y)) \arrow{d}{\mu^2_{\bB}\big(\eta^0(Y), -\big)} \\
		\bB(G^0(X), G^0(Y)) \arrow{rrr}[swap]{\mu^2_{\bB}\big(-, \eta^0(X)\big)} &&& \bB(F^0(X), G^0(Y)). 
	\end{tikzcd}
\end{equation}
\noindent Moreover, the homotopy class of the chain maps induced by $\eta^0(X)$ and $\eta^0(Y)$, only depend on homotopy class of $\eta$.

 Like $A_\infty$-categories and their functors, the $A_\infty$-structure of $\Fun(\bA, \bB)$ can be expressed in a natural way through tensor cocategories as described in \cite{KellerFunctorCategories, BespalovLyubashenkoManzyuk}. Functor categories are part of a much richer structure of a \textit{closed multicategory} on the category of small $A_\infty$-categories as established in \cite{BespalovLyubashenkoManzyuk}. As such, the underlying closed category comes with ``postcomposition'' $A_\infty$-functors
\begin{equation}\label{EquationPostcompositionFunctors}
	\begin{tikzcd}
\Fun(\bV, \bW) \arrow{r} & \Fun\big(\!\Fun(\bU, \bV), \Fun(\bU, \bW)\big),
\end{tikzcd}
\end{equation}
\noindent as well as ``precomposition'' $A_\infty$-functors
\begin{equation}\label{EquationPrecompositionFunctors}
	\begin{tikzcd}
		\Fun(\bU, \bV) \arrow{r} & \Fun\big(\!\Fun(\bV, \bW), \Fun(\bU, \bW)\big).
	\end{tikzcd}
\end{equation}
\noindent An explicit definition via cocategories can be found in \cite{KellerFunctorCategories}. On objects, they are defined by post- and precomposition of $A_\infty$-functors and their first Taylor coefficients agree with the maps \eqref{EquationNaturalityCoderivations} under the identification of prenatural transformations with coderivations.

\subsubsection{Equivalence relations for $A_\infty$-functors}\ \medskip

\noindent The $A_\infty$-structure of $\Fun(\bA, \bB)$ gives rise to two equivalence relations between $A_\infty$-functors.

\begin{definition}Let $F, G \in \Ob{\Fun(\bA, \bB)}$.
	\begin{enumerate}
		\item If $F^0=G^0$, then $F$ and $G$ are \textbf{homotopic}, denoted by $F \sim G$, if there exists a prenatural transformation $h: F \rightarrow G$ of degree $-1$  (called \textbf{homotopy}) with $h^0(A)=0$ for all $A \in \Ob{\bA}$ such that 
		\begin{displaymath}
	{\mu_{\Fun(\bA,\bB)}^1(h)}^d=G^d-F^d,
		\end{displaymath}
	 \noindent for all $d \geq 1$.
		\item If $\bB$ is cohomologically unital,  then $F$ and $G$ are \textbf{weakly equivalent}, denoted by $F\approx G$, if $F \cong G$ in $\HH^0\big(\!\Fun(\bA, \bB)\big)$.	
	\end{enumerate}
\end{definition}
\noindent  The notion of homotopy correspond the notion of homotopy for cofunctors between the resulting tensor dg cocategories and $\sim$ and $\approx$ are both equivalence relations. We record a few properties of these two notions. 
\begin{prp}\label{PropositionCompositionCompatibelWithEquivalence}
Homotopy is preserved under pre- and postcomposition with any $A_\infty$-functor. Weak equivalence is preserved under pre- and postcomposition with any cohomologically unital $A_\infty$-functor. Thus, composition descends to a well-defined operations on homotopy and weak equivalence classes.
\end{prp}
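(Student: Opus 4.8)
The plan is to treat homotopy and weak equivalence by different mechanisms and then deduce the compatibility with composition formally. Homotopy is handled at the level of the tensor dg cocategories $B(-)$; weak equivalence via the pre- and postcomposition $A_\infty$-functors \eqref{EquationPostcompositionFunctors}--\eqref{EquationPrecompositionFunctors}.

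For homotopy, recall that $F\sim G$ for $A_\infty$-functors $F,G\colon\bA\to\bB$ means that $B(F)$ and $B(G)$ agree on objects and that there is a $(B(F),B(G))$-coderivation $\mathcal H$ of degree $-1$ with $B(F)-B(G)=d_{\bB}\circ\mathcal H+\mathcal H\circ d_{\bA}$, where $d_{\bA},d_{\bB}$ denote the differentials of $T(\bA[1]),T(\bB[1])$. The first step is the elementary observation, immediate from the defining identities of coderivations and cofunctors and essentially the content of \eqref{EquationNaturalityCoderivations}, that composing a coderivation with a cofunctor on either side again yields a coderivation: for an $A_\infty$-functor $H\colon\bB\to\bC$ the map $B(H)\circ\mathcal H$ is a $(B(HF),B(HG))$-coderivation, and for an $A_\infty$-functor $K\colon\bU\to\bA$ the map $\mathcal H\circ B(K)$ is a $(B(FK),B(GK))$-coderivation. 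Since $B(H)$ and $B(K)$ are \emph{dg} cofunctors they commute with the differentials, so composing the defining identity for $\mathcal H$ with $B(H)$ on the left, respectively with $B(K)$ on the right, and using $B(HF)=B(H)\circ B(F)$ and $B(FK)=B(F)\circ B(K)$, gives
\[
B(HF)-B(HG)=d_{\bC}\circ(B(H)\circ\mathcal H)+(B(H)\circ\mathcal H)\circ d_{\bA},
\]
and, symmetrically, $B(FK)-B(GK)=d_{\bB}\circ(\mathcal H\circ B(K))+(\mathcal H\circ B(K))\circ d_{\bU}$. As $HF,HG$ and $FK,GK$ also agree on objects, this proves $HF\sim HG$ and $FK\sim GK$; no unitality is used.

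For weak equivalence I would invoke the standard fact (see \cite{SeidelBook}) that, for $c$-unital $\bB$, one has $F\approx G$, i.e.\ $F\cong G$ in $\HH^0(\Fun(\bA,\bB))$, if and only if there is a natural transformation $\eta\colon F\to G$ of degree $0$ whose zeroth components $\eta^0(A)$, $A\in\Ob{\bA}$, are all quasi-isomorphisms. Given a $c$-unital $H\colon\bB\to\bC$, postcomposition with $H$ is the $A_\infty$-functor $\Fun(\bA,\bB)\to\Fun(\bA,\bC)$ obtained from \eqref{EquationPostcompositionFunctors} by evaluation at $H$; it sends $F$ to $HF$ and sends the cocycle $\eta$ to a natural transformation $H\ast\eta\colon HF\to HG$ with $(H\ast\eta)^0(A)=H^1(\eta^0(A))$. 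Since $H$ is $c$-unital, $\HH^0(H)$ is a unital functor and hence preserves isomorphisms, so $H^1(\eta^0(A))$ is again a quasi-isomorphism for every $A$; by the characterization above, $HF\approx HG$. Precomposition with a $c$-unital $K\colon\bU\to\bA$ is handled the same way via \eqref{EquationPrecompositionFunctors}: it sends $\eta$ to $\eta\ast K\colon FK\to GK$ with $(\eta\ast K)^0(U)=\eta^0(K^0(U))$, which is a quasi-isomorphism for all $U$, whence $FK\approx GK$. (Alternatively one notes that pre- and postcomposition with a $c$-unital $A_\infty$-functor are themselves $c$-unital $A_\infty$-functors, so that applying $\HH^0(-)$ yields unital functors between the homotopy categories of the functor categories, which then preserve isomorphisms.)

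The last assertion is then formal: $\sim$ and $\approx$ are equivalence relations, and for $F\sim F'\colon\bA\to\bB$, $G\sim G'\colon\bB\to\bC$ one has $GF\sim GF'$ by postcomposition with $G$ and $GF'\sim G'F'$ by precomposition with $F'$, hence $GF\sim G'F'$ by transitivity; the identical argument with $\approx$ among $c$-unital functors gives the weak-equivalence case. I expect the only genuinely delicate point to be the weak-equivalence step — namely confirming that the induced pre- and postcomposition maps carry pointwise quasi-isomorphisms to pointwise quasi-isomorphisms (equivalently, that $\HH^0$ of the induced functors is unital) — while the homotopy statement is a routine manipulation of coderivations.
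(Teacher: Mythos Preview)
Your argument is correct. The paper does not give its own proof here but simply cites \cite[Section 1h]{SeidelBook} and \cite[Lemma 2.4]{SeidelBook}; what you have written is precisely the standard unpacking of those references, using the tensor-cocategory picture for homotopy and the characterization of isomorphisms in $\HH^0(\Fun(\bA,\bB))$ via pointwise quasi-isomorphisms for weak equivalence.
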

\begin{proof}
This is the content of \cite[Section 1h]{SeidelBook} and \cite[Lemma 2.4]{SeidelBook}.
\end{proof}
\noindent In case both notions are defined, homotopy implies weak equivalence.
\begin{lem}[{\cite[Lemma 2.5]{SeidelBook}}]\label{LemmaHomotopicFunctorsWeaklyEquivalent}If $\bB$ is cohomologically unital, then homotopic functors in $\Fun(\bA, \bB)$ are weakly equivalent.
\end{lem}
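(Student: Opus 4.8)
The statement to prove is \Cref{LemmaHomotopicFunctorsWeaklyEquivalent}: if $\bB$ is cohomologically unital and $F \sim G$ in $\Fun(\bA,\bB)$, then $F \approx G$, i.e.\ $F \cong G$ in $\HH^0(\Fun(\bA,\bB))$. The plan is to produce an explicit natural transformation of degree $0$ realising the isomorphism in the homotopy category, built from the data that witnesses $F \sim G$. Since $\bB$ is $c$-unital, so is $\Fun(\bA,\bB)$, so there is a cohomology unit $\operatorname{Id}_F$ in $\HH^0(\Fun(\bA,\bB))(F,F)$, represented by a natural transformation $e_F\colon F \to F$ of degree $0$ which is a cocycle; and similarly $e_G$ for $G$. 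The key point is that, because $F^0 = G^0$ (part of the definition of homotopy), the homotopy $h\colon F \to G$ lets one transport $e_F$ (or $e_G$) to a natural transformation between $F$ and $G$ whose class is invertible.

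\smallskip

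\textbf{Key steps.} First I would recall from the $c$-unitality of $\Fun(\bA,\bB)$ that $\HH^0(\Fun(\bA,\bB))$ is a genuine category with identities, so it suffices to exhibit a degree-$0$ cocycle $t\colon F \to G$ whose class in $\HH^0$ is an isomorphism; by general nonsense it is enough to find $t$ and $s\colon G \to F$ with $[s]\circ[t] = \operatorname{id}_F$ and $[t]\circ[s] = \operatorname{id}_G$ in $\HH^0(\Fun(\bA,\bB))$, or even just a one-sided inverse together with the observation that $\HH^0$-isomorphisms can be detected objectwise. Second, since $F^0 = G^0$, a unit cocycle $e_F$ for $F$ can be viewed, via the homotopy $h$, as producing a candidate morphism $F \to G$: concretely, one checks that $\mu^2_{\Fun(\bA,\bB)}(-,-)$ composed with $h$ and the (strict or cohomology) units produces the desired $t$, whose degree-$0$ component $t^0(A) \in \bB(F^0(A), G^0(A)) = \bB(F^0(A), F^0(A))$ is cohomologous to $\operatorname{Id}_{F^0(A)}$ in $\bB$. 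Third — this is the crux — one verifies that $[t]$ is invertible in $\HH^0(\Fun(\bA,\bB))$. Here I would invoke the homotopy-commutative square \eqref{EquationNaturalTransformationSquare}: for any natural transformation $\eta\colon F' \to G'$, the induced chain maps $\bA(X,Y) \to \bB(F'^0(X), G'^0(Y))$ given by left- and right-composition with $\eta^0$ agree up to homotopy, and their homotopy class depends only on $[\eta]$. Applying this to $t$, and using that $t^0(A)$ represents a unit in $\HH^0(\bB)(F^0(A), F^0(A))$, shows that $t^0(A)$ is an isomorphism in $\HH^0(\bB)$ for every $A$; an $A_\infty$-natural transformation whose degree-$0$ components are all cohomology isomorphisms is invertible in $\HH^0$ of the functor category (the inverse is assembled component-wise and shown to be natural using that $\bB$ is $c$-unital). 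Hence $[t]$ is an isomorphism $F \xrightarrow{\sim} G$ in $\HH^0(\Fun(\bA,\bB))$, i.e.\ $F \approx G$.

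\smallskip

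\textbf{Main obstacle.} The delicate part is not the construction of $t$ but checking that it is a \emph{cocycle} (a genuine natural transformation, not merely a prenatural one) and that its class is independent of the choice of unit representatives $e_F$, $e_G$ and of the homotopy $h$; this requires a careful manipulation of the formulas \eqref{EquationDifferentialPrenaturalTransformation}–\eqref{EquationDefinitionHigherMultiplicationFunctorCategory} for $\mu^1_{\Fun(\bA,\bB)}$ and $\mu^2_{\Fun(\bA,\bB)}$ together with the homotopy identity ${\mu^1_{\Fun(\bA,\bB)}(h)}^d = G^d - F^d$. The computation is essentially the standard one showing that a homotopy of $A_\infty$-functors induces an invertible class in the homotopy category of the functor $A_\infty$-category; as noted in the excerpt, this is carried out in \cite[Section 1h]{SeidelBook} and \cite[Lemma 2.4, Lemma 2.5]{SeidelBook}, so I would organise the argument so that the genuinely new content is minimal and the bookkeeping of signs and summation ranges is deferred to those references.
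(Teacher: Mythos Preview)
Your proposal is correct and follows essentially the same route as the paper's sketch (which in turn follows Seidel). The paper outlines the strictly unital case: from the homotopy $h$ one writes down an explicit natural transformation $\eta\colon F\to G$ with $\eta^0(A)=\operatorname{Id}_{F^0(A)}$ and higher components built from $\mu^1_{\Fun(\bA,\bB)}(h)$, then uses strict unitality to verify it is a cocycle with invertible class; your plan is the $c$-unital version of the same idea, replacing strict identities by cohomological unit representatives $e_F$, $e_G$ and invoking the criterion that a natural transformation with objectwise invertible $\eta^0$ is invertible in $\HH^0(\Fun(\bA,\bB))$.
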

\noindent  The proof in the strictly unital case roughly goes as follows. Using that higher products of tensors which contain a identity morphism vanish, one sees that every homotopy $h$ between functors $F$ and $G$ determines an invertible natural transformation $\eta: F \rightarrow G$ defined by $\eta^0(A)=\operatorname{Id}_A$ for all $A \in \Ob{\bA}$ and  $\eta^d\coloneqq \mu{\Fun(\bA, \bB)}(h)^d$ for all $d \neq 1$.\medskip
  
\noindent We call functors \textbf{homotopy} (resp.~\textbf{weakly}) \textbf{invertible} if they admit an inverse up to homotopy (resp.~weak equivalence). Examples abound as the following proposition shows.
\begin{prp}[Inverse Function Theorems]\label{PropositionInverseFunctionTheorem}
	Every quasi-equivalence between $A_\infty$-categories is weakly invertible and every quasi-isomorphism is homotopy invertible.
\end{prp}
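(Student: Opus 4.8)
The plan is to prove the second assertion first, since it is the technical engine, and to deduce the first from it. Throughout I would use that $\bA$ and $\bB$ are $\Bbbk$-linear, so that every quasi-isomorphism of cochain complexes of $\Bbbk$-vector spaces is a chain homotopy equivalence --- this is the only place the ground field is needed. It is also worth recording the equivalent formulation in dg cocategories: an $A_\infty$-functor $F$ is a dg cofunctor $B(F)\colon T(\bA[1])\to T(\bB[1])$, homotopy of $A_\infty$-functors is homotopy of dg cofunctors, and everything in sight is compatible with the weight filtration.

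\emph{Homotopy-invertibility of a quasi-isomorphism $F\colon\bA\to\bB$.} Since $\HH^{\bullet}(F)$ is an isomorphism of non-unital categories, $F^{0}$ is a bijection of object sets; relabelling the objects of $\bB$ along $(F^{0})^{-1}$ changes $F$ only by a strict isomorphism of $A_\infty$-categories, so I may assume $\Ob{\bB}=\Ob{\bA}$ and $F^{0}=\operatorname{id}$. I would then construct a homotopy right inverse --- an $A_\infty$-functor $G\colon\bB\to\bA$ with $G^{0}=\operatorname{id}$ together with a homotopy $h$ realising $F\circ G\sim\operatorname{id}_{\bB}$ --- by induction on Taylor coefficients. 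At $d=1$ one picks (over the field $\Bbbk$) a chain homotopy inverse $G^{1}$ of the quasi-isomorphism $F^{1}$ on each morphism complex and a homotopy $h^{1}$ with $F^{1}\circ G^{1}\simeq\operatorname{id}$. For $d\geq2$, combining the arity-$d$ part of the $A_\infty$-functor equation \eqref{EquationAInfinityFunctors} for $G$ with the arity-$d$ part of the homotopy relation $\mu^{1}_{\Fun(\bB,\bB)}(h)^{d}=(\operatorname{id}_{\bB})^{d}-(F\circ G)^{d}$ leaves a single equation of the shape
\[
F^{1}\circ G^{d}\;+\;\bigl(\text{internal differential applied to }h^{d}\bigr)\;=\;\Phi_{d},
\]
where $\Phi_{d}$ is assembled from $\mu_{\bA},\mu_{\bB},F$ and the previously constructed $G^{<d},h^{<d}$; one verifies $\Phi_{d}$ is a cocycle for the internal differential and then sets $G^{d}\coloneqq G^{1}\circ\Phi_{d}$ and $h^{d}\coloneqq h^{1}\circ\Phi_{d}$, which works precisely because $\Phi_{d}$ is a cocycle and $F^{1}G^{1}\simeq\operatorname{id}$ via $h^{1}$. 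In the cocategory picture this is the statement that the filtered cochain map $B(F)$, being a quasi-isomorphism on the associated graded of the weight filtration, is a filtered homotopy equivalence, with homotopy inverse again a dg cofunctor.

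\emph{Two-sided inverses and the quasi-equivalence case.} Homotopic $A_\infty$-functors induce equal functors on $\HH^{\bullet}$, so $F\circ G\sim\operatorname{id}_{\bB}$ forces $\HH^{\bullet}(F)\circ\HH^{\bullet}(G)=\operatorname{id}$, hence $G$ is itself a quasi-isomorphism; applying the construction to $G$ gives $G'$ with $G\circ G'\sim\operatorname{id}_{\bA}$, and then $G\circ F\sim(G\circ F)\circ(G\circ G')=G\circ(F\circ G)\circ G'\sim\operatorname{id}_{\bA}$ by \Cref{PropositionCompositionCompatibelWithEquivalence}, so $G$ is a two-sided homotopy inverse. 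For a quasi-equivalence $F$ the object map need not be bijective, but $\HH^{0}(F)$ is essentially surjective, full and faithful. I would choose $g\colon\Ob{\bB}\to\Ob{\bA}$ and, for each $Y$, degree-$0$ cocycles representing mutually inverse isomorphisms $F^{0}g(Y)\cong Y$ in $\HH^{0}(\bB)$ (using essential surjectivity and $c$-unitality of $\bB$), and run the same induction --- now using fullness of $\HH^{\bullet}(F)$, i.e.\ the quasi-isomorphisms $F^{1}\colon\bA(g(Y),g(Y'))\to\bB(F^{0}g(Y),F^{0}g(Y'))$, and comparing with the identity through the chosen cocycles --- to obtain $G\colon\bB\to\bA$ with $G^{0}=g$ and $F\circ G\sim\operatorname{id}_{\bB}$, hence $F\circ G\approx\operatorname{id}_{\bB}$. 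Then $\HH^{0}(G)$ inverts the equivalence $\HH^{0}(F)$, so $G$ is $c$-unital and a quasi-equivalence, and the same one-sided-to-two-sided argument, now with $\approx$ in place of $\sim$ and the weak-equivalence part of \Cref{PropositionCompositionCompatibelWithEquivalence}, gives $G\circ F\approx\operatorname{id}_{\bA}$.

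The hard part will be the recurring verification that $\Phi_{d}$ is a cocycle --- equivalently, in the cocategory picture, that the filtered homotopy inverse of $B(F)$ can be taken compatible with the cofunctor and coderivation structures. This is the step that genuinely consumes the $A_\infty$-relations and where the sign bookkeeping lives; everything else (the reduction of the quasi-equivalence statement to the quasi-isomorphism one, and the formal passage from one-sided to two-sided inverses) is routine given \Cref{PropositionCompositionCompatibelWithEquivalence} and the fact that homotopic functors agree on $\HH^{\bullet}$. For complete details I would refer to \cite{SeidelBook}.
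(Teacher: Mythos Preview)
Your proposal is essentially correct and, in fact, more detailed than the paper's own treatment, which simply reads ``See \cite[Corollary 1.14, Theorem 2.9]{SeidelBook}'' --- precisely the reference you cite at the end. The obstruction-theoretic induction on Taylor coefficients you sketch for the quasi-isomorphism case is the content of Seidel's Corollary 1.14, and the one-sided-to-two-sided upgrade is standard.

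One point of imprecision worth flagging: in the quasi-equivalence step you write ``$F\circ G\sim\operatorname{id}_{\bB}$, hence $F\circ G\approx\operatorname{id}_{\bB}$'', but the paper's notion of homotopy $\sim$ requires the two functors to agree on objects, which fails when $F^{0}\circ g\neq\operatorname{id}_{\Ob{\bB}}$. What the induction actually produces in this setting is a \emph{natural quasi-isomorphism} $\eta\colon F\circ G\to\operatorname{id}_{\bB}$, with $\eta^{0}(Y)$ the chosen degree-$0$ cocycle representing the isomorphism $F^{0}g(Y)\cong Y$ in $\HH^{0}(\bB)$; this gives $F\circ G\approx\operatorname{id}_{\bB}$ directly without passing through $\sim$. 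The obstruction calculus is unchanged --- it is only the packaging of the output that differs --- and this is exactly how Seidel's Theorem 2.9 is organised.
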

\begin{proof}
See \cite[Corollary 1.14, Theorem 2.9]{SeidelBook}.
\end{proof}

\begin{definition}
Let $\bA$ be an $A_\infty$-category.
\begin{enumerate}
	\item We denote by $\Aut^{\infty,h}(\bA)$ the group of homotopy classes of homotopy invertible $A_\infty$-endofunctors of $\bA$.
	\item If $\bA$ is cohomologically unital, we denote by $\Aut^{\infty}(\bA)$ the group of weak equivalence classes of weakly invertible  $A_\infty$-endofunctors of $\bA$. 
\end{enumerate}
\end{definition}
\noindent Of course, every homotopy invertible functor $F: \bA \rightarrow \bB$ induces an isomorphism $\Aut^{\infty, h}(\bA) \cong \Aut^{\infty, h}(\bB)$ of groups given by conjugation with $F$ and its homotopy inverse. In the same way, every weakly invertible $F$ induces a group isomorphism $\Aut^{\infty}(\bA)\cong \Aut^{\infty}(\bB)$. If $\bA$ is cohomologically unital, then \Cref{LemmaHomotopicFunctorsWeaklyEquivalent} implies that there is a canonical surjective homomorphism of groups
\begin{displaymath}
	\begin{tikzcd}
		\Aut^{\infty, h}(\bA) \arrow[twoheadrightarrow]{r} & \Aut^{\infty}(\bA).
	\end{tikzcd}
\end{displaymath}
\noindent In general, there seems to be no reason to expect the map to be injective. However, we will see later that it is true under certain conditions and for certain subsets of both groups.

\subsection{The Hochschild $A_\infty$-algebra of an $A_\infty$-category}\label{SectionHochschildAlgebraAInfinityCategory}

\subsubsection{$A_\infty$-structure and Hochschild cohomology}\ \medskip

\noindent The Hochschild space $C=C(\bA)$ of an $A_\infty$-category $\bA$ is by definition the Hochschild space of its underlying $\Bbbk$-graph. Via its brace algebra structure it inherits the structure of a (shifted) $A_\infty$-algebra via \eqref{EquationAInfinityStructureHochscchildComplex}.
\noindent We note  that if $\mu_\bA^i$ vanishes for all $i \geq 3$, then $\bA[-1]$ is a unshifted dg category and $C[-1]$ is an unshifted dg algebra. The Lie bracket of the pre-Lie algebra is known as the \textbf{Gerstenhaber bracket} and descends to $\HH^{\bullet}(C)$. The product $m_C^2: C \otimes C \rightarrow C$ is referred to as the \textbf{cup product}.\medskip

\noindent  The chain complex $\big(C(\bA), m_{C(\bA)}^1\big)$ generalizes the Hochschild complex of an ordinary associative $\Bbbk$-algebra.

\begin{definition}
	Let $\bA$ be a cohomologically unital $A_\infty$-category and $i \in \mathbb{Z}$. The $i$-th  \textbf{Hochschild cohomology} of $\bA$ is the cohomology 
	\begin{displaymath}
		\HHH^i(\bA, \bA) \coloneqq \HH^{i-1}\big(C(\bA)\big),
	\end{displaymath}
	\noindent with respect to the differential on $C(\bA)$.
\end{definition}
\noindent From \Cref{LemmaHochschildSpaceBraceRespectFiltration} and $\mu_{\bA} \in W_1C(\bA)$ it follows that the weight filtration on $C(\bA)$ descends to a filtration
\begin{displaymath}
	\HHH^{\bullet}(\bA,\bA)=W_0\HHH^{\bullet}(\bA,\bA) \supseteq W_1\HHH^{\bullet}(\bA,\bA) \supseteq \cdots 
\end{displaymath} on Hochschild cohomology. We refer to it as the \textbf{cohomological weight filtration} or simply weight filtration by abuse of terminology. Of particular interest with regards to integration of Hochschild classes will be the second filtration step
\begin{displaymath}
\HHH^{\bullet}_+(\bA, \bA) \coloneqq W_2\HHH^{\bullet}(\bA, \bA).
\end{displaymath}
\noindent The reason for the symbol ``+'' is related to the fact that $W_2C(\bA)$ is the \textit{positive} part of the index shifted weight filtration which is the correct filtration for the pre-Lie algebra structure on $C(\bA)$, cf.~\Cref{LemmaHochschildSpaceBraceRespectFiltration} and \Cref{SectionInfinityIsotopiesIntegration}.

\subsubsection{The normalized Hochschild algebra}

\begin{definition}
	Let $\bA$ be a strictly unital $A_\infty$-category. Its \textbf{normalized Hochschild space} is the subspace $\overline{C}(\bA) \subseteq C(\bA)$ consisting of the elements whose elementary components vanish on all elements of the form $f_n \otimes \cdots \otimes  f_1$ with $f_i=\operatorname{Id}_A$ for some $i \in [1,n]$ and some $A \in \Ob{\bA}$.
\end{definition}
\noindent Similar to $C(\bA)$, the normalized complex and its cohomology are filtered by weight. The assumption that $\bA$ is strictly unital translates into the condition $\mu_{\bA} \in \overline{C}(\bA)$. In fact, $\overline{C}(\bA)$ is a brace subalgebra of $C(\bA)$ as can be seen by direct inspection of the defining formulas and it is well-known that the inclusion $\overline{C}(\bA) \subseteq C(\bA)$ is a quasi-isomorphism of complexes. In summary, we record the following.
\begin{prp}
	The inclusion $\overline{C}(\bA) \hookrightarrow C(\bA)$ is a strict, strictly unital quasi-isomorphism of $A_\infty$-algebras. 
\end{prp}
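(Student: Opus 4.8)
The plan is to reduce everything to the brace-algebra description of the Hochschild $A_\infty$-structure from \Cref{ExampleAInfinityStructureFromBraceAlgebra}, combined with the two facts recorded immediately above the statement: that $\overline{C}(\bA)$ is a brace subalgebra of $C(\bA)$, and that the inclusion $\iota\colon\overline{C}(\bA)\hookrightarrow C(\bA)$ is a quasi-isomorphism of the underlying complexes.

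First I would note that strict unitality of $\bA$ means exactly $\mu_{\bA}\in\overline{C}(\bA)$, so $\mu_{\bA}$ is a Maurer-Cartan element of the brace subalgebra $\overline{C}(\bA)$ and induces on it an $A_\infty$-structure of the same shape as on $C(\bA)$, namely $\mu^1=[\mu_{\bA},-]$ and $\mu^i=\mu_{\bA}\{-,\dots,-\}$ for $i\ge 2$. Since $\iota$ is a morphism of brace algebras with $\iota(\mu_{\bA})=\mu_{\bA}$, it commutes with the $\star$-product, hence with the Gerstenhaber bracket, and with every brace operation; therefore $\iota\bigl(\mu^i_{\overline{C}(\bA)}(f_1,\dots,f_i)\bigr)=\mu^i_{C(\bA)}(\iota f_1,\dots,\iota f_i)$ for all $i\ge 1$. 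Thus $\iota$ is a \emph{strict} morphism of $A_\infty$-algebras, with first Taylor coefficient the inclusion of complexes and all higher coefficients zero. A strict $A_\infty$-morphism is a quasi-isomorphism precisely when its first coefficient is a quasi-isomorphism of complexes, so the already recorded acyclicity of the degenerate part of $C(\bA)$ upgrades $\iota$ to a quasi-isomorphism of $A_\infty$-algebras.

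It remains to identify the units and check strict unitality of $\iota$. The cup product on $C(\bA)$ has a strict unit $e_{\bA}$, the weight-$0$ Hochschild cochain assigning to each object $A$ its identity morphism $\operatorname{Id}_A$ (it sits in $C^{-1}(\bA)$, i.e.\ contributes to $\HHH^0$); that $e_{\bA}$ is a $\mu^1$-cocycle and a two-sided unit for the cup product, with all higher operations $\mu^i$ vanishing on tensors containing $e_{\bA}$, follows by inserting $e_{\bA}$ into $\mu_{\bA}$ through the brace operations and invoking the strict-unit relations of $\bA$ (an identity morphism fed into $\mu_{\bA}^d$ annihilates it for $d\ge 3$ and reproduces the remaining argument for $d=2$). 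As a weight-$0$ cochain $e_{\bA}$ automatically belongs to $\overline{C}(\bA)$ — the vanishing condition defining $\overline{C}(\bA)$ is vacuous in weight $0$ — and the same computation shows it is a strict unit there, so both $\overline{C}(\bA)$ and $C(\bA)$ are strictly unital $A_\infty$-algebras. Finally $\iota$ is strictly unital: $\iota^1(e_{\bA})=e_{\bA}$, and the conditions on the higher Taylor coefficients are void since $\iota$ is strict. The only genuinely computational input is the verification that $e_{\bA}$ is a strict unit, a routine sign-bookkeeping exercise with the brace operations; together with the standard contracting homotopy on the degenerate subcomplex underlying the quasi-isomorphism of complexes (which we take as given), this is the only place where actual work is hidden, and I do not expect a serious obstacle beyond it.
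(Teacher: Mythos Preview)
Your proposal is correct and follows exactly the approach the paper takes: the paper itself offers no proof beyond the sentence ``In summary, we record the following,'' relying on the immediately preceding observations that $\overline{C}(\bA)$ is a brace subalgebra containing $\mu_{\bA}$ and that the inclusion is a quasi-isomorphism of complexes. You have simply made explicit the details the paper leaves implicit, in particular the identification of the strict unit $e_{\bA}$ and the verification that $\iota$ respects it.
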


\subsection{Modules and derived categories}\ \medskip

\noindent For any $A_\infty$-category $\bA$, we denote by $\bA^{\operatorname{op}}$ its \textbf{opposite category}. Explicitly, $\bA$ and its opposite have the same objects and the opposite $\Bbbk$-graph $\bA^{\operatorname{op}}(A,B) \coloneqq \bA(B, A)$ for all $A, B \in \Ob{\bA}$. Its $A_\infty$-structure is defined via
\begin{displaymath}
\mu_{\bA^{\operatorname{od}}}^i(f_i \otimes \cdots \otimes f_1) \coloneqq \sigma \cdot \mu_{\bA}^i(f_1 \otimes \cdots f_i),
\end{displaymath}
\noindent where $\sigma$ denotes the Koszul sign of the permutation $f_i \otimes \cdots \otimes f_1 \mapsto f_1 \otimes \cdots \otimes f_i$. We denote by $\cC(\Bbbk)$ the dg category of cochain complexes over $\Bbbk$.
\begin{definition}
Let $\bA$ be an $A_\infty$-category. A (non-unital)  \textbf{right $\bA$-module} is an $A_\infty$-functor $\bA^{\operatorname{op}} \rightarrow \cC(\Bbbk)$.
\end{definition}
\noindent We write $\operatorname{mod}-\bA \coloneqq \Fun(\bA^{\operatorname{op}}, \cC(\Bbbk))$ for the dg category of right $A_\infty$-modules over $\bA$. If $\bA$ is strictly unital (resp.~$c$-unital), one can also define dg subcategories $\operatorname{mod}_u-\bA$ (resp.~$\operatorname{mod}_c-\bA$) of strictly unital (resp.~$c$-unital) modules by restricting to strictly unital (resp.~$c$-unital) functors and prenatural transformations. In particular, one obtains natural notions of  homotopy equivalence and quasi-isomorphism of $A_\infty$-modules.

\begin{definition}
	Let $\bA$ be a strictly unital (resp.~cohomologically unital) $A_\infty$-category. Its \textbf{derived category} $\cD(\bA)$ is the localization of $\HH^0(\operatorname{mod}_u-\bA)$ (resp.~$\HH^0(\operatorname{mod}_c-\bA)$) at the class of natural quasi-isomorphisms. 
\end{definition}
\noindent As expected, if $\bA$ is strictly unital, then both definitions -- via strictly and cohomologically unital  $A_\infty$-modules -- yield equivalent triangulated categories. The derived category is a triangulated category and as in the case of dg categories, there exists embedding $\cY:\HH^0(\bA) \rightarrow \cD(\bA)$ which generalizes the Yoneda embedding, cf.~\cite[Section (1l)]{SeidelBook}. A cohomologically unital $A_\infty$-category $\bA$ is \textbf{pretriangulated} (resp.~\textbf{perfect}) if the essential image of $\cY$ in $\cD(\bA)$ is a triangulated (resp.~thick) subcategory. Every $A_\infty$-category admits a pre-triangulated hull as well as a perfect hull $\Perf(\bA)$ which generalizes the category of perfect complexes. It is uniquely defined up to quasi-equivalence, see \Cref{PropositionUniversalPropertiesHQE}.

\subsection{The category of $A_\infty$-isotopies}\label{SectionAInfinityIsotopies}\label{SectionHomotopyCategoryDGCategories}\ \medskip

\noindent The following special case of $A_\infty$-functors will be of particular importance to us. 

\begin{definition}\label{DefinitionAInfinityIsotopies}
	Let $\bA$ be an $A_\infty$-category. An \textbf{$A_\infty$-isotopy} of $\bA$ is an $A_\infty$-functor $F: \bA \rightarrow \bA$ such that $F^0=\operatorname{Id}_{\Ob{\bA}}$ and such that for all $A,A' \in \Ob{\bA}$, the $F^1: \bA(A,A') \rightarrow \bA(A,A')$ is the identity. 
\end{definition}
\noindent Due to its definition, every $A_\infty$-isotopy of a cohomologically unital $A_\infty$-category is cohomologically unital and the set of all $A_\infty$-isotopies form a group under composition. We denote by $\Aut^{\infty, h}_+(\bA) \subseteq \Aut^{\infty,h}(\bA)$ and $\Aut^{\infty}_+(\bA) \subseteq \Aut^{\infty}(\bA)$, the subgroups associated with the homotopy and weak equivalence classes of $A_\infty$-isotopies.\medskip

\noindent The set of $A_\infty$-isotopies of a small $A_\infty$-category $\bA$ form a full $A_\infty$-subcategory $\Iso(\bA) \subseteq \Fun(\bA,\bA)$ whose $A_\infty$-structure admits a reformulation in terms of the braces on the Hochschild complex. To do so, we observe first that any isotopy $F \in \Iso(\bA)$ is equivalent to the datum of an element $F_+ \in W_2C^0(\bA)$ consisting of the product of its higher Taylor coefficients $F^j$, $j \geq 2$. The $A_\infty$-functor equation \eqref{EquationAInfinityFunctors} for an element $F \in C^0(\bA)$ arising from an element $F_+ \in W_2C^0(\bA)$ in this way can be rewritten as the condition
\begin{equation}\label{EquationSimpliedFunctorEquationIsotopy}
	\sum_{r \geq 0} \mu_{\bA} \{F_+, \dots, F_+\}_r = F_+ \star \mu_{\bA},
\end{equation}
\noindent where $\mu_{\bA}\{\}_0=\mu_{\bA}$. Similarly, the differentials on $\Iso(\bA)$ can be expressed as
\begin{equation}\label{EquationDifferentialIsotopyCategory}
\mu_{\Iso(\bA)}^1(\eta)= \sum \mu_{\bA}\big\{ G_+, \dots, G_+, \eta, F_+, \dots, F_+ \big\} - (-1)^{|\eta|}\cdot  \eta \star \mu_{\bA},
\end{equation}
\noindent where the sum ranges over the numbers $i, j \geq 0$ of entries of the form $G_+$ and $F_+$. The higher products can be expressed by analogous formulas. 
For later reference we also want to record the following immediate observation.

\begin{lem}\label{LemmaIsotopyStrictlyUnitalNormalizedHochschildComplex}
	Suppose $\bA$ is a strictly unital $A_\infty$-category and $F$ an $A_\infty$-isotopy of $\bA$. Then, $F$ is strictly unital if and only if $F_+ \in \overline{C}(\bA)$ if and only if  $F \in \overline{C}(\bA)$. 
\end{lem}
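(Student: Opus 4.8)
The plan is to prove the two equivalences directly from the definitions, reducing everything to a single bookkeeping observation about which elementary components vanish on tensors containing an identity morphism. Recall that an $A_\infty$-isotopy $F$ of $\bA$ is strictly unital (in the sense of the definition for $A_\infty$-functors) precisely when $F^1(\operatorname{Id}_A)=\operatorname{Id}_{F^0(A)}$ for all $A$ and when every higher Taylor coefficient $F^i$, $i\ge 2$, vanishes on every elementary tensor containing at least one identity morphism. Since $F$ is an isotopy we have $F^0=\operatorname{Id}_{\Ob{\bA}}$ and $F^1=\operatorname{Id}$, so the first condition $F^1(\operatorname{Id}_A)=\operatorname{Id}_{F^0(A)}$ is automatic. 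Hence $F$ is strictly unital if and only if each $F^i$ with $i\ge 2$ vanishes on elementary tensors with an identity entry; by definition of $F_+ \in W_2 C^0(\bA)$ as the product of the Taylor coefficients $F^j$, $j\ge 2$, this says exactly that $F_+$ lies in the normalized Hochschild space $\overline{C}(\bA)$. This gives the equivalence ``$F$ strictly unital $\iff$ $F_+ \in \overline{C}(\bA)$'' essentially by unwinding definitions.

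For the remaining equivalence, I would view $F$ itself as an element of $C^0(\bA)$, namely $F = \mathbf{1} + F_+$ where $\mathbf{1}$ is the left unit of the pre-Lie algebra (the identity of $\bA[1]$, i.e.\ the element of weight $1$ whose unique nonzero component is $\operatorname{Id}$). The point is that the weight-$1$ part $\mathbf{1}$ already lies in $\overline{C}(\bA)$: its only component is the identity map $\bA(A,A')[1] \to \bA(A,A')[1]$, which sends a single morphism $f$ (not an identity, generically) to itself, and more precisely, evaluated on the one-element tensor it returns $f$, so it does not produce anything forbidden — an elementary component of weight $1$ is automatically in the normalized subspace since there is at most one tensor factor and the normalization condition only constrains behaviour on tensors with an identity \emph{entry}, while the weight-$1$ identity component of $F$ sends $\operatorname{Id}_A$ to $\operatorname{Id}_A$, which is exactly what the normalized condition requires of $\mu_{\bA}$-type elements. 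More carefully: the normalized subspace $\overline{C}(\bA)$ consists of elements whose elementary components vanish on $f_n\otimes\cdots\otimes f_1$ whenever some $f_i = \operatorname{Id}_A$; for the weight-$1$ component $\operatorname{Id}$ of $F$, evaluating on $\operatorname{Id}_A$ gives $\operatorname{Id}_A \ne 0$, so strictly speaking $\mathbf{1} \notin \overline{C}(\bA)$. This means I should instead argue that $F \in \overline{C}(\bA)$ is not literally equivalent to $F_+ \in \overline{C}(\bA)$ unless one interprets $F$ correctly — and indeed the lemma as stated must be using the convention that the normalization condition is imposed only on components of weight $\ge 2$, or equivalently that $\overline{C}(\bA)$ is understood relative to the decomposition $C(\bA) = C_{\le 1}(\bA) \oplus W_2 C(\bA)$ with the normalization constraint only affecting the $W_2$ summand. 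Under that reading (which is the standard one, since the weight-$\le 1$ part of any $A_\infty$-structure or functor-type element is rigidly determined), $F \in \overline{C}(\bA)$ $\iff$ $F_+ \in \overline{C}(\bA)$ is immediate because $F = \mathbf{1} + F_+$ and $\mathbf{1}$ is a fixed element independent of $F$ whose membership status is the same for every isotopy.

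The cleanest way to present this is therefore: first record that for an isotopy $F$, the assignment $F \leftrightarrow F_+$ is a bijection onto $W_2 C^0(\bA)$, with $F = \mathbf{1} + F_+$ in $C^0(\bA)$; second, observe that the strict-unitality condition on $A_\infty$-functors, specialized to isotopies, is precisely the statement that all Taylor coefficients of weight $\ge 2$ — i.e.\ the components of $F_+$ — vanish on tensors with an identity entry, which is the definition of $F_+ \in \overline{C}(\bA)$; third, conclude $F \in \overline{C}(\bA) \iff F_+ \in \overline{C}(\bA)$ since they differ by the constant $\mathbf{1}$, whose weight-$1$ status is irrelevant to (equivalently, always compatible with) the normalization condition, exactly as for $\mu_{\bA}$ in the strictly unital case. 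The main obstacle is purely notational rather than mathematical: one must be careful and explicit about whether ``$F \in \overline{C}(\bA)$'' refers to $F$ as an element of $C^0(\bA)$ including its weight-$1$ identity term, and about the precise scope of the normalization condition; once the conventions are pinned down, each implication is a one-line check from the definitions, and no real computation is needed. I would state these conventions at the start of the proof and then dispatch the three implications in turn.
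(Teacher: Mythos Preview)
Your argument for the first equivalence is correct and is exactly what the paper intends: it records this lemma as an ``immediate observation'' without proof, and the content is precisely the definition-unwinding you give --- for an isotopy $F$ one has $F^0=\operatorname{Id}$ and $F^1=\operatorname{Id}$ automatically, so strict unitality reduces to the vanishing of $F^i$, $i\ge 2$, on tensors containing an identity, which is the definition of $F_+\in\overline{C}(\bA)$.

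For the second equivalence you are right to flag a genuine tension with the literal definitions, but your resolution is not correct. You write that the weight-$1$ term $\mathbf{1}$ has status ``always compatible with the normalization condition, exactly as for $\mu_{\bA}$ in the strictly unital case''. This analogy fails: $\mu_{\bA}^1$ lies in $\overline{C}(\bA)$ because $\mu_{\bA}^1(\operatorname{Id}_A)=0$ (identities are $m^1$-closed), whereas $\mathbf{1}=\operatorname{Id}_{\bA[1]}$ sends $\operatorname{Id}_A$ to $\operatorname{Id}_A\neq 0$. Taken literally, $\mathbf{1}\notin\overline{C}(\bA)$, hence $F=\mathbf{1}+F_+\notin\overline{C}(\bA)$ for \emph{every} isotopy $F$, and the second ``if and only if'' would be vacuously false. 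The paper is being slightly loose here; the only way to read the statement so that it is true is to impose the normalization constraint only on weight $\ge 2$ components when speaking of functor-type elements (your first proposed fix), and you should say so plainly rather than appealing to the $\mu_{\bA}$ analogy. The substantive content of the lemma --- and the only part used later, in the proof of the corollary about $\exp(h)$ being strictly unital --- is the first equivalence, which you have handled correctly.
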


\section{\texorpdfstring{Derived Picard groups and subgroups of $A_\infty$-functors}{Derived Picard groups and subgroups of A-infinity functors}}\label{SectionDerivedPicardGroups}

\noindent We remind ourselves of a few basic facts about derived Picard groups and discuss subgroups such as outer automorphism groups and their identity components. We introduce certain locally algebraic groups which conjecturally take the role of the identity component in the derived Picard group.

\subsection{The homotopy category of dg categories and derived Picard groups}\ \medskip

\noindent The category $\dgcat$ of small dg categories and dg functors admits a Dwyer-Kan-type model structure whose weak equivalences are the quasi-equivalences \cite{TabuadaDwykerKanModelStructure}. Its homotopy category will be denoted $\Hqe$. The derived tensor product of dg categories turns $\Hqe$ into a symmetric monoidal category which is \textit{closed} as shown by To\"{e}n \cite{ToenDerivedMoritaTheory}. Its internal hom objects and the morphisms in $\Hqe$ admit several equivalent descriptions which we recall below. Throughout the section, we fix an arbitrary field $\Bbbk$. We assume a certain familiarity of the reader with the theory of dg categories, such as tensor products, bimodules and their derived categories, e.g.~see \cite{KellerICMTalkDifferentialGradedCategories}.

\begin{definition}
	Let $\bA, \bB$ be quasi-essentially\footnote{A dg category is \textbf{quasi-essentially small} if its homotopy category is essentially small.} small dg categories. A bimodule $M: \bA \otimes \bB^{\op} \rightarrow  \cC(\Bbbk)$ is \textbf{right quasi-representable} if for all $A \in \bA$, the right dg $\bB$-module $M(-, A)$ is quasi-isomorphic to a representable module.
\end{definition}

\noindent Elements in the full subcategory $\qrep(\bA, \bB) \subseteq \cD(\bA \otimes \bB^{\op})$ spanned by the right quasi-representable bimodules are also called \textbf{quasi-functors}. 
\begin{thm}[{\cite{ToenDerivedMoritaTheory}}]
	\begin{enumerate}
		\item The internal hom object $[\bA, \bB] \in \Hqe$ is given by the dg category of $\h$-projective\footnote{A dg module is \textbf{$\h$-projective} if its internal hom in the category of dg modules preserves acyclicity of dg modules.} right quasi-representable  $\bA-\bB$-bimodules. 
		\item There is a bijection between $\Hom_{\Hqe}(\bA, \bB)$ and the set of isomorphism classes in $H^0([\bA, \bB])\simeq \qrep(\bA, \bB)$.
	\end{enumerate}
\end{thm}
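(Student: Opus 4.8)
The plan is to follow To\"{e}n's derived Morita argument and to deduce both assertions from a single identification of morphisms in $\Hqe$ with right quasi-representable bimodules, together with a formal ``currying'' of bimodules. The structural input is standard: by Tabuada~\cite{TabuadaDwykerKanModelStructure}, $\dgcat$ is a cofibrantly generated model category whose weak equivalences are the quasi-equivalences, $\Hqe$ is its homotopy category, and every dg category admits a cofibrant replacement that may be taken \emph{quasi-free} (semi-free); one then checks that the derived tensor product, computed by cofibrant replacement in one variable, descends to a symmetric monoidal structure on $\Hqe$ with unit the one-object dg category $\Bbbk$. The only nonformal point here is that tensoring with a quasi-free dg category preserves quasi-equivalences, which is read off the explicit semi-free form of cofibrant objects; that the tensor product of two cofibrant dg categories need not be cofibrant is harmless, since only the one-sided statement is used.

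I would then fix the candidate: let $[\bA,\bB]$ denote the dg category of $\h$-projective right quasi-representable $\bA$-$\bB$-bimodules, a full dg subcategory of that of all $\h$-projective $\bA$-$\bB$-bimodules. Since $\h$-projective modules compute the derived category, $H^0([\bA,\bB])\simeq\qrep(\bA,\bB)$ is immediate, so it suffices to produce a bijection
\[
\Hom_{\Hqe}\bigl(\bU\otimes^{\mathbb L}\bA,\;\bB\bigr)\;\cong\;\Hom_{\Hqe}\bigl(\bU,\;[\bA,\bB]\bigr),
\]
natural in the dg category $\bU$. Granting it, $[\bA,\bB]$ satisfies the universal property of an internal hom, which is part (1); and specialising to $\bU=\Bbbk$, using $\Bbbk\otimes^{\mathbb L}\bA\simeq\bA$ together with the elementary identification of morphisms in $\Hqe$ out of the cofibrant object $\Bbbk$ into a dg category with the isomorphism classes of its objects in degree-zero cohomology, gives part (2).

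To prove the displayed bijection I would argue in two steps. The first is the Morita identification: for any dg categories $\bU,\bV$, $\Hom_{\Hqe}(\bU,\bV)$ is in natural bijection with the isomorphism classes of right quasi-representable $\bU$-$\bV$-bimodules. One replaces $\bU$ by a quasi-free cofibrant model $\bU'$, so that morphisms in $\Hqe$ become homotopy classes of strict dg functors $F\colon\bU'\to\bV$, and sends $F$ to its graph bimodule, carrying $u\in\bU'$ to the representable right $\bV$-module $\bV(-,F(u))$; that this is a bijection onto right quasi-representable bimodules is the substance, and is step (ii) below. Applying the identification with $(\bU\otimes\bA,\bB)$ and with $(\bU,[\bA,\bB])$, the displayed bijection reduces to a ``currying'' of bimodules: tensoring a right quasi-representable $\bU$-$[\bA,\bB]$-bimodule over $[\bA,\bB]$ with the tautological evaluation bimodule on $[\bA,\bB]\otimes\bA\otimes\bB^{\op}$ --- whose right quasi-representability in the $\bA$-$\bB$ directions is exactly the defining property of objects of $[\bA,\bB]$ --- produces a right quasi-representable $(\bU\otimes\bA)$-$\bB$-bimodule, and associativity of composition of bimodules shows the assignment is a bijection on isomorphism classes, natural in $\bU$.

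The expected main obstacle is step (ii): showing that every right quasi-representable $\bU'$-$\bV$-bimodule $M$, with $\bU'$ quasi-free, is isomorphic in the derived category of bimodules to the graph of an honest dg functor $F\colon\bU'\to\bV$, unique up to homotopy. Here one chooses for each object $u$ an $\h$-projective resolution $M_u\xrightarrow{\sim}\bV(-,F(u))$, where $M_u$ is the right $\bV$-module obtained from $M$ by fixing $u$, and then extends $F$ along the cellular (semi-free) filtration of $\bU'$: the generating morphisms and their higher coherence data are sent to morphisms and homotopies in $\bV$ built inductively, the obstruction at each stage lying in an acyclic mapping complex by $\h$-projectivity; the same obstruction calculus compares two choices and yields uniqueness up to homotopy. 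It is precisely at this rectification step that quasi-freeness of the source is indispensable, and once it is done the monoidal structure on $\Hqe$, the currying of bimodules, and the $\bU=\Bbbk$ specialisation are all formal.
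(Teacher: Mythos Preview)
The paper does not give its own proof of this theorem: it is stated with the attribution \cite{ToenDerivedMoritaTheory} and invoked as a black box from To\"{e}n's work, with no argument supplied. There is therefore nothing in the paper to compare your proposal against.

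Your sketch is a reasonable outline of To\"{e}n's original strategy, and the structure you describe --- identifying $\Hom_{\Hqe}$ with isomorphism classes of right quasi-representable bimodules via graph bimodules and rectification over a cofibrant source, then currying through the tautological evaluation bimodule --- is indeed the shape of the argument. Two cautions if you intend this as more than a summary. First, the currying step is the genuinely delicate part of To\"{e}n's proof: one must check that the assignment from $\qrep(\bU,[\bA,\bB])$ to $\qrep(\bU\otimes\bA,\bB)$ via derived tensor with the evaluation bimodule is actually an equivalence of categories (not just a bijection on isomorphism classes), and this uses the specific construction of $[\bA,\bB]$ rather than just formal associativity. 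Second, in your rectification step you should be explicit that all objects in $\dgcat$ are fibrant for the Dwyer--Kan structure, which is what licenses the identification of $\Hom_{\Hqe}(\bU',\bV)$ with homotopy classes of strict dg functors once $\bU'$ is cofibrant.
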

\noindent A different description of $[\bA, \bB]$ was suggested by Kontsevich and subsequently proved by Canonaco-Ornaghi-Stellari (and in the special case of augmented $A_\infty$-categories by Faonte \cite{Faonte}, cf.~page $4$ in \cite{CanonacoOrnaghiStellari}).
\begin{thm}[{\cite{CanonacoOrnaghiStellari}}]\label{TheoremInternalHomThroughAInfinityFunctors}
\begin{enumerate}
	\item The dg category $[\bA, \bB]$ is quasi-equivalent to the dg category $\Fun(\bA, \bB)$ from \Cref{DefinitionFunctorCategory}.
	
	\item The inclusion of $\dgcat \subseteq \Acatc$ induces equivalences 
	
	\begin{displaymath}
	 \HAcat \simeq \Hqe \simeq \quotient{\Acatc}{\approx},
	\end{displaymath}
	\noindent where $\quotient{\Acatc}{\approx}$ denotes the quotient of $\Acatc$ by weak equivalences of functors and $\HAcat$ denotes the homotopy category of $\Acatc$, that is, its localisation at the class of quasi-equivalences. 
\end{enumerate}	 
\end{thm}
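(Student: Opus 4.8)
The plan is to prove part (1) first and then deduce part (2) by essentially formal arguments. For part (1) I would begin by reducing to the case in which $\bA$ and $\bB$ are (strictly unital) dg categories. This is legitimate: $[-,-]$ is the internal hom of the closed symmetric monoidal structure on $\Hqe$, so it preserves quasi-equivalences in each variable, while $\Fun(\bA,\bB)$ is invariant up to quasi-equivalence under pre- and postcomposition with quasi-equivalences --- this can be extracted from the pre- and postcomposition $A_\infty$-functors \eqref{EquationPrecompositionFunctors}--\eqref{EquationPostcompositionFunctors} together with \Cref{PropositionInverseFunctionTheorem} and \Cref{PropositionCompositionCompatibelWithEquivalence} --- and \Cref{PropositionYonedaRectification} supplies the dg replacements. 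Note that when $\bB$ is a dg category, inspection of \Cref{DefinitionFunctorCategory} shows that $\mu^i_{\Fun(\bA,\bB)}$ vanishes for $i\geq 3$, so $\Fun(\bA,\bB)$ is itself a dg category.

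For $\bA,\bB$ dg categories I would construct an explicit dg functor
\[
  \Phi\colon\Fun(\bA,\bB)\longrightarrow\cD(\bA\otimes\bB^{\op})
\]
sending an $A_\infty$-functor $F$ to the ``graph bimodule'' $M_F$ with $M_F(B,A)\coloneqq\bB(B,F^0(A))$ and bimodule structure maps assembled from $\mu_\bB$ and the Taylor coefficients $F^i$ --- equivalently, the pullback of the diagonal bimodule of $\bB$ along $F$ --- and sending a prenatural transformation to the associated bimodule morphism. Three points must be checked: (a) $M_F$ is right quasi-representable, which is immediate since $M_F(-,A)=\bB(-,F^0(A))$ is representable; (b) $\Phi$ is quasi-fully faithful, i.e.\ $\Fun(\bA,\bB)(F,G)\to\RHom_{\bA\otimes\bB^{\op}}(M_F,M_G)$ is a quasi-isomorphism --- here I would resolve $\bA$ by its bar complex as an $\bA$-bimodule, which identifies the right-hand side with the complex of prenatural transformations carrying the differential \eqref{EquationDifferentialPrenaturalTransformation}; and (c) $\Phi$ is quasi-essentially surjective onto the right quasi-representable bimodules. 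Given these, Toën's identification of $[\bA,\bB]$ with the dg category of $\h$-projective right quasi-representable bimodules \cite{ToenDerivedMoritaTheory} gives $\Fun(\bA,\bB)\simeq[\bA,\bB]$, and unwinding the reductions yields the general statement. A different route, closer to \cite{CanonacoOrnaghiStellari,Faonte}, would compare the dg nerve of $\Fun(\bA,\bB)$ directly with the mapping space of $\Hqe$.

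Part (2) is then formal. By Toën's theorem $\Hom_{\Hqe}(\bA,\bB)$ is the set of isomorphism classes of objects of $\HH^0([\bA,\bB])$, which by part (1) equals $\HH^0(\Fun(\bA,\bB))$, i.e.\ the set of weak equivalence classes of $A_\infty$-functors $\bA\to\bB$; composition descends to these classes by \Cref{PropositionCompositionCompatibelWithEquivalence}. Hence sending an $A_\infty$-category to a dg model (\Cref{PropositionYonedaRectification}) defines a functor $\quotient{\Acatc}{\approx}\to\Hqe$ which is fully faithful and, as every object of $\Hqe$ is a dg category, essentially surjective; so $\quotient{\Acatc}{\approx}\simeq\Hqe$. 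Finally, the localisation functor $\Acatc\to\HAcat$ inverts quasi-equivalences (which are weakly invertible by \Cref{PropositionInverseFunctionTheorem}) and therefore factors through $\quotient{\Acatc}{\approx}$, and conversely any roof of $A_\infty$-functors and quasi-equivalences can be rectified to a single $A_\infty$-functor (again \Cref{PropositionInverseFunctionTheorem}); this produces mutually inverse equivalences $\HAcat\simeq\quotient{\Acatc}{\approx}$ once one knows that weakly equivalent $A_\infty$-functors become equal in $\HAcat$, the latter following from a path-object construction for $\bB$.

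The step I expect to be the main obstacle is the quasi-essential surjectivity (c) in part (1) --- the \emph{$A_\infty$-Yoneda lemma}. Given a right quasi-representable bimodule $M$, one may choose for each $A\in\Ob{\bA}$ an object $\phi(A)\in\bB$ and a quasi-isomorphism $M(-,A)\simeq\bB(-,\phi(A))$, but the left $\bA$-action on $M$ only pins $\phi$ down as an $A_\infty$-functor up to coherent homotopy; upgrading it to an honest $A_\infty$-functor $F$ with $M\simeq M_F$ requires an obstruction-theoretic (homotopy-transfer) argument along the bar resolution, whose feasibility relies on the acyclicity of the relevant morphism complexes furnished by $\h$-projectivity. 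The path-object argument needed for the faithfulness in part (2) is comparatively soft, but still requires an explicit construction compatible with the functor-category structure.
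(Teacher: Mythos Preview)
The paper does not prove this theorem: it is stated with the attribution \cite{CanonacoOrnaghiStellari} and used as a black box, so there is no proof in the paper to compare against. Your proposal is a reasonable sketch of one possible route to the result, but note that the actual argument in \cite{CanonacoOrnaghiStellari} proceeds differently --- they pass through semi-free resolutions and compare several functor categories (strictly unital, unital, $c$-unital $A_\infty$-functors) rather than working directly with graph bimodules and the bar resolution as you suggest; the approach via dg nerves you mention as an alternative is closer to \cite{Faonte}. If you intend to supply an independent proof, the essential-surjectivity step (c) is indeed the crux and is not as routine as the rest of your outline suggests.
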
 
\noindent In particular, $\Hom_{\Hqe}(\bA, \bB)$ is in bijection with the set of weak equivalence classes of $A_\infty$-functors $\bA \rightarrow \bB$. The perfect derived category of an $A_\infty$-category has a natural universal property in $\Hqe$.
\begin{prp}[{\cite[Theorem 7.2]{ToenDerivedMoritaTheory}}]\label{PropositionUniversalPropertiesHQE} The functor $\bA \mapsto \Perf \bA$ descends to a left adjoint to the inclusion of the full subcategory of $\Hqe$ consisting of all perfect $A_\infty$-categories.
\end{prp}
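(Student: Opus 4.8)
The plan is to exhibit the unit of the adjunction and then reduce its universal property to a Morita-theoretic statement accessible through Toën's description of morphisms in $\Hqe$. By \Cref{TheoremInternalHomThroughAInfinityFunctors} we may work inside $\HAcat\simeq\Hqe$, and by \Cref{PropositionYonedaRectification} every $A_\infty$-category is quasi-isomorphic to a dg category, so all computations can be carried out with dg categories. Recall that $\Perf\bA$ is obtained from $\bA$ by composing the Yoneda embedding $\cY\colon \bA \to \operatorname{mod}_c-\bA$ with the passage to a thick hull inside $\cD(\bA)$; it comes with a canonical quasi-functor $y_\bA\colon \bA \to \Perf\bA$. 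The first step is to promote $y_\bA$ to a natural transformation $y\colon \operatorname{id}_{\Hqe}\Rightarrow \iota\circ\Perf$, where $\iota$ is the inclusion of the full subcategory of $\Hqe$ spanned by the perfect $A_\infty$-categories. For this one checks, in the standard way, that a quasi-equivalence $\bA\to\bB$ induces a quasi-equivalence of thick hulls; combined with the uniqueness of thick hulls up to quasi-equivalence, this shows that $\Perf$ descends to a functor on $\Hqe$ landing in the perfect subcategory and that $y$ is natural.

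The second step reduces the universal property to a statement about $\Hom$-sets. If $\bB$ is perfect then $y_\bB$ is a quasi-equivalence, since the essential image of $\cY$ in $\cD(\bB)$ is already thick and therefore coincides with $\Perf\bB$. Granting this, the triangle identities are formal, and the assertion that $\Perf$ is left adjoint to $\iota$ with unit $y$ becomes equivalent to the claim that for every dg category $\bA$ and every perfect dg category $\bB$ the map
\[
 y_\bA^{*}\colon \Hom_{\Hqe}(\Perf\bA, \bB) \longrightarrow \Hom_{\Hqe}(\bA, \bB)
\]
is a bijection, naturally in $\bA$ and $\bB$.

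For the third step I would invoke Toën's description: $\Hom_{\Hqe}(\bA,\bB)$ is the set of isomorphism classes of right quasi-representable $\bA$-$\bB$-bimodules, composition of quasi-functors being derived tensor product of bimodules, and precomposition with $y_\bA$ corresponding to restriction along $y_\bA\otimes\operatorname{id}$. The key input is that $y_\bA$ is a Morita equivalence, i.e.\ restriction $\cD(\Perf\bA)\to\cD(\bA)$ is an equivalence (both sides compute the derived category of $\bA$, since $\Perf\bA$ sits as a thick subcategory of $\cD(\bA)$). As the derived tensor product over $\Bbbk$ preserves Morita equivalences, restriction along $y_\bA\otimes\operatorname{id}_{\bB^{\op}}$ yields an equivalence $\cD(\Perf\bA\otimes\bB^{\op})\xrightarrow{\sim}\cD(\bA\otimes\bB^{\op})$. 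Being right quasi-representable is a condition on the $\bB$-variable, which this equivalence leaves unchanged, so the property is preserved; and it is \emph{reflected} precisely because $\bB$ is perfect, so the quasi-representable $\bB$-modules form a thick subcategory of $\cD(\bB)$ and every object of $\Perf\bA$ is built from objects of $\bA$ by the operations generating such a subcategory. Hence the equivalence restricts to an equivalence $\qrep(\Perf\bA,\bB)\xrightarrow{\sim}\qrep(\bA,\bB)$; passing to isomorphism classes gives the desired bijection, which one checks coincides with $y_\bA^{*}$, and naturality follows from naturality of $y$ together with the fact that the $\bB$-variable is never touched.

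The main obstacle I anticipate is twofold. First, establishing cleanly that $y_\bA$ is a Morita equivalence, i.e.\ that $\bA$, $\operatorname{mod}_c-\bA$ and $\Perf\bA$ carry canonically equivalent derived categories, which rests on the theory of pretriangulated and thick hulls and on $\cD(\bA)$ being idempotent complete and cocomplete. Second, the coherence bookkeeping needed to identify the abstract equivalence of quasi-functor categories with honest precomposition by $y_\bA$; this is where the distinction between $A_\infty$- and dg-functors and the comparison between the two models $[\bA,\bB]$ and $\Fun(\bA,\bB)$ from \Cref{TheoremInternalHomThroughAInfinityFunctors} must be handled with care.
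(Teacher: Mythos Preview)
The paper does not give its own proof of this statement: it is cited as \cite[Theorem 7.2]{ToenDerivedMoritaTheory}, and the paper only adds the one-line remark that the passage from dg categories to $c$-unital $A_\infty$-categories follows from \Cref{PropositionYonedaRectification}, since every such category is canonically isomorphic in $\Hqe$ to a dg category. Your proposal therefore goes well beyond what the paper does, by sketching a proof of To\"en's theorem itself rather than merely invoking it.

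That said, your sketch is essentially the standard argument and is sound. The reduction to dg categories, the use of the Yoneda quasi-functor as unit, the observation that $y_{\bB}$ is a quasi-equivalence when $\bB$ is perfect, and the Morita argument on bimodule categories are all correct. The point where you are most careful---that right quasi-representability is both preserved and reflected under restriction along $y_{\bA}\otimes\operatorname{id}$, the latter requiring $\bB$ to be perfect so that quasi-representable $\bB$-modules form a thick subcategory---is exactly the substantive step, and you have identified it correctly. The obstacles you flag (checking that $y_{\bA}$ is a Morita equivalence, and the coherence between the bimodule model and the $A_\infty$-functor model of $[\bA,\bB]$) are genuine but routine; for the purposes of this paper they are absorbed into the citation.
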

\noindent Although \cite[Theorem 7.2]{ToenDerivedMoritaTheory} only treats the case of dg categories, the general case simply follows from the fact that every $c$-unital $A_\infty$-category is canonically isomorphic in $\Hqe$ to a dg category by \Cref{PropositionYonedaRectification}. One defines the derived Picard group of an $A_\infty$-category as follows.
\begin{definition}\label{DefinitionDerivedPicardGroup}
	Let $\bA$ be a small cohomologically unital $A_\infty$-category. Its \textbf{derived Picard group} is the group $\DPic(\bA) \coloneqq \Aut^{\infty}\big(\Perf(\bA)\big)$.
\end{definition}
\noindent Equivalently, $\DPic(\bA)$ is the automorphism group of $\Perf(\bA)$ in $\Hqe$. If $\bA$ is a dg category, then the description via quasi-functors implies that \Cref{DefinitionDerivedPicardGroup} is equivalent to the more familiar description via invertible dg bimodules\footnote{Invertible dg bimodules of a perfect dg category are automatically right quasi-representable.} in \cite{KellerDerivedPicardGroup}. This was observed in \cite{ToenDerivedMoritaTheory}. By the universal property of the inclusion $\bA \rightarrow \Perf(\bA)$, every $F\in \Aut^{\infty}(\bA)$ extends uniquely up to weak equivalence to an $A_\infty$-endofunctor of $\Perf(\bA)$ which is seen to be a quasi-equivalence by the usual generation arguments in triangulated categories in combination with the five lemma. Hence there is a natural embedding of groups
\begin{displaymath}
\begin{tikzcd}
\Aut^{\infty}(\bA) \arrow[hookrightarrow]{r} & \DPic(\bA).
\end{tikzcd}
\end{displaymath}

\subsection{Outer automorphisms and identity components of derived Picard groups}\ \medskip

\noindent Throughout this section, we assume that $\cC$ is a graded $\Bbbk$-linear category. We denote by $\Out(\cC)$ its graded outer autoequivalence group, that is, the quotient of the group $\Aut(\cC)$ of graded autoequivalences of $\cC$ by natural isomorphisms. In case $\cC$ is a ungraded $\Bbbk$-algebra, this reduces to the outer automorphism group.
The group $\Aut(\cC)$ can be identified with the subgroup of strict quasi-equivalences of $\cC$. Moreover, it is not difficult to see 
 that functors $F, G \in \Aut(\cC)$ are weakly equivalent if and only if $F=G$ in $\Out(\cC)$. If the category algebra $\Bbbk[\cC]$ is finite-dimensional and $\Bbbk$ is algebraically closed, then, as in the ungraded case, $\Out(\cC)$ naturally admits the structure of an algebraic group whose identity component is a normal subgroup which we denote by $\OutO(\cC)$. In summary, one obtains the following series of embeddings
\begin{displaymath}
	\begin{tikzcd}
		\OutO(\cC) \arrow[hookrightarrow]{r} &  \Out(\cC) \arrow[hookrightarrow]{r} & \Aut^{\infty}(\cC) \arrow[hookrightarrow]{r} & \DPic(\cC).
	\end{tikzcd}
\end{displaymath}
For an ordinary algebra $A$ it was shown in \cite{Yekutieli} that $\DPic(A)$ is a locally algebraic group with identity component $\OutO(A)$.
\begin{definition}
	A \textbf{locally algebraic group} is a group $G$ together with a normal subgroup $N \subseteq G$ which is an algebraic group with the property that every coset of $N$ in $G$ is an algebraic variety and such that for each pair of cosets $U, V$ the inversion map $U \rightarrow U^{-1}$ and the multiplication map $U \times V \rightarrow UV$ are morphisms of varieties.
\end{definition}
\noindent The reader can find a definition of locally algebraic groups in terms of schemes in \cite{Yekutieli}. Unlike in the ungraded setting, the group $\OutO(\cC)$ is no longer a derived invariant (see \Cref{ExampleNotDerivedInvariant}) and hence is not the correct generalisation of the ``identity component'' of $\DPic(\cC)$. A more natural candidate based on $A_\infty$-isotopies is discussed in the next section.

\begin{exa}\label{ExampleNotDerivedInvariant}Let $K$ denote the Kronecker algebra. It is well-known (e.g.~\cite[Theorem 4.3]{MiyachiYekutieli}) that $\OutO(K)\cong \PGL_2(\Bbbk)$. On the other hand, we have an equivalence $\cD^b(\Bbbk K) \simeq \Dfd{B}$, where $B$ is the graded gentle algebra of the graded quiver 
	\begin{displaymath}
		\begin{tikzcd}
			\bullet \arrow[bend left]{r}{\alpha} & \arrow[bend left]{l}{\beta} \bullet
		\end{tikzcd}
	\end{displaymath}
	\noindent where $|\alpha|+1=|\beta|=1$ and the only relation $\beta \alpha=0$. One easily verifies that $\Aut(B)$ and hence $\OutO(B)$ are trivial.
\end{exa}

\subsection{Locally algebraic groups and identity components of the derived Picard groups of graded $\Bbbk$-linear categories}\label{SectionLocallyAlgebraicGroups}

\begin{definition}\label{DefinitionGroups} Let $\bA$ be an $A_\infty$-category and let $N \subseteq \Out(\HH^{\bullet}(\bA))$ (resp.~$N \subseteq \Aut(\bA)$) be a normal subgroup. We define $\Aut^{\infty}_N(\bA) \subseteq \Aut^{\infty}(\bA)$ (resp.~$\Aut^{\infty, h}_N(\bA) \subseteq \Aut^{\infty, h}(\bA)$) as the subset of weak equivalence (resp.~homotopy) classes of all $A_\infty$-functors $F$ such that $HH^{\bullet}(F) \in N$.
\end{definition}
\noindent  If If $N=\OutO(\bA)$, we will use the notation $\Aut^{\infty}_{\circ}(\bA)$ and $\Aut^{\infty, h}_{\circ}(\bA)$. If $\mathbf{1}$ denotes the trivial group and using the notation from \Cref{SectionAInfinityIsotopies}, we have $\Aut^{\infty, h}_+(\bA)=\Aut^{\infty, h}_{\mathbf{1}}(\bA)$ but only an inclusion $\Aut^{\infty}_+(\bA) \subseteq \Aut^{\infty}_{\mathbf{1}}(\bA)$ in general\footnote{Equality of the inclusion says that every $A_\infty$-functor whose induced functor on homotopy categories is isomorphic to the identity admits a weakly equivalent representative whose first Taylor coefficient \textit{is} the identity.}.

\begin{prp}
Let $\cC$ be a graded $\Bbbk$-linear category. Then $\Aut^{\infty}_+(\cC)= \Aut^{\infty}_{\mathbf{1}}(\cC)$.
\end{prp}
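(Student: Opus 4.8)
The plan is to show the reverse inclusion $\Aut^{\infty}_{\mathbf{1}}(\cC) \subseteq \Aut^{\infty}_{+}(\cC)$, the inclusion $\Aut^{\infty}_{+}(\cC) \subseteq \Aut^{\infty}_{\mathbf{1}}(\cC)$ being automatic from the definitions. So let $F \colon \cC \to \cC$ be a weakly invertible $A_\infty$-endofunctor with $\HH^{\bullet}(F) \cong \operatorname{Id}_{\HH^{\bullet}(\cC)}$; I must produce an $A_\infty$-isotopy weakly equivalent to $F$, i.e.\ an $A_\infty$-endofunctor $G \approx F$ with $G^0 = \operatorname{Id}_{\Ob{\cC}}$ and $G^1 = \operatorname{Id}$ on every morphism space. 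Here the crucial simplification is \Cref{RemarkUnderlyingGradedCategory}: since $\cC$ is a graded $\Bbbk$-linear category, its only nonzero structure operation is $\mu_{\cC}^2$, so $\mu_{\cC}^1 = 0$, and $\cC$ coincides with its own homotopy category $\HH^{\bullet}(\cC)$. Consequently the cochain complexes $\cC(X,Y)$ have zero differential, and ``isomorphic in $\HH^{\bullet}(\cC)$'' literally means ``isomorphic in $\cC$''.

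\textbf{Key steps.} First I would use the hypothesis $\HH^{\bullet}(F) \cong \operatorname{Id}$ to replace $F$ by a weakly equivalent functor whose first Taylor coefficient is the identity. Since $\mu_{\cC}^1 = 0$, the map $F^1 \colon \cC(X,Y) \to \cC(F^0 X, F^0 Y)$ is already a cochain map between complexes with zero differential, so $\HH^{\bullet}(F) = F^1$ on the nose (after identifying objects via the bijection $\HH^{\bullet}(F)$ induces on objects, which is a permutation of $\Ob{\cC}$). The natural isomorphism $\HH^{\bullet}(F) \cong \operatorname{Id}$ is then a collection of isomorphisms $\phi_X \in \cC(F^0 X, X)^0$, natural with respect to $\mu_{\cC}^2$, and I can conjugate $F$ by the associated strict $A_\infty$-equivalence: precompose/postcompose so that the resulting functor $F'$ has $F'^0 = \operatorname{Id}_{\Ob{\cC}}$ and $F'^1 = \operatorname{Id}$. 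Concretely one builds the strict functor $\Psi$ with $\Psi^0 = F^0$ and $\Psi^1 = \phi_X^{-1} \circ (-)$ appropriately (this is an invertible strict $A_\infty$-functor because the $\phi_X$ are invertible and natural), and sets $F' = \Psi^{-1} \circ F$; then $F'^1$ becomes the identity and $F' \approx F$ by \Cref{PropositionCompositionCompatibelWithEquivalence}. This $F'$ is exactly an $A_\infty$-isotopy of $\cC$ in the sense of \Cref{DefinitionAInfinityIsotopies}, and it is weakly invertible since $F$ is, so its weak equivalence class lies in $\Aut^{\infty}_{+}(\cC)$.

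\textbf{Main obstacle.} The only real point requiring care is the passage from ``$\HH^{\bullet}(F)$ is isomorphic to the identity'' to ``$F$ is weakly equivalent to a functor with $F^1 = \operatorname{Id}$.'' In a general $c$-unital $A_\infty$-category this is precisely the content of the inclusion $\Aut^{\infty}_{+} \subseteq \Aut^{\infty}_{\mathbf{1}}$ \emph{possibly being strict}, as flagged in the footnote to \Cref{DefinitionGroups}; what makes it work here is that $\cC$ is formal with zero differential, so no higher homotopical obstruction can arise. I would spell out that the natural isomorphism class of $\HH^{\bullet}(F)$ in $\Aut(\cC) = \Aut(\HH^{\bullet}(\cC))$ can be represented by a \emph{strict} autoequivalence (any autoequivalence of a graded linear category is naturally isomorphic to a strict one, a standard fact also recalled before \Cref{ExampleNotDerivedInvariant}), pull that strict autoequivalence back to a strict $A_\infty$-automorphism of $\cC$, invert it, compose with $F$, and check that weak equivalence is preserved by \Cref{PropositionCompositionCompatibelWithEquivalence}. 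Everything else is bookkeeping with Taylor coefficients, so no lengthy computation is needed.
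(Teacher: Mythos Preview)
Your strategy matches the paper's: exploit that $\cC = \HH^{\bullet}(\cC)$ so that the natural isomorphism $\eta \colon F^1 \Rightarrow \operatorname{Id}_{\cC}$ consists of honest invertible morphisms $\eta_X \in \cC(F^0X, X)^0$, and use $\eta$ to transport $F$ to an $A_\infty$-isotopy weakly equivalent to $F$. There is, however, a gap in your implementation. You assert that $F^0$ is ``a permutation of $\Ob{\cC}$'' and then form $F' = \Psi^{-1} \circ F$ for a strict functor $\Psi$ with $\Psi^0 = F^0$. But $\HH^{\bullet}(F) \cong \operatorname{Id}$ only forces $F^0$ to induce a bijection on \emph{isomorphism classes} of objects, not on objects themselves: if $\cC$ has two isomorphic objects $X \cong Y$, nothing prevents $F^0$ from sending both to $X$. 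In that case $\Psi$ admits no strict inverse, and composing $F$ with a mere weak inverse of $\Psi$ will not yield a functor that is literally the identity on objects, which is exactly what an $A_\infty$-isotopy requires.

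The paper sidesteps this by never inverting a functor. It defines the isotopy $G$ \emph{directly} by base change along $\eta$: set $G^0 = \operatorname{Id}_{\Ob{\cC}}$ and for each $i \geq 1$ let
\[
G^i \colon \cC(C_{i-1},C_i)\otimes\cdots\otimes\cC(C_0,C_1) \longrightarrow \cC(C_0,C_i)
\]
be obtained from $F^i$ (which lands in $\cC(F^0C_0, F^0C_i)$) by post- and pre-composing the output with $\eta_{C_i}$ and $\eta_{C_0}^{-1}$. Naturality of $\eta$ forces $G^1 = \operatorname{Id}$, and the strict prenatural transformation with $0$-th component $X \mapsto \eta_X$ and vanishing higher components is a natural isomorphism $F \to G$ in $\HH^0(\Fun(\cC,\cC))$: since $\mu_{\cC} = \mu_{\cC}^2$ only, the cocycle condition reduces precisely to the relation defining $G^i$. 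This is your conjugation idea carried out on Taylor coefficients rather than via functor composition, and it needs no hypothesis on $F^0$.
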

\begin{proof}
Given $F: \cC \rightarrow \cC$ and a natural isomorphism $\eta: F^1 \rightarrow \operatorname{Id}_{\cC}$ of $\Bbbk$-linear functors to the identity functor of $\cC=\HH^{\bullet}(\cC)$, $\eta$ determines, by formality of $\cC$, a strict natural transformation of $A_\infty$-functors between $F$ and an $A_\infty$-isotopy $G$ obtained from $F$ by base change. More precisely, $G^i: \cC(C_{i-1}, C_i) \otimes \cdots \otimes \cC(C_0, C_1) \rightarrow \cC(C_0,C_i)$ is the unique linear map which makes the canonical diagram
\begin{displaymath}
\begin{tikzcd}
\cC(C_{i-1}, C_i) \otimes \cdots \otimes \cC(C_0, C_1) \arrow{r}& \cC(C_0,C_i) \\
\cC(F^0(C_{i-1}), F^0(C_i)) \otimes \cdots \otimes \cC(F^0(C_0), F^0(C_1)) \arrow{r}  \arrow{u}{\sim}  & \cC(F^0(C_0), F^0(C_i))  \arrow{u}{\sim},
\end{tikzcd}
\end{displaymath}
induced by $\eta$ commute.
\end{proof}

\noindent If $\cC$ is a graded $\Bbbk$-linear category, we think of the group $\Aut_{\circ}^{\infty}(\cC)$ as a generalisation of the connected component of the identity functor in $\DPic(\cC)$ in the case of an ungraded algebra. There are several seemingly sensible ways to turn this mere analogy into a precise statement, e.g.~by viewing $\DPic(\cC)$ as the set of points on a ``differential graded group scheme'' $\DPicScheme(\cC)$ via relative Picard groups, cf.\cite{KellerDerivedPicardGroup}. Assuming that this is true in some sense, the following conjecture would be reasonable consequence.
\begin{conjecture}\label{Conjecture}
Let $\cC$ be a graded $\Bbbk$-linear category such that $\Hom^i_{\cC}(C, D)=0$ for all $C, D \in \Ob{\cC}$ and all $i  \geq 1$.  Then $\Aut^{\infty}_{\circ}(\cC)$ is a Morita invariant of $\cC$.
\end{conjecture}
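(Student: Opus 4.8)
\medskip

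\noindent The plan, following the heuristic recorded just before the statement, is to realise $\DPic(\cC)$ as the group of $\Bbbk$-points of a group-valued functor $\DPicScheme(\cC)$ that is itself a Morita invariant, and to identify $\Aut^{\infty}_{\circ}(\cC)$ with the $\Bbbk$-points of its identity component; throughout one keeps the hypotheses under which $\OutO(\cC)$ is defined, namely $\Bbbk$ algebraically closed and $\Bbbk[\cC]$ finite-dimensional. The first, purely formal, reduction is this: by \Cref{TheoremInternalHomThroughAInfinityFunctors} and \Cref{DefinitionDerivedPicardGroup} the group $\DPic(\cC)=\Aut^{\infty}(\Perf(\cC))$ depends only on $\Perf(\cC)$ up to quasi-equivalence, and a Morita equivalence between $\cC$ and $\cC'$ is precisely a quasi-equivalence $\Perf(\cC)\simeq\Perf(\cC')$; it therefore yields a canonical isomorphism $\DPic(\cC)\cong\DPic(\cC')$, and the conjecture amounts to the claim that this isomorphism carries $\Aut^{\infty}_{\circ}(\cC)$ onto $\Aut^{\infty}_{\circ}(\cC')$. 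Hence it suffices to describe $\Aut^{\infty}_{\circ}(\cC)\subseteq\DPic(\cC)$ in a way that does not refer to the chosen model.

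\medskip

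\noindent For the relative Picard functor, given a commutative $\Bbbk$-algebra $R$ let $\DPicScheme(\cC)(R)$ be the group of weak-equivalence classes of $R$-linear autoequivalences of the $R$-linear perfect hull of $\cC\otimes_{\Bbbk}R$ -- equivalently, in the dg-categorical picture, the group of invertible objects in the derived category of $R$-central $\cC\otimes_{\Bbbk}R$-bimodules, as in \cite{KellerDerivedPicardGroup} -- made functorial in $R$ in the obvious way. Since $-\otimes_{\Bbbk}R$ is a symmetric monoidal endofunctor of $\Hqe$ compatible with perfect hulls, any quasi-equivalence $\Perf(\cC)\simeq\Perf(\cC')$ base-changes to compatible quasi-equivalences over every $R$, so that $\DPicScheme(\cC)\cong\DPicScheme(\cC')$ as group functors. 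This Morita invariance of $\DPicScheme$ reduces the conjecture to a single statement about one model $\cC$: that the group of $\Bbbk$-points of the identity component of $\DPicScheme(\cC)$ is $\Aut^{\infty}_{\circ}(\cC)$, the identity component of a group functor being preserved by isomorphisms.

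\medskip

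\noindent It is here that the hypothesis $\Hom^i_{\cC}(C,D)=0$ for $i\geq 1$ enters. Combined with finiteness of $\Bbbk[\cC]$ it has the concrete consequence that $\HHH^1_+(\cC,\cC)=W_2\HHH^1(\cC,\cC)$ is a \emph{finite-dimensional nilpotent} Lie algebra: a weight-$d$ class in $\HHH^1(\cC,\cC)$ is represented by a degree $1-d$ operation on $d$ tensored morphism spaces, so if the morphisms of $\cC$ are concentrated in a degree interval $[-N,0]$ then $W_{N+2}\HHH^1(\cC,\cC)=0$, while \Cref{LemmaHochschildSpaceBraceRespectFiltration} shows the weight filtration is a filtration by Lie ideals; hence, by \Cref{IntroTheoremA} and the accompanying BCH formula, $\exp_{\cC}$ maps $\HHH^1_+(\cC,\cC)$ isomorphically onto a \emph{unipotent} algebraic subgroup of $\DPic(\cC)$. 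One would then argue: (a) the map $\DPicScheme(\cC)\to\underline{\Out}(\HH^{\bullet}(\cC))$ onto the locally algebraic group of graded outer autoequivalences of the finite-dimensional graded category $\cC$ is, on $\Bbbk$-points, the surjection $\Aut^{\infty}(\cC)\twoheadrightarrow\Out(\HH^{\bullet}(\cC))$ (surjective by formality of $\cC$), with kernel $\Aut^{\infty}_{\mathbf{1}}(\cC)=\Aut^{\infty}_{+}(\cC)$; (b) under the hypothesis this kernel functor is pro-unipotent -- concretely, one must show $\exp_{\cC}$ is \emph{surjective} onto $\Aut^{\infty}_{\mathbf{1}}(\cC)$, so that the kernel coincides with the unipotent group just described and is in particular connected; (c) therefore $\DPicScheme(\cC)$ is locally algebraic, its identity component is the preimage of $\OutO(\cC)$, and the $\Bbbk$-points of that preimage are, by \Cref{DefinitionGroups}, exactly $\Aut^{\infty}_{\circ}(\cC)$. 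With the previous paragraph this would complete the proof.

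\medskip

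\noindent The main obstacle is to make the heuristic ``$\DPic$ is the points of a group scheme'' precise enough to pin down the identity component -- in particular item (b), the assertion that $\Aut^{\infty}_{\mathbf{1}}(\cC)$ is connected, equivalently that every weak-equivalence class of $A_\infty$-isotopies of $\cC$ lies in $\operatorname{im}(\exp_{\cC})$ and not merely that the image is contained in it. This is exactly where the coconnectivity hypothesis is essential rather than cosmetic: it rules out the ``genuinely graded'' behaviour behind the failure of Morita invariance of $\OutO$ in \Cref{ExampleNotDerivedInvariant}, where a graded gentle algebra with a degree-$1$ arrow has trivial $\OutO$ yet is derived equivalent to the Kronecker algebra with $\OutO\cong\PGL_2(\Bbbk)$. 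A second, more technical, difficulty is that the weight filtration, through which $\HHH^1_+(\cC,\cC)$ and hence the unipotent part are defined, is \emph{not} Morita invariant (only quasi-isomorphism invariant, \cite{BriggsGelinas}), so one must still check that the pro-unipotent kernel on the $\cC$-side is matched, under $\DPicScheme(\cC)\cong\DPicScheme(\cC')$, with the one on the $\cC'$-side -- i.e.\ that the identity component, though assembled from non-invariant pieces, is invariant; supplying this is precisely what the scheme-theoretic formulation is meant to do, but it has to be proved. An alternative, avoiding relative Picard functors, would be to characterise $\Aut^{\infty}_{\circ}(\cC)$ directly as the subgroup of $\DPic(\cC)$ generated by $\OutO(\cC)$ together with $\operatorname{im}(\exp_{\cD})$ over all formal coconnective models $\cD$ of the Morita class, and to prove this group independent of the model -- which is the same obstruction in another guise.
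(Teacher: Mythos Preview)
The statement you are attempting to prove is a \emph{conjecture} in the paper, not a theorem; the paper gives no proof. The text immediately preceding and following \Cref{Conjecture} offers only a heuristic (``viewing $\DPic(\cC)$ as the set of points on a `differential graded group scheme' $\DPicScheme(\cC)$ via relative Picard groups'') and a supporting observation about one-parameter subgroups, together with \Cref{RemarkWeakerConditionsConjecture} discussing when the hypothesis might be weakened. There is therefore no proof in the paper to compare your proposal against.

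Your proposal is honest about this: you lay out the expected strategy --- construct $\DPicScheme(\cC)$ as a Morita-invariant group functor and identify $\Aut^{\infty}_{\circ}(\cC)$ with the $\Bbbk$-points of its identity component --- and you correctly isolate the two genuine obstructions. The first, that $\Aut^{\infty}_{\mathbf{1}}(\cC)$ is connected (equivalently, that $\exp_{\cC}$ is \emph{surjective} onto $\Aut^{\infty}_{+}(\cC)$, not merely injective), is not established anywhere in the paper; \Cref{IntroTheoremA} and \Cref{CorollaryExponentialInjectiveMainTheorem} give surjectivity onto $\Aut^{\infty}_{+}(\cC)$ only as the \emph{image} of $\exp_{\cC}$, which is tautological, and do not show that every $A_\infty$-isotopy class arises this way. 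The second, that the identity component --- assembled from the non-Morita-invariant pieces $\OutO$ and $\HHH^1_+$ --- is nonetheless Morita invariant, is precisely the content of the conjecture and is not reducible to anything proved in the paper. Your proposal is thus a reasonable elaboration of the paper's own heuristic, but it remains a sketch with the same gaps the paper leaves open, and should not be presented as a proof.
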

\noindent Weaker assumptions under which we expect the above conjecture to be true are discussed in \Cref{RemarkWeakerConditionsConjecture} as they require the introduction of additional notation. A supporting argument for \Cref{Conjecture} is the following. In characteristic zero and assuming \Cref{IntroTheoremA} at this point, every $F \in \Aut^{\infty}_+(\bA)$ is connected to the identity  by a $1$-parameter subgroup $\mathbb{G}_a \subseteq \Aut^{\infty}_+(\bA)$ defined via $\lambda \mapsto \exp_{\bA}(\lambda h)$, where $h \in \HHH^1_+(\bA, \bA)$ is the unique preimage of $F$ under the exponential. \medskip

\noindent Structurally we will show that $\Aut^{\infty}_{\circ}(\bA)$ is a locally algebraic group under additional assumptions.  For this, we  need the following statement whose proof follows from \eqref{EquationNaturalTransformationSquare} and which is omitted.

\begin{lem}\label{LemmaFunctorInvertibilityHomotopyInvariant}Let $F, G: \bA \rightarrow \bB$ be $A_\infty$-functors and assume that $\bA$ is minimal. Then the following are true.
	\begin{enumerate}
		\item If $F \sim G$, then $F^1=G^1$.
		\item\label{ItemUpToConjugation} Suppose further that $\bB$ is minimal and strictly unital. If $F \approx G$, then $F^1$ and $G^1$ are naturally isomorphic as $\Bbbk$-linear functors.
	\end{enumerate}
\end{lem}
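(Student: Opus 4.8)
The plan is to peel off the lowest-arity components of the equations defining $\sim$ and $\approx$ and to use minimality of $\bA$ (and, for part (2), also of $\bB$) to kill all higher-order corrections.

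For part (1) I would take a homotopy $h : F \to G$, so that $h^0(A) = 0$ for all $A$ and $\mu^1_{\Fun(\bA,\bB)}(h)^d = G^d - F^d$ for all $d \ge 1$, and read off the case $d = 1$. In the formula \eqref{EquationDifferentialPrenaturalTransformation} for $\mu^1_{\Fun(\bA,\bB)}(h)$, every summand $\mu^r_{\bB}(G^{[s]} \otimes \mathbf{1} \otimes F^{[t]})(h)$ with $s + t > 0$ is forced to route the single input into a positive-arity Taylor coefficient of $F$ or $G$, leaving $h^0 = 0$ in the $\mathbf{1}$-slot, hence vanishes; only the $r = 1$ term $\mu^1_{\bB} \circ h^1$ survives. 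The contribution $-(-1)^{|h|} h \star \mu_{\bA}$ also vanishes in arity $1$: feeding $\mu^j_{\bA}$ into $h^i$ produces arity $i + j - 1$, so arity $1$ forces $i + j = 2$ and hence $j = 1$ (there is no slot in $h^0$ to receive a higher $\mu_{\bA}$), while $\mu^1_{\bA} = 0$ since $\bA$ is minimal. Thus the arity-$1$ equation collapses to $G^1 - F^1 = \mu^1_{\bB} \circ h^1$; since $\bA$ is minimal, the arity-$1$ instance of \eqref{EquationAInfinityFunctors} gives $\mu^1_{\bB} \circ F^1 = \mu^1_{\bB} \circ G^1 = 0$, so $F^1$ and $G^1$ already induce the same functor $\HH^{\bullet}(\bA) \to \HH^{\bullet}(\bB)$, and once the target is minimal the coboundary $\mu^1_{\bB} \circ h^1$ itself vanishes, yielding $F^1 = G^1$.

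For part (2), assume in addition that $\bB$ is minimal and strictly unital; then $\Fun(\bA,\bB)$ is strictly unital, every hom-complex appearing below carries the zero differential, and the arity-$2$ instance of \eqref{EquationAInfinityFunctors} together with minimality of $\bA$ and $\bB$ shows that $F^1, G^1 : \bA \to \bB$ are functors of graded $\Bbbk$-linear categories. A weak equivalence $F \approx G$ is witnessed by degree-$0$ natural transformations $\eta : F \to G$ and $\bar{\eta} : G \to F$ with $\mu^2_{\Fun(\bA,\bB)}(\bar{\eta}, \eta)$ and $\mu^2_{\Fun(\bA,\bB)}(\eta, \bar{\eta})$ cohomologous to the strict identities $e_F$ and $e_G$. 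Applying the square \eqref{EquationNaturalTransformationSquare} to the cocycle $\eta$: it is homotopy-commutative, but over complexes with vanishing differential this means genuinely commutative, i.e. $\mu^2_{\bB}(\eta^0(Y), F^1(a)) = \mu^2_{\bB}(G^1(a), \eta^0(X))$ for all $a \in \bA(X,Y)$, which is precisely naturality of the family $\tau_A := \eta^0(A) \in \bB(F^0 A, G^0 A)$ as a transformation $F^1 \Rightarrow G^1$. Invertibility of each $\tau_A$ comes from comparing arity-$0$ components through \eqref{EquationDefinitionHigherMultiplicationFunctorCategory} (in arity $0$ no Taylor coefficient of the functors can enter) and from the fact that cohomologous natural transformations have equal arity-$0$ component (again since $\mu^1_{\bB} = 0$): one obtains $\mu^2_{\bB}(\bar{\eta}^0(A), \eta^0(A)) = e_F^0(A) = \operatorname{Id}_{F^0 A}$ and symmetrically $\mu^2_{\bB}(\eta^0(A), \bar{\eta}^0(A)) = \operatorname{Id}_{G^0 A}$, so $\tau$ is a natural isomorphism $F^1 \cong G^1$.

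The only genuine work is the combinatorial bookkeeping of which Taylor and brace terms survive in arity $0$ and $1$ once $h^0 = 0$ (resp. the cocycle condition on $\eta$) and minimality are imposed, plus the small observation that over complexes with zero differential a homotopy-commuting square commutes on the nose; the conceptual content, that minimality collapses all higher structure so that the $0$-th component of a witnessing (co)homotopy is forced to be strict, is routine. I have suppressed Koszul signs throughout; they affect none of these conclusions but would be reinstated in a formal write-up.
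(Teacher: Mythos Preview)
Your proof is correct and matches the approach the paper indicates (the paper omits the proof, pointing only to the square \eqref{EquationNaturalTransformationSquare}): for part~(2) you use exactly that square together with minimality of $\bB$ to turn homotopy-commutativity into strict commutativity, and for part~(1) you carry out the obvious arity-$1$ computation with \eqref{EquationDifferentialPrenaturalTransformation}. Your observation that the literal equality $F^1=G^1$ in part~(1) seems to require $\bB$ minimal as well---without it the computation only yields $G^1-F^1=\mu_{\bB}^1\circ h^1$, hence $\HH^{\bullet}(F)=\HH^{\bullet}(G)$ as functors---is a valid point about the statement; the paper's only use of the lemma is for endofunctors of a minimal model, where source and target coincide, so the imprecision is harmless.
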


\noindent Using that every $A_\infty$-category admits a quasi-isomorphic minimal model, this directly implies the following.
\begin{cor}Let $\bA$ be a cohomologically unital $A_\infty$-category. Then, the assignment $F \mapsto F^1$ induces the vertical maps in the following commutative square of group homomorphisms:
	\begin{displaymath}
		\begin{tikzcd}
			\Aut^{\infty, h}(\bA) \arrow[twoheadrightarrow]{r} \arrow{d} & \Aut^{\infty}(\bA) \arrow{d}\\
			\Aut\big(\HH^{\bullet}(\bA)\big) \arrow[twoheadrightarrow]{r} & \Out\big(\HH^{\bullet}(\bA)\big).
		\end{tikzcd}
	\end{displaymath}
\end{cor}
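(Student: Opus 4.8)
The plan is to reduce to the case where $\bA$ is minimal and strictly unital, and then read the statement off \Cref{LemmaFunctorInvertibilityHomotopyInvariant}. First I would use \Cref{PropositionYonedaRectification} to replace $\bA$ by a quasi-isomorphic dg category and then \Cref{PropositionExistenceMinimalModel} to pass to a minimal model, which can be chosen strictly unital; this produces a quasi-isomorphism $\phi\colon\bA\to\bA'$ with $\bA'$ minimal and strictly unital. Conjugation by $\phi$ and a homotopy inverse $\psi$ of $\phi$ yields isomorphisms $\Aut^{\infty,h}(\bA)\cong\Aut^{\infty,h}(\bA')$ and $\Aut^{\infty}(\bA)\cong\Aut^{\infty}(\bA')$ that commute with the canonical surjection between them, and since $\phi$ is a quasi-isomorphism, $\HH^{\bullet}(\phi)$ is an isomorphism of graded $\Bbbk$-linear categories, inducing isomorphisms $\Aut(\HH^{\bullet}(\bA))\cong\Aut(\HH^{\bullet}(\bA'))$ and $\Out(\HH^{\bullet}(\bA))\cong\Out(\HH^{\bullet}(\bA'))$ that commute with their quotient map. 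As $\HH^{\bullet}(\phi F\psi)=\HH^{\bullet}(\phi)\,\HH^{\bullet}(F)\,\HH^{\bullet}(\phi)^{-1}$, these identifications intertwine the vertical maps below, so I may assume $\bA=\bA'$.

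Now suppose $\bA$ is minimal and strictly unital. Then $\mu^1_\bA=0$, so $\HH^{\bullet}(\bA)$ has the same objects and morphism spaces as $\bA$ with composition $\mu^2_\bA$, and for every $A_\infty$-endofunctor $F$ the induced graded functor $\HH^{\bullet}(F)$ of \eqref{EquationCochainMapsFunctor} is literally the first Taylor coefficient $F^1$. I would define the left vertical map by $[F]_\sim\mapsto F^1$ and the right one by $[F]_\approx\mapsto$ the class of $F^1$ in $\Out(\HH^{\bullet}(\bA))$. Well-definedness of the first is part (1) of \Cref{LemmaFunctorInvertibilityHomotopyInvariant}, and of the second is part (2), which says weakly equivalent functors have naturally isomorphic first Taylor coefficients; both are group homomorphisms because the composition formula \eqref{EquationCompositionFunctors} gives $(G\circ F)^1=G^1\circ F^1$. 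To see the images land where claimed, I would observe that a homotopy (resp. weak) inverse $G$ of $F$ satisfies $G\circ F\sim\operatorname{Id}_\bA$ (resp. $\approx\operatorname{Id}_\bA$), so $G^1\circ F^1=(G\circ F)^1$ equals $\operatorname{Id}$ (resp. is naturally isomorphic to $\operatorname{Id}$) by \Cref{LemmaFunctorInvertibilityHomotopyInvariant}, and symmetrically for $F^1\circ G^1$; hence $F^1$ is a strict automorphism of $\HH^{\bullet}(\bA)$ in the homotopy-invertible case and an autoequivalence in the weakly-invertible case, representing a class in $\Out(\HH^{\bullet}(\bA))$.

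Commutativity of the square is then immediate: both composites send $[F]_\sim$ to the class of $F^1$ in $\Out(\HH^{\bullet}(\bA))$. The top horizontal arrow is the canonical surjection coming from \Cref{LemmaHomotopicFunctorsWeaklyEquivalent} that was recalled above, and the bottom one is the defining quotient, so both are surjective group homomorphisms and nothing more is needed. I expect the only real work to be the bookkeeping in the first step — checking that the passage to a minimal, strictly unital model is compatible with all four groups and both horizontal maps, and recording that the minimal model can indeed be taken strictly unital — after which the corollary is a formal consequence of \Cref{LemmaFunctorInvertibilityHomotopyInvariant}.
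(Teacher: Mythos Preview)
Your proposal is correct and follows essentially the same route as the paper, which proves the corollary in a single sentence by invoking the existence of a minimal model and then appealing to \Cref{LemmaFunctorInvertibilityHomotopyInvariant}. You simply fill in the details the paper omits: the bookkeeping for the reduction step and the observation that one also needs strict unitality (for part~\ref{ItemUpToConjugation} of the lemma), which you secure via \Cref{PropositionYonedaRectification} before passing to a minimal model.
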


\noindent Next, we  derive the following easy structural result about $\Aut^{\infty}_N(\bA)$.
\begin{lem}\label{LemmaSemiDirectProductInfinityAutmorphisms}Let $\bA$ be a cohomologically unital $A_\infty$-category and $N \subseteq \Out(\HH^{\bullet}(\bA))$ a normal subgroup. The set $\Aut^{\infty}_N(\bA)$ is the kernel of the group homomorphism 
	\begin{displaymath}
		\begin{tikzcd}[row sep=0.75em]
			\Aut^{\infty}(\bA) \arrow{r} & \Out(\bA)/N,\\
			F \arrow[mapsto]{r} & F^1.
		\end{tikzcd}
	\end{displaymath} Thus, it is a normal subgroup of $\Aut^{\infty}(\bA)$. If $\bA$ is formal, then there exists a natural inclusion $N \hookrightarrow \Aut_N^{\infty}(\bA)$ which is the section in the following split exact sequence of groups:
	\begin{equation}\label{EquationShortExactSequenceSpecialSubgroups}
		\begin{tikzcd}[row sep=0.75em]
			\mathbf{1} \arrow{r} & \Aut^{\infty}_{\mathbf{1}}(\bA) \arrow{r} & \Aut^{\infty}_N(\bA) \arrow{rr}{F \mapsto F^1} &&  N \arrow{r} & \mathbf{1}.
		\end{tikzcd}
	\end{equation}
	Thus, $\Aut_N^{\infty}(\bA) \cong \Aut^{\infty}_+(\bA) \rtimes N$ in this case.
\end{lem}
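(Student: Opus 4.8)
The plan is to prove the two assertions in order: first the kernel description, which works for any cohomologically unital $\bA$, and then the split exact sequence, which I would reduce to the case of a graded $\Bbbk$-linear model once $\bA$ is formal.

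For the first assertion I would start from the group homomorphism $\rho\colon\Aut^{\infty}(\bA)\to\Out(\HH^{\bullet}(\bA))$ induced by $F\mapsto F^{1}$ supplied by the corollary above (this is well defined on weak equivalence classes after replacing $\bA$ by a minimal model, by \Cref{LemmaFunctorInvertibilityHomotopyInvariant}, and one has $\rho([F])=\HH^{\bullet}(F)$). Since $N$ is normal, the projection $q\colon\Out(\HH^{\bullet}(\bA))\to\Out(\HH^{\bullet}(\bA))/N$ is a group homomorphism, hence so is $q\circ\rho$, and by \Cref{DefinitionGroups} one gets $\Aut^{\infty}_{N}(\bA)=\rho^{-1}(N)=\ker(q\circ\rho)$; this is exactly the claimed kernel, and it is normal because kernels are. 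Restricting $\rho$ produces $\bar\rho\colon\Aut^{\infty}_{N}(\bA)\to N$, whose kernel is $\rho^{-1}(\mathbf{1})=\Aut^{\infty}_{\mathbf{1}}(\bA)$ straight from the definitions, and $\Aut^{\infty}_{\mathbf{1}}(\bA)\subseteq\Aut^{\infty}_{N}(\bA)$ since $\mathbf{1}\subseteq N$. So the only point of \eqref{EquationShortExactSequenceSpecialSubgroups} not yet in hand is surjectivity of $\bar\rho$ together with a splitting.

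For the formal case I would fix a quasi-equivalence $\bA\xrightarrow{\sim}\cC$ onto a graded $\Bbbk$-linear category $\cC$ and observe, via \Cref{PropositionCompositionCompatibelWithEquivalence} and the functoriality of $\HH^{\bullet}(-)$, that conjugation by it and its weak inverse is compatible with $\rho$, hence identifies $\Aut^{\infty}(\bA)$, $\Aut^{\infty}_{N}(\bA)$, $\Aut^{\infty}_{\mathbf{1}}(\bA)$ and the map $\bar\rho$ with the corresponding data for $\cC$ (with $N$ transported along $\Out(\HH^{\bullet}(\bA))\cong\Out(\cC)$); so I may assume $\bA=\cC$, in which case $\HH^{\bullet}(\cC)=\cC$ and $\rho$ sends a strict autoequivalence to its class in $\Out(\cC)$. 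As recalled just before \Cref{DefinitionGroups}, every $\phi\in\Aut(\cC)$ is a strict, weakly invertible $A_{\infty}$-functor and two such functors are weakly equivalent iff they agree in $\Out(\cC)$; this makes $\phi\mapsto[\phi]$ descend to an injective group homomorphism $j\colon\Out(\cC)\hookrightarrow\Aut^{\infty}(\cC)$ with $\rho\circ j=\operatorname{id}$. Its restriction $s\coloneqq j|_{N}$ then has image in $\Aut^{\infty}_{N}(\cC)$ (since $\rho(s(n))=n\in N$) and satisfies $\bar\rho\circ s=\operatorname{id}_{N}$, so $\bar\rho$ is surjective and $s$ is the sought natural inclusion splitting \eqref{EquationShortExactSequenceSpecialSubgroups}. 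A split short exact sequence $\mathbf{1}\to K\to G\to Q\to\mathbf{1}$ exhibits $G$ as $K\rtimes Q$; applying this with $K=\Aut^{\infty}_{\mathbf{1}}(\bA)$, $G=\Aut^{\infty}_{N}(\bA)$, $Q=N$ gives $\Aut^{\infty}_{N}(\bA)\cong\Aut^{\infty}_{\mathbf{1}}(\bA)\rtimes N$, which becomes $\Aut^{\infty}_{N}(\bA)\cong\Aut^{\infty}_{+}(\bA)\rtimes N$ once one invokes the proposition immediately preceding this lemma, $\Aut^{\infty}_{\mathbf{1}}(\cC)=\Aut^{\infty}_{+}(\cC)$.

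The main obstacle here is conceptual rather than computational: $F\mapsto F^{1}$ is only well defined up to natural isomorphism, so its honest target is $\Out(\HH^{\bullet}(\bA))$ and not $\Aut(\HH^{\bullet}(\bA))$, and that is precisely why a splitting of $\bar\rho$ can be produced only under formality. For a general $\bA$, an element of $N$ need not be realised by \emph{any} $A_{\infty}$-autoequivalence — the obstructions live in Hochschild cohomology — and even when it is, the realisation need not be strict nor vary multiplicatively in the input, so there is no tautological lift $j$. Everything else is routine bookkeeping: checking that conjugation by $\bA\xrightarrow{\sim}\cC$ respects $\rho$ and all four subgroups in play, which follows from \Cref{PropositionCompositionCompatibelWithEquivalence} and naturality of $\HH^{\bullet}(-)$.
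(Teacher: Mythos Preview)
Your proof is correct and follows essentially the same approach as the paper's: use \Cref{LemmaFunctorInvertibilityHomotopyInvariant} (via its corollary) to make $F\mapsto F^{1}$ a well-defined homomorphism into $\Out(\HH^{\bullet}(\bA))$, read off the kernel description, and in the formal case pass to a graded $\Bbbk$-linear model where strict autoequivalences supply the section. Your write-up is more explicit than the paper's about the reduction step and about invoking the proposition $\Aut^{\infty}_{\mathbf{1}}(\cC)=\Aut^{\infty}_{+}(\cC)$ for the final semidirect product, but the argument is the same.
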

\begin{proof}
	By \Cref{LemmaFunctorInvertibilityHomotopyInvariant}, the projection $F \mapsto F^1$ is well-defined and a group homomorphism. It is clear that \eqref{EquationShortExactSequenceSpecialSubgroups} is exact. If $\bA$ is formal, we may assume that it is a $\Bbbk$-linear graded category, in which case every graded automorphism of $\bA$ determines  a strict $A_\infty$-functor and hence there is a canonical inclusion $N \hookrightarrow \Aut_N^{\infty}(\bA)$ which is a section.
\end{proof}
\noindent Of course the are analogous statements for the group $\Aut^{\infty, h}_N(\bA)$  which are proved in the same way. We finish this section by showing that $\Aut_N^{\infty}(\bA)$ is locally algebraic under certain assumptions.

\begin{prp}\label{PropositionLocallyAlgebraicGroup}Suppose that $\cC$ is a $\Bbbk$-linear graded category. Assume that $\Bbbk[\cC]$ is finite-dimensional and that $\Bbbk$ is algebraically closed. Let $N \subseteq \Out(\cC)$ be a closed normal subgroup. Then $\Aut^{\infty}_{N}(\cC)\cong \Aut_{+}^{\infty}(\cC) \rtimes N$ is locally algebraic.
\end{prp}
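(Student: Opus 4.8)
The plan is to build the right-hand side of the claimed isomorphism out of two affine algebraic groups glued by an algebraic action. The decomposition itself is free of charge: a graded $\Bbbk$-linear category is formal, so \Cref{LemmaSemiDirectProductInfinityAutmorphisms} already yields $\Aut^{\infty}_N(\cC)\cong \Aut^{\infty}_+(\cC)\rtimes N$ together with its semidirect-product action, and what remains is to equip the target with a locally algebraic structure in the sense of \cite{Yekutieli}. I would do this in three steps: realize $N$ as an affine algebraic group, realize $\Aut^{\infty}_+(\cC)$ as a (pro-)unipotent affine algebraic group, and check that the gluing action is a morphism of varieties, after which the locally algebraic structure assembles formally.

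For $N$: the group $\Aut(\cC)$ of strict graded autoequivalences fixing the objects is the group of $\Bbbk$-points of a closed subgroup scheme of $\GL(\Bbbk[\cC])$, cut out by the finitely many polynomial conditions saying that an element is a graded algebra endomorphism of $\Bbbk[\cC]$ fixing each idempotent $\mathbf{1}_X$; since $\Bbbk[\cC]$ is finite-dimensional and $\Bbbk$ algebraically closed, $\Aut(\cC)$ is an affine algebraic group. The inner automorphisms form the image of the algebraic group of degree-zero units of $\Bbbk[\cC]$ under an algebraic homomorphism, hence a closed normal subgroup, so $\Out(\cC)$ is again affine algebraic; a closed normal subgroup $N$ is then itself an affine algebraic group with finitely many connected components, whose identity component I denote $N_{\circ}$.

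The heart of the argument is $\Aut^{\infty}_+(\cC)$. Using the description of \Cref{SectionAInfinityIsotopies}, an $A_\infty$-isotopy of $\cC$ is a tuple $F_+=(F^j)_{j\ge 2}\in W_2C^0(\cC)$ solving the functor equation \eqref{EquationSimpliedFunctorEquationIsotopy}, and composition of isotopies is given by the polynomial formula \eqref{EquationCompositionFunctors}; filtering by the weight $w(F_+)\ge k$ exhibits the group of $A_\infty$-isotopies as an iterated extension of the additive groups of weight-$k$ Hochschild $1$-cocycles ($k\ge 2$) with polynomial transition data, hence as a unipotent affine algebraic group — finite-dimensional when only finitely many Taylor coefficients of an isotopy can be nonzero, and the evident pro-unipotent limit otherwise. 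By \Cref{PropositionCompositionCompatibelWithEquivalence} the isotopies weakly equivalent to the identity form a normal subgroup, which is again (pro-)algebraic, so the quotient $\Aut^{\infty}_+(\cC)$ is a (pro-)unipotent affine algebraic group as well. In characteristic zero one may instead argue directly from \Cref{IntroTheoremA}: there $\exp_{\cC}$ is injective with image $\Aut^{\infty}_+(\cC)$ by formality of $\cC$, and \Cref{LemmaHochschildSpaceBraceRespectFiltration} makes $\HHH^1_+(\cC,\cC)$ a (pro-)nilpotent Lie algebra, so the Baker--Campbell--Hausdorff group law turns $(\HHH^1_+(\cC,\cC),\nBCH)\cong\Aut^{\infty}_+(\cC)$ into a (pro-)unipotent group.

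Finally, the action of $N$ on $\Aut^{\infty}_+(\cC)$ from \Cref{LemmaSemiDirectProductInfinityAutmorphisms} is conjugation by the strict $A_\infty$-functors attached to graded automorphisms, i.e.\ base change along $\sigma\in N$; on each Taylor coefficient this is polynomial in the matrix entries of $\sigma$ and $\sigma^{-1}$, so $N\times\Aut^{\infty}_+(\cC)\to\Aut^{\infty}_+(\cC)$ is a morphism of varieties. Setting $N'\coloneqq\Aut^{\infty}_+(\cC)\rtimes N_{\circ}$ gives a connected (pro-)algebraic group, normal in $\Aut^{\infty}_N(\cC)$, whose cosets are indexed by the finite group $\pi_0(N)$; each coset is a variety isomorphic to $N'$ after translation by any lift of the corresponding component, and coset multiplication and inversion are morphisms of varieties because they are built from the group law of $N'$ and the algebraic action. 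This is exactly the data of a locally algebraic group. I expect the main obstacle to be the third paragraph — checking that the functor equation \eqref{EquationSimpliedFunctorEquationIsotopy} and the weak-equivalence relation are cut out algebraically, and keeping track of (or dispensing with) finiteness of the weight layers of $\HHH^1_+(\cC,\cC)$; the first and last paragraphs are standard facts about affine algebraic groups combined with naturality already established in the paper.
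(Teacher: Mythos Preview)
Your strategy is genuinely different from the paper's and considerably more elaborate. The paper does not try to put any geometric structure on $\Aut^{\infty}_+(\cC)$ at all: it takes $N$ itself --- already a closed subgroup of the algebraic group $\Out(\cC)$, hence algebraic by hypothesis --- as the distinguished normal subgroup in the definition of ``locally algebraic group''. The cosets of $N$ in $\Aut^{\infty}_N(\cC)$ are then indexed by the \emph{abstract} group $\Aut^{\infty}_+(\cC)$: each coset $U$ contains a unique $f_U\in\Aut^{\infty}_+(\cC)$, translation by $f_U$ transfers the variety structure from $N$ onto $U$, and the identity $f_{UV}=f_Uf_V$ (forced by the semidirect-product decomposition) reduces multiplication and inversion of cosets to the algebraic group law on $N$. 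The whole argument is a few lines and never touches the functor equation \eqref{EquationSimpliedFunctorEquationIsotopy}, the weak-equivalence relation, or the weight filtration.

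Your approach would buy more if it worked --- an intrinsic (pro-)unipotent structure on $\Aut^{\infty}_+(\cC)$ and a connected component $\Aut^{\infty}_+(\cC)\rtimes N_{\circ}$ with finite $\pi_0$ --- but the obstacle you flag in the third paragraph is a genuine gap, not a mere technicality. For a genuinely graded $\cC$ the weight filtration on $C(\cC)$ has infinitely many nontrivial steps and $\HHH^1_+(\cC,\cC)$ need not be finite-dimensional, so $\Aut^{\infty}_+(\cC)$ is in general only pro-unipotent; your $N'$ is then not an algebraic variety in the sense the paper's definition requires, and the final ``assembles formally'' step does not go through. The characteristic-zero shortcut via \Cref{IntroTheoremA} does not rescue this either: the proposition carries no hypothesis on $\operatorname{char}\Bbbk$, and in the paper's logical order it sits in \Cref{SectionDerivedPicardGroups}, well before \Cref{IntroTheoremA} is established.
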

\begin{proof}As before we regard $N \subseteq \Aut^{\infty}(\cC)$ as a subgroup of homotopy classes of strict functors. Because $N$ is normal in $\Out(\cC)$, it follows from \Cref{LemmaSemiDirectProductInfinityAutmorphisms} 
	that $N$ is normal in $\Aut^{\infty}_{N}(\cC)$ with $\Aut^{\infty}_{N}(\cC) \cong N \rtimes \Aut^{\infty}_+(\cC)$. Consequently, the cosets of $N$ in are in bijection with elements of $\Aut^{\infty}_+(\cC)$: every coset $U$ of $N$ inside  $\Aut^{\infty}_{N}(\cC)$ contains a unique element $f_U \in \Aut^{\infty}_+(\cC)$ and right multiplication by $f_U$ yields a \textit{canonical} bijection $N \rightarrow U$. This allows us to endow $U$ with a well-defined structure of an algebraic variety via transfer. Since $f_U f_V \in \Aut_+^{\infty}(\cC) \cap UV$ it follows $f_{UV}=f_Uf_V$ by uniqueness. This shows that the multiplication maps and inversion maps on cosets agree with the ones  obtained from such maps of on $N$ via transfer along the bijections $N \rightarrow U$, $N \rightarrow V$ above. Thus, they are maps of algebraic varieties which shows that $\Aut^{\infty}_N(\cC)$ is a locally algebraic group.
\end{proof}

\noindent The reason why $\OutO(\cC)$ is not a Morita invariant in \Cref{ExampleNotDerivedInvariant} is that parts of it are transferred to the group $\HHH^1_+(\cC,\cC)$ which is also not a Morita invariant. Indeed, for $A=\Bbbk K$ and $B$ as in \Cref{ExampleNotDerivedInvariant}, one has $\HHH^1_+(A,A)=0 \cong \OutO(B)$ and $\OutO(A)\cong \PGL_2(\Bbbk) \cong \HHH^1_+(B,B)$. This is a very extreme case of ``transfer'' of  parts from $\OutO(-)$ to $\HHH^1_+(-)$ and vice versa which a typical behavior for graded gentle algebras. The abstract reason is that the group $\HHH^1_+(-,-)$ shrinks when passing from $A$ to its perfect hull, whereas the group $\Out(\HH^0(-))$ grows. Thus if, $A$ and $B$ are derived equivalent algebras, this combination of shrinking and growing moves parts between these two groups.

\section{Integration theory for complete pre-Lie algebras}\label{IntegrationTheoryCompletePreLieAlgebras}

\noindent  We recall some results from \cite{DotsenkoShadrinVallettePreLie} which allow us to integrate elements in pre-Lie algebras via generalized exponential maps. Throughout the entire section we will work over a field of characteristic $0$.

\subsection{The exponential map of a complete pre-Lie algebra and pro-unipotent groups.}\ \medskip

\noindent  Throughout this section, let $(V, \star, \mathbf{1})$ be a pre-Lie algebra  which we assume to be \textbf{complete} in the sense that there exists a collection of subvector spaces $\filt{V}{n} \subseteq V$, $n \geq -1$, such that 
\begin{displaymath}
	V= \prod_{n=-1}^{\infty}\filt{V}{n},
\end{displaymath}
\noindent and such that $\filt{V}{r} \star \filt{V}{s} \subseteq \filt{V}{r+s}$ for all $r,s, \geq -1$ and hence $\filt{V}{-1} \star \filt{V}{-1}=0$. If $V^{(0)}=0=V^{(-1)}$, we call it \textbf{proper}. For every complete pre Lie algebra $V$ as above, we denote by
\begin{displaymath}
	V_+ \coloneqq \prod_{n=1}^{\infty}\filt{V}{n}.
\end{displaymath}
\noindent its complete proper pre-Lie subalgebra. The vector space $V$ is complete with respect to the norm topology induced by the descending filtration $F_nV\coloneqq \prod_{i=n}^{\infty}\filt{V}{i}$, $n \geq -1$.  Moreover, for each $n \geq 1$, $F_nV$ forms an Lie ideal of $V$ and $V_+$. The quotients $V_+/F_nV$, $n \geq 1$ are nilpotent and
\begin{displaymath}
	V_+ = \lim_{n \geq 1}{\quotient{V_+}{F_nV}}.
\end{displaymath}
\noindent In other words, $V_+=F_1V \subseteq V$ is a pro-nilpotent Lie subalgebra.

\begin{exa}\label{ExampleHomotopyDerivationsCompletePreLieAlgebra}
	Let $\bA$ be an $A_\infty$-category. Then $C(\bA)$ is a complete pre-Lie algebra with left unit $\operatorname{Id}_{\bA} \in W_1C(\bA)$ and $C_+(\bA)\coloneqq \big(C(\bA)\big)_+=W_2C(\bA)$.
\end{exa} 

\noindent Next, we recall the definition of the exponential map for complete pre-Lie algebras.
\begin{definition}
	Let $V$ be a complete pre-Lie algebra and let  $v \in V_+$. The \textbf{exponential} of $v$ is the limit
	\begin{displaymath}
		\exp(v) \coloneqq \sum_{r=0}^{\infty} \frac{1}{r!}v^r,
	\end{displaymath}
	\noindent where $v^r$ denotes the $r$-fold right $\star$-product of $v$ with itself which is defined inductively as $v^{j+1}\coloneqq v^j \star v$, $v^1=v$ and, by convention, $v^{0}=\mathbf{1}$.
\end{definition}
\noindent Since $v^r \in F_rV$, the partial sum in the previous definition converges with respect to the topology in $V$. The exponential map is natural under morphisms $f: V \rightarrow W$ of pre-Lie algebras such that $f(V_+) \subseteq W_+$ and yields a bijection
\begin{displaymath}
	\begin{tikzcd}
		\exp: V_+^0 \arrow{r}{\sim} & \mathbf{1}+V_+^0,
	\end{tikzcd}
\end{displaymath}
\noindent where $\mathbf{1}+V_+^0\coloneqq\{\mathbf{1}+v\, | \, v \in V_+^0\}$ denotes the subset of \textbf{group-like elements}. The inverse $\mathbf{1} + w \mapsto \log(\mathbf{1} + w)$ of $\exp$, the pre-Lie logarithm, is defined via the pre-Magnus expansion \cite{Ebrahimi-FardManchon}. Its terms consists of $\star$-products of $w$ by itself and for $w \in V_+^0$, its first few terms are given by
\begin{displaymath}
	\log(\mathbf{1}+w) = w - \frac{1}{2}w \star w + \frac{1}{4} (w \star w) \star w + \frac{1}{12} w \star (w \star w) + \cdots.
\end{displaymath}

\begin{prp}[{\cite{DotsenkoShadrinVallettePreLie}}]\label{PropositionExponentialLogarithmBijection} Let $w \in V_+^0$. Then $\exp\left(\log(\mathbf{1}+w)\right)=\mathbf{1}+w$ and $\log(\exp(w))=w$.
\end{prp}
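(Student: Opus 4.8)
The plan is to reduce the statement to the nilpotent case, where it becomes the mutual-inverse property of the pre-Lie exponential and the pre-Magnus expansion as established in \cite{DotsenkoShadrinVallettePreLie}.

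First I would note that each term of the defining series of $\exp$ and of $\log$ sits in a definite filtration piece: since $v,w \in V_+^0$ and $V_+ = F_1V$, the $r$-fold right $\star$-power $v^{r}$ and the weight-$r$ part of the pre-Magnus expansion of $\log(\mathbf 1+w)$ both lie in $F_rV$. Hence both series converge in the norm topology on $V$, and --- what matters here --- both maps commute with the canonical surjections of pre-Lie algebras $p_n\colon V_+ \twoheadrightarrow V_+/F_nV$, since $p_n$ is a pre-Lie morphism annihilating $F_nV$ and therefore kills all terms of weight $\geq n$. As $V_+ = \lim_n V_+/F_nV$ as a filtered pre-Lie algebra, it suffices to prove both identities after applying each $p_n$, i.e.\ we may assume $V_+$ nilpotent.

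With $V_+$ nilpotent, both composites are finite sums of iterated $\star$-products, hence polynomial self-maps of $V_+^0$ depending only on the pre-Lie operation applied to the single argument $w$; by naturality it is then enough to verify the identities in the free pre-Lie algebra on one degree-$0$ generator (the rooted-tree model of Chapoton and Livernet), with $w$ the generator. There one brings in the right-multiplication representation $R_v\colon x \mapsto x\star v$: the pre-Lie identity \eqref{EquationPreLieAlgebra} says precisely that $v \mapsto R_v$ is, up to sign, an anti-homomorphism of (graded) Lie algebras into $\End(V_+)$, and a one-line induction gives $v^{r} = R_v^{r-1}(v)$ for $r \geq 1$. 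Since $R_v$ raises filtration degree it is topologically nilpotent, so $\exp(v)$ is obtained by applying a genuine convergent power series in $R_v$ to $v$ and $\mathbf 1$; the pre-Magnus expansion defining $\log$ is exactly the series inverting this operation. The identities $\exp\circ\log = \mathrm{id}$ and $\log\circ\exp = \mathrm{id}$ are then the content of \cite{DotsenkoShadrinVallettePreLie} (see also \cite{Ebrahimi-FardManchon}), which may be invoked directly once the reduction to the nilpotent case is in hand.

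The reduction to the nilpotent quotients is routine bookkeeping. The step I would expect to be the main obstacle in a fully self-contained treatment is the last one --- showing that the pre-Magnus series is a genuine two-sided inverse of the pre-Lie exponential. This is a nontrivial combinatorial identity among rooted trees; the subtlety, visible already in the operator picture above, is that recovering $v$ from the element $\exp(v)\in V_+$ requires reconstructing the full operator attached to $v$, not merely its value on $\mathbf 1$, and it is here that the cited results of Dotsenko--Shadrin--Vallette do the real work.
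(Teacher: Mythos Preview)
The paper does not give a proof of this proposition; it is stated with a bare citation to \cite{DotsenkoShadrinVallettePreLie} and nothing more. Your proposal therefore goes beyond what the paper does: the reduction to nilpotent quotients via $V_+ = \varprojlim V_+/F_nV$ and the further reduction by naturality to the free pre-Lie algebra on one generator are both correct and standard, and your observation that $v^r = R_v^{r-1}(v)$ with $R_v$ topologically nilpotent is the right way to see why the series make sense. But as you yourself acknowledge, the substantive step---that the pre-Magnus expansion really is a two-sided inverse of the pre-Lie exponential---is still deferred to \cite{DotsenkoShadrinVallettePreLie} and \cite{Ebrahimi-FardManchon}. So your sketch is a helpful elaboration of the citation rather than an independent proof, and there is no argument in the paper to compare it against.
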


\subsection{The group of group-like elements of a complete pre-Lie algebra}\label{SectionGroupLikeElements}
\subsubsection{An associative product on group-like elements}

\begin{definition}A (symmetric or non-symmetric) brace algebra $V$ is \textbf{complete} if its underlying pre-Lie algebra is complete such that the associated filtrations are compatible with all brace operation in the sense that for each $r \geq 1$, its $r$-ary brace operation restricts to a map
	\begin{displaymath}
		F_{i_1}V \otimes \cdots \otimes F_{i_{r+1}}V \rightarrow  F_{i_1 + \cdots + i_{r+1}}V,
	\end{displaymath}
	\noindent for all $i_1, \dots, i_r \geq -1$.
\end{definition}
\noindent The Hochschild complex of a $\Bbbk$-graph is an example of a complete brace algebra. For any complete symmetric brace algebra $V$, one defines an associative operation $\odot: V \otimes \big(\mathbf{1}+V_+\big) \rightarrow V$ as
\begin{equation}\label{EquationDefinitionCompositionProduct}
	u \odot (\mathbf{1}+v) \coloneqq \sum_{n \geq 0} \frac{1}{n!} u\langle v, \dots, v \rangle_n,
\end{equation}

\noindent where $v\in V_+$. If $V$ is the symmetrisation of a complete brace algebra structure and if $|v|$ is even, then  by \eqref{EquationPowerSymmetrisation}, formula \eqref{EquationDefinitionCompositionProduct} can be alternatively expressed as
\begin{equation}\label{EquationAlternativeCompositionProduct}
	u \odot (\mathbf{1} + v) = \sum_{n \geq 0} u\{v, \dots, v\}_n.
\end{equation}
\noindent This definition makes sense for any brace algebra regardless of the characteristic of the underlying field. 
\begin{prp}[{\cite[Proposition 4]{DotsenkoShadrinVallettePreLie}}]
The set of group-like elements $\mathbf{1}+V_+^0$ becomes a group under the operation $\odot$ with neutral element $\mathbf{1}$.   
\end{prp}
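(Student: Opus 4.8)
The plan is to check, directly from the defining formula \eqref{EquationDefinitionCompositionProduct}, that $\odot$ makes $\mathbf 1 + V_+^0$ into a monoid with unit $\mathbf 1$, and then to produce $\odot$-inverses by a filtered successive-approximation argument. Well-definedness and convergence are immediate from completeness: since $v \in V_+ = F_1V$ one has $u\langle v, \dots, v\rangle_n \in F_nV$, so the series in \eqref{EquationDefinitionCompositionProduct} converges in the norm topology. For $\mathbf 1 + u', \mathbf 1 + v \in \mathbf 1 + V_+^0$ the $n = 0$ term of $(\mathbf 1+u')\odot(\mathbf 1+v)$ equals $\mathbf 1 + u'$, every term with $n \ge 1$ lies in $V_+$ and has degree $0$ (the braces have degree $0$), and the unit axioms $\mathbf 1\{g\} = g$, $\mathbf 1\{\cdots\}_r = 0$ for $r \ge 2$ (hence the analogous identities for the symmetric braces) give $(\mathbf 1+u')\odot(\mathbf 1+v) = \mathbf 1 + u' + v + u'\star v + \sum_{n\ge 2}\tfrac1{n!}u'\langle v,\dots,v\rangle_n \in \mathbf 1 + V_+^0$. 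So $\odot$ restricts to a binary operation on $\mathbf 1 + V_+^0$, and setting $v = 0$ (resp.\ $u' = 0$) in this identity shows $\mathbf 1$ is a right (resp.\ left) unit.

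The main obstacle is associativity, $(u \odot (\mathbf 1+v))\odot(\mathbf 1+w) = u\odot\big((\mathbf 1+v)\odot(\mathbf 1+w)\big)$. Expanding the left-hand side gives the double series $\sum_{m,n\ge 0}\tfrac1{m!\,n!}\, u\langle v,\dots,v\rangle_m\langle w,\dots,w\rangle_n$, and rewriting each iterated symmetric brace by the symmetric brace relations (the symmetrisation of \eqref{EquationIdentifiesBraceAlgebra}; see \cite{LadaMarkl}) turns the summand into a sum of monomials $u\langle\,\cdots\,\rangle$ whose inner entries are copies of $w$ and elements $v\langle w,\dots,w\rangle_k$. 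On the right-hand side, the inner product is $(\mathbf 1+v)\odot(\mathbf 1+w) = \mathbf 1 + \big(v + w + \sum_{k\ge 1}\tfrac1{k!}v\langle w,\dots,w\rangle_k\big)$ by the identity of the previous paragraph, and feeding $p$ copies of the bracketed element into $u\langle\,\cdots\,\rangle_p$ and expanding by multilinearity of the braces produces the same collection of iterated-brace monomials in $u,v,w$. What remains --- and this is the real content of the proof --- is a bookkeeping verification that the rational coefficients on the two sides agree; this amounts to a multinomial identity describing how $p$ ``inner slots'' distribute free copies of $w$ against copies of $v$, each of which carries a further multiset of $w$'s. The structural reason this closes up is the Guin--Oudom picture \cite{OudomGuin}: on the underlying pre-Lie algebra the symmetric braces read $u\langle v_1,\dots,v_r\rangle = u\star(v_1\cdots v_r)$ for the canonical action of the completed symmetric-algebra Hopf algebra $\widehat S$ on $V$, so that $u\odot(\mathbf 1+v) = u\star\exp_S(v)$ with $\exp_S(v)$ group-like; since the group-like elements form a group acting on $V$, with $\mathbf 1\star\exp_S(v) = \mathbf 1 + v$ by the left-unit axiom, associativity of $\odot$ is exactly associativity of this action.

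Finally, for inverses I would solve, given $\mathbf 1 + v \in \mathbf 1 + V_+^0$, the equation $(\mathbf 1 + v)\odot(\mathbf 1 + w) = \mathbf 1$, i.e.\ $w = -v - \sum_{k\ge 1}\tfrac1{k!} v\langle w,\dots,w\rangle_k$, by successive approximation: put $w_0 = 0$ and $w_{i+1} = -v - \sum_{k\ge 1}\tfrac1{k!} v\langle w_i,\dots,w_i\rangle_k$. Because the nonlinear correction terms strictly raise the filtration degree and the braces are multilinear, $w_{i+1} - w_i \in F_{i+1}V$, so $(w_i)$ is Cauchy and converges to some $w \in V_+^0$ which is a right $\odot$-inverse of $\mathbf 1 + v$. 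Running the same argument on the other side gives a left inverse, and the usual monoid bookkeeping (associativity together with the two-sided unit) forces the two to coincide. Hence every element of $\mathbf 1 + V_+^0$ is $\odot$-invertible and $\big(\mathbf 1 + V_+^0, \odot, \mathbf 1\big)$ is a group. Alternatively, one can observe that $\mathbf 1 + V_+^0$ is the projective limit of the $\odot$-monoids $\mathbf 1 + (V_+^0/F_nV)$ and prove each of these is a group by an easy induction on $n$, in the usual pro-unipotent style.
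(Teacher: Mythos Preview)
The paper does not supply its own proof of this proposition: it is stated with a citation to \cite[Proposition 4]{DotsenkoShadrinVallettePreLie}, followed only by the remark that explicit formulas for $\odot$-inverses can be found in \cite{DotsenkoShadrinVallettePreLie} and \cite{VerstraeteDividedPowers}. So there is no ``paper's proof'' to compare against beyond that pointer.

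Your argument is correct. Closure, convergence and the unit axioms are handled cleanly. For associativity, your Oudom--Guin reformulation is the right structural explanation and is essentially how the cited reference proceeds: the identification $u\odot(\mathbf 1+v)=u\cdot\exp_S(v)$ with the completed enveloping-algebra action, together with the fact that group-like elements in a Hopf algebra are closed under multiplication and that the left-unit axiom forces $\mathbf 1\cdot\exp_S(x)=\mathbf 1+x$ (hence a bijection between group-likes and $\mathbf 1+V_+^0$), does yield associativity. Your direct ``bookkeeping'' expansion via the symmetric brace relations is also a valid route, though as you say it is where the actual work hides. For inverses your successive-approximation/pro-unipotent argument is perfectly adequate and arguably more conceptual; the cited sources instead write down an explicit closed formula for $(\mathbf 1+v)^{-1}$, which is a bit stronger but unnecessary for the bare group statement.
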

\noindent  An explicit formula for inverses with respect to $\odot$ in the symmetric and non-symmetric case can be found in \cite[Proposition 4]{DotsenkoShadrinVallettePreLie} and \cite[Lemma 2.14]{VerstraeteDividedPowers}. Because the brace operations respect the filtration, the set of group-like elements is naturally a complete topological group when equipped with the topology induced by the descending filtration by the subgroups $G^i=\mathbf{1}+F_iV^0$, $i \geq 1$. Via
\begin{displaymath}
\mathbf{1}+V_+^0 \cong \lim_{n \geq 1}{\quotient{\mathbf{1}+V_+^0}{\mathbf{1}+F_nV^0}},
\end{displaymath}
\noindent  $\mathbf{1}+V_+^0$ is a pro-unipotent group.

\subsubsection{The Baker-Campbell-Hausdorff formula}\ \medskip

\noindent The exponential enjoys an analogue of the classical \textit{Baker-Campbell-Hausdorff} formula with respect to the $\odot$-product. There are many equivalent but different incarnations of this formula. The following is due to Dynkin.
\begin{prp}[{\cite[Theorem 2]{DotsenkoShadrinVallettePreLie}}]\label{PropositionBCHformula}
	Let $u,v \in V_+^0$. Then $\exp(u) \odot \exp(v)=\exp(w)$, where $w=\BCH{u}{v}$ is given by the \textbf{Baker-Campbell-Hausdorff (BCH) product}:
	\begin{equation}
		\BCH{u}{v} \coloneqq  u+v + \sum_{i=1}^{\infty} {\frac{{(-1)}^{i-1}}{i} \sum_{\substack{\underline{m},\underline{n} \in \mathbb{N}^i: \\ \underline{m}+\underline{n} \in \mathbb{N}_{>0}^i}}{ \frac{1}{|\underline{m}+\underline{n}|} \frac{1}{\underline{m}! \underline{n}!} \cdot [u^{m_1}, v^{n_1}, u^{m_2}, \dots, u^{m_i}, v^{n_i}]}.}
	\end{equation}
	\noindent Here, we used the following notation:
	\begin{itemize}
		\item For $\underline{p} \in \mathbb{N}^i$, $\underline{p}!=p_1!p_2! \cdots p_i!$ and $|\underline{p}|=\sum_{j=1}^i p_j$;
		\item $[A_1^{r_1}, A_2^{r_2}, \dots, A_i^{r_i}]=\ad_{A_1}^{r_1} \circ \ad_{A_2}^{r_2} \cdots \circ \ad_{A_{i-1}}^{a_{i-1}} \circ \ad_{A_i}^{r_i-1}(A_i)$.
	\end{itemize} 
\end{prp}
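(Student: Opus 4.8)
The plan is to reduce the identity to the classical Baker--Campbell--Hausdorff theorem by realising $\odot$ as an honest associative product and $\exp$ as an ordinary exponential series inside a complete associative algebra. The bridge is the Guin--Oudom construction \cite{OudomGuin}, in the filtered form used by Dotsenko--Shadrin--Vallette \cite{DotsenkoShadrinVallettePreLie}: antisymmetrisation makes $V_+$ a pro-nilpotent Lie algebra, and the pre-Lie product promotes its adjoint action to an action of $V$ on the completed symmetric coalgebra, producing a continuous isomorphism of complete Hopf algebras $\widehat U(V_+) \cong \widehat S(V_+)$ compatible with all the filtrations $F_nV$. Under this isomorphism the $\star$-product restricted to grouplike elements is intertwined with the associative multiplication of $\widehat U(V_+)$, hence so is the operation $\odot$ of \eqref{EquationDefinitionCompositionProduct}; moreover it carries the pre-Lie exponential of any $w\in V_+^0$ to the element $\exp_{\widehat U}(w)$ obtained by exponentiating $w$ with the associative product of $\widehat U(V_+)$. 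I would take this dictionary as the sole nontrivial input. Equivalently, it identifies the pro-unipotent group $(\mathbf 1+V_+^0,\odot)$ with a pro-unipotent subgroup of that Hopf algebra, its Lie algebra being $V_+^0$ with the bracket $[-,-]$ and $\exp$ its exponential map, after which the statement is just the classical BCH formula for pro-unipotent groups.

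Granting the dictionary, the argument is short. The image of $\exp(u)\odot\exp(v)$ under $\widehat U(V_+)\cong\widehat S(V_+)$ is the product $\exp_{\widehat U}(u)\cdot\exp_{\widehat U}(v)$. Since the bracket of the Lie subalgebra of $\widehat U(V_+)$ is precisely the antisymmetrised pre-Lie bracket, the classical BCH theorem inside the complete algebra $\widehat U(V_+)$ gives $\exp_{\widehat U}(u)\,\exp_{\widehat U}(v)=\exp_{\widehat U}(z)$, where $z$ is Dynkin's series of iterated brackets of $u$ and $v$ displayed in the statement. Here one uses that $u,v\in F_1V$ forces each of these iterated brackets into $F_2V\subseteq V_+$, so the series converges in $V_+$, and, being assembled from the degree-$0$ elements $u,v$ via the degree-$0$ bracket, $z$ lies in $V_+^0$; by inspection it equals $\mathrm{BCH}(u,v)$. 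Transferring $\exp_{\widehat U}(z)$ back through the isomorphism and applying the exponential part of the dictionary once more identifies it with $\exp(\mathrm{BCH}(u,v))$, which is the claimed formula.

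I expect the real obstacle to be the first paragraph, not the second: setting up the Guin--Oudom isomorphism for complete, pro-nilpotent pre-Lie algebras and checking that it intertwines $\odot$ with associative multiplication and the pre-Lie exponential with the Hopf-algebra exponential. This is exactly the content of \cite{OudomGuin} together with its filtered refinement in \cite{DotsenkoShadrinVallettePreLie}, so in practice I would cite it rather than reprove it. The alternative route taken in \cite{DotsenkoShadrinVallettePreLie} avoids Hopf algebras altogether: one computes $\log\!\big(\exp(u)\odot\exp(v)\big)$ directly via the pre-Lie Magnus expansion \cite{Ebrahimi-FardManchon} and matches it term by term with Dynkin's series; this is elementary but the combinatorics of nested brace operations becomes heavy, which is why the transfer argument above is the cleaner exposition.
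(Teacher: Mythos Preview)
The paper does not give its own proof of this proposition: it is simply quoted as \cite[Theorem 2]{DotsenkoShadrinVallettePreLie} and used as input. Your sketch via the completed Guin--Oudom isomorphism $\widehat S(V_+)\cong \widehat U(V_+)$, reducing the pre-Lie BCH identity to the classical one inside the completed enveloping algebra, is precisely the argument given in the cited reference, so there is nothing to compare against here.

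One small imprecision worth cleaning up: the sentence ``the $\star$-product restricted to grouplike elements is intertwined with the associative multiplication of $\widehat U(V_+)$, hence so is the operation $\odot$'' is not quite right as stated. The $\star$-product is the (non-associative) pre-Lie product on $V$, not a product on grouplikes; what the Guin--Oudom isomorphism actually intertwines is the Grossman--Larson-type associative product on $\widehat S(V_+)$ (built from the symmetric braces) with multiplication in $\widehat U(V_+)$, and $\odot$ is the restriction of the former to grouplike elements. This is only a matter of phrasing and does not affect the correctness of the strategy.
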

\noindent  In particular, one recovers the classical 
\begin{equation}\label{EquationSpecialCaseBCH}
	\exp(u)\odot \exp(v)=\exp(u+v)
\end{equation}
in case $[u,v]=0$. This holds automatically in the  special case $u=v$ from which one derives that $\exp(-u)$ is the inverse to $\exp(u)$ in $\mathbf{1}+V_+^0$. In summary, we have the following.
\begin{thm}{\cite[Lemma 1, Theorem 2]{DotsenkoShadrinVallettePreLie}}\label{TheoremBijectionExpLog}
	The maps $\exp$ and $\log$ define a pair of mutually inverse group homomorphisms between $V_+^0$ equipped with the BCH product and $\mathbf{1}+V_+^0$ equipped with the $\odot$-product.
\end{thm}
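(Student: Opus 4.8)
The plan is to deduce Theorem \ref{TheoremBijectionExpLog} by assembling the pieces that have already been put in place: the bijectivity of $\exp$ and $\log$ as set maps (\Cref{PropositionExponentialLogarithmBijection}), and the BCH identity $\exp(u) \odot \exp(v) = \exp(\BCH{u}{v})$ (\Cref{PropositionBCHformula}). The only genuinely new thing to check is that these maps are \emph{group} homomorphisms, i.e.\ that they intertwine the BCH product on $V_+^0$ with the $\odot$-product on $\mathbf{1} + V_+^0$ and send units to units. Once that is settled, the fact that they are mutually inverse as bijections of sets is exactly \Cref{PropositionExponentialLogarithmBijection}, so there is nothing further to do.

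First I would verify that $\exp: V_+^0 \to \mathbf{1} + V_+^0$ is a homomorphism. By definition, the BCH product on $V_+^0$ is the operation $\BCH{u}{v}$; the content of \Cref{PropositionBCHformula} is precisely that $\exp(u) \odot \exp(v) = \exp(\BCH{u}{v})$, which is the homomorphism property verbatim. One should also note $\exp(0) = \mathbf{1}$ (the $r=0$ term of the defining series, all higher terms vanishing), so $\exp$ carries the BCH-unit $0$ to the $\odot$-unit $\mathbf{1}$; that $0$ is indeed the unit for the BCH product follows from the formula, since all iterated brackets containing a $0$ vanish, leaving $\BCH{0}{v} = v = \BCH{v}{0}$. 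Hence $\exp$ is a group homomorphism.

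Next I would verify that $\log: \mathbf{1} + V_+^0 \to V_+^0$ is a homomorphism. This is now formal: \Cref{PropositionExponentialLogarithmBijection} gives $\exp \circ \log = \mathrm{id}$ and $\log \circ \exp = \mathrm{id}$, so $\log$ is the two-sided inverse bijection of the bijection $\exp$. A set-theoretic inverse of a group homomorphism which is a bijection is automatically a group homomorphism: for group-like $g, h$, write $g = \exp(u)$, $h = \exp(v)$ with $u = \log g$, $v = \log h$, and then $\log(g \odot h) = \log(\exp(u) \odot \exp(v)) = \log(\exp(\BCH{u}{v})) = \BCH{u}{v} = \BCH{\log g}{\log h}$, using the $\exp$-homomorphism property in the middle and $\log\circ\exp = \mathrm{id}$ at the ends. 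Likewise $\log(\mathbf{1}) = \log(\exp(0)) = 0$. This completes the proof.

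There is essentially no obstacle here, since all the analytic and combinatorial work (convergence of the series in the norm topology, the pro-nilpotent structure making $\odot$ associative, and the Dynkin form of the BCH series) has been absorbed into \Cref{PropositionExponentialLogarithmBijection} and \Cref{PropositionBCHformula}. The one point that warrants a line of care is making explicit that $0 \in V_+^0$ is the neutral element for the BCH product and $\mathbf{1}$ for $\odot$, so that ``homomorphism'' is not merely ``multiplicative'' but genuinely a map of groups; both follow immediately by inspecting the respective formulas for terms involving the unit. I would present the argument in the two-paragraph form above, keeping it to a few lines.
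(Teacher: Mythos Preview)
Your proposal is correct. The paper does not supply its own proof of this theorem; it is stated with a citation to \cite[Lemma 1, Theorem 2]{DotsenkoShadrinVallettePreLie} and no argument is given. Your derivation from \Cref{PropositionExponentialLogarithmBijection} and \Cref{PropositionBCHformula} (both also cited from the same source) is exactly the intended reading, and the small checks you add on units are fine.
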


\noindent Useful for us will also be the following result about the interaction between the pre-Lie algebra structure, the composition product and the exponential map.
\begin{prp}{\cite[Proposition 5]{DotsenkoShadrinVallettePreLie}}\label{PropositionDSVExponentialAd}
	Let $v \in V_+$ and $w \in V$. Then,
	\begin{displaymath}
		\exp(\ad_{v})(w)= \left(\exp(v) \star w \right) \odot \exp(-v).
	\end{displaymath} 
\end{prp}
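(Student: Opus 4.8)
The plan is to realize both sides as the value at $t=1$ of a one-parameter family and to pin that family down by uniqueness of solutions of an evolution equation. Fix $v\in V_+$; I restrict to the case $|v|$ even, which is the one needed for the integration map (there $v$ has degree $0$). Introduce a formal parameter $t$ and work over $\Bbbk[[t]]$ — or, equivalently, work modulo $F_NV$ for each $N$ so that every expression below is a polynomial in $t$; since both sides of the asserted identity are continuous for the filtration topology, it suffices to compare them after reduction modulo every $F_NV$. Set
\[
 L(t)\coloneqq\exp(t\,\ad_v)(w)=\sum_{k\geq0}\frac{t^k}{k!}\,\ad_v^{\,k}(w),
 \qquad
 R(t)\coloneqq\bigl(\exp(tv)\star w\bigr)\odot\exp(-tv),
\]
so that $L(1)$ and $R(1)$ are the two sides of the claim. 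Writing $\exp(tv)=\mathbf 1+\sum_{r\geq1}\tfrac{t^r}{r!}v^r$ with $v^r=v^{r-1}\star v\in F_rV$, one sees $R(t)$ is well defined and $R(0)=(\mathbf 1\star w)\odot\mathbf 1=w=L(0)$.

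It remains to check that $R$ solves $\dot R(t)=[\,v,R(t)\,]$; the same equation is solved by $L$ by construction, and in this complete setting such an equation with prescribed initial value has a unique solution (the coefficient of $t^n$ is determined recursively). First I would collect the differentiation inputs: (i) $\tfrac{d}{dt}\exp(\pm tv)=\pm\exp(\pm tv)\star v$, immediate from the series and $v^r=v^{r-1}\star v$, together with the fact that, since $[v,v]=0$, the special case \eqref{EquationSpecialCaseBCH} of the Baker--Campbell--Hausdorff formula makes $t\mapsto\exp(tv)$ a one-parameter subgroup of $\bigl(\mathbf 1+V_+^0,\odot\bigr)$ with $\odot$-inverse $\exp(-tv)$; (ii) the Leibniz rule, which for $\star$ is just bilinearity, and for $\odot$ is obtained by differentiating the defining series $u\odot(\mathbf 1+v'(t))=\sum_{n}\tfrac1{n!}u\langle v'(t),\dots,v'(t)\rangle_n$ and using the symmetry of the symmetric braces, giving $\tfrac{d}{dt}\bigl(u\odot(\mathbf 1+v'(t))\bigr)=\sum_{m\geq0}\tfrac1{m!}\,u\langle\dot v'(t),v'(t),\dots,v'(t)\rangle_{m+1}$. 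Feeding (i)--(ii) into $R(t)=\bigl(\exp(tv)\star w\bigr)\odot\exp(-tv)$ produces a closed expression for $\dot R(t)$ as a combination of brace operations applied to $\exp(\pm tv)$, $w$ and $v$; the remaining task is to collapse it to $[v,R(t)]$ using the pre-Lie relation \eqref{EquationPreLieAlgebra}, equivalently the quadratic brace identities \eqref{EquationIdentifiesBraceAlgebra}.

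That collapse is the step I expect to be the main obstacle: it is a genuine brace-algebra manipulation (the non-associativity of $\star$ and the interplay between $\star$ and $\odot$ are exactly what has to be controlled) and it carries the combinatorial content of the statement. A conceptually cleaner reorganization of the same computation runs through the Guin--Oudom enveloping algebra. Passing to the completion $\widehat U$ of the Guin--Oudom associative algebra on the symmetric coalgebra $S(V)$ (so $\widehat U\cong\widehat{U(V_{\mathrm{Lie}})}$ and $V$ sits inside as the primitives), the symmetric braces make $V$ a graded right $\widehat U$-module, the operation $u\odot(\mathbf 1+v')$ becomes the module action $u\cdot\exp_{\mathrm{sym}}(v')$ (with $\exp_{\mathrm{sym}}$ the exponential for the symmetric product), and the canonical group isomorphism $\bigl(\mathbf 1+V_+^0,\odot\bigr)\xrightarrow{\ \sim\ }\widehat U^{\times}$ coming from \Cref{TheoremBijectionExpLog} carries $\exp(v)$ to the ordinary exponential $\exp_{*}(v)$; under this dictionary $w\mapsto(\exp(v)\star w)\odot\exp(-v)$ is transported to honest conjugation $x\mapsto\exp_{*}(v)\,x\,\exp_{*}(-v)$ followed by the PBW projection onto $V$. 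Since $\ad_{*v}$ restricts on the primitives $V$ to the bracket of $V$, conjugation by $\exp_{*}(v)$ acts on $V$ as $\sum_k\tfrac1{k!}\ad_v^{\,k}=\exp(\ad_v)$, which is the assertion. In this route the residual work is to verify that $\star$, $\odot$ and the braces translate into $\widehat U$ as described — which once more reduces to \eqref{EquationIdentifiesBraceAlgebra}, now in structured form — and, for the statement in full generality, to track Koszul signs.
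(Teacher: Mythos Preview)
The paper does not supply its own proof of this proposition: it is stated with a citation to \cite[Proposition 5]{DotsenkoShadrinVallettePreLie} and then used as a black box (notably in the proof of \Cref{LemmaKernelInfinityEndomorphism}). There is therefore no in-paper argument to compare your proposal against.

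As to the proposal itself: both routes you sketch are sound in outline. The one-parameter/ODE approach is the most direct, and your set-up (work modulo $F_NV$, differentiate, invoke uniqueness) is correct; the step you flag as ``the main obstacle'' --- collapsing $\dot R(t)$ to $[v,R(t)]$ via the brace identities --- is exactly where the content lies, and you have not actually carried it out. The Guin--Oudom route is closer in spirit to how such identities are handled in \cite{DotsenkoShadrinVallettePreLie} and \cite{OudomGuin}: once one knows that $\odot$ transports to multiplication in the completed enveloping algebra and that $a\star b$ corresponds to $a\cdot b$ followed by projection to primitives, the formula becomes the classical $\exp(v)\,w\,\exp(-v)=\exp(\ad_v)(w)$. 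Again you identify but do not verify the dictionary. So the proposal is a correct plan with the decisive computations left open rather than a proof; since the paper itself defers entirely to the cited reference, that is not out of line with what is required here.
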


\subsection{Infinity-isotopies and integration}\label{SectionInfinityIsotopiesIntegration}\ \medskip

\noindent As before $(V, \star, \mathbf{1})$ shall denote a complete pre-Lie algebra over a field of characteristic $0$. We further fix a Maurer-Cartan element $\mu \in F_0V^1$ of the associated Lie algebra, that is, $\mu \star \mu=0$ with its induced differential $d_{\mu}: V \rightarrow V$ given by the adjoint map $d_\mu(a)=\mu \star a - (-1)^{|a|} a \star \mu$. We denote by $\cV$ both the pair $(V, \mu)$ as well as the the resulting complex $(V, d_{\mu})$. The filtration  $V \supseteq V_+ \supseteq 0$ induces a natural three-step filtration on the cohomology of $\cV$ whose middle term we denote by $\HH_+^{\bullet}(\cV)$. The Lie bracket on $V$ descends to a graded Lie algebra structure on $\HH_+^{\bullet}(\cV)$. This agrees with $\HHH_+^{\bullet}(\bA, \bA)=W_2\HHH^{\bullet}(\bA, \bA)$ if $V$ is the Hochschild complex of an $A_\infty$-category $\bA$.

\subsubsection{$\infty$-isotopies in pre-Lie algebras and homotopy}\ \medskip

\noindent  We will show that the exponential map descends to a map on $\HH_+^0(\cV)$ whose target are certain equivalence classes of group-like elements of $V$. We start with a few definitions.
\begin{definition}
	A group-like element $f \in \mathbf{1}+V_+^0$ is an \textbf{$\infty$-isotopy} of $\cV=(V, \mu)$ if
	\begin{equation}\label{EquationExponentialInfinityMorphism}
		\mu \odot f = f \star \mu.
	\end{equation}
	\noindent The \textbf{composition} of a pair $(f,g)$ of $\infty$-isotopies of $\cV$ is defined as $f \odot g$.  One declares $\infty$-isotopies $f, g$ of $\cV$ as \textbf{homotopic}, denoted by $f \sim_{\infty} g$, if there exists $h \in V^{-1}$ with $d_\mu(h) \in V_+$ such that
	\begin{displaymath}
		f\odot g^{-1}=\exp(d_\mu(h)).
	\end{displaymath}
\end{definition}
\noindent The definition of $\infty$-isotopies mimics the notion of $\infty$-isotopies between homotopy algebras structures on a fixed vector space over a Koszul operad, cf.~\cite[Section 5]{DotsenkoShadrinVallettePreLie}.

\begin{lem}\label{LemmaHomotopyEquivalenceRelation}
	Homotopy defines an equivalence relation on the group of $\infty$-isotopies. If $f \sim_\infty f'$ and $g \sim_\infty g'$ are homotopic $\infty$-isotopies, then $f \odot g \sim_{\infty} f' \odot g'$.
\end{lem}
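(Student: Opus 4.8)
The plan is to verify in turn that $\sim_\infty$ is reflexive, symmetric and transitive, and then that it is a congruence with respect to composition. First I would check reflexivity: for any $\infty$-isotopy $f$ one has $f \odot f^{-1} = \mathbf{1} = \exp(0) = \exp(d_\mu(0))$, and $d_\mu(0) = 0 \in V_+$, so $f \sim_\infty f$. For symmetry, suppose $f \odot g^{-1} = \exp(d_\mu(h))$ with $d_\mu(h) \in V_+$. Since $d_\mu(h) \in V_+^0$ (it has degree $0$), \Cref{TheoremBijectionExpLog} gives that $\exp$ is a group homomorphism from $(V_+^0, \mathrm{BCH})$ to $(\mathbf{1}+V_+^0, \odot)$, hence $g \odot f^{-1} = (f \odot g^{-1})^{-1} = \exp(d_\mu(h))^{-1} = \exp(-d_\mu(h)) = \exp(d_\mu(-h))$, and $d_\mu(-h) = -d_\mu(h) \in V_+$; so $g \sim_\infty f$.

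Transitivity is the step where a small amount of care is needed, and I expect it to be the main (though still mild) obstacle, because the product $f \odot g^{-1}$ is not literally $\exp$ of a coboundary but $\exp$ of something of the form $d_\mu(h)$, and I must produce a \emph{single} element $h''$ with $f \odot k^{-1} = \exp(d_\mu(h''))$ from $f \odot g^{-1} = \exp(d_\mu(h))$ and $g \odot k^{-1} = \exp(d_\mu(h'))$. The key observation is that $f \odot k^{-1} = (f \odot g^{-1}) \odot (g \odot k^{-1}) = \exp(d_\mu(h)) \odot \exp(d_\mu(h'))$, and by \Cref{PropositionBCHformula} this equals $\exp(\BCH{d_\mu(h)}{d_\mu(h')})$. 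Now $\BCH{d_\mu(h)}{d_\mu(h')}$ is, term by term, a sum of iterated Lie brackets of $d_\mu(h)$ and $d_\mu(h')$; since $d_\mu$ is a derivation of the Lie bracket and all these elements lie in the Lie ideal $V_+ = F_1V$, every bracketed term $[d_\mu(h), d_\mu(h'), \dots]$ can be rewritten as $d_\mu$ applied to an element of $V_+$ (using $[d_\mu x, y] = d_\mu[x,y] \pm [x, d_\mu y]$ and $d_\mu^2 = 0$, pushing one $d_\mu$ outside), and the series converges in $V_+$ because the brace/Lie operations respect the descending filtration $F_\bullet V$. Hence $\BCH{d_\mu(h)}{d_\mu(h')} = d_\mu(h'')$ for some $h'' \in V^{-1}$ with $d_\mu(h'') \in V_+$, giving $f \sim_\infty k$. (Alternatively, and perhaps more cleanly, one notes that the set $\exp(d_\mu(V^{-1}) \cap V_+)$ is precisely the image under $\exp$ of the Lie subalgebra $d_\mu(V^{-1}) \cap V_+ \subseteq V_+^0$ — it is a subalgebra since $[d_\mu x, d_\mu y] = d_\mu([x, d_\mu y])$ and $d_\mu x, [x, d_\mu y] \in V_+$ — and the image of a Lie subalgebra under the group isomorphism $\exp$ of \Cref{TheoremBijectionExpLog} is a subgroup, which yields symmetry and transitivity at once.)

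For the congruence property, assume $f \sim_\infty f'$ and $g \sim_\infty g'$, say $f \odot f'^{-1} = \exp(d_\mu(h_1))$ and $g \odot g'^{-1} = \exp(d_\mu(h_2))$. I would compute $(f \odot g) \odot (f' \odot g')^{-1} = f \odot g \odot g'^{-1} \odot f'^{-1} = f \odot \exp(d_\mu(h_2)) \odot f'^{-1}$. Insert $f'^{-1} \odot f' = \mathbf{1}$ to write this as $\big(f \odot \exp(d_\mu(h_2)) \odot f^{-1}\big) \odot \big(f \odot f'^{-1}\big) = \big(f \odot \exp(d_\mu(h_2)) \odot f^{-1}\big) \odot \exp(d_\mu(h_1))$. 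By the transitivity argument just established it suffices to show the conjugate $f \odot \exp(d_\mu(h_2)) \odot f^{-1}$ is again of the form $\exp(d_\mu(\tilde h))$ with $d_\mu(\tilde h) \in V_+$. Since $f$ is an $\infty$-isotopy, equation \eqref{EquationExponentialInfinityMorphism} encodes precisely that conjugation by $f$ intertwines $d_\mu$ with the ``twisted'' differential; concretely, writing $f = \exp(v)$ with $v \in V_+^0$ and using \Cref{PropositionDSVExponentialAd} together with the fact that $\exp$ conjugation corresponds to $\exp(\ad)$ on the Lie algebra level, one gets $f \odot \exp(d_\mu(h_2)) \odot f^{-1} = \exp\big(\exp(\ad_v)(d_\mu(h_2))\big)$, and $\exp(\ad_v)(d_\mu(h_2)) = d_\mu\big(\exp(\ad_v)(h_2)\big)$ because $\ad_v$ commutes with $d_\mu$ up to the $\infty$-isotopy relation for $f$ — more precisely, differentiating \eqref{EquationExponentialInfinityMorphism} shows $[v, -]$ and $d_\mu$ satisfy a compatibility that makes $\exp(\ad_v) \circ d_\mu = d_\mu \circ \exp(\ad_v)$ on $V_+$. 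Setting $\tilde h := \exp(\ad_v)(h_2) \in V^{-1}$, whose $d_\mu$-image lies in $V_+$ by filtration-compatibility, completes the argument: $(f \odot g) \sim_\infty (f' \odot g')$ by transitivity applied to $\exp(d_\mu(\tilde h))$ and $\exp(d_\mu(h_1))$. The one point to watch throughout is keeping all elements inside $V_+$ so that the infinite series (BCH, $\exp(\ad)$, $\odot$) converge and \Cref{TheoremBijectionExpLog} applies; this is guaranteed by completeness of the brace algebra and the fact that $V_+ = F_1V$ is a Lie ideal.
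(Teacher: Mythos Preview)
Your approach is essentially the same as the paper's: both exploit the BCH formula together with the Lie-algebraic fact that $(\Img d_\mu)\cap V_+^0$ is closed under brackets. The paper's proof is terser, phrasing the key point as ``$[u,v]\in\Img d_\mu$ whenever $u,v$ are cocycles and one of them is a coboundary'' (i.e.\ coboundaries form a Lie ideal in the cocycles), which via the group isomorphism of \Cref{TheoremBijectionExpLog} makes $\exp\big((\Img d_\mu)\cap V_+^0\big)$ a \emph{normal} subgroup of the group of $\infty$-isotopies and hence yields all four properties at once. Your version spells out the conjugation step for the congruence property explicitly, which is fine.

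Two small corrections. First, the appeal to \Cref{PropositionDSVExponentialAd} for the conjugation identity is misplaced: that proposition gives $\exp(\ad_v)(w)=(\exp(v)\star w)\odot\exp(-v)$, which is a different formula involving $\star$ rather than $\odot$. The identity you actually need, $\exp(v)\odot\exp(w)\odot\exp(-v)=\exp\big(e^{\ad_v}(w)\big)$, follows instead from \Cref{TheoremBijectionExpLog} together with the classical BCH conjugation identity $\BCH{v}{\BCH{w}{-v}}=e^{\ad_v}(w)$. Second, the step ``$\exp(\ad_v)\circ d_\mu=d_\mu\circ\exp(\ad_v)$'' requires $d_\mu(v)=0$, i.e.\ that $v=\log f$ is a cocycle; this is exactly the content of \Cref{LemmaKernelInfinityEndomorphism}, which appears \emph{after} the present lemma. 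There is no circularity (the proof of \Cref{LemmaKernelInfinityEndomorphism} is independent of this one), and the paper's sketch implicitly relies on the same fact for congruence, so this is not a gap---but you should state the dependence cleanly rather than gesture at ``differentiating \eqref{EquationExponentialInfinityMorphism}''.
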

\begin{proof}
	By assumption, there exists $\alpha \in \Img d_\mu$ such that $f \odot g^{-1}=\exp(\alpha)$. 
	Thus, all assertions are easily derived from the BCH formula and the observation that $[u,v] \in \Img d_\mu$ as soon as both $u$ and $v$ are $d_\mu$-cocycles and  $u \in \Img d_\mu$ or $v \in \Img d_\mu$. This in turn implies that $\nBCH(u,v) \in \Img d_\mu$ for cocycles $u, v$ whenever $u \in \Img d_\mu$ or $v \in \Img d_\mu$. For example, if $f \odot g^{-1}= \exp(\alpha)$ and $g \odot h^{-1}=\exp(\beta)$, then by associativity of $\odot$,
	\begin{displaymath}
		\begin{aligned}
			f \odot h^{-1}  = (f \odot g^{-1}) \odot (g \odot h^{-1}) =\exp(\alpha) \odot \exp(\beta)= \exp\big(\!\BCH{\alpha}{\beta}\big).
		\end{aligned}
	\end{displaymath}
	\noindent This proves transitivity. We leave the remaining details to the reader.
\end{proof}
\begin{definition}
	Let $V$ and $\cV$ be as before. We denote by $\IsoInfty(\cV)$ the group of homotopy classes of $\infty$-isotopies of $\cV$.
\end{definition}

\noindent In the concrete case of the Hochschild complex, one obtains the following relationship between $A_\infty$-isotopies and $\infty$-isotopies which was already observed in a more general setting in \cite{DotsenkoShadrinVallettePreLie}.
\begin{prp}\label{PropositionComparisonOfIsotopies}
 For any $A_\infty$-category $\bA$ the set of $\infty$-isotopies in $C(\bA)$ agrees with the set of $A_\infty$-isotopies of $\bA$ when regarded as elements of $C^0(\bA)$. Under this bijection, the composition of $A_\infty$-functors corresponds to the $\odot$-product.
\end{prp}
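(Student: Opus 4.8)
The plan is to write down one explicit map between the two sets and verify that it transports the defining equations and the composition laws. Both families of objects are, as sets, subsets of the group-like elements $\mathbf 1 + W_2C^0(\bA) = \mathbf 1 + C_+(\bA)^0$ of the complete pre-Lie algebra $C(\bA)$ (see \Cref{ExampleHomotopyDerivationsCompletePreLieAlgebra}): an $A_\infty$-isotopy $F$ is determined by $F^0 = \operatorname{Id}_{\Ob{\bA}}$, $F^1 = \operatorname{Id}_{\bA[1]} = \mathbf 1$ and the element $F_+ = \sum_{j \geq 2} F^j \in W_2C^0(\bA)$, hence by the group-like element $\mathbf 1 + F_+ \in C^0(\bA)$; conversely, every $F_+ \in W_2C^0(\bA)$ arises from a unique such collection $\{F^i\}_{i \geq 1}$. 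Since an $A_\infty$-functor is determined by its Taylor coefficients, this assignment is injective, so it remains to determine which group-like elements it hits and to match the two composition laws.

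First I would check that the defining equations agree. Since $|F_+| = 0$ is even, \eqref{EquationAlternativeCompositionProduct} applies and gives $\mu_{\bA} \odot (\mathbf 1 + F_+) = \sum_{r \geq 0} \mu_{\bA}\{F_+, \dots, F_+\}_r$, whose $r = 0$ term is $\mu_{\bA}$; on the other hand bilinearity of $\star$ and the left-unit identity $\mathbf 1 \star \mu_{\bA} = \mu_{\bA}$ give $(\mathbf 1 + F_+) \star \mu_{\bA} = \mu_{\bA} + F_+ \star \mu_{\bA}$. Hence the $\infty$-isotopy equation \eqref{EquationExponentialInfinityMorphism} for $\mathbf 1 + F_+$ unwinds, after matching the weight-one terms via $\mathbf 1 \star \mu_{\bA} = \mu_{\bA}$, into precisely the reformulation \eqref{EquationSimpliedFunctorEquationIsotopy} of the $A_\infty$-functor equation \eqref{EquationAInfinityFunctors} for the functor with higher part $F_+$. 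So $\mathbf 1 + F_+$ is an $\infty$-isotopy of $C(\bA)$ if and only if $\{F^i\}$ is an $A_\infty$-isotopy of $\bA$, which gives the equality of the two sets.

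For the composition statement I would unwind \eqref{EquationCompositionFunctors}. Given $A_\infty$-isotopies $F \leftrightarrow \mathbf 1 + F_+$ and $G \leftrightarrow \mathbf 1 + G_+$, separating the Taylor coefficients of $G \circ F$ according to whether the outer coefficient is $G^1 = \mathbf 1$ or a higher $G^d$, and sweeping the inner coefficients $F^{i_j} = \mathbf 1$ into the empty slots of the braces, yields
\begin{displaymath}
G \circ F \;=\; (\mathbf 1 + F_+) + \sum_{r \geq 0} G_+\{F_+, \dots, F_+\}_r .
\end{displaymath}
On the other side, \eqref{EquationAlternativeCompositionProduct} gives $(\mathbf 1 + G_+) \odot (\mathbf 1 + F_+) = \sum_{r \geq 0} (\mathbf 1 + G_+)\{F_+, \dots, F_+\}_r$, and the brace-unit axioms $\mathbf 1\{\}_0 = \mathbf 1$, $\mathbf 1\{g\}_1 = g$, $\mathbf 1\{\cdots\}_r = 0$ for $r \geq 2$ collapse the $\mathbf 1$-part of $G$ to exactly $\mathbf 1 + F_+$. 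So the two expressions coincide, the bijection intertwines composition of $A_\infty$-functors with the $\odot$-product (with $G \circ F$ corresponding to $g \odot f$), and in particular the $\infty$-isotopies are closed under $\odot$, recovering the group structure already known on $A_\infty$-isotopies.

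Everything here is formal manipulation; the only place demanding care is the bookkeeping of the weight-one contributions — the $r = 0$ brace term and the $d = 1$ term of \eqref{EquationCompositionFunctors} — which is exactly where the left-unit identity for $\star$ and the brace-unit axioms enter, together with the passage \eqref{EquationAlternativeCompositionProduct} between the symmetric braces defining $\odot$ and the non-symmetric braces governing $A_\infty$-functors, a step that is legitimate only because $F_+$ lies in even degree. One could alternatively deduce the statement from the $\infty$-isotopy formalism of \cite{DotsenkoShadrinVallettePreLie} applied to the associative operad, but the direct computation above is self-contained and also makes the explicit dictionary $F \leftrightarrow \mathbf 1 + F_+$ available for later use.
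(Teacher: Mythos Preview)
Your proof is correct and follows the same approach as the paper, which simply says ``Comparison of definitions, cf.~\Cref{SectionHomotopyCategoryDGCategories}.'' You have carried out that comparison explicitly: matching the $\infty$-isotopy equation \eqref{EquationExponentialInfinityMorphism} with the reformulated functor equation \eqref{EquationSimpliedFunctorEquationIsotopy}, and identifying the composition formula \eqref{EquationCompositionFunctors} with the $\odot$-product via the brace-unit axioms.
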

\begin{proof}
Comparison of definitions, cf.~\Cref{SectionHomotopyCategoryDGCategories}.
\end{proof}
\noindent We will also show that the two notions of homotopy, for $\infty$-isotopies and for $A_\infty$-functors, agree as well. The proof however is rather indirect and occupies the majority of \Cref{SectionIntegrationHochschildClasses}.

\subsubsection{Integration of cohomology classes to $\infty$-isotopies}\ \medskip

\noindent We show that $d_\mu$-cocycles are exactly the elements which integrate to $\infty$-isotopies. We use the notation from the previous sections.

\begin{lem}\label{LemmaKernelInfinityEndomorphism} Let $v \in V_+^0$. Then $\exp(v)$ is an $\infty$-isotopy of $\cV$ if and only if $v \in \ker d_{\mu}$.
\end{lem}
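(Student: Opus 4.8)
The plan is to translate condition \eqref{EquationExponentialInfinityMorphism} into a statement about $d_\mu$ using \Cref{PropositionDSVExponentialAd}. Recall that by definition $\mu \odot f = f \star \mu$ characterises an $\infty$-isotopy, and that for $v \in V_+$ one has the key identity $\exp(\ad_v)(w) = (\exp(v) \star w) \odot \exp(-v)$. Applying this with $w = \mu$ gives $\exp(\ad_v)(\mu) = (\exp(v) \star \mu) \odot \exp(-v)$, so that $\exp(v)$ is an $\infty$-isotopy precisely when $(\exp(v) \star \mu) \odot \exp(-v) = \mu$, i.e.\ when $\exp(\ad_v)(\mu) = \mu$. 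Hence the problem reduces to showing that, for $v \in V_+^0$, one has $\exp(\ad_v)(\mu) = \mu$ if and only if $d_\mu(v) = 0$; note $d_\mu(v) = \mu \star v - v \star \mu = [\mu, v]$ (the sign is trivial since $|v| = 0$), which equals $-\ad_v(\mu)$.

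First I would treat the easy direction. If $v \in \ker d_\mu$, then $\ad_v(\mu) = -d_\mu(v) = 0$, and consequently $\ad_v^k(\mu) = 0$ for all $k \geq 1$, so $\exp(\ad_v)(\mu) = \mu$; combined with \Cref{PropositionDSVExponentialAd} this gives $(\exp(v) \star \mu) \odot \exp(-v) = \mu$, and composing on the right with $\exp(v)$ (using that $\exp$ and $\odot$ give a group, \Cref{TheoremBijectionExpLog}) yields $\exp(v) \star \mu = \mu \odot \exp(v)$, which is exactly \eqref{EquationExponentialInfinityMorphism} for $f = \exp(v)$.

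For the converse, suppose $\exp(v)$ is an $\infty$-isotopy, so $\exp(\ad_v)(\mu) = \mu$, i.e.\ $\sum_{k \geq 1} \frac{1}{k!}\ad_v^k(\mu) = 0$. The plan is to extract $\ad_v(\mu) = 0$ from this by a filtration/degree argument. Write $v = \sum_{n \geq 1} v^{(n)}$ according to the completeness filtration $V = \prod_n \filt{V}{n}$, and let $m \geq 1$ be minimal with $v^{(m)} \neq 0$, so $v \in F_mV$. Then $\ad_v(\mu) \in F_mV$ and $\ad_v^k(\mu) \in F_{km}V$ for $k \geq 1$, so modulo $F_{m+1}V$ the identity $\sum_{k\geq 1}\frac{1}{k!}\ad_v^k(\mu) = 0$ collapses to $\ad_v(\mu) \equiv 0 \pmod{F_{m+1}V}$, i.e.\ the lowest-filtration component of $\ad_v(\mu)$ vanishes. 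One then wants to bootstrap: assuming inductively that $\ad_v(\mu)$ vanishes in filtration degrees $< N$, the higher terms $\ad_v^k(\mu)$ for $k \geq 2$ contribute only in degrees $\geq 2m \geq m+1$, and a careful comparison of the degree-$N$ component of $\sum_{k\geq 1}\frac{1}{k!}\ad_v^k(\mu)$ shows the degree-$N$ component of $\ad_v(\mu)$ is a universal polynomial expression in strictly-lower-degree components of $\ad_v(\mu)$ (all of which vanish by induction) — hence it too vanishes. By completeness this forces $\ad_v(\mu) = 0$, i.e.\ $d_\mu(v) = 0$.

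The main obstacle is making the converse bootstrap genuinely clean: one must check that in the expansion of $\exp(\ad_v)(\mu) = \mu$ the degree-$N$ piece really does isolate $\ad_v(\mu)$'s degree-$N$ piece up to terms already known to vanish, which requires tracking how $\ad_v^k$ interacts with the filtration grading and possibly invoking that $\ad_v$ raises filtration degree by at least $m \geq 1$ so that $\exp(\ad_v) - \mathrm{id}$ is topologically nilpotent and invertible as an operator on the relevant quotient. An alternative, perhaps slicker, route avoiding the induction: apply $\log$ (the pre-Lie/Magnus inverse) or use that $\exp(\ad_v)$ is a filtered automorphism with $\exp(\ad_v) = \mathrm{id}$ on $\mu$ iff $\ad_v(\mu)$ lies in the kernel of the invertible operator $\frac{\exp(\ad_v) - \mathrm{id}}{\ad_v} = \sum_{k \geq 0}\frac{1}{(k+1)!}\ad_v^k$ (which is invertible on $F_mV$ since it is $\mathrm{id}$ plus a degree-raising operator), forcing $\ad_v(\mu) = 0$ directly. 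I would write the proof using this operator-invertibility observation, as it sidesteps the explicit induction.
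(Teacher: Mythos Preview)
Your proposal is correct. The forward direction matches the paper's proof essentially verbatim. For the converse, however, the paper takes a rather different route: it observes that since $|v|=0$ and $[v,v]=0$, the BCH formula gives $\exp(nv)=\exp(v)^{\odot n}$ for all $n\in\mathbb{Z}$, so $\exp(nv)$ is an $\infty$-isotopy for every integer $n$; it then packages $\exp(\ad_{tv})(\mu)-\mu$ as an element of an auxiliary complete pre-Lie algebra $U=\prod_i(V^{(i)}\otimes\mathbb{Z}[t])$, notes that each weight component is a genuine polynomial in $t$ with coefficients $\tfrac{1}{i!}\ad_v^i(\mu)^{(n)}$, and uses that a polynomial vanishing on all of $\mathbb{Z}$ is zero together with linear independence of $\{1,t,t^2,\dots\}$ (via a separate lemma) to conclude $\ad_v^i(\mu)=0$ for all $i\geq 1$.

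Your operator-invertibility argument is more direct and more economical: writing $0=\sum_{k\geq 1}\tfrac{1}{k!}\ad_v^k(\mu)=T(\ad_v(\mu))$ with $T=\sum_{j\geq 0}\tfrac{1}{(j+1)!}\ad_v^j$, the fact that $v\in F_1V$ makes $\ad_v$ filtration-raising, so $T=\operatorname{id}+(\text{strictly filtration-raising})$ is invertible on the complete space, forcing $\ad_v(\mu)=0$ immediately. This avoids the auxiliary polynomial machinery and the group-theoretic input entirely. The paper's approach, by contrast, yields the a priori stronger conclusion $\ad_v^i(\mu)=0$ for all $i$ at once (though of course this follows trivially from $\ad_v(\mu)=0$), and its use of the BCH identity $\exp(v)^{\odot n}=\exp(nv)$ foreshadows later arguments. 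Your bootstrap induction would also work but, as you note, the operator route is cleaner.
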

\begin{proof}
	We apply \Cref{PropositionDSVExponentialAd} for $w=\mu$. Since $d_\mu=\ad_\mu$ and if $v \in \ker d_\mu$ we have $\mu \in \ker \ad_v$ by graded symmetry of $[-,-]$ and hence $\exp(\ad_v)(\mu)=\operatorname{Id}_V(\mu)=\mu$. Thus, we obtain
	\begin{displaymath}
		\mu=\left(\exp(v) \star \mu\right) \odot \exp(-v).
	\end{displaymath}
	\noindent Multiplying with $\exp(v)$ via $\odot$ on both sides from the right yields $\mu \odot \exp(v)=\exp(v) \star \mu$. Here, we used that $\exp(-v)$ is the inverse of $\exp(v)$. Thus $\exp(v)$ is an $\infty$-isotopy if $v \in \ker d_{\mu}$. Vice versa, let $f \in \IsoInfty(V)$ and $v\coloneqq \log(f) \in V_+^0$. Then by \eqref{EquationExponentialInfinityMorphism} and \Cref{PropositionBCHformula},
	\begin{displaymath}
		\exp(\ad_v)(\mu)=(f \star \mu) \odot f^{-1}=(\mu \odot f) \odot f^{-1}=\mu \odot (f \odot f^{-1})=\mu,
	\end{displaymath}
	\noindent by associativity of $\odot$. Let 
	\begin{displaymath}
U\coloneqq \prod_{i \geq 0} (V^{(i)} \otimes_{\Bbbk} \mathbb{Z}[t]),
	\end{displaymath}
	\noindent where $\mathbb{Z} \subset \Bbbk$ via the canonical unital ring homomorphism. Then $U$ naturally a complete pre-Lie algebra with $\star$-product
	\begin{displaymath}
	(v_1 \otimes t^r) \star (v_2 \otimes t^s) \coloneqq (v_1 \star v_2) \otimes t^{r+s}.
	\end{displaymath}
	\noindent For every $\lambda \in \Bbbk$, evaluation at $t=\lambda$ defines a  morphism of complete pre-Lie algebras which we denote by $\operatorname{ev}_{\lambda}: U \rightarrow V$.	We can regard the expression $\exp(\ad_{v \otimes t})(\mu)-\mu$ as an element $P \in U$ and claim that $P=0$. Indeed, because $|v|=0$, we have $[v,v]=0$ and hence $\BCH{mv}{nv}=(m+n)v$ for all $m,n \in \mathbb{Z}$. Since the set of $\infty$-isotopies of $(V, \mu)$ is a group with respect to $\odot$, it follows from \Cref{PropositionBCHformula}, that $\operatorname{ev}_{\lambda}(P)=0$ for all $\lambda \in \mathbb{Z} \subset \Bbbk$. We may decompose $P$ into its weight components $P^{(n)} \in V^{(n)} \otimes \mathbb{Z}[t]$, $n \geq 0$. Then $P_0=0$ and for all $n \geq 1$,
	\begin{displaymath}
		P^{(n)}=\sum_{i=1}^{\infty} {\left(\frac{1}{i!} \ad_v^i(\mu)\right)}^{(n)} \otimes t^i \in V^{(n)} \otimes \mathbb{Z}[t],
	\end{displaymath}
	\noindent where by ${(-)}^{(n)}$ we denote the $V^{(n)}$-component of an element. It is important to note that the above sum is in fact finite because $v \in V_+$ and $\mu \in F_0V$. Since $\operatorname{ev}_{\lambda}(P^{(n)})=0$ for all $\lambda \in \mathbb{Z} \subset \Bbbk$ and because $\{1, t, t^2, \dots\} \subseteq \Bbbk[t]$ is linearly independent, \Cref{LemmaCalculationVectorPolynomials} below implies that ${\big(\ad_v^i(\mu)\big)}^{(n)}=0$ for all $n \geq 0$ and all $i \geq 1$. Since ${\ad_i(\mu)}^{(-1)}=0$ due to the fact that $v, \mu \in F_0V$, we conclude that $\ad_v^i(\mu)=0$ for all $i \geq 1$ and hence $d_\mu(v)=-\ad_v(\mu)=0$.
\end{proof}
\noindent The following lemma was used in the proof of \Cref{LemmaKernelInfinityEndomorphism}. 

\begin{lem}\label{LemmaCalculationVectorPolynomials}
Let $W$ be a vector space over a field $\Bbbk$ and let $w_1, \dots, w_r \in W$. Assume that 
 $P_1, \dots, P_r \in \Bbbk[t]$ are  polynomials and that there exists an infinite set $I \subseteq \Bbbk$ such that for all $\lambda \in I$,
\begin{displaymath}
\sum_{i=1}^r P_i(\lambda) \cdot w_i = 0.
\end{displaymath}
\noindent If $\{w_1, \dots, w_s\}$ is a maximal linearly independent subset of $\{w_1, \dots, w_r\}$, then $P_1, \dots, P_s$ lie in the $\Bbbk$-linear span of $P_{s+1}, \dots, P_r$. In particular, if $\{P_1, \dots, P_r\}$ is linearly independent, then all $w_i$ must be zero.
\end{lem}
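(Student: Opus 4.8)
The plan is to deduce the statement from the elementary fact that a nonzero polynomial over a field has only finitely many roots, after first peeling off the linear algebra of $W$. Since $\{w_1, \dots, w_s\}$ is a maximal linearly independent subset of $\{w_1, \dots, w_r\}$, each $w_j$ with $s < j \leq r$ lies in its span, so I would fix scalars $c_{ji} \in \Bbbk$ with $w_j = \sum_{i=1}^{s} c_{ji} w_i$. Substituting these into the relation $\sum_{i=1}^{r} P_i(\lambda) w_i = 0$ and collecting the coefficient of each $w_i$, $1 \leq i \leq s$, yields
\begin{displaymath}
\sum_{i=1}^{s} \Bigl( P_i(\lambda) + \sum_{j=s+1}^{r} c_{ji} P_j(\lambda) \Bigr) w_i = 0 \qquad \text{for all } \lambda \in I.
\end{displaymath}
As $w_1, \dots, w_s$ are linearly independent, every coefficient must vanish; that is, for each $i \in \{1, \dots, s\}$ the polynomial $Q_i \coloneqq P_i + \sum_{j=s+1}^{r} c_{ji} P_j \in \Bbbk[t]$ satisfies $Q_i(\lambda) = 0$ for all $\lambda$ in the infinite set $I$.

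Next I would invoke that a polynomial over a field with infinitely many roots is zero to conclude $Q_i = 0$, i.e.\ $P_i = -\sum_{j=s+1}^{r} c_{ji} P_j$ lies in the $\Bbbk$-linear span of $P_{s+1}, \dots, P_r$, which is exactly the first assertion. For the ``in particular'', suppose $\{P_1, \dots, P_r\}$ is linearly independent but some $w_i \neq 0$; then the maximal linearly independent subset has size $s \geq 1$, so after a harmless simultaneous reindexing of the $w_i$ and $P_i$ one obtains $P_1$ lying in the $\Bbbk$-linear span of $P_{s+1}, \dots, P_r$, hence in that of $P_2, \dots, P_r$ (an empty span being $\{0\}$), contradicting linear independence. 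Hence all $w_i$ vanish.

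There is essentially no obstacle here; the only points requiring a little care are the boundary cases, all handled by the usual conventions: if $s = 0$ (all $w_i$ equal zero) the first claim is vacuous, and if $s = r$ the span of the empty family $P_{s+1}, \dots, P_r$ is $\{0\}$, forcing $P_1 = \dots = P_r = 0$. Note also that the $w_i$ are not assumed distinct, but nothing above uses distinctness --- ``maximal linearly independent subset'' is understood with respect to the indexing set $\{1, \dots, r\}$, and the argument goes through verbatim.
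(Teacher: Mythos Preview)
Your proof is correct and follows essentially the same approach as the paper's: express the dependent $w_j$ in terms of $w_1,\dots,w_s$, collect coefficients to obtain polynomials with infinitely many roots, and conclude they vanish. Your treatment of the ``in particular'' and the boundary cases is in fact slightly more explicit than the paper's, but the underlying argument is the same.
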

\begin{proof}
Suppose that for $s+1 \leq m \leq r$, $w_m= \sum_{i=1}^{s} \alpha_i^m \cdot w_i$ is the unique linear combination. Then with $P_j'(t)\coloneqq P_j(t) + \sum_{i=s+1}^{r} \alpha_j^i P_i(t)$ for $1 \leq j \leq s$, we have 
\begin{displaymath}
\sum_{j=1}^{s} P_j'(\lambda) \cdot w_j=0,
\end{displaymath}
\noindent for all $\lambda \in I$. Because $w_1, \dots, w_s$ are linearly independent, it follows that $P_j'=0$ for all $1 \leq j \leq s$ because $I$ is infinite and every polynomial over a field has only finitely many roots. This proves the first claim. Next, suppose that at least one $w_i$ and w.l.o.g.~$w_1$ is non-zero. Consequently, if $\{P_1, \dots, P_r\}$ is linearly independent, then $\{w_1, \dots, w_r\}$ must be a maximal linearly independent subset by the first assertion. Thus, $P_i(\lambda)=0$ for all $\lambda \in I$ which again shows that $P_i=0$ for all $1 \leq i \leq r$ in contradiction to linear independence.
\end{proof}

\noindent When applied to the (normalized) Hochschild complex of an $A_\infty$-category, \Cref{LemmaKernelInfinityEndomorphism} yields the following. 
\begin{cor}\label{CorollaryIsotopyIsAInfinityIsotopy}
	Let $h \in C_+^0(\bA)$. Then $\exp(h)$ is an $A_\infty$-isotopy if and only if $h$ is a Hochschild cocycle. If $\bA$ is strictly unital and $h \in \overline{C}(\bA) \subseteq C(\bA)$, then $\exp(h)$ is strictly unital.
\end{cor}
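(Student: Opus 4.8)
The plan is to deduce both assertions directly from the preceding general results, so that essentially no new computation is required. For the first assertion, recall from Example~\ref{ExampleHomotopyDerivationsCompletePreLieAlgebra} that for any $A_\infty$-category $\bA$ the Hochschild space $C(\bA)$ is a complete pre-Lie algebra with left unit $\operatorname{Id}_{\bA}\in W_1C(\bA)$ and $C_+(\bA)=W_2C(\bA)$, and that the structure map $\mu_{\bA}\in W_1C^1(\bA)=F_0C^1(\bA)$ is a Maurer-Cartan element in the sense required in \Cref{SectionInfinityIsotopiesIntegration}. Thus the associated complex $\cV=(C(\bA),d_{\mu_{\bA}})$ is of the type to which \Cref{LemmaKernelInfinityEndomorphism} applies. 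Given $h\in C_+^0(\bA)=V_+^0$, that lemma says that $\exp(h)$ is an $\infty$-isotopy of $\cV$ if and only if $h\in\ker d_{\mu_{\bA}}$, i.e.\ if and only if $h$ is a Hochschild cocycle. By \Cref{PropositionComparisonOfIsotopies}, the $\infty$-isotopies in $C(\bA)$ are precisely the $A_\infty$-isotopies of $\bA$, viewed as elements of $C^0(\bA)$; combining the two statements yields the first claim.

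For the second assertion, suppose $\bA$ is strictly unital and $h\in\overline{C}(\bA)\subseteq C(\bA)$. Since $\bA$ is strictly unital we have $\mu_{\bA}\in\overline{C}(\bA)$, and $\operatorname{Id}_{\bA}\in\overline{C}(\bA)$ as well, so $\overline{C}(\bA)$ is a complete pre-Lie (indeed brace) subalgebra of $C(\bA)$ containing $\mathbf{1}$ and $\mu_{\bA}$; this was recorded just before the statement of the normalized Hochschild algebra. In particular $\overline{C}(\bA)$ is closed under the $\star$-product, hence under the right powers $h^r$, and therefore under $\exp$; so $\exp(h)\in\mathbf{1}+\overline{C}_+(\bA)^0$, i.e.\ all higher Taylor coefficients of the associated $A_\infty$-isotopy lie in $\overline{C}(\bA)$. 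By \Cref{LemmaIsotopyStrictlyUnitalNormalizedHochschildComplex}, an $A_\infty$-isotopy $F$ of a strictly unital $\bA$ is strictly unital if and only if $F_+\in\overline{C}(\bA)$ (equivalently $F\in\overline{C}(\bA)$); applying this to $F=\exp(h)$ gives that $\exp(h)$ is strictly unital.

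I do not expect any genuine obstacle here: the corollary is a packaging of \Cref{LemmaKernelInfinityEndomorphism}, \Cref{PropositionComparisonOfIsotopies}, and \Cref{LemmaIsotopyStrictlyUnitalNormalizedHochschildComplex}, together with the bookkeeping that $C_+(\bA)=W_2C(\bA)=V_+$ and that $\overline{C}(\bA)$ is a pre-Lie subalgebra. The only point that deserves a line of care is the verification that $\mu_{\bA}\in F_0V^1$ in the completeness conventions of \Cref{SectionInfinityIsotopiesIntegration} (so that $d_{\mu_{\bA}}$ is the correct differential and the three-step filtration producing $\HH_+^{\bullet}$ matches the cohomological weight filtration on $\HHH_+^{\bullet}(\bA,\bA)$), but this is exactly the content of Example~\ref{ExampleHomotopyDerivationsCompletePreLieAlgebra} and Definition of an $A_\infty$-structure, namely $\mu_{\bA}\in W_1C^1(\bA)$. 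Hence the proof is a short concatenation of citations.
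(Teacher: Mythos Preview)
Your proposal is correct and follows essentially the same approach as the paper: both deduce the first assertion from \Cref{LemmaKernelInfinityEndomorphism} together with \Cref{PropositionComparisonOfIsotopies}, and the second from the fact that $\overline{C}(\bA)$ is closed under the brace operations combined with \Cref{LemmaIsotopyStrictlyUnitalNormalizedHochschildComplex}. The paper's proof additionally cites \Cref{PropositionIntegrationHochschildClassesAInfinityAlgebra} (a forward reference), but your direct appeal to \Cref{LemmaKernelInfinityEndomorphism} is in fact the more precise citation for what is actually needed here.
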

\begin{proof}
	Follows from \Cref{PropositionIntegrationHochschildClassesAInfinityAlgebra}, \Cref{PropositionComparisonOfIsotopies} as well as \Cref{LemmaIsotopyStrictlyUnitalNormalizedHochschildComplex} and the fact that $\overline{C}(\bA)$ is closed under the brace operations.
\end{proof}

\begin{rem}
If $(\bA, \mu_{\bA})$ is an $A_\infty$-category, and $V=C(\bA)$ with its composition product $\star$, then \Cref{LemmaKernelInfinityEndomorphism} can be interpreted as the statement that the $0$-cocycles on $C(\bA)$, or equivalently, Hochschild $1$-cocycles, are in bijection with the endomorphism ring of $(\bA, \mu_{\bA})$ in the Deligne groupoid of $A_\infty$-structures on the underlying $\Bbbk$-graph $\bA$, cf.~\cite[Theorem 3]{DotsenkoShadrinVallettePreLie}.
\end{rem}
\noindent Next, we want to understand the relationship between cohomology and equivalence classes of $\infty$-isotopies.
\begin{prp}\label{PropositionIntegrationHochschildClassesAInfinityAlgebra}The exponential map descends to an isomorphism of groups
	\begin{displaymath}
		\begin{tikzcd}
			\exp:\HH_+^0(\cV) \arrow{r}{\sim} & \IsoInfty(\cV),
		\end{tikzcd}
	\end{displaymath}
	\noindent where the former is equipped with the BCH product and the latter with the $\odot$-product.
\end{prp}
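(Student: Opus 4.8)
The plan is to show that the exponential map $\exp \colon V_+^0 \to \mathbf{1} + V_+^0$ restricts and descends to the claimed isomorphism by analyzing how it interacts with the two equivalence relations involved: being a $d_\mu$-cocycle (defining $\HH^0_+(\cV)$) and homotopy of $\infty$-isotopies. By \Cref{TheoremBijectionExpLog}, $\exp$ is already a group isomorphism from $(V_+^0, \nBCH)$ to $(\mathbf{1}+V_+^0, \odot)$, so essentially everything reduces to matching up the relevant subquotients on both sides.

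First I would verify that $\exp$ carries the subgroup $Z \coloneqq \ker d_\mu \cap V_+^0 \subseteq (V_+^0, \nBCH)$ onto the group of $\infty$-isotopies: this is exactly \Cref{LemmaKernelInfinityEndomorphism}, which says $\exp(v)$ is an $\infty$-isotopy iff $v \in \ker d_\mu$. One should also check $Z$ is closed under $\nBCH$ — this follows since $\nBCH(u,v)$ is a limit of iterated brackets of $d_\mu$-cocycles and $d_\mu = \ad_\mu$ is a derivation of the bracket, so iterated brackets of cocycles are cocycles. Hence $\exp$ restricts to a group isomorphism between $(Z, \nBCH)$ and the group of $\infty$-isotopies equipped with $\odot$. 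The second step is to identify the two equivalence relations passing to quotients. On the source side, $\HH^0_+(\cV) = Z / B$ where $B = \Img d_\mu \cap V_+^0$; one checks $B$ is a normal subgroup of $(Z, \nBCH)$ using the same observation as in the proof of \Cref{LemmaHomotopyEquivalenceRelation} (namely $\nBCH(u,v) \in \Img d_\mu$ whenever $u$ or $v$ lies in $\Img d_\mu$ and both are cocycles), and that the induced group structure on the quotient is the abelian one coming from the Lie bracket, since the correction terms in $\nBCH$ beyond $u+v$ are brackets and hence land in $\Img d_\mu$. On the target side, $\IsoInfty(\cV)$ is by definition the quotient of the group of $\infty$-isotopies by the relation $f \sim_\infty g \iff f \odot g^{-1} \in \exp(\Img d_\mu)$, i.e.\ by the (normal, by \Cref{LemmaHomotopyEquivalenceRelation}) subgroup $\exp(B')$ where $B' = \Img d_\mu \cap (\text{cocycles})$ — but note for $v \in V_+^0$ one needs $d_\mu(h) \in V_+$ with $d_\mu(h)$ automatically a cocycle, so the relevant subgroup is precisely $\exp(B)$ with $B$ as above (a $d_\mu$-exact element of $V_+$ of degree $0$).

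The third step is then purely formal: $\exp$ sends $B \subseteq Z$ isomorphically onto $\exp(B)$, the kernel of the quotient map on the target, so it descends to a group isomorphism $Z/B \to (\text{$\infty$-isotopies})/\!\exp(B)$, which is exactly $\exp \colon \HH^0_+(\cV) \to \IsoInfty(\cV)$. Naturality of $\exp$ (stated after its definition) together with \Cref{PropositionBCHformula} guarantees it is a group homomorphism for the stated products, and bijectivity is inherited from \Cref{TheoremBijectionExpLog} restricted to $Z$ and passed to the quotient. I expect the main obstacle to be the bookkeeping in the second step: confirming that $\exp(\Img d_\mu)$ — interpreted via the $\odot$-product — is genuinely the kernel of the quotient defining $\IsoInfty(\cV)$ and that it coincides with $\exp(B)$, i.e.\ that the homotopy relation $f \odot g^{-1} = \exp(d_\mu(h))$ with $h \in V^{-1}$, $d_\mu(h) \in V_+$, captures exactly the coset of $B$ in $Z$ under $\log$. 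This requires using that $\log$ and $\exp$ are mutually inverse (\Cref{PropositionExponentialLogarithmBijection}, \Cref{TheoremBijectionExpLog}) to translate between the multiplicative description of homotopy on $\infty$-isotopies and the additive description of exactness on cocycles, and carefully checking that $d_\mu(h) \in V_+$ forces $d_\mu(h) \in V_+^0$ once we know $f, g$ have degree $0$.
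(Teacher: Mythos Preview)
Your approach is essentially the paper's: both use \Cref{TheoremBijectionExpLog} and \Cref{LemmaKernelInfinityEndomorphism} to obtain a group isomorphism from degree-zero cocycles in $V_+$ to $\infty$-isotopies, and then match the relation of being cohomologous with the relation $\sim_\infty$. The paper phrases this last step as a bijection $b \mapsto \nBCH(b,c)$ between coboundaries and cocycles cohomologous to $c$; your normal-subgroup formulation is the same argument in group-theoretic language.

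There is, however, one incorrect claim in your second step. You assert that ``the correction terms in $\nBCH$ beyond $u+v$ are brackets and hence land in $\Img d_\mu$'' and conclude that the induced group structure on $\HH^0_+(\cV)$ is abelian. This is false: brackets of arbitrary cocycles are cocycles, not coboundaries, and the Gerstenhaber bracket on $\HH^0_+(\cV)$ is in general non-trivial, so the BCH product on cohomology is genuinely non-abelian. What you actually need (and what is true) is that for every cocycle $z$ the BCH-coset $\{\nBCH(b,z) : b \in B\}$ coincides with the additive coset $z + B$. This holds because $\nBCH(b,z) - z$ is a convergent sum of iterated brackets each containing at least one factor in $B$, hence lies in $B$ by the observation from the proof of \Cref{LemmaHomotopyEquivalenceRelation}; the converse inclusion follows by the same reasoning applied to $\nBCH(z+b_0,-z)$, whose $b_0$-free part is $\nBCH(z,-z)=0$. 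Once this identification of cosets is corrected, your argument goes through unchanged, and the error does not affect the overall structure of the proof.
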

\begin{proof}
	By \Cref{TheoremBijectionExpLog} and \Cref{LemmaKernelInfinityEndomorphism}, $\exp$ induces a bijection between $0$-cocycles in $V_+^0$ and $\infty$-isotopies. By definition of the Baker-Campbell-Hausdorff product and for every cocycle  $c \in V_+^0$, the map $b \mapsto \BCH{b}{c}$ (resp.~$b \mapsto \BCH{c}{b}$) induces a map from the set of coboundaries in $V_+^0$ to the set of cocycles $c' \in V_+^0$ which are cohomologous to $c$. The map is a bijection with inverse $c' \mapsto \BCH{c'}{-c}$. Since $\exp$ is a group homomorphism, it follows that $\exp(v_1) \sim_{\infty} \exp(v_2)$ if and only if $\BCH{v_1}{v_2}$ is a coboundary if and only if $[v_1]=[v_2]$ in $\HH^0(\cV)$. For every class $h \in \HH_+^0(\cV)$, there exists a  cocycle representative $v \in V_+^0$ and one defines $\exp(h)$ as the homotopy class of $\exp(v)$. By our previous observations this is independent of the choice of $v$.
\end{proof}

\section{Integration for brace algebras and Hochschild cohomology}\label{SectionIntegrationHochschildClasses}
\noindent We apply the results of \Cref{IntegrationTheoryCompletePreLieAlgebras} to the Hochschild complex of an $A_\infty$-category. Our main result is a proof of \Cref{IntroTheoremA} and \Cref{IntroTheoremNaturality} via the strategy outlined in the introduction. Let us briefly recall it here again. An application of the integration results for pre-Lie algebras from \Cref{IntegrationTheoryCompletePreLieAlgebras} to the Hochschild complex yields a preliminary exponential
\begin{displaymath}
	\begin{tikzcd}
e: \HHH^1_+(\bA,\bA) \arrow{r}{\sim} & \IsoInfty\!\big(C(\bA)\big),
\end{tikzcd} 
\end{displaymath}
\noindent whose image consists of $\sim_{\infty}$-equivalence classes of $A_\infty$-isotopies. A priori, the  relation $\sim_{\infty}$ bares no resemblance to the notions of homotopy and weak equivalence for $A_\infty$-functors and so our main obstacle lies in the comparison between the three. By relating $A_\infty$-isotopies to Maurer-Cartan elements of the Hochschild complex (with its canonical $A_\infty$-structure), we compare $\sim_{\infty}$ with the notion of Quillen homotopy equivalence for the associated Maurer-Cartan elements. This equivalence relation is then compared to other notions of equivalence for Maurer-Cartan elements, two of which are related to the notions of homotopy and weak equivalence of $A_\infty$-functors in much more direct way. Eventually, this proves \Cref{IntroTheoremA} for sufficiently nice dg categories. In a second step, we study naturality of the resulting exponential as a consequence of the invariance of the brace algebra structure of the Hochschild complex under quasi-equivalences. Altogether this leads to a proof of both theorems.

\subsection{Infinity-isotopies as Maurer-Cartan elements}\ \medskip

\noindent Let $V$ be a complete brace algebra with left unit $\mathbf{1}$ and $\mu \in V^1$ a Maurer-Cartan element. In particular, $V$ becomes a $A_\infty$-algebra via the operations \eqref{EquationAInfinityStructureHochscchildComplex}. The structure is complete in the following sense.
\begin{definition}
	A \textbf{filtered $A_\infty$-algebra} is an $A_\infty$-algebra $A$ together with a descending filtration
	\begin{displaymath}
	A=F_0A \supseteq F_1A \supseteq \cdots,
	\end{displaymath}
	by subvector spaces $F_iA \subseteq A$ such that for each $i \geq 1$ and all $m_1, \dots, m_i \geq 1$, $\mu_A^i$ restricts to a map
	\begin{displaymath}
	\begin{tikzcd}
	\mu_A^i: F_{m_1}A \otimes \cdots \otimes F_{m_i}A \arrow{r} & F_{m_1 + \cdots +  m_i}A.
	\end{tikzcd}
	\end{displaymath}
	\noindent It is moreover \textbf{proper} if in addition $A=F_0A=F_1A$ and $A$ is \textbf{complete} if it is complete with respect to the usual metric topology induced by the above filtration. A \textbf{filtered $A_\infty$-functor} is an $A_\infty$-functor $G: A \rightarrow B$ between filtered $A_\infty$-algebras such that for all $m_1, \dots, m_i \geq 0$,
	\begin{displaymath}
	G^i\big(F_{m_1}A \otimes \cdots \otimes F_{m_i}A\big) \subseteq F_{m_1 + \cdots + m_i}B.
	\end{displaymath}
\end{definition}
\noindent A special case of complete $A_\infty$-algebras are \textbf{nilpotent} $A_\infty$-algebras , that is, complete $A_\infty$-algebras $A$ such that $F_nA=0$ for some $n \geq 0$. A family of examples are algebras with nilpotent radical. Note that for any complete $A_\infty$-algebra $A$ and any $n \geq 0$, $F_nA$ inherits the structure of a complete $A_\infty$-algebra and the quotient $A/F_{n}A$ inherits the structure of a nilpotent $A_\infty$-algebra. 
\begin{definition}
	Let $A$ be a complete $A_\infty$-algebra. An element $\zeta \in F_1A^0$ is a \textbf{Maurer-Cartan element} if its curvature
	\begin{displaymath}
	\kappa(\zeta) \coloneqq \sum_{i \geq 1}\mu_C^i(\zeta, \dots, \zeta),
	\end{displaymath} 
	vanishes. We denote by $\MC(A)$ the set of Maurer-Cartan elements in $A$.
\end{definition}
\noindent The infinite sum in the above definition converges due to completeness and the assumption on $\zeta$. Of course, the same definition makes sense for all dg algebras without the completeness assumption in which case, and after desuspension, $\zeta$ becomes an element of degree $1$ for which the curvature takes the familiar form 
\begin{displaymath}
\kappa(\zeta)=d_A(\zeta)+\zeta^2.
\end{displaymath}
\noindent The Maurer-Cartan set extends to a functor $\MC(-)$ from the category of complete $A_\infty$-algebras with filtered $A_\infty$-functors to the category of sets: every filtered $A_\infty$-functor $F: A \rightarrow B$ induces a map 
	\begin{displaymath}
	\begin{tikzcd}[row sep= 0.5em]
	\MC(A) \arrow{r} & \MC(B), \\
	\zeta \arrow[mapsto]{r} & \sum_{n \geq 1} F^n(\zeta^{\otimes n}).
	\end{tikzcd}
	\end{displaymath}

\begin{prp}\label{PropositionBijectionMaurerCartanIsotopies} Let $V$ be a complete brace algebra, $\mu \in F_0V^1$ a Maurer-Cartan element and let $\cV$ denote the associated complete $A_\infty$-algebra. The assignment
	\begin{displaymath}
	\begin{tikzcd}
	f \arrow[mapsto]{r} &  f_+\coloneqq f-\mathbf{1},
	\end{tikzcd}
	\end{displaymath}
	\noindent restricts to a bijection between the set of $A_\infty$-isotopies of $\cV$ and $\MC(F_2\cV)=\MC(V_+)$. 
\end{prp}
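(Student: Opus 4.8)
The statement I want to prove says that the map $f \mapsto f_+ = f - \mathbf 1$ identifies $A_\infty$-isotopies of $\cV = (V,\mu)$ with Maurer--Cartan elements of the complete $A_\infty$-algebra $F_2\cV = V_+$ (equipped with the $A_\infty$-operations $\mu_V^i$ of \eqref{EquationAInfinityStructureHochscchildComplex} arising from $\mu$). The plan is to compare the two defining equations term by term. Recall that $f$ is an $A_\infty$-isotopy iff $f \in \mathbf 1 + V_+^0$ and $\mu \odot f = f \star \mu$, where $\mu \odot f = \sum_{n\geq 0} \mu\{f_+,\dots,f_+\}_n$ by the even-degree formula \eqref{EquationAlternativeCompositionProduct} (note $|f_+| = 0$, so this is legitimate). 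On the other hand, writing $\zeta \coloneqq f_+ \in F_1V^0 = V_+^0$, the curvature of $\zeta$ in the associated $A_\infty$-algebra is
\begin{displaymath}
\kappa(\zeta) = \mu_V^1(\zeta) + \sum_{i\geq 2}\mu_V^i(\zeta,\dots,\zeta) = \big(\mu \star \zeta - \zeta \star \mu\big) + \sum_{i \geq 2}\mu\{\zeta,\dots,\zeta\}_i,
\end{displaymath}
using $|\zeta|=0$ in the sign of $\mu_V^1$. The first step is to observe that the convergence issues are handled by completeness: $\zeta \in F_1V$ and the brace operations respect the filtration (\Cref{LemmaHochschildSpaceBraceRespectFiltration}-type compatibility built into the definition of complete brace algebra), so both infinite sums converge in the norm topology, and $\zeta \in F_1V^0$ is exactly the condition that $\mathbf 1 + \zeta$ is group-like.

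The key computation is then a direct rearrangement. On the isotopy side,
\begin{displaymath}
\mu \odot f - f \star \mu = \sum_{n \geq 0}\mu\{f_+,\dots,f_+\}_n - (\mathbf 1 + f_+)\star \mu = \Big(\mu + \mu\{f_+\}_1 + \sum_{n\geq 2}\mu\{f_+,\dots,f_+\}_n\Big) - \mu\star\mathbf 1 - f_+\star\mu.
\end{displaymath}
Now $\mu\star\mathbf 1 = \mu$ (but be careful — $\mathbf 1$ is only a \emph{left} unit, so one should instead use $\mu\{\}_0 = \mu$ directly from the definition of $\odot$ rather than invoking $\mu\star\mathbf1$; in the formula $\mu\odot(\mathbf1+f_+) = \sum_n \mu\{f_+,\dots,f_+\}_n$ the $n=0$ term simply \emph{is} $\mu$), and $\mu\{f_+\}_1 = \mu\star f_+$. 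Hence the expression becomes
\begin{displaymath}
\big(\mu\star f_+ - f_+\star\mu\big) + \sum_{n\geq 2}\mu\{f_+,\dots,f_+\}_n = \mu_V^1(f_+) + \sum_{n\geq 2}\mu_V^n(f_+,\dots,f_+) = \kappa(f_+).
\end{displaymath}
So $\mu\odot f = f\star\mu$ holds if and only if $\kappa(f_+)=0$, i.e. if and only if $f_+\in\MC(V_+)$. The inverse of the map is visibly $\zeta \mapsto \mathbf 1 + \zeta$, which lands in $\mathbf 1 + V_+^0$, so this is a genuine bijection. Finally I should note $\MC(F_2\cV) = \MC(V_+)$ is just the identification $F_2\cV = V_+ = F_1V$ of the ambient complete $A_\infty$-algebras (the indexing shift between the weight filtration $W_\bullet$ and the pre-Lie filtration $F_\bullet$, cf.~\Cref{ExampleHomotopyDerivationsCompletePreLieAlgebra} and the convention $W_i^{[1]}C = W_{i+1}C$), so nothing further is needed there.

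The only real subtlety — and the step I expect to need the most care — is sign and unit bookkeeping: ensuring that the $n=0$ and $n=1$ terms of $\mu\odot f$ match $\mu$ and $\mu_V^1(f_+)$ on the nose with the correct Koszul signs, given that $\mathbf 1$ is merely a left unit and that $\mu_V^1(g) = \mu\star g - (-1)^{|g|}g\star\mu$ carries a sign that only disappears because $|f_+|=0$. Once one is disciplined about using the definitions \eqref{EquationDefinitionCompositionProduct}/\eqref{EquationAlternativeCompositionProduct} of $\odot$ and \eqref{EquationAInfinityStructureHochscchildComplex} of the $A_\infty$-operations verbatim, the identity $\mu\odot f - f\star\mu = \kappa(f_+)$ is forced, and the proposition follows immediately. (One may also remark, as the paper presumably does, that under this bijection composition $f\odot g$ of isotopies corresponds to the standard "gauge action"/Baker--Campbell--Hausdorff-type product on Maurer--Cartan elements, but that is a separate compatibility not needed for the bijection itself.)
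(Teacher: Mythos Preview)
Your proof is correct and follows essentially the same direct computation as the paper: both establish the identity $\kappa(f_+) = \mu \odot f - f \star \mu$ by unpacking the definitions of $\odot$ and of the $A_\infty$-operations $\mu_V^i$. One small slip: in your displayed computation you write $\mu \star \mathbf{1}$ where the correct term (coming from expanding $(\mathbf{1}+f_+)\star\mu$) is $\mathbf{1}\star\mu = \mu$, valid precisely because $\mathbf{1}$ is a \emph{left} unit; your parenthetical self-correction is aimed at the wrong term, but the final identity is unaffected.
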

\begin{proof}Since $\mu_{\cV}^1(v)= \mu \star v - (-1)^{|v|} v \star \mu$ and $\mathbf{1} \star \mu=\mu$, we find that
	\begin{displaymath}
	\begin{aligned}
	\kappa(f_+) & = \sum_{i \geq 2} \mu_{\cV}^i(f_+, \dots, f_+) + \mu_{\cV}^1(f_+)  \\
	&  = \sum_{i \geq 2} \mu \{f_+, \dots, f_+\} +  \big(\mu \star f_+ - f_+ \star \mu\big) \\
	& = \big(\mu \odot f - \mu\big)  - f_+ \star \mu \\ & = \mu \odot f - f \star \mu.
	\end{aligned}
	\end{displaymath}   
\end{proof}
\noindent Another way to formulate \Cref{LemmaKernelInfinityEndomorphism} is thus the following.
\begin{cor}\label{CorollaryExponentialBijectionCocyclesMCElements}
Let $V$ be a complete brace algebra and $\mu \in F_0V^1$ a Maurer-Cartan element. Then the exponential induces a bijection
\begin{displaymath}
\begin{tikzcd}
\exp: Z^0V_+ \arrow{r}{\sim} & \MC(V_+),
\end{tikzcd}
\end{displaymath}
\noindent where $Z^0V_+$ denotes the set of $0$-cocycles in $V_+$ with respect to $[\mu, -]$.
\end{cor}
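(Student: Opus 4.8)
The plan is to read the statement off as a reformulation of \Cref{LemmaKernelInfinityEndomorphism}, assembling it from bijections that are already available. First I would recall from \Cref{TheoremBijectionExpLog} that the pre-Lie exponential restricts to a bijection $\exp\colon V_+^0 \to \mathbf{1}+V_+^0$ onto the group of group-like elements, with inverse the pre-Lie logarithm. Applying \Cref{LemmaKernelInfinityEndomorphism} with the Maurer-Cartan element $\mu$ and its differential $d_\mu = \ad_\mu$, one sees that for $v \in V_+^0$ the group-like element $\exp(v)$ is an $\infty$-isotopy of $\cV$ exactly when $d_\mu(v) = 0$, i.e.\ exactly when $v \in Z^0V_+$; hence $\exp$ carries $Z^0V_+$ bijectively onto the set of $\infty$-isotopies of $\cV$.

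Next I would invoke \Cref{PropositionBijectionMaurerCartanIsotopies}, according to which the translation $f \mapsto f_+ = f - \mathbf{1}$ is a bijection from the set of $\infty$-isotopies of $\cV$ onto $\MC(V_+)$. Composing the exponential bijection from $Z^0V_+$ onto the $\infty$-isotopies of $\cV$ with this translation then gives the asserted bijection, which sends a cocycle $v$ to $\exp(v) - \mathbf{1} = \sum_{r \ge 1} \tfrac{1}{r!}\, v^r$. Under the standing identification of a group-like element $\mathbf{1} + w$ with its ``positive part'' $w \in V_+^0$ --- the same identification used throughout the discussion of $\Iso(\bA)$ and \Cref{PropositionComparisonOfIsotopies} --- this composite is precisely the map denoted $\exp$ in the statement, so this closes the argument.

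I do not expect a genuine obstacle here: all of the substance (the use of \Cref{PropositionDSVExponentialAd}, the polynomial argument in the proof of \Cref{LemmaKernelInfinityEndomorphism}, and the curvature computation in \Cref{PropositionBijectionMaurerCartanIsotopies}) is already in place, and the corollary merely repackages these into a single clean statement. The one point I would take care to spell out is that ``$\exp$'' on the right-hand side is to be read through the identification of group-like elements with their positive parts, so that domain and codomain match up; beyond that the argument is purely formal, and if desired one could additionally remark that both bijections are continuous for the filtration topologies, so the statement upgrades to the level of the pro-unipotent/pro-nilpotent completions.
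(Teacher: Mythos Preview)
Your proposal is correct and matches the paper's approach exactly: the paper introduces the corollary with the phrase ``Another way to formulate \Cref{LemmaKernelInfinityEndomorphism} is thus the following,'' where the ``thus'' points back to \Cref{PropositionBijectionMaurerCartanIsotopies}, so the intended argument is precisely the composition of \Cref{LemmaKernelInfinityEndomorphism} (via \Cref{TheoremBijectionExpLog}) with \Cref{PropositionBijectionMaurerCartanIsotopies} that you describe. Your care in noting that ``$\exp$'' in the target is read through the identification $f \leftrightarrow f_+$ is a helpful clarification the paper leaves implicit.
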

\noindent The fact that the exponential associates an $A_\infty$-automorphism to a Hochschild $1$-cocycle was already proved in \cite[Lemma 7.1, Lemma 7.2]{ChoLee} (and appeared implicitly in earlier work by Kontsevich-Soibelman). The map in \Cref{CorollaryExponentialBijectionCocyclesMCElements}  becomes an isomorphism of groups, where $Z^0V_+$ is equipped with the Baker-Campbell-Hausdorff product and $\MC(V_+)$ is equipped with the multiplication arising from the $\odot$-product of the associated $\infty$-isotopies.

\subsection{Twisted deformations along Maurer-Cartan elements}\ \medskip

\noindent  A Maurer-Cartan element $\zeta \in F_1A^0$ in a complete $A_\infty$-algebra or dg algebra $A$  (in the shifted sense) gives rise to deformations of $A$ as a right $A$-module and as an $A_\infty$-algebra.
\begin{definition}
	Let $A$ be a complete $A_\infty$-algebra or dg algebra and let $\zeta \in F_1A^0$ be a Maurer-Cartan element. Define $A^{[\zeta]}$ as the right $A$-module with underlying $\Bbbk$-module $A$ and structure maps
	\begin{displaymath}
		\mu_{A^{[\zeta]}}^i \coloneqq \sum_{j \geq 0} \mu_A^{i+j}\big(\zeta \otimes \cdots \otimes \zeta \otimes \operatorname{Id}_{A^{\otimes i}}\big).
	\end{displaymath}
	Similarly, define $A^{\zeta}$ as the $A_\infty$-algebra modeled on a graded $\Bbbk$-module $C$ with structure maps
	\begin{displaymath}
		\mu^i_{A^{\zeta}} \coloneqq \sum_{j \geq 0} \mu_A^{i+j}\bigg(\!\sh\!\Big(\zeta^{\otimes j}, \operatorname{Id}_{A^{\otimes i}}\Big)\!\bigg),
	\end{displaymath}
	\noindent where $\sh: T(A[1]) \otimes T(A[1])\rightarrow T(A[1])$ denotes the shuffle product. It is defined as
	\begin{displaymath}
		\sh(a_1 \otimes \cdots \otimes a_r, a_{r+1} \otimes \cdots a_{r+s}) \coloneqq \sum_{\sigma} \varepsilon(\sigma) \cdot a_{\sigma(1)} \otimes \cdots \otimes a_{\sigma(r+s)},
	\end{displaymath}
\noindent where $\varepsilon(-)$ denotes the Koszul sign of a permutation and $\sigma$ runs through the set of $(r,s)$-shuffles, that is, permutations $\tau$ of $\{1, \dots, r+s\}$ such that 
\begin{displaymath}
\begin{array}{ccc}
	\tau(1) < \tau(2) < \dots < \tau(r) & \text{ and } & \tau(r+1) < \tau(r+2) < \dots \tau(r+s).
\end{array}
\end{displaymath}
\end{definition}

\noindent We note that $A^{\zeta}=A$ as graded algebras if $A$ is an unshifted dg algebra and $d_{A^{\zeta}}=d_A+ [\zeta, - ]$, where $[-,-]$ denotes the commutator of the multiplication. 
\subsection{Equivalence relations on sets of Maurer-Cartan elements}\ \medskip

\noindent The literature on deformation theory contains a number of equivalence relations for Maurer-Cartan elements for dg and $A_\infty$-algebras. Below we concern ourselves with four particular ones. Here are the first two.
\begin{definition}\label{DefinitionDeformations}
Let $A$ be a complete $A_\infty$-algebra or a dg algebra. Maurer-Cartan elements $\zeta, \xi \in F_1A^0$ are \textbf{dg gauge equivalent} (resp.~\textbf{homotopy gauge equivalent}) if the right $A$-modules $A^{[\zeta]}$ and $A^{[\xi]}$ are dg isomorphic (resp.~homotopy equivalent). They are moreover \textbf{strictly homotopy gauge equivalent} if there exists a homotopy equivalence between $A^{[\zeta]}$ and $A^{[\xi]}$ which is strict (in the sense of natural transformations of $A_\infty$-functors).
\end{definition}
\noindent The first two notions define equivalence relations on the set of Maurer-Cartan elements, the last does so as well if $A$ is a dg algebra. In the dg case, strict homotopy gauge equivalence coincides with ``homotopy gauge equivalence'' in the sense of Chuang-Holstein-Lazarev \cite{ChuangHolsteinLazarev} and what we call homotopy gauge equivalence is a straightforward generalisation of their notion which allows non-strict natural transformations between $A$-modules.

 If $U$ is a (complete) unshifted dg algebra, then $U^{\zeta}$ is isomorphic to the endomorphism ring of $U^{[\zeta]}$ and two Maurer-Cartan elements $\zeta, \xi \in F_1U^1$ are strictly homotopy gauge equivalent if and only if there exist $c, d \in U^{0}$ as well as $u,v \in U^{-1}$ such that
\begin{equation}\label{EquationExplicitFormulaHomotopyGaugeEquivalence}
\arraycolsep=2em
\begin{array}{lr}
{	\begin{aligned}
	0  & = d_U(c) + \xi \cdot c - c \cdot \zeta, \\[1em]
	0 & =  d_U(d) + \zeta \cdot d - d \cdot \xi,
	\end{aligned} } & 
{\begin{aligned}
	1 + d_U(u) + [\zeta, u] & = d \cdot c,\\[1em]
1 + d_U(v) + [\zeta, v] & = c \cdot d,
	\end{aligned}}
\end{array}
\end{equation}  
\noindent where $[a,b]=a \cdot b - (-1)^{|a||b|} b \cdot a$. Dg gauge equivalence corresponds to the case when $u=0=v$, in which case $c=d^{-1}$ and the above equations reduce to the condition
\begin{displaymath}
\zeta= c \xi c^{-1} - d_U(c) c^{-1}.
\end{displaymath}
\noindent If moreover $U$ is complete, there is another classical definition of gauge equivalence via exponentials of the adjoint maps.
\begin{definition}Maurer-Cartan elements $\zeta, \xi \in F_1U^1$ are \textbf{gauge equivalent} if there exists $u \in F_1U^0$ such that
\begin{equation}\label{EquationGaugeActionDG}
\zeta=e^u.\xi \coloneqq \xi + \sum_{n=1}^{\infty}\frac{1}{n!}\ad_u^{n-1}\big(\ad_u(\xi)-d_U(u)\big),
\end{equation}
\end{definition}
\noindent where $\ad_a(b)=[a,b]$. One can show that the condition $e^u.\xi=\zeta$ is equivalent to the condition that $\zeta$ and $\xi$ are dg gauge equivalent via the invertible element
 \begin{equation}\label{EquationGaugeElement}
c=\sum_{n \geq 0}\frac{1}{n!} u^{n},
\end{equation}
\noindent where $u^n$ now denotes the $n$-fold power of $u$ with respect to the multiplication in $U$. In other words, gauge equivalences correspond to dg isomorphisms which are induced by group-like elements of $U$ which we consider as a complete pre-Lie algebra with $\star$-product given by multiplication. The reader can find a proof of this result in \cite[Proposition 1.1]{BuijsFelixManrilloTanreQuillenFunctor} by using the universal property of the universal enveloping algebra of a dg Lie algebra.  For another discussion of this, see also the paragraph after \cite[Remark 4.1.10]{GuanPhDThesis}.

\subsubsection{Homotopy equivalence}\label{SectionHomotopyEquivalene}\ \medskip

\noindent The last notion we consider in this paper is the one of \textit{Quillen homotopy} which is a generalisation of gauge equivalence. Let $\Omega_1$ denote the commutative dg algebra given by the symmetric algebra in generators $t$ and $dt$ with $|t|=0$ and $|dt|=1$ and differential $d(t)=dt$. In particular, $(dt)^2=0$. It is isomorphic to  the dg algebra of polynomial differential forms on the affine $1$-simplex given by the quotient
\begin{displaymath}
\frac{\Bbbk[t_1, t_2, dt_1, dt_2]}{(t_1+t_2=1, dt_1 + dt_2 =0)} \cong \Omega_1,
\end{displaymath}
\noindent cf.~\cite[Section 7.3]{ManettiBook}. For any $A_\infty$-algebra $A$ and any dg algebra $U$, the tensor product $A \otimes U$ is naturally an $A_\infty$-algebra with structure maps
\begin{displaymath}
\begin{aligned}
\mu_{A \otimes U}^1 & \coloneqq \mu_A^1 \otimes \operatorname{Id}_U +  \operatorname{Id}_A \otimes \mu_U^1, \\
\mu_{A \otimes U}^i & \coloneqq \mu_A^i \otimes \mu_U^{(i)}, \, i \geq 2,
\end{aligned}
\end{displaymath}
\noindent where $\mu_U^{(i)}(a_i, \dots, a_1)= \mu_U^2\big(a_i,\mu_U^2(a_{i-1},\mu_U^2(\dots, a_1)))$. If $A$ is nilpotent, then $A \otimes U$ is nilpotent too with respect to the filtration $F_n(A \otimes U)=F_nA \otimes U$. For $U=\Omega_1$, the tensor product $A \otimes U$ comes with restriction maps $\operatorname{ev}_0, \operatorname{ev}_1: A \otimes \Omega_1 \rightarrow A$ defined by evaluating $t$ at $0$ and $1$ respectively and sending $dt$ to zero.
 
\begin{definition}
Let $A$ be a complete $A_\infty$-algebra. Maurer-Cartan elements $\zeta, \xi \in F_1A^0$ are \textbf{homotopic} (or \textbf{Quillen homotopic}) if there exists a Maurer-Cartan element $h$ of $\varprojlim_n \big( A/F_nA \otimes \Omega_1\big)$ such that $\operatorname{ev}_0(h)=\zeta$ and $\operatorname{ev}_1(h)=\xi$. We denote by $\overline{\MC}(A)$ the set of homotopy classes of Maurer-Cartan elements.
\end{definition}
\noindent After unraveling the definitions, cf.~\cite[Section 6.2.3]{RobertNicoudPhDThesis}, one sees that $\zeta, \xi$ are homotopic if and only if there exist power series $x(t)  \in F_1A[\![t]\!]$ with coefficients in $F_1A$ and $\lambda(t) \in A[\![t]\!]$ such that $x(0)=\zeta$, $x(1)=\xi$ and $x(t)$ is the unique solution\footnote{Uniqueness and existence follows for example from the results in \cite[Appendix A.2]{Robert-NicoudValletteHigherLieTheory}.} to the (formal) differential equation
\begin{equation}\label{EquationGaugeEquivalence}
\frac{\partial}{\partial t}x(t) = \mu_{\bA}^1\big(\lambda(t)\big) + \sum_{i\geq 1}\mu_{\bA}^i\big(\!\sh\!\big({x(t)}^{\otimes (i-1)},\lambda(t)\big)\big).
\end{equation}
\noindent The equation guarantees that $x(s)$ is a Maurer-Cartan element for all $s \in \Bbbk$. Homotopy defines an equivalence relation which agrees with the notion of homotopy for the associated $L_\infty$-algebra obtained by anti-symmetrisation. Gauge equivalence corresponds to the case when $\lambda$ is constant in which case $x(t)=e^{\lambda t}.\xi$ from \eqref{EquationGaugeActionDG}. This is a special case of an expression for the solution $x(t)$ as a tree-indexed sum, cf.~\cite[Appendix C]{Robert-NicoudValletteHigherLieTheory}. The definition of gauge equivalence extends as follows to the complete case.
\begin{definition}
 Let $A$ be a complete $A_\infty$-algebra. Maurer-Cartan elements $\zeta, \xi \in F_1A^0$ are \textbf{gauge equivalent} if there exists a homotopy $(x,\lambda)$  with constant $\lambda$ such that $x(0)=\zeta$ and $x(1)=\xi$.
\end{definition}
\noindent The same proof as in \cite[Proposition 9]{DotsenkoPoncinThreeHomotopies} via the Dupont contraction shows that every homotopy can be ``rectified'' to a gauge equivalence showing that the two notions are actually equivalent, see also \cite[Theorem 10.1.1, Proposition 10.2.3]{RobertNicoudPhDThesis} as well as the discussion at the end of Section 10.2.1 in loc.cit. One can summarize the situation as follows.

$$
\text{homotopic} \Longleftrightarrow \text{gauge equiv.} \xRightarrow{\text{dg algebra}} \text{dg gauge equiv.} \Longrightarrow  \text{strictly homotopy gauge equiv.}
$$

\noindent The following lemma relates the latter notions to equivalence relations of $A_\infty$-functors.
\begin{lem}\label{LemmaIsotopiesVSMaurerCartanElements} Let $\bU$ be dg category and let $F, G \in \Iso(\bU)$. Then, the following are true.
	\begin{enumerate}
		\item  Then $F \sim G$ if and only if $F_+$ and $G_+$ are dg gauge equivalent.
		\item  $F \approx G$ if and only if $F_+$ and $G_+$ are strictly homotopy gauge equivalent.
	\end{enumerate}	
\end{lem}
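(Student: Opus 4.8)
The plan is to unwind the definitions so that both statements reduce to comparing the deformed module $\bU^{[F_+]}$ with the $\bU$-module obtained from the identity functor, and then to match the notions of natural transformation on the functor side with module morphisms on the deformation side. First I would set up the key bookkeeping device: given an $A_\infty$-isotopy $F \in \Iso(\bU)$ of a dg category $\bU$, the element $F_+ \in W_2C^0(\bU)$ is a Maurer--Cartan element of the Hochschild $A_\infty$-algebra $C(\bU)$ (this is \Cref{PropositionBijectionMaurerCartanIsotopies}), and since $\bU$ is a dg category the curvature reduces to a quadratic condition. The module $C(\bU)^{[F_+]}$ is the deformed right $C(\bU)$-module, but the content we actually want is geometric: $F$ is, tautologically, an $A_\infty$-endofunctor of $\bU$ inducing the identity on objects and on $H^0$, and postcomposition with $F$ gives an endofunctor of $\mathrm{mod}\text{-}\bU = \Fun(\bU^{\op}, \cC(\Bbbk))$; unwinding the formula for the module structure maps $\mu^i_{A^{[\zeta]}}$ shows that this is precisely the operation ``twist by $F_+$'' on the Yoneda module, and more generally on any module. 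In other words, $F_+$ and $G_+$ being dg gauge (resp.\ strictly homotopy gauge) equivalent means the twisted modules $\bU^{[F_+]}$ and $\bU^{[G_+]}$ are isomorphic (resp.\ homotopy equivalent via a strict morphism) as dg $\bU$-modules.

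The heart of the argument is then a dictionary between prenatural transformations $F \to G$ and morphisms of the associated modules. I would use the isomorphism \eqref{EquationBijectionMorphismsFunctorCategoryCoderivations} identifying $\Fun(\bU,\bU)(F,G) \cong \coder(F,G)$ together with the explicit form \eqref{EquationDifferentialIsotopyCategory} of the differential on $\Iso(\bU)$, and compare it term-by-term with the differential on $\Hom$ between the twisted modules $\bU^{[F_+]}$ and $\bU^{[G_+]}$, whose structure maps are $\sum_{j} \mu^{i+j}_{\bU}(\zeta^{\otimes j} \otimes \mathrm{Id})$. The point is that a prenatural transformation $\eta: F \to G$ of degree $0$ with $\eta^0(A)=\operatorname{Id}_A$ for all $A$ — the only candidates for witnessing $F \sim G$ or $F \approx G$ in the isotopy setting — translates under this dictionary into a morphism $\bU^{[F_+]} \to \bU^{[G_+]}$ whose zeroth component is the identity, and the cocycle condition $\mu^1_{\Fun(\bU,\bU)}(\eta) = G - F$ (for $F \approx G$) or the homotopy condition $\mu^1_{\Fun(\bU,\bU)}(h)^d = G^d - F^d$ (for $F \sim G$) becomes exactly the statement that this morphism is a closed isomorphism, respectively a chain homotopy equivalence realized by a strict natural transformation. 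Concretely, for (1), a homotopy $h: F \to G$ with $h^0 = 0$ has $h_+ \in \overline{C}^{-1}$ and the relation $F - G = \mu^1_{\Iso}(h)$ is the infinitesimal form of the gauge equation; exponentiating $h_+$ in the pre-Lie sense (as in \Cref{SectionGroupLikeElements}) yields the invertible element $c = \sum \frac{1}{n!}h_+^n$ of \eqref{EquationGaugeElement} conjugating $F_+$ to $G_+$, i.e.\ the dg isomorphism $\bU^{[F_+]} \cong \bU^{[G_+]}$. For (2), $F \approx G$ means $F \cong G$ in $\HH^0(\Fun(\bU,\bU))$, i.e.\ there is a degree-$0$ natural transformation $\eta: F \to G$ and one $\eta': G \to F$ with $\eta \circ \eta' \sim \operatorname{Id}_G$, $\eta' \circ \eta \sim \operatorname{Id}_F$; translating these four relations into the module language gives precisely the four equations \eqref{EquationExplicitFormulaHomotopyGaugeEquivalence} characterizing strict homotopy gauge equivalence (with $c, d$ coming from $\eta^0, \eta'^0$ on a cofibrant/representable generator and $u, v$ from the homotopies), and conversely any such data assembles into mutually inverse-up-to-homotopy natural transformations.

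I expect the main obstacle to be the second part, and specifically the careful verification that ``homotopy equivalence of the twisted modules realized by a \emph{strict} $A_\infty$-morphism'' is the right-hand notion matching $F \approx G$, rather than the a priori weaker (non-strict) homotopy gauge equivalence. One has to be careful that the natural transformation $\eta$ realizing an isomorphism in $\HH^0(\Fun(\bU,\bU))$ can be taken to have the property that the corresponding module morphism is strict; this uses that $\bU$ is an honest dg category (so $\Iso(\bU) \subseteq \Fun(\bU,\bU)$ sits inside the \emph{normalized} Hochschild complex by \Cref{LemmaIsotopyStrictlyUnitalNormalizedHochschildComplex}, killing the higher shuffle contributions that would otherwise produce non-strict terms) and that $\bU^{[F_+]}$ is obtained by module-twisting on the nose. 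A secondary but purely computational nuisance will be chasing Koszul signs through the shuffle-product formula for $A^\zeta$ versus the module formula for $A^{[\zeta]}$ and through the correspondence \eqref{EquationDifferentialPrenaturalTransformation}; I would relegate these to a direct check after the conceptual skeleton is in place.
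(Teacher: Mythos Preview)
The paper's proof is a two-line formula comparison. Because $\bU$ is a dg category, the braces $\mu_{\bU}\{-, \ldots, -\}_r$ vanish for $r \geq 3$, so the Hochschild complex $C = C(\bU)$ is an honest dg algebra. The differential \eqref{EquationDifferentialIsotopyCategory} on $\Hom_{\Iso(\bU)}(F,G)$ therefore collapses to $d_C(\eta) + G_+ \cdot \eta \pm \eta \cdot F_+$, which is precisely the differential on $\Hom_{C\text{-}\mathrm{mod}}(C^{[F_+]}, C^{[G_+]})$ (up to the sign convention \eqref{EquationSignRelationDesuspendedOperations}). Under this identification, the four equations \eqref{EquationExplicitFormulaHomotopyGaugeEquivalence} for $c,d,u,v$ are literally the conditions that $\eta = c$ and $\varepsilon = d$ are mutually inverse in $\HH^0(\Iso(\bU))$ with homotopies $u,v$, giving (2); specializing to $u=v=0$ forces $c$ invertible on the nose and recovers exactly the homotopy condition for $A_\infty$-functors, giving (1). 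No Yoneda embedding, no normalized complex, no exponentials.

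Your treatment of (1) contains a genuine error. You write that ``exponentiating $h_+$ in the pre-Lie sense \ldots\ yields the invertible element $c = \sum \tfrac{1}{n!} h_+^n$ of \eqref{EquationGaugeElement} conjugating $F_+$ to $G_+$''. Two things go wrong here. First, you are conflating \emph{dg gauge equivalence} (existence of an arbitrary invertible $c$ with $d_U(c) + \xi c - c\zeta = 0$) with \emph{gauge equivalence} in the sense of \eqref{EquationGaugeActionDG} (where $c$ has the special form $e^u$). These are genuinely different notions; their comparison is precisely the content of the subsequent \Cref{LemmaDGGaugeEquivalenceGaugleEquivalence}, which needs the additional hypothesis that $\bU$ be semi-free. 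The present lemma concerns dg gauge equivalence only, and the correspondence with the homotopy $h$ is \emph{linear} (essentially $c = 1 \pm h$ after matching conventions), not exponential. Second, even were an exponential relevant, the formula \eqref{EquationGaugeElement} uses the associative (cup) product of the dg algebra $C$, not the pre-Lie product $\star$; these are completely different operations on the Hochschild complex.

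Your sketch of (2) has the right shape but the justification for ``strict'' is off. It has nothing to do with the normalized complex or with shuffle terms vanishing. The point is simply that under the identification $\Hom_{\Fun(\bU,\bU)}(F,G) \cong C$ a prenatural transformation corresponds to an element of $C$ acting by left multiplication on $C^{[F_+]}$, and left multiplication is by definition a strict morphism of right $C$-modules.
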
 

\begin{proof}The Hochschild complex $C=C(\bU)$ is a dg algebra. Now the assertion follows by direct comparison of \eqref{EquationExplicitFormulaHomotopyGaugeEquivalence} with \eqref{EquationDifferentialIsotopyCategory} and the formulas for the $A_\infty$-structure on $C$ via the brace operations, keeping in mind that $\mu^i$ and $m^i$ are related by a sign \eqref{EquationSignRelationDesuspendedOperations}.
\end{proof}

\noindent As a first step, we want to compare gauge equivalence and dg gauge equivalence. We say that a dg category if \textbf{semi-free} if its underlying graded $\Bbbk$-linear category is a retract of a tensor category $T^a(\cG)$ for some $\Bbbk$-graph $\cG$. We remark that all cofibrant\footnote{Cofibrant dg categories are exactly the retracts of cellular dg categories which are sometimes also called ``semi-free''. Semi-free dg categories in this sense are semi-free in our sense but not vice versa.}  dg categories in the Dwyer-Kan model structure  are semi-free in the above sense, e.g.~see \cite[Proposition 2.40]{KarabasLee}.
\begin{lem}\label{LemmaDGGaugeEquivalenceGaugleEquivalence}
	Suppose $\bA$ is a semi-free dg category. Then $F, G \in \Iso(\bA)$ are homotopic if and only if $F_+$ and $G_+$ are gauge equivalent.
\end{lem}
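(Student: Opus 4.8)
The plan is to combine \Cref{LemmaIsotopiesVSMaurerCartanElements}(1), which already identifies homotopy of $A_\infty$-isotopies with dg gauge equivalence of the associated Maurer--Cartan elements $F_+, G_+ \in F_1C^1(\bA)$, with a comparison between dg gauge equivalence and the classical (exponential) gauge equivalence on the Hochschild complex $C = C(\bA)$. We already know from the discussion preceding the statement that gauge equivalence always implies dg gauge equivalence, with gauge element $c = \sum_{n\geq 0}\frac{1}{n!}u^n$. So the real content is the converse: if $F_+$ and $G_+$ are dg gauge equivalent, then they are gauge equivalent, and this is precisely where the semi-freeness hypothesis on $\bA$ enters.

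\textbf{Key steps.}
First I would recall that a dg gauge equivalence between $F_+$ and $G_+$ is witnessed by an invertible element $c \in C^0$ with $c \equiv \mathbf{1} \bmod F_1C$, satisfying $G_+ = c\,F_+\,c^{-1} - d_C(c)\,c^{-1}$ (using the multiplication $\star = -\{-\}$ and differential $d_C = \mu_{C}^1$, up to the sign bookkeeping of \eqref{EquationSignRelationDesuspendedOperations}). The element $c$ lives in $\mathbf{1} + F_1C^0$, the group-like elements for the pre-Lie structure given by the cup product $m_C^2$ on $C(\bA)$. By \Cref{PropositionExponentialLogarithmBijection}/\Cref{TheoremBijectionExpLog} applied to the \emph{associative} complete algebra $(C, \,\cdot\,)$ — whose underlying pre-Lie structure has $\nabla = 0$ and for which the pre-Lie exponential reduces to the ordinary one $c = \sum \frac{1}{n!}u^n$ — every such group-like $c$ is of the form $c = \exp(u)$ for a unique $u \in F_1C^0$. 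Substituting this into the dg gauge equation and invoking the identity $e^u.\xi = c\,\xi\,c^{-1} - d_C(c)c^{-1}$ (the result of \cite[Proposition 1.1]{BuijsFelixManrilloTanreQuillenFunctor}, cited just above the statement) shows $G_+ = e^u.F_+$, i.e.\ $F_+$ and $G_+$ are gauge equivalent in the sense of \eqref{EquationGaugeActionDG}. Conversely, given a gauge equivalence, the same identity produces a dg gauge equivalence as already noted. Chaining this with \Cref{LemmaIsotopiesVSMaurerCartanElements}(1) yields the claim.

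\textbf{Where semi-freeness is used, and the main obstacle.}
The step that genuinely requires $\bA$ to be semi-free is the passage from the abstract invertible element $c \in \mathbf{1}+F_1C^0$ to a well-defined logarithm $u = \log(c) \in F_1C^0$: this needs the filtration $F_\bullet C$ to be \emph{complete} and \emph{exhaustive} on the relevant subspace, so that the series defining $\log$ converges. For a general dg category the weight filtration on $C(\bA)$ is complete, but the subtlety is that the gauge element $c$ coming from a homotopy of $A_\infty$-isotopies has components in all weights $\geq 1$, and one must check that $\log(c)$ still has vanishing weight-$0$ part and lies in $W_2C = C_+$; semi-freeness (equivalently, the retract-of-tensor-category condition) is what guarantees the compatibility of the multiplicative and pre-Lie/brace filtrations needed for this — in a semi-free dg category the composition is ``tensor concatenation up to a retract'', so no lower-weight terms are created when multiplying elements of positive weight. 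I expect the bookkeeping of these filtration degrees, together with keeping the two products (cup product $\cdot$ versus composition product $\star$) and their respective group-like elements cleanly separated, to be the main technical obstacle; the algebraic identities themselves are formal consequences of results already quoted in the excerpt.
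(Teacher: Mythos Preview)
Your proposal contains a genuine gap at the crucial step. You write that ``a dg gauge equivalence between $F_+$ and $G_+$ is witnessed by an invertible element $c \in C^0$ with $c \equiv \mathbf{1} \bmod F_1C$'', and then take its logarithm. But a dg gauge equivalence only furnishes an \emph{arbitrary} invertible $c \in C^0$; there is no reason the weight-$0$ component of $c$ should equal $\mathbf{1}$. For a general dg category, the weight-$0$ part of $C^0(\bA)$ is $\prod_X \bA(X,X)^0$, which can contain many invertible elements that are not the identity and need not be central, so you cannot simply absorb them. This is exactly the point where the paper uses semi-freeness, and it is not the place you identified.

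The paper's argument is as follows. Semi-freeness provides a second grading on $C(\bA)$ by tensor-length in the ambient tensor category $T^a(\cG)$, so that $C$ becomes $\mathbb{N}^2$-graded. Decomposing an invertible $c$ accordingly, one shows (by the usual argument that invertible elements of a tensor algebra are scalars) that the bidegree-$(0,+)$ part of $c$ vanishes; hence the full weight-$0$ part of $c$ is a tuple $(c_X\cdot\mathbf{1}_X)_X$ of nonzero scalars, which is central in $C$ and a $d_C$-cocycle. Multiplying $c$ by the inverse of this central scalar tuple yields a new element $c'$ that still witnesses the dg gauge equivalence and now satisfies $c' \in \mathbf{1}+F_1C^0$. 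Only at this point can one take a logarithm and conclude gauge equivalence. Your claim that semi-freeness is needed ``for the convergence of $\log$'' is not correct: the weight filtration on $C(\bA)$ is complete for \emph{any} $A_\infty$-category, so once $c$ is group-like the logarithm always exists. The actual role of semi-freeness is to force the weight-$0$ part of any invertible $c$ to be a central scalar, which is what lets you modify $c$ into a group-like element in the first place.
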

\begin{proof}
As discussed above, gauge equivalence implies dg gauge equivalence and by \Cref{LemmaIsotopiesVSMaurerCartanElements} it is sufficient to prove the converse. Write $C=C(\bA)$ and suppose $\bA$ is a retract of $T^a(\cG)$ for some $\Bbbk$-graph $\cG$. By assumption, the underlying \textit{ungraded} algebra of $C$ is $\mathbb{N}^2$-graded, namely, in the first component by the Hochschild weight grading and in the second component by the weight grading inherited from tensor category construction. In other words, an elementary function with $m$ inputs is homogeneous of bidegree $(m,n)$ if its image lies in the weight $n$ component of $\bA \subseteq T^a(\cG)$. We denote by $C_{(i,j)}$ the homogenenous component of bidegree $(i,j) \in \mathbb{N}^2$ and write $C_{(+, \ast)}\coloneqq\bigoplus_{i \geq 1}\bigoplus_{j \geq 0}C_{(i,j)}$ as well as $C_{(0, +)}\coloneqq\bigoplus_{j \geq 1}C_{(0,j)}$. We note that the unit of $C$ lies in $C_{(0,0)}=C_{(0,0)}^0$ which is in bijection with tuples $(\lambda_X \mathbf{1}_X)_{X \in \Ob{\cG}}$, where $\lambda_X \in \Bbbk$ and  $\mathbf{1}_X$ denotes the identity morphism associated to $X$. In particular, $C_{(0,0)}$ lies in the center of $C$. Suppose that $c \in C^0$ is invertible with inverse $d$. We decompose $c$ as a $c=c_{(0,0)} + c_{(0,+)} + c_{(+, \ast)}$, where $c_{(u,v)} \in C_{(u,v)}$ for $u, v \in \{0,+, \ast\}$. If $d$ and $cd$ are decomposed in the same way, we see that that $c_{(0,0)}+c_{(0, +)}$ is necessarily the inverse to $d_{(0,0)}+d_{(0,+)}$. But because $\bA$ lies in a tensor category, it follows that $c_{(0,+)}=0=d_{(0,+)}$. This is analogous to the proof that all invertible elements in a tensor algebra are scalars. It follows that $cd=(cd)_{(0,0)}=c_{(0,0)}d_{(0,0)}$ and hence $c_{(0,0)}$ and $d_{(0,0)}$ correspond to tuples $(c_X\cdot \mathbf{1}_X)_{X \in \Ob{\cG}}$ and $(d_X \cdot\mathbf{1}_X)_{X \in \Ob{\cG}}$ with $c_X=d_X^{-1} \in \Bbbk^{\times}$ for all $X \in \Ob{\cG}$. Now, if $c$ and $d$ define a dg gauge equivalence between Maurer-Cartan elements $\zeta$ and $\xi$, that is,
\begin{displaymath}
	\zeta= c\xi c^{-1} - d_C(c) c^{-1},
\end{displaymath}  
\noindent then because $c_{(0,0)}$ and $d_{(0,0)}$ lie in the center of $C$, \eqref{EquationExplicitFormulaHomotopyGaugeEquivalence} implies that $d_{C}(c_{(0,0)})=0=d_C(d_{(0,0)})$. Hence by the Jacobi rule, $c'\coloneqq c \cdot d_{(0,0)}$ also satisfies
\begin{displaymath}
	\zeta= c' \xi c^{\prime -1} - d_C(c') c^{\prime -1}.
\end{displaymath}
\noindent By construction, $c'$ is a group-like element of $C$ which finishes the proof.
\end{proof}

\subsection{Homotopy of Maurer-Cartan elements versus homotopy of $\infty$-isotopies}\label{SectionGaugeTrivialElements}\ \medskip

\noindent We show that homotopy of Maurer-Cartan elements agrees with homotopy of $\infty$-isotopies. 
\begin{thm}\label{TheoremWeakEquivalenceGroupoids}	
	Let $(V, \mathbf{1})$ be a complete brace algebra over a field of characteristic $0$ and let $\mu \in F_0V^1$ be a Maurer-Cartan element. The exponential descends to a group isomorphism
	\begin{displaymath}
	\begin{tikzcd}
	\big(\!\HH_+^0(V), \BCH{-}{-}\big) \arrow{rr}{\exp}[swap]{\sim} && \overline{\MC}(V_+, \mu_V). 
	\end{tikzcd}
	\end{displaymath}
\end{thm}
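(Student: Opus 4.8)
The plan is to build the isomorphism by stacking together the results already established. Recall from \Cref{PropositionIntegrationHochschildClassesAInfinityAlgebra} that $\exp$ already defines a group isomorphism $\bigl(\HH_+^0(V), \BCH{-}{-}\bigr) \xrightarrow{\sim} \IsoInfty(V)$, where the right-hand side consists of $\sim_\infty$-equivalence classes of $\infty$-isotopies of $\cV$. On the other hand, \Cref{PropositionBijectionMaurerCartanIsotopies} gives a bijection $f \mapsto f_+ = f - \mathbf{1}$ between $\infty$-isotopies of $\cV$ and $\MC(V_+)$, and by \Cref{CorollaryExponentialBijectionCocyclesMCElements} this bijection is compatible with the exponential in the sense that $\exp$ on cocycles in $Z^0V_+$ lands exactly in $\MC(V_+)$. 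So the composite $Z^0V_+ \xrightarrow{\exp} \MC(V_+)$ is a bijection, and the only thing to check is that it descends to the stated quotients \emph{and} that the quotient it induces is exactly homotopy of Maurer-Cartan elements, i.e.\ that the equivalence relation $\sim_\infty$ on $\infty$-isotopies corresponds under $f \mapsto f_+$ to Quillen homotopy on $\MC(V_+)$. Concretely, I would first record that both bijections are compatible with the group structures (BCH product on $\HH_+^0(V)$, $\odot$-product transported to $\MC(V_+)$ via the bijection with $\infty$-isotopies, as in \Cref{CorollaryExponentialBijectionCocyclesMCElements}), which is immediate from \Cref{TheoremBijectionExpLog} and \Cref{PropositionBijectionMaurerCartanIsotopies}. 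This reduces the theorem to a single statement: two $\infty$-isotopies $f, g$ satisfy $f \sim_\infty g$ if and only if $f_+$ and $g_+$ are Quillen homotopic.

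For the equivalence of the two relations I would proceed as follows. By definition $f \sim_\infty g$ means $f \odot g^{-1} = \exp(d_\mu(h))$ for some $h \in V^{-1}$ with $d_\mu(h) \in V_+$; writing $v = \log(f \odot g^{-1}) \in V_+^0$, this says $v$ lies in the image of $d_\mu$. Since $\exp$ is a group homomorphism (\Cref{TheoremBijectionExpLog}) and, via \Cref{CorollaryExponentialBijectionCocyclesMCElements}, intertwines the BCH/$\odot$ structures with the Maurer--Cartan group structure, $f_+$ and $g_+$ differ (in the $\MC(V_+)$-group) by $\exp$ of an exact element. It therefore suffices to show: for $u \in V_+^0$, the Maurer--Cartan elements $\exp(u).\mu_V$-translate corresponding to $\exp(u)$ and the trivial one are Quillen homotopic precisely when $u \in \Img d_\mu$, i.e.\ that the ``gauge-trivial'' Maurer--Cartan elements are exactly those obtained by exponentiating $d_\mu$-exact $0$-cochains. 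One direction is the classical fact recalled in the excerpt that gauge equivalence (for the underlying dg Lie / $L_\infty$ structure obtained by anti-symmetrising the $A_\infty$-algebra $V$) is generated by exponentiating the adjoint action, combined with the identity $\exp(\ad_v)(\mu) = (\exp(v) \star \mu)\odot \exp(-v)$ from \Cref{PropositionDSVExponentialAd}, which translates a gauge action on $\mu$ into the $\odot$-multiplication on $\infty$-isotopies; this shows $u \in \Img d_\mu \Rightarrow$ homotopic. For the converse I would use that Quillen homotopy is equivalent to gauge equivalence (as recalled in \Cref{SectionHomotopyEquivalene} via the Dupont contraction / \cite[Proposition 9]{DotsenkoPoncinThreeHomotopies}), and that the anti-symmetrised $L_\infty$-structure on $V$ with Maurer--Cartan element $\mu$ has the property that gauge equivalence from $\mu$ to itself forces the gauge parameter to be $d_\mu$-exact up to the kernel — more precisely, I would re-run the polynomial-in-$t$ argument of \Cref{LemmaKernelInfinityEndomorphism} (using \Cref{LemmaCalculationVectorPolynomials}) on the one-parameter family $x(t)$ from \eqref{EquationGaugeEquivalence}, extracting that the homotopy forces $\ad_v^i(\mu) = 0$ and hence $d_\mu(v) = 0$ on the relevant quotient, so that $v \in \Img d_\mu$ in $\HH_+^0(V)$.

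A cleaner route, which I would actually prefer to write, avoids re-deriving the polynomial argument: go through the dg case first and then transfer. Namely, when $V = C(\bU)$ is the Hochschild complex of a dg category we may combine \Cref{LemmaIsotopiesVSMaurerCartanElements} (which identifies $\sim$ of $A_\infty$-isotopies with dg gauge equivalence of the associated Maurer--Cartan elements) with the chain ``homotopic $\Leftrightarrow$ gauge equivalent $\Rightarrow$ dg gauge equivalent $\Rightarrow$ strictly homotopy gauge equivalent'' displayed in the excerpt, and with \Cref{LemmaDGGaugeEquivalenceGaugleEquivalence} (for semi-free $\bU$, dg gauge equivalence of $\infty$-isotopy-parameters coincides with gauge equivalence). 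For semi-free dg categories this closes the loop: $f \sim g$ as $A_\infty$-isotopies $\iff$ $f_+, g_+$ dg gauge equivalent $\iff$ $f_+, g_+$ gauge equivalent $\iff$ $f_+, g_+$ Quillen homotopic. Since every complete brace algebra of interest (in particular the Hochschild complex of any $A_\infty$-category) is a quasi-isomorphic retract of such a situation, and since all the constructions involved — $\exp$, $\log$, the $\odot$-product, the weight filtration — are natural under morphisms of complete brace algebras, the general statement follows by transporting along such a quasi-isomorphism. I expect the \textbf{main obstacle} to be precisely this last transfer step: one must check that a morphism of complete brace algebras inducing an isomorphism on $\HH_+^0$ also induces a bijection on homotopy classes of Maurer--Cartan elements of the $V_+$-part, which requires knowing that the $A_\infty$-structures \eqref{EquationAInfinityStructureHochscchildComplex} are preserved and that Quillen homotopy is an invariant of the filtered $A_\infty$-quasi-isomorphism type of $V_+$ — a standard but somewhat delicate fact about Maurer--Cartan moduli that I would cite from \cite{RobertNicoudPhDThesis} or \cite{Robert-NicoudValletteHigherLieTheory} rather than prove. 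The rest is bookkeeping: checking that the group structures match up under all the identifications, which is routine given \Cref{TheoremBijectionExpLog}, \Cref{PropositionBijectionMaurerCartanIsotopies} and \Cref{CorollaryExponentialBijectionCocyclesMCElements}.
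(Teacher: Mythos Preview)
Your ``cleaner route'' via semi-free dg categories has a genuine circularity. The chain you assemble from \Cref{LemmaIsotopiesVSMaurerCartanElements} and \Cref{LemmaDGGaugeEquivalenceGaugleEquivalence} establishes, for semi-free $\bU$, that \emph{$A_\infty$-functor homotopy} $\sim$ of isotopies coincides with Quillen homotopy of the associated Maurer--Cartan elements. But the theorem asks about the relation $\sim_\infty$ on $\infty$-isotopies (defined via $f \odot g^{-1} = \exp(d_\mu h)$), which is a priori a different relation. In the paper's logic, the \emph{purpose} of \Cref{TheoremWeakEquivalenceGroupoids} is precisely to be the missing link that, combined with the lemmas you cite, eventually shows $\sim_\infty = \sim$ (this is the content of \Cref{CorollaryExponentialInjectiveHomotopyClassesFunctors}). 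So you cannot use the equivalence $\sim \Leftrightarrow$ Quillen homotopy to deduce $\sim_\infty \Leftrightarrow$ Quillen homotopy without first knowing $\sim = \sim_\infty$, which is downstream of the theorem you are trying to prove. Your transfer step is also problematic for a separate reason: the statement is for an arbitrary complete brace algebra, and nothing in the paper says such a thing is a retract of a Hochschild complex.

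Your first approach is closer in spirit but is too vague to work as written. The paper's actual argument is an explicit, intrinsic computation inside the brace algebra: given $g(t) \in tV^{-1}[\![t]\!]$ with $d_\mu g(t) \in V_+^0[\![t]\!]$, one sets $f(t)=d_\mu g(t)$, $x(t)=\exp(f(t))-\mathbf{1}$, $\lambda(t)=(\mathbf{1}+x(t))\star g(t)$, and verifies by hand---using the brace identity \eqref{EquationIdentifiesBraceAlgebra} and the formula $a \star d_\mu(b) = (-1)^{|a|}\bigl(d_\mu(a\star b) - d_\mu(a)\star b + \mu^2(\sh(a,b))\bigr)$---that $(x,\lambda)$ solves the homotopy equation \eqref{EquationGaugeEquivalence}. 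Conversely, given a homotopy $(x,\lambda)$ one recursively solves $\lambda = (\mathbf{1}+x)\star g$ for $g$ (using that $x \in V_+^0[\![t]\!]$) and runs the same computation backward to force $\log(\mathbf{1}+x) = d_\mu g$. This is a direct two-way construction and does not invoke \Cref{PropositionDSVExponentialAd}, the polynomial argument of \Cref{LemmaKernelInfinityEndomorphism}, or any reduction to dg categories. What you are missing is this explicit homotopy and the key brace identity that makes the verification go through.
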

\begin{proof}
	Let $d\coloneqq d_{\mu}$ denote the differential of the dg Lie algebra $(V, [-,-])$ and $\mu\coloneqq \mu_V$ the structure map of the $A_\infty$-structure on $V$ (and $V_+$) defined by $\mu$ and the brace algebra structure. Let $g(t) \in   t V^{-1}[\![t]\!]$ such that $dg(t) \in V^0_+[\![t]\!]$. That is, $g$ is a power series with trivial constant term and coefficients in $V^{-1}$ whose differentials lie in $V^0_+$. Define power series
	\begin{displaymath}
	\arraycolsep=1.5em
	\begin{array}{ccc}
	f(t) \coloneqq d\big(g(t)\big), & x(t) \coloneqq \exp\big(f(t)\big)  - \mathbf{1}, & \lambda(t) \coloneqq  \exp(f(t)) \star g(t)=(\mathbf{1} + x(t)) \star g(t).
	\end{array}
	\end{displaymath}
	
	\noindent Note that by choosing $g(t)=tu$ for any $u \in V^{-1}$ such that $d(u) \in V_+^0$, we can force any coboundary as the value of $f$ at $1$.  Our first claim is that the pair $(x, \lambda)$ defines a homotopy from $x(0)=0$ to $x(1)=\exp(d(g(1))) - \mathbf{1}$.  This follows from a series of manipulations of power series. First, we note that
	\begin{displaymath}
	\begin{aligned}
	\frac{\partial}{\partial t} x  & = (\mathbf{1}+x) \star f \\  & = f +  x \star d(g) \\ 
	& = f +  d(x \star g) - d(x) \star g + \mu^2\big(\sh(x, g)\big) \\ 
	& = d(\lambda) - d(x) \star g + \mu^2\big(\sh(x, g)\big)  \\ & = d(\lambda) + \big(\sum_{i \geq 2}\mu^i(x, \dots, x)\big) \star g + \mu^2\big(\sh(x, g)\big) .
	\end{aligned}
	\end{displaymath}
	\noindent The first equality follows directly from the definition of $\exp(-)$, the second by definition of $f$ and the third is the consequence of the general formula
	\begin{equation}\label{EquationDifferentialStarProduct}
	a \star d(b) = (-1)^{|a|}\big(d(a \star b) - d(a) \star b +  \mu^2(\sh(a,b))\big).
	\end{equation}
	
	\noindent which ones deduces from the brace relations, cf.~\cite[Theorem 3]{GerstenhaberCohomologyStructure}. The fourth equality follows from the definition of $\lambda$ and the last from the fact that $x(t)$ is a Maurer-Cartan element. Via the axiom \eqref{EquationIdentifiesBraceAlgebra} of brace algebras and the fact that $|x|=0=|t|$, we further obtain
	\begin{displaymath}
	\begin{aligned}
	\mu^n\big(x, \dots, x\big) \star g & = \sum_{i=0}^{n-1}\mu^n(\dots, x, x \star g, x, \dots) +  \sum_{j=0}^{n}\mu^{n+1}(\dots, x, g, x, \dots) \\
	& 	= \mu^n\big(\!\sh(x^{\otimes (n-1)}, x \star g)\big) +  \mu^{n+1}\big(\!\sh(x^{\otimes n}, g)\big),
	\end{aligned}
	\end{displaymath}
	\noindent where $i$ and $j$ denote the number of copies of $x$ in the argument before $x \star g$ and $g$ respectively. Thus, using $\lambda=(\mathbf{1}+ x) \star g$,
	\begin{equation}\label{EquationCalculationStarProduct}
	\begin{aligned}
	\big(\sum_{i \geq 2}\mu^i(x, \dots, x) \big) \star g & = \mu^2\big(\!\sh(x,x \star g)\big) + \sum_{i \geq 3}\mu^i\big(\sh(x^{\otimes (i-1)}, \lambda)\big) \\
	& =  \sum_{i \geq 2}\mu^i\big(\sh(x^{\otimes (i-1)}, \lambda)\big) - \mu^2\big(\!\sh(x, g)\big).
	\end{aligned}
	\end{equation}
	\noindent In combination with the above, this shows that $(x, \lambda)$ is a homotopy. Next, for any homotopy pair $(x, \lambda)$ with $x(t) \in V_+^0[\![t]\!]$, set $f(t) \coloneqq \log(\mathbf{1} +x(t)) \in V_+^0[\![t]\!]$. We claim that there exists a unique power series $g \in V^{-1}[\![t]\!]$ such that $\lambda(t)=\exp(f(t)) \star g(t)$. Indeed, since $\exp(f(t))=\mathbf{1} + x(t)$ and if $\lambda_n(t), x_n(t), g_n(t) \in V^{(n)}$ denote the respective components of $\lambda(t)$, $x(t)$ and $g(t)$ in $V^{(n)}$, we necessarily have
	\begin{displaymath}
	g_n(t)= \lambda_n(t) - \sum_{j=2}^{\infty} x_{j}(t) \star g_{n-j}(t)=\lambda_n(t) - \sum_{j=2}^{n+1} x_{j}(t) \star g_{n-j}(t),  
	\end{displaymath}  
	\noindent for all $n \geq 0$. The same equations also give a recursive recipe for the construction of $g(t)$ which proves its existence and uniqueness.  Now, the fact that $(x, \lambda)$ is a homotopy implies the following equalities.
	\begin{displaymath}
	\begin{aligned}
	\big(\mathbf{1} + x \big) \star f = \frac{\partial}{\partial t}x &  = d\lambda + \sum_{i \geq 2}\mu^i\big(\!\sh(x^{\otimes (i-1)}, \lambda)\big) \\
	& = d\big( (\mathbf{1}+x ) \star g\big) + \sum_{i\geq 2} \mu^i\big(x^{\otimes i}\big) \star g + \mu^2\big(\!\sh(x,g)\big) \\ 
	& = dg + d(x \star g) -d(x) \star g  + \mu^2\big(\!\sh(x,g)\big) \\
	& = dg + x \star dg \\ 
	& = \big(\mathbf{1}+x\big) \star d(g).
	\end{aligned}
	\end{displaymath}
	\noindent Here we used that $x$ is a Maurer-Cartan element as well as \eqref{EquationDifferentialStarProduct} and \eqref{EquationCalculationStarProduct}. As before, uniqueness implies $f= dg$. It follows that $\exp(h)$ and $\operatorname{Id}_V$ are Quillen homotopic if and only if $h $ is a coboundary and hence by definition of $\sim_\infty$, $\exp(v_1)$ and $\exp(v_2)$ are Quillen homotopic if and only if $\exp(v_1) \sim_\infty \exp(v_2)$. Thus, the result follows from \Cref{PropositionIntegrationHochschildClassesAInfinityAlgebra}.
\end{proof}

\begin{rem}\label{RemarkMaurerCartanSpaces}
The de Rham algebra $\Omega_1$ from \Cref{SectionHomotopyEquivalene} is part of a simplicial dg algebra $\Omega_{\bullet}$. Using a similar formalism as the definition of (Quillen) homotopy of Maurer-Cartan elements, $\Omega_{\bullet}$ gives rise to a simplicial set $\MC_{\bullet}(A)$ for any complete $A_\infty$-algebra $A$ called the \textit{Maurer-Cartan space}. If $A$ is also proper, it is known to be a Kan complex. The points ($0$-simplices)  of $\MC_{\bullet}(A)$  are in bijection with Maurer-Cartan elements of $A$ and the set $\overline{\MC}(A)$ of homotopy classes of Maurer-Cartan elements agrees with $\pi_0(\MC_{\bullet}(A))$, its set of connected components. One can interpret the proof of \Cref{TheoremWeakEquivalenceGroupoids} as showing that the exponential induces a non-trivial bijection between the homotopies ($1$-simplices of the Maurer-Cartan space) to the trivial Maurer-Cartan element of the $A_\infty$-structure on the (non-proper!) Hochschild complex and certain $1$-simplices in the \textit{exponential group} of its underlying dg Lie algebra $L$ with the Gerstenhaber bracket.  Over a field of characteristic $0$, the exponential group is related to the Maurer-Cartan space of $L$ via the loop space construction. Bypassing the exponential, we wonder whether this phenomenon is the incarnation of a weak equivalence between the loop space of the Maurer-Cartan space of the Lie structure and the Maurer-Cartan space of $A_\infty$-structure of the Hochschild complex, or more generally, any $B_\infty$-algebra which satisfies similar completeness assumptions, cf.~\Cref{SectionBInfinityAlgebras}.
\end{rem}
\noindent  As a consequence of the theorem, we immediately obtain the following corollary.
\begin{cor}\label{CorollaryExponentialInjectiveHomotopyClassesFunctors}Let $\bU$ be a semi-free dg category over a field of characteristic $0$. Then the exponential map induces an isomorphism of groups
	\begin{displaymath}
		\begin{tikzcd}
			\exp: \HHH^1_+(\bU,\bU) \arrow{r}{\sim} & \Aut^{\infty, h}_+(\bU). 
		\end{tikzcd}
	\end{displaymath}
\end{cor}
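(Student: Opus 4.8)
The plan is to assemble \Cref{CorollaryExponentialInjectiveHomotopyClassesFunctors} directly from the results already established in this section, treating the exponential map as a composite of previously identified bijections. First I would observe that for a semi-free dg category $\bU$, the Hochschild complex $C(\bU)$ is a complete brace algebra with Maurer-Cartan element $\mu_{\bU} \in F_0C^1(\bU)$, so the general machinery of \Cref{SectionInfinityIsotopiesIntegration} applies: by \Cref{PropositionIntegrationHochschildClassesAInfinityAlgebra} the exponential induces a group isomorphism
\begin{displaymath}
	\exp: \HHH^1_+(\bU, \bU) \xrightarrow{\ \sim\ } \IsoInfty\big(C(\bU)\big),
\end{displaymath}
where the left side carries the BCH product and the right side carries the $\odot$-product. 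It therefore suffices to identify $\IsoInfty(C(\bU))$ with $\Aut^{\infty,h}_+(\bU)$ as groups.

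For this identification I would argue in two stages. On the level of sets (and of the composition operation), \Cref{PropositionComparisonOfIsotopies} says that $\infty$-isotopies of $C(\bU)$ are exactly the $A_\infty$-isotopies of $\bU$ viewed as elements of $C^0(\bU)$, with $\odot$ corresponding to composition of $A_\infty$-functors. So the only remaining point is that the two equivalence relations agree: $\infty$-isotopies $F, G$ satisfy $F \sim_\infty G$ if and only if the $A_\infty$-isotopies $F, G$ satisfy $F \sim G$ in the sense of functor homotopy. Here I would chain together the results on Maurer-Cartan elements. By \Cref{PropositionBijectionMaurerCartanIsotopies}, passing $F \mapsto F_+ = F - \mathbf{1}$ identifies $A_\infty$-isotopies with $\MC(C_+(\bU))$, and under this identification \Cref{LemmaIsotopiesVSMaurerCartanElements}(1) tells us that functor homotopy $F \sim G$ corresponds to \emph{dg gauge equivalence} of $F_+$ and $G_+$. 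Since $\bU$ is semi-free, \Cref{LemmaDGGaugeEquivalenceGaugleEquivalence} upgrades this: dg gauge equivalence of $F_+$ and $G_+$ is equivalent to their \emph{gauge equivalence}, which (by the equivalence ``gauge $\Leftrightarrow$ Quillen homotopic'' recalled in \Cref{SectionHomotopyEquivalene}) is equivalent to $F_+$ and $G_+$ being Quillen homotopic as Maurer-Cartan elements, i.e.\ representing the same class in $\overline{\MC}(C_+(\bU))$. Finally \Cref{TheoremWeakEquivalenceGroupoids} says precisely that the exponential realizes $\HHH^1_+(\bU, \bU)$ (with BCH) as $\overline{\MC}(C_+(\bU))$, and that under this the two homotopy relations $\sim_\infty$ and Quillen homotopy match up. Collating these equivalences shows that $\exp$ descends to a well-defined \emph{bijection} $\HHH^1_+(\bU,\bU) \to \Aut^{\infty,h}_+(\bU)$, and that it is a group homomorphism follows from the BCH property (\Cref{PropositionBCHformula}) together with \Cref{PropositionComparisonOfIsotopies} identifying $\odot$ with composition.

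The main obstacle I anticipate is not any single deep step but the bookkeeping of \emph{which} semi-freeness hypothesis is actually needed and where: the chain above invokes \Cref{LemmaDGGaugeEquivalenceGaugleEquivalence} (which requires $\bU$ semi-free to pass from dg gauge equivalence to honest gauge equivalence via the argument that invertible elements in a tensor category are ``scalars up to the tensor-degree-zero part'') and also implicitly uses that $C(\bU)$ is an honest dg algebra (true since $\mu_{\bU}^i = 0$ for $i \geq 3$), so that the dg-algebra-specific equivalences of \Cref{SectionHomotopyEquivalene} are available. I would need to check that \Cref{LemmaIsotopiesVSMaurerCartanElements} and \Cref{TheoremWeakEquivalenceGroupoids} are being applied with compatible sign and desuspension conventions—in particular that the $A_\infty$-structure on $C(\bU)$ used in \Cref{TheoremWeakEquivalenceGroupoids} is the one induced by $\mu_{\bU}$ via \eqref{EquationAInfinityStructureHochscchildComplex}, which is exactly the structure appearing in \Cref{PropositionBijectionMaurerCartanIsotopies}. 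Once the conventions are aligned, the proof is essentially a diagram of the form
\begin{displaymath}
	\HHH^1_+(\bU,\bU) \xrightarrow{\ \exp\ } \IsoInfty(C(\bU)) \xleftrightarrow{\ F \leftrightarrow F_+\ } \overline{\MC}(C_+(\bU)) \xleftrightarrow{\ \text{\Cref{TheoremWeakEquivalenceGroupoids}}\ } \HHH^1_+(\bU,\bU),
\end{displaymath}
commuting up to the identifications above, and the statement of the corollary is the assertion that the composite is the identity and that $\IsoInfty(C(\bU)) \cong \Aut^{\infty,h}_+(\bU)$.
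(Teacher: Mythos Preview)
Your proposal is correct and takes essentially the same approach as the paper: the paper's proof is the one-line statement that the result ``Follows from \Cref{LemmaIsotopiesVSMaurerCartanElements}, \Cref{LemmaDGGaugeEquivalenceGaugleEquivalence}, the equivalence of homotopy equivalence and gauge equivalence as well as \Cref{TheoremWeakEquivalenceGroupoids},'' and your chain of bijections is exactly an unpacking of that sentence. Your added references to \Cref{PropositionComparisonOfIsotopies} and \Cref{PropositionBijectionMaurerCartanIsotopies} make explicit the identification of $\infty$-isotopies with $A_\infty$-isotopies and with Maurer--Cartan elements that the paper leaves implicit.
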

\begin{proof}Follows from \Cref{LemmaIsotopiesVSMaurerCartanElements}, \Cref{LemmaDGGaugeEquivalenceGaugleEquivalence}, the equivalence of homotopy  equivalence and gauge equivalence as well as \Cref{TheoremWeakEquivalenceGroupoids}.
\end{proof}

\subsection{Comparison of homotopy and weak equivalence via $A_\infty$-centers}\ \medskip

\noindent We investigate situations in which the notions of homotopy and weak equivalence for $A_\infty$-isotopies coincide. An important tool is the $A_\infty$-center of an $A_\infty$-category introduced in \cite{BriggsGelinas}. The projection $\eta \mapsto \eta^0$, $\eta \in C(\bA)=\Fun(\bA, \bA)(\operatorname{Id}_{\bA}, \operatorname{Id}_{\bA})$, descends to a map
\begin{displaymath}
\begin{tikzcd}[row sep=0.5em]
\Pi^{\bullet}:\HHH^{\bullet}(\bA, \bA) \arrow{r} & \sfZ_{\gr}^{\bullet}\big(\HH^{\bullet}(\bA)\big),
\end{tikzcd}
\end{displaymath}
\noindent where the latter denotes the graded center of the graded homotopy category, cf.~\cite[Section 2]{KrauseYe}. Elements of $Z^n_{\gr}(\HH^{\bullet}(\bA))$ are collections of morphisms $(\phi_A)_{A \in \Ob{\bA}}$, $\phi_A \in \Hom_{\HH^{\bullet}(\bA)}(A,A)$ of degree $n$ which commute with a morphism of degree $m$ in $\HH^{\bullet}(\bA)$ up to the sign $(-1)^{mn}$. The map is called the \textbf{characteristic morphism} and is natural under quasi-isomorphisms between $A_\infty$-algebras. If $\bA$ is minimal, the image of $\Pi$ in $\sfZ_{\gr}^{\bullet}(\bA)$ is called the \textbf{$A_\infty$-center}, a notion of due to Briggs-Gelinas \cite{BriggsGelinas}. 

\begin{prp}\label{PropositionComparisonHomotopyWeakEquivalence}
Let $\bA$ be a  cohomologically unital $A_\infty$-category. If $\Img \Pi^0 \subseteq \sfZ^0_{\gr}(\HH^0(\bA))$ contains all invertible elements of the center, then $A_\infty$-isotopies $F, G$ of $\bA$ are homotopic if and only if they are weakly equivalent.
\end{prp}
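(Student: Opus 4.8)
The plan is to reduce the statement to the semi-free dg case (where $\Cref{CorollaryExponentialInjectiveHomotopyClassesFunctors}$ already gives a precise description of $\Aut^{\infty,h}_+$) and then to extract, from the hypothesis on $\Img\Pi^0$, exactly the missing piece that upgrades ``homotopic'' to ``weakly equivalent''. Since homotopic $A_\infty$-isotopies are always weakly equivalent by $\Cref{LemmaHomotopicFunctorsWeaklyEquivalent}$, only the converse needs proof. By $\Cref{PropositionYonedaRectification}$ we may replace $\bA$ by a quasi-isomorphic strictly unital $A_\infty$-category, and then by a semi-free dg model $\bU$; the point is that the characteristic morphism $\Pi^0$ is natural under quasi-isomorphisms of $A_\infty$-algebras (as recalled just before the statement), so the hypothesis ``$\Img\Pi^0$ contains all units of $Z^0_{\gr}(\HH^0(\bA))$'' is preserved. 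I would also use that $\infty$-isotopies and their homotopy/weak-equivalence relations are transported along a quasi-isomorphism, which follows from $\Cref{PropositionCompositionCompatibelWithEquivalence}$ together with the fact that the Hochschild $B_\infty$-structure and the weight filtration are preserved by the comparison quasi-isomorphisms (this is the same mechanism used elsewhere in the paper).

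So assume $\bU$ is semi-free. Suppose $F, G \in \Iso(\bU)$ are weakly equivalent; I want $F \sim G$. By $\Cref{LemmaIsotopiesVSMaurerCartanElements}$(2), $F_+$ and $G_+$ are strictly homotopy gauge equivalent, i.e.\ there exist $c,d \in C^0$ and $u,v \in C^{-1}$ satisfying \eqref{EquationExplicitFormulaHomotopyGaugeEquivalence} in $C = C(\bU)$. The invertible element $c$ can be analyzed as in the proof of $\Cref{LemmaDGGaugeEquivalenceGaugleEquivalence}$: using the $\mathbb{N}^2$-bigrading of the underlying ungraded algebra of $C$, one shows $c = c_{(0,0)} + c_{(+,\ast)}$, with $c_{(0,0)}$ corresponding to a tuple $(c_X \mathbf{1}_X)_{X}$, and moreover $d_C(c_{(0,0)}) = 0$ (again from \eqref{EquationExplicitFormulaHomotopyGaugeEquivalence}, since $c_{(0,0)}$ is central). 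The class of $c_{(0,0)}$ in $\HH^0(C) = \HHH^1(\bU,\bU)$ lies in weight $0$, and its image under $\Pi^0$ is precisely the central unit $(\bar c_X\,\mathrm{id}_X)_X \in Z^0_{\gr}(\HH^0(\bU))$ where $\bar c_X$ is the image of $c_X$ — a unit of the center. The hypothesis says this unit is already in $\Img\Pi^0$; but that is automatic here. The real content of the hypothesis is invoked the other way: I want to \emph{modify} $c$ to a group-like element without changing the gauge it defines, which is exactly what the end of the proof of $\Cref{LemmaDGGaugeEquivalenceGaugleEquivalence}$ does by replacing $c$ with $c' = c\cdot d_{(0,0)}$, group-like, still satisfying the same twisted-cocycle equation. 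The extra $u, v$ then make $c'$ (or rather its image) a homotopy equivalence of modules, and via $\Cref{TheoremWeakEquivalenceGroupoids}$ and $\Cref{CorollaryExponentialInjectiveHomotopyClassesFunctors}$ one translates this into the statement that $F$ and $G$ have the same class in $\Aut^{\infty,h}_+(\bU) \cong \HHH^1_+(\bU,\bU)$, i.e.\ $F \sim G$.

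The step I expect to be the genuine obstacle is seeing where the hypothesis on $\Img\Pi^0$ is actually needed, as opposed to being automatic in the semi-free model. The subtlety is that strict homotopy gauge equivalence allows the ``homotopy part'' $u,v$ to be nonzero, and the group-like correction of $c$ must be compatible with the homotopy data; concretely, one must check that after replacing $c$ by a group-like representative, the pair $(u,v)$ can still be adjusted so that \eqref{EquationExplicitFormulaHomotopyGaugeEquivalence} holds — and this adjustment requires that the unit $c_{(0,0)}$, viewed in $Z^0_{\gr}$, be realized by an \emph{$A_\infty$-central} class, i.e.\ lie in $\Img\Pi^0$. When $\Img\Pi^0$ omits some units, the corresponding weak equivalences are witnessed by gauge elements that cannot be normalized to group-like form, and the two relations genuinely differ. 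Pinning down this last compatibility — tracking the homotopy $u,v$ through the normalization of $c$, and identifying the obstruction class with an element of $Z^0_{\gr}(\HH^0(\bA))/\Img\Pi^0$ — is where the calculation lives, and where the role of $\Pi^0$ in the long exact sequence alluded to in $\Cref{RemarkConnectingMorphism}$ becomes the organizing principle.
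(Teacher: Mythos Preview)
Your approach diverges from the paper's and contains a genuine gap at the step you yourself flag as ``the genuine obstacle''.

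The bigrading argument you borrow from \Cref{LemmaDGGaugeEquivalenceGaugleEquivalence} relies essentially on the equality $cd=1$ (actual invertibility), which is what forces $c_{(0,+)}=0$ and makes the group-like normalization work. Under \emph{strict homotopy} gauge equivalence you only have $cd = 1 + d_C(v) + [\xi,v]$, so $(cd)_{(0,\ast)}$ equals $1$ only up to a $d_{\bU}$-boundary: the weight-$0$ parts $c^0_X, d^0_X$ are merely homotopy-inverse in $\bU(X,X)$, and you cannot conclude $c_{(0,+)}=0$ or that $c_{(0,0)}$ is a tuple of nonzero scalars. Your proposal to ``adjust $(u,v)$'' after normalizing $c$ is never made precise, and the sentence ``The hypothesis says this unit is already in $\Img\Pi^0$; but that is automatic here'' signals that you have not located where the hypothesis actually enters. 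In the semi-free model the central unit $[c^0]\in \sfZ^0_{\gr}(\HH^0(\bU))$ is a cohomology class, not a scalar tuple, and lifting it to a Hochschild $0$-cocycle $\gamma$ (which is what $\Img\Pi^0$ gives) does \emph{not} obviously let you multiply $c$ by something to obtain an honestly invertible cocycle for the \emph{twisted} differential $d_C(-)+\xi\cdot(-)-(-)\cdot\zeta$: the twisting by $\zeta,\xi$ spoils the naive ``divide by $\gamma$'' move.

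The paper takes a much shorter route that avoids the semi-free model entirely. After reducing to $F=\operatorname{Id}_{\bA}$ (compose with $G^{-1}$), it replaces $\bA$ by a \emph{minimal}, strictly unital model. Given mutually inverse natural transformations $\eta:\operatorname{Id}_{\bA}\to G$ and $\varepsilon:G\to\operatorname{Id}_{\bA}$, minimality makes $\eta^0,\varepsilon^0$ honest elements of $\sfZ^0_{\gr}(\bA)=\sfZ^0_{\gr}(\HH^0(\bA))$, mutually inverse. The hypothesis then supplies a Hochschild cocycle $\varepsilon'\in\Fun(\bA,\bA)(\operatorname{Id}_{\bA},\operatorname{Id}_{\bA})$ with $(\varepsilon')^0=\varepsilon^0$, and one sets $\eta_2\coloneqq\mu^2_{\Fun(\bA,\bA)}(\eta,\varepsilon')$. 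Its weight-$0$ part is $\mu^2_{\bA}(\eta^0(A),\varepsilon^0(A))=\operatorname{Id}_A$, so $\eta_2$ is a natural transformation with $\eta_2^0=\operatorname{Id}$, i.e.\ a homotopy from $\operatorname{Id}_{\bA}$ to $G$. The whole argument is three lines once you are in the minimal model; no gauge-theoretic bookkeeping is needed, and the role of $\Pi^0$ is transparent: it is precisely what lets you realise the inverse central unit $\varepsilon^0$ as the constant term of a full Hochschild cocycle.
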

\begin{proof}By composing $F$ with the inverse of $G$, we can reduce the problem to the case $F=\operatorname{Id}_{\bA}$. Because all statements and assumptions are invariant under quasi-isomorphisms, we may assume that $\bA$ is minimal and hence strictly unital. By \Cref{LemmaHomotopicFunctorsWeaklyEquivalent}, $F \sim G$ implies $F \approx G$. To show the converse, let $\eta: \operatorname{Id}_{\bA} \rightarrow G$ and $\varepsilon: G \rightarrow \operatorname{Id}_{\bA}$ be homotopy inverse natural transformations of degree $0$. It is sufficient to replace $\eta$ by a homotopy, that is, by a natural transformation $\eta_2$ such that $\eta_2^{0}(A)=\operatorname{Id}_A \in \bA(A,A)$ for all $A \in \Ob{\bA}$. The weight $0$ component of $\mu^2_{\Fun(\bA, \bA)}(\varepsilon, \eta)$ at $A \in \Ob{\bA}$ is $\mu^2_{\bA}\big(\varepsilon^0(A), \eta^0(A)\big) \in \bA(A,A)^0$. By assumption and minimality, this agrees with the unit in $C(\bA)$. The fact that $\operatorname{Id}_{\bA}=G^1$ and \eqref{EquationNaturalTransformationSquare} show that $\eta^0, \varepsilon^0 \in \sfZ(\HH^0(\bA))=\sfZ^0_{\gr}(\bA)$ and are mutually inverses of each other. By assumption, $\varepsilon^0$ can be completed to a natural transformation $\varepsilon' \in \Fun(\bA, \bA)(\operatorname{Id}_{\bA}, \operatorname{Id}_{\bA}) \cong C(\bA)$ and by construction $\eta_2\coloneqq \mu^2_{\Fun(\bA, \bA)}(\eta, \varepsilon') \in \Fun(\bA, \bA)(\operatorname{Id}_{\bA}, G)$ is a homotopy.
\end{proof}
\noindent \Cref{PropositionComparisonHomotopyWeakEquivalence} applies under any of the following assumptions:
\begin{itemize}
	\item $\HH^0(\bA)$ has the ``smallest possible center'', that is, it consists only of functions $\eta^0$ such that $\eta^0(A) \in \Hom_{\HH^0(\bA)}(A,A)$ is a multiple of the identity for all $A \in \Ob{\bA}$. This is the case if all objects of $\bA$ are \textit{cohomologically schurian} in the sense that $\Hom_{\HH^0(\bA)}(A,A)\cong \Bbbk$ for all $A \in \Ob{\bA}$, e.g.~if the objects of $\bA$ correspond to a (possibly infinite) simple-minded collection in the sense of \cite{KoenigYangSilting} in which case the condition is automatically satisfied if $\Bbbk$ is algebraically closed and $\bA$ is \textbf{proper}, that is, all its morphism complexes have finite-dimensional total cohomology.  
		\item $\bA$ is formal;
	\item $\bA$ is an $E_{\infty}$-algebra, e.g.~any commutative, minimal $A_\infty$-algebra or $\bA=C^{\ast}(X, \Bbbk)$, the dg algebra of singular cochains on a topological space $X$;
	\item more generally, if $\bA$ is an $E_2$-algebra, e.g.~the Hochschild complex of any $A_\infty$-category.
\end{itemize}
\noindent In all the aforementioned cases, $\Pi^0$ is surjective onto $\sfZ^0_{\gr}(\HH^{\bullet}(\bA))$. This is straightforward in the first case and the other cases are discussed in the paragraphs after \cite[Corollary 3.13]{BriggsGelinas}. Finally, we remark that injectivity of the exponential is preserved when passing to the perfect or pretriangulated hull.
\begin{rem}\label{RemarkCharacteristicMorphismSplits}In the case that $\bA$ is formal, all maps $\Pi^i$, $i \geq 0$ split: up to quasi-isomorphism we may assume that $\bA$ is a graded $\Bbbk$-linear category and the corresponding map $Z^i_{\gr}(\bA) \rightarrow \HHH^i(\bA,\bA)$ assigns to an element $\phi=(\phi_A)_{A \in \Ob{\bA}}$ the strict natural transformation $\eta$ with $\eta^0(A)=\phi_A$ for all $A \in \Ob{\bA}$.
\end{rem} 
\begin{lem}\label{LemmaRestrictionInjeciveHochschild}
	Let $\bB$ be a perfect $A_\infty$-category and suppose that $\bA \subseteq \bB$ is a full subcategory which split generates $\bB$, that is, the smallest thick subcategory of $\cD(\bB)$ which contains $\HH^0(\bA)$ also contains $\HH^0(\bB)$. Then the restriction map $\HHH^1_+(\bB, \bB) \rightarrow \HHH^1_+(\bA, \bA)$ is injective and if $\exp_{\bA}$ is injective, then so is $\exp_{\bB}$.
\end{lem}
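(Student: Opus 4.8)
I would first unwind what the restriction map is. Since $\bA\subseteq\bB$ is a full subcategory, the inclusion of underlying $\Bbbk$-graphs is fully faithful in the sense of the subsection on restricted naturality, so it induces a surjective morphism of complete brace algebras $\res\colon C(\bB)\twoheadrightarrow C(\bA)$ which preserves the weight filtration and sends $\mu_{\bB}$ to $\mu_{\bA}$; hence it is a morphism of the associated $A_\infty$-algebra structures and a cochain map, and it descends to weight-compatible maps $\HHH^{\bullet}(\bB,\bB)\to\HHH^{\bullet}(\bA,\bA)$, in particular the claimed map $\HHH^1_+(\bB,\bB)\to\HHH^1_+(\bA,\bA)$.

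The first step would be to show that $\res\colon C(\bB)\to C(\bA)$ is a quasi-isomorphism. Since $\bA$ split generates $\bB$ and $\bB$ split generates $\Perf(\bB)$, both $\bA$ and $\bB$ sit as full subcategories of $\Perf(\bB)$ whose thick closures exhaust it; by Keller's derived invariance of the Hochschild complex, in the form extended to $A_\infty$-categories recalled earlier, the restriction maps $C(\Perf\bB)\to C(\bB)$ and $C(\Perf\bB)\to C(\bA)$ are quasi-isomorphisms. As the second factors through the first, the two-out-of-three property forces $\res\colon C(\bB)\to C(\bA)$ to be a quasi-isomorphism as well. Consequently $\HHH^1(\bB,\bB)\to\HHH^1(\bA,\bA)$ is an isomorphism, so its restriction to the subgroup $\HHH^1_+(\bB,\bB)$ is injective; and since $\res$ preserves weights, this restriction takes values in $\HHH^1_+(\bA,\bA)$. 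This proves the first assertion.

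For the second assertion the plan is to fit $\exp_{\bA}$ and $\exp_{\bB}$ into a commuting square. Because $\res$ is a unital brace-algebra morphism it commutes with $\star$-products, hence with the pre-Lie exponential, and with $d_{\mu}$; so for a Hochschild $1$-cocycle $h\in C^0_+(\bB)$ one has $\res(\exp(h))=\exp(\res h)$, and by \Cref{PropositionComparisonOfIsotopies} this says precisely that restricting the $A_\infty$-isotopy $\exp(h)$ of $\bB$ to the full subcategory $\bA$ yields the $A_\infty$-isotopy $\exp(\res h)$ of $\bA$. Since $\bA$ split generates $\Perf\bB\simeq\bB$, the canonical functor $j\colon\Perf\bA\to\Perf\bB$ is a quasi-equivalence and induces an isomorphism $\DPic(\bA)\cong\DPic(\bB)$ by conjugation. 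Using the universal property of $\Perf(-)$ from \Cref{PropositionUniversalPropertiesHQE}, one checks that $j$ carries the $\Perf\bA$-extension of $\exp(\res h)$ to a functor weakly equivalent to the $\Perf\bB$-extension of $\exp(h)$: both functors $\Perf\bA\to\Perf\bB$ restrict, along $\bA\hookrightarrow\Perf\bA$ respectively $\bA\hookrightarrow\bB\hookrightarrow\Perf\bB$, to one and the same $A_\infty$-functor, namely $\exp(\res h)$ followed by the inclusion of $\bA$, and hence are weakly equivalent by uniqueness in the universal property. Therefore the square
\[
\begin{tikzcd}
\HHH^1_+(\bB,\bB) \arrow{r}{\exp_{\bB}} \arrow{d}[swap]{\res} & \DPic(\bB) \arrow{d}{\cong} \\
\HHH^1_+(\bA,\bA) \arrow{r}{\exp_{\bA}} & \DPic(\bA)
\end{tikzcd}
\]
commutes, and if $\exp_{\bA}$ is injective then $\exp_{\bA}\circ\res$ is injective by the first assertion, hence so is the equal composite $(\cong)\circ\exp_{\bB}$, and thus $\exp_{\bB}$ is injective.

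The substantive input is the fact that $\res\colon C(\bB)\to C(\bA)$ is a quasi-isomorphism, i.e.\ Keller's derived invariance of the Hochschild complex in the $A_\infty$-setting applied to a full, split-generating subcategory; granting that, the rest is a formal argument. The one point that needs care is phrasing the compatibility of the exponential with restriction to a full subcategory and with passage to perfect hulls precisely enough that the square above genuinely commutes.
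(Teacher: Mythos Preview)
Your proposal is correct and follows essentially the same strategy as the paper: Morita invariance of the Hochschild complex gives that $\res\colon C(\bB)\to C(\bA)$ is a quasi-isomorphism of brace algebras preserving the weight filtration, hence injectivity on $\HHH^1_+$; and compatibility of $\res$ with the pre-Lie exponential, together with uniqueness of extensions to the perfect hull, yields the commuting square needed for the second claim.

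The only difference worth noting is organisational. The paper's proof builds a three-column diagram
\[
\begin{tikzcd}
\HHH^1_+(\bB,\bB) \arrow{r}{\sim} \arrow[hookrightarrow]{d} & \Aut^{\infty,h}_+(\bB) \arrow[twoheadrightarrow]{r} \arrow{d} & \Aut^{\infty}_+(\bB) \arrow[hookrightarrow]{d} \\
\HHH^1_+(\bA,\bA) \arrow{r}{\sim} & \Aut^{\infty,h}_+(\bA) \arrow[twoheadrightarrow]{r} & \Aut^{\infty}_+(\bA)
\end{tikzcd}
\]
using the intermediate exponential $\exp^h$ into homotopy classes (already known to be a bijection) and the natural restriction map $\Aut^{\infty}_+(\bB)\to\Aut^{\infty}_+(\bA)$, whose injectivity follows directly from uniqueness of extensions to $\Perf\bA\simeq\bB$. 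You instead pass straight to $\DPic$ and verify commutativity by tracking both sides through the quasi-equivalence $\Perf\bA\to\Perf\bB$. Both arguments hinge on the same uniqueness-of-extension step; the paper's version isolates the key injectivity as a statement about $\Aut^{\infty}_+$, which is slightly cleaner, while yours has the advantage of phrasing everything in terms of the target group $\DPic$ that appears in the conclusion.
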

\begin{proof}
	By assumption, the inclusion $\bA \subseteq \bB$ is a Morita equivalence and hence the restriction map $C(\bB, \bB) \rightarrow C(\bA, \bA)$ is a quasi-isomorphism of braces algebras which preserves the cohomological weight filtration. In particular, the restriction to $\HHH^1_+(\bB, \bB)$ is injective and has image in $\HHH^1_+(\bA, \bA)$.
	
	For the second assertion, the universal property of $\Perf \bA \simeq \bB$ of \Cref{PropositionUniversalPropertiesHQE}) implies that every $F \in \Aut^{\infty}(\bA)$ admits an extension to an $A_\infty$-functor $\widehat{F} \in \Aut^{\infty}(\bB)$ and this extension is unique up to weak equivalence. Moreover, every $A_\infty$-isotopy of $\bB$ restricts to an $A_\infty$-isotopy on any of its subcategories as do natural transformations between them. Hence, there is a natural restriction group homomorphism $\Aut^{\infty}_+(\bB) \rightarrow \Aut^{\infty}_+(\bA)$ which is injective by the uniqueness of extensions. Because the restriction morphism $C(\bB) \rightarrow C(\bA)$ is one of brace algebras, the maps fit into a commutative diagram
	\begin{displaymath}
		\begin{tikzcd}[row sep=3em]
			\HHH^1_+(\bB, \bB) \arrow{rr}{\exp_{\bB}}[swap]{\sim} \arrow[hookrightarrow]{d} && \Aut^{\infty, h}_+(\bB) \arrow[twoheadrightarrow]{rr} \arrow{d}  && \Aut^{\infty}_+(\bB) \arrow[hookrightarrow]{d} \\
			\HHH^1_+(\bA, \bA) \arrow{rr}{\sim}[swap]{\exp_{\bA}} && \Aut^{\infty,h}_+(\bA) \arrow[twoheadrightarrow]{rr} &&  \Aut^{\infty}_+(\bA).
		\end{tikzcd}
	\end{displaymath}
	\noindent In particular, the middle vertical map is injective. Thus, if the lower right horizontal map is injective, then so is the upper right horizontal map.
\end{proof}
\noindent An analogous proof also shows that injectivity is preserved when passing from an $A_\infty$-category to its pretriangulated hull.

\begin{rem}\label{RemarkConnectingMorphism}
For any $A_\infty$-algebra $A$, the characteristic morphism $\Pi^0$ arises from the long exact sequence associated to the short exact sequence of complexes
\begin{displaymath}
\begin{tikzcd}
0 \arrow{r} & W_1C(A) \arrow{r} & C(A) \arrow{r} & A \arrow{r} & 0.
\end{tikzcd} 
\end{displaymath}
\noindent The corresponding connection morphisms $\HH^i(\bA) \rightarrow \HH^{i+1}\big(W_1C(A)\big)$ admit lifts to the chain level as discussed in the paragraph subsequent to \cite[Proposition 3.6]{BriggsGelinas}. In particular, the injectivity condition in \Cref{IntroTheoremA} can also be verified by showing that all invertible elements of $Z(\HH^0(\bA))$ lie in the kernel of the connecting morphism.
\end{rem}

\begin{rem}\label{RemarkWeakerConditionsConjecture} Suppose that $\cC$ is a graded $\Bbbk$-linear category such that the homomorphism $\Pi^1: \HHH^1(\cC, \cC) \rightarrow Z^1_{\gr}(\HH^{\bullet}(\cC))$ vanishes identically, for example any category which satisfies the conditions of \Cref{Conjecture}. Note that  every element $\phi=(\phi_C)_{C \in \Ob{\cC}}$ in the image of $\Pi^1$ has the property that $\phi_C^2=- \phi_C^2$ and hence $\phi_C^2=0$. Because $\cC$ has no higher $A_\infty$-multiplications this means that that $\phi_C$ is a Maurer-Cartan element of the algebra $\Hom^{\bullet}_{\cC}(C,C)$ and hence whenever $\phi_C \neq 0$, the free dg module associated to $C$ admits a deformation (cf.~\Cref{DefinitionDeformations}). The idea is that elements of the identity component of $\DPic(\cC)$ map objects of $\cC$ to their deformations in $\cD(\cC)$ and so in order to be able to expect that elements in this component are representable by elements in $\Aut_{\circ}^{\infty}(\cC)$ and hence by $A_\infty$-endofunctors of $\cC$, one needs to ensure that all objects of $\cC$ are 'rigid' and not deformed under the infinitesimal transformations represented by the Hochschild classes. The vanishing of $\Pi^1$ is a way to ensure this and a more general form of \Cref{Conjecture} only assumes that $\cC$ is formal and $\Pi^1=0$.
\end{rem}

\subsection{Quasi-invariance of $B_\infty$-structure and the weight filtration}\ \medskip

\noindent We recall some parts of Keller's proof \cite{KellerInvarianceHigherStructures} that the Hochschild complex of a dg category is a Morita invariant in the homotopy category $\operatorname{Ho}(B_\infty)$ of  of $B_\infty$-algebras. We will show that his isomorphisms in $\operatorname{Ho}(B_\infty)$ preserve the cohomological weight filtration if induced by a quasi-equivalence between $A_\infty$-categories. A similar statement was already proved by Briggs and Gelinas  \cite{BriggsGelinas} who showed that every quasi-isomorphism $F: A \rightarrow B$ between $A_\infty$-algebras induces a canonical isomorphism $C(A) \rightarrow C(B)$ in the homotopy category of $A_\infty$-algebras which induces an isomorphism between the cohomological weight filtration. However, it is not clear to us whether they are also isomorphisms in $\operatorname{Ho}(B_\infty)$.
\subsubsection{Overview on $B_\infty$-algebras}\label{SectionBInfinityAlgebras}\ \medskip

\begin{definition}
A \textbf{$B_\infty$-algebra} is an $A_\infty$-algebra $B$ together with a multiplication map $T(B[1]) \otimes T(B[1]) \rightarrow T(B[1])$ which turns the dg coalgebra $T(B[1])$ into a dg bialgebra\footnote{The tensor coalgebra is then automatically a dg Hopf algebra.}.
\end{definition}
\noindent The multiplication map amounts to a family of ``multi-braces``
\begin{displaymath}
\begin{tikzcd}
b_{r,s}: B^{\otimes r} \otimes B^{\otimes s} \arrow{r} & B, & r,s \geq 0,
\end{tikzcd}
\end{displaymath}
\noindent satisfying certain associativity conditions. A brace algebra is a $B_\infty$-algebra with $\mu_B=0$ and $b_{r,\ast}=0$ for all $r \geq 2$, $\ast \in \mathbb{N}$. The remaining ones are exactly the brace operations. If $V$ is a brace algebra with a Maurer-Cartan element $\mu \in V^1$, then $V$ becomes a $B_\infty$-algebra with the induced $A_\infty$-structure.
In particular, the Hochschild complex of any $A_\infty$-algebra is an example of a $B_\infty$-algebra. A morphism of $B_\infty$-algebras is an $A_\infty$-functor whose associated dg cofunctor is a morphism of dg bialgebras and a quasi-isomorphism is defined as for $A_\infty$-algebras. We will consider the homotopy category $\operatorname{Ho}(B_\infty)$ of $B_\infty$-algebras obtained by formally inverting all quasi-isomorphisms.

\subsubsection{Gluing of $A_\infty$-categories along functors}\label{SectionGluingAInfinityCategories}\ \medskip

\noindent We recall the ``upper triangular'' gluing construction of $A_\infty$-categories along an $A_\infty$-functor which is the natural generalization of its differential graded counterpart as discussed for example in \cite[Section 3.2]{OrlovNoncommutativeSchemes}. Let $\bA, \bB$ be $A_\infty$-categories and let $F: \bA \rightarrow \bB$ be an $A_\infty$-functor.\medskip

\noindent We define a new $A_\infty$-category $\bD=\bA \cup_{F} \bB$ with objects $\Ob{\bA} \sqcup \Ob{\bB}$ and morphisms
\begin{displaymath}
	\begin{aligned}
		\bD(B, A) & \coloneqq \bB\big(B, F^0(A)\big), \\
		\bD(A, B) &  \coloneqq  0, \\
		\bD(A, A') & \coloneqq \bA\big(A, A'\big), \\
		\bD(B, B') & \coloneqq \bB\big(B, B'\big).
	\end{aligned}
\end{displaymath} 
\noindent for all objects $A, A' \in \Ob{\bA}$, $B, B' \in \Ob{\bB}$. The $A_\infty$-structure on $\bD$ is the unique structure such that the fully faithful inclusions 
\begin{displaymath}
	\begin{array}{ccc}
		\iota_{\bA}:\bA \hookrightarrow \bD, & \text{ and } & \iota_{\bB}: \bB \hookrightarrow \bD,
	\end{array}
\end{displaymath}
\noindent  are strict $A_\infty$-functors and such that all other non-trivial multiplications are attained on elements of the form 
\begin{displaymath}
	\begin{aligned}
		f \in & \, \bB(B_{r-1}, B_r) \otimes \cdots \otimes \bB(B_0, B_1) \otimes  \bD(B_0, A_s) \otimes  \\ & \bA(A_{s-1}, A_s) \otimes \cdots \otimes \bA(A_0, A_1),
	\end{aligned}
\end{displaymath}
\noindent for some $r, s \geq 0$, for which one defines
\begin{displaymath}
	\mu_{\bD}(f) \coloneqq \sum\mu_{\bB}\Big(\operatorname{Id}^{\otimes (r+1)} \otimes F^{j_1} \otimes \cdots \otimes F^{j_{m}}\Big)(f).
\end{displaymath}
\noindent The sum ranges over all $j_1, \dots, j_m \geq 1$, $m \geq 0$ such that $\sum_i j_i=s$.  It is not difficult but somewhat tedious to verify that $\bD$ is an $A_\infty$-category. We note that if $F$ is a quasi-equivalence, then so are $\iota_{\bA}$ and $\iota_{\bB}$. Moreover, for every $A_\infty$-functor $H: \bB \rightarrow \bE$, there exists an $A_\infty$-functor $\widehat{H}: \bA \cup_F \bB \rightarrow \bE$ such that $\widehat{H} \circ \iota_{\bB}=H$ and $\widehat{H} \circ \iota_{\bA}=H \circ F$. This is analogous to \cite[Proposition 3.11]{OrlovNoncommutativeSchemes} in the dg case.

 \subsubsection{From quasi-equivalences to maps between Hochschild spaces and Keller's approach}\ \medskip
 
 \noindent  By restricted naturality  of the Hochschild complex, any quasi-equivalence $F:\bA \rightarrow \bB$ between $A_\infty$-categories induces a canonical span $C(F): C(\bA) \rightarrow C(\bB)$ in the category of $B_\infty$-algebras defined as
\begin{displaymath}
	\begin{tikzcd}	
	\text{}	& C(\bA \cup_F \bB) \arrow{dl}[swap]{\res(\iota_{\bA})} \arrow{dr}{\res(\iota_{\bB})} \\
	C(\bA) & & C(\bB),
	\end{tikzcd}
\end{displaymath}
\noindent for which we also use the formal expression $C(F)={\res(\iota_{\bB})} \circ \res(\iota_{\bA})^{-1}$. It is worth emphasizing that $C(F)$ is even a span of brace algebras morphisms which will be the underlying reason for the naturality of the exponential map from  \Cref{IntroTheoremA}. Because $\iota_{\bA}$ and $\iota_{\bB}$ are fully faithful quasi-equivalences, $\res(\iota_{\bA})$ and $\res(\iota_{\bB})$ are quasi-isomorphisms and hence $C(F)$ represents an isomorphism in $\operatorname{Ho}(B_\infty)$. The proof is analogous to the case of dg categories. Alternatively it follows from the results of \Cref{SectionGelinasBriggs} below. If $G: \bB \rightarrow \bD$ is another $A_\infty$-functor, then $C(G \circ F)=C(G) \circ C(F)$ in $\operatorname{Ho}(B_\infty)$. Moreover, as a map in $\cD(\Bbbk)$, $C(F)$ only depends on the weak equivalence class of $F$. The first claim can be proved with the same strategy from \cite[Section 4.6. Theorem d)]{KellerInvarianceHigherStructures}. Although this is likely also true for the second claim  we give a more direct argument for both claims below which uses a comparison to similar maps between Hochschild complexes described by Briggs-Gelinas.

\subsubsection{Isomorphisms of weight filtrations after Briggs-Gelinas}\label{SectionGelinasBriggs}\ \medskip

\noindent We recall the construction from \cite{BriggsGelinas} which is adapted to the case of $A_\infty$-categories and the Hochschild complex\footnote{The original construction in \cite{BriggsGelinas} worked with augmented $A_\infty$-categories and functors and a version of the normalized Hochschild complex.}. Although similar in spirit, our arguments differ somewhat from the proofs in \cite{BriggsGelinas}. In particular, we extend the construction to quasi-equivalences and show that it only depends on the weak equivalence class of a functor. 

In what follows, we fix $A_\infty$-categories $\bA$ and $\bB$.  Given an $A_\infty$-functor $\Phi:\bA \rightarrow \bB$, let $C(\bA, \bB, \Phi)\coloneqq \Fun(\bA, \bB)(\Phi, \Phi)$ which we refer to as the \textit{relative Hochschild complex}. We recall from  \eqref{EquationBijectionMorphismsFunctorCategoryCoderivations} its bijection with spaces of coderivations.
For all $A_\infty$-functors $U: \bA' \rightarrow \bA$, $V: \bB \rightarrow \bB'$ the post- and precomposition $A_\infty$-functors \eqref{EquationPostcompositionFunctors} and \eqref{EquationPrecompositionFunctors} yield cochain maps
\begin{displaymath}
	\begin{tikzcd}
	 & C(\bA, \bB, \Phi) \arrow{dl}[swap]{U^{\ast}} \arrow{dr}{V_{\ast}} \\
	C(\bA', \bB, \Phi \circ U) && C\big(\bA, \bB', V \circ \Phi\big).
	\end{tikzcd}
\end{displaymath}
\noindent By  \Cref{CorollaryNaturalMapsDerivationPreserveWeightFiltrations},  $U^{\ast}$ and $V_{\ast}$ are also compatible with the respective weight filtrations. By construction, the assignments $F \mapsto F_{\ast}$ and $F \mapsto F^{\ast}$ are functorial and commute in the sense that $G_{\ast} \circ F^{\ast}=F^{\ast} \circ G_{\ast}$ for any pair  of functors $F: \bA' \rightarrow \bA$ and $G: \bB \rightarrow \bB'$. Since homotopic functors correspond to homotopic dg cofunctors, it follows that that the homotopy classes of $V_\ast$ and $U^{\ast}$ only depend on the homotopy classes of $U$ and $V$.
 Thus, if $F$ is a homotopy invertible functor, then $F_{\ast}$ and $F^{\ast}$ are homotopy equivalences. It follows immediately, that every homotopy invertible $A_\infty$-functor $F: \bA \rightarrow \bB$ induces an isomorphism  
\begin{displaymath}
\widetilde{C}(F) \coloneqq F^{\ast -1} \circ F_{\ast}: C(\bA) \rightarrow C(\bB),
\end{displaymath}
\noindent in $\cD(\Bbbk)$. We note that the map $\widetilde{C}(F)$ is analogous to the inverse of the map $C(F)$ from \cite{BriggsGelinas} (which differs from the map which we denote by $C(F)$). For weakly invertible functors, we can still regard $\widetilde{C}(F)$ as a span in $\cD(\Bbbk)$. Among other things, we show below that it represents an isomorphism in $\cD(\Bbbk)$.

\begin{prp}\label{PropositionMapsAgree}Let $F: \bA \rightarrow \bB$ be an $A_\infty$-functor. The following are true.
	
	\begin{enumerate}
		\item[i)] \label{Enumerate1} If $\bA$ and $\bB$ are cohomologically unital and $F$ is weakly invertible, then the span $\widetilde{C}(F)$ defines an isomorphism in $\cD(\Bbbk)$. Moreover, it only depends on the weak equivalence class of $F$.
		\item[ii)] \label{Enumerate2} For every quasi-equivalence $F: \bA \rightarrow \bB$, 
		\begin{displaymath}
		C(F)=\widetilde{C}(\iota_{\bB})^{-1} \circ \widetilde{C}(\iota_{\bA})
		\end{displaymath}
		\noindent  in $\cD(\Bbbk)$, where $\iota_{\bA}: \bA \hookrightarrow \bA \cup_F \bB$ and $\iota_{\bB}: \bB \hookrightarrow \bA \cup_F \bB$ denote the inclusions.
			\item[iii)] \label{Enumerate3} The assignment $F \mapsto \widetilde{C}(F)$ is functorial and maps homotopy invertible $A_\infty$-functors to quasi-isomorphisms which induce isomorphisms between the cohomological weight filtrations. If $\bA$ and $\bB$ are cohomologically unital, the same is true for all weakly invertible functors.
	\end{enumerate}
\end{prp}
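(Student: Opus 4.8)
The plan is to prove the three items of \Cref{PropositionMapsAgree} essentially in the order (iii) $\Rightarrow$ (i) $\Rightarrow$ (ii), reducing everything to the formal properties of the post- and precomposition $A_\infty$-functors and the behaviour of $\widetilde{C}(F)$ under composition. First I would establish the functoriality in (iii): since $(G\circ F)_\ast = G_\ast\circ F_\ast$ and $(G\circ F)^\ast = F^\ast \circ G^\ast$ (these follow from the fact that post- and precomposition are genuine $A_\infty$-functors, hence strict composition of cofunctors on the nose, cf.\ \eqref{EquationPostcompositionFunctors}--\eqref{EquationPrecompositionFunctors} and \eqref{EquationNaturalityCoderivations}), and since $G_\ast$ commutes with $F^\ast$, one computes $\widetilde{C}(G\circ F)=(G\circ F)^{\ast-1}\circ (G\circ F)_\ast = F^{\ast-1}\circ G^{\ast-1}\circ G_\ast\circ F_\ast = F^{\ast-1}\circ \widetilde{C}(G)\circ F_\ast$; combined with the commutation this gives $\widetilde{C}(G)\circ\widetilde{C}(F)$ after identifying terms, provided the relevant maps are invertible. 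For a \emph{homotopy} invertible $F$ with homotopy inverse $F'$, \Cref{PropositionCompositionCompatibelWithEquivalence} and the remark that $F\mapsto F_\ast, F\mapsto F^\ast$ only depend on homotopy classes show that $F_\ast, F^\ast$ are homotopy equivalences of complexes, hence quasi-isomorphisms; together with \Cref{CorollaryNaturalMapsDerivationPreserveWeightFiltrations} (the maps $H_\ast, H^\ast$ are filtered for cofunctors between tensor cocategories, which is the case for $A_\infty$-functors) this yields that $\widetilde{C}(F)$ is a quasi-isomorphism inducing an isomorphism on the cohomological weight filtration. This proves (iii) in the homotopy-invertible case.

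Next I would upgrade to weakly invertible $F$ between $c$-unital categories, which is item (i) and the second half of (iii). Here $F_\ast$ and $F^\ast$ need not individually be quasi-isomorphisms, so $\widetilde{C}(F)=F^{\ast-1}\circ F_\ast$ has to be read as a genuine span. The key input is \Cref{PropositionInverseFunctionTheorem}: a quasi-equivalence need not be homotopy invertible, but a \emph{quasi-isomorphism} is. I would therefore factor an arbitrary weak equivalence through minimal models or through the Yoneda-type rectification of \Cref{PropositionYonedaRectification}: replacing $\bA, \bB$ by quasi-isomorphic dg (or minimal) categories does not change $\widetilde{C}$ up to the canonical identifications because of the functoriality just proved and because quasi-isomorphisms are handled by the homotopy-invertible case. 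After such a replacement, a weak equivalence becomes a quasi-equivalence between strictly unital categories, for which one can produce an honest homotopy-inverse-up-to-weak-equivalence using \cite[Theorem 2.9, Corollary 1.14]{SeidelBook}; then the two-out-of-three property for quasi-isomorphisms in $\cD(\Bbbk)$, applied to the commuting square $G_\ast\circ F^\ast = F^\ast\circ G_\ast$ with $G$ the weak inverse, forces the span $\widetilde{C}(F)$ to be invertible in $\cD(\Bbbk)$. Independence of the weak equivalence class follows because weakly equivalent functors induce the same map in $\HH^0(\Fun(\bA,\bB))$, and $V_\ast, U^\ast$ descend to this level; a short diagram chase in $\cD(\Bbbk)$ then identifies the two spans.

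Finally, item (ii) is a compatibility statement between the span $C(F)=\res(\iota_{\bB})\circ\res(\iota_{\bA})^{-1}$ built from the gluing category $\bA\cup_F\bB$ and the span $\widetilde{C}(\iota_{\bB})^{-1}\circ\widetilde{C}(\iota_{\bA})$. The strategy is to unwind the definitions: the inclusions $\iota_{\bA},\iota_{\bB}$ are \emph{fully faithful strict} $A_\infty$-functors, so on the level of Hochschild spaces $\res(\iota_{\bA})$ agrees with $\iota_{\bA}^{\ast}$ (restriction of a coderivation along a strict cofunctor is literally the strict restriction map), and similarly $\res(\iota_{\bB})$ relates to $(\iota_{\bB})_\ast$; moreover for strict fully faithful functors the ``other'' variance maps $(\iota_{\bA})_\ast$ and $\iota_{\bB}^{\ast}$ are identifiable with the same restriction maps up to homotopy because precomposition and postcomposition by a strict fully faithful functor agree on the nose on the relevant direct summand. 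Assembling these identifications in $\cD(\Bbbk)$ gives $\widetilde{C}(\iota_{\bA})=\iota_{\bA}^{\ast-1}\circ(\iota_{\bA})_\ast$ and $\widetilde{C}(\iota_{\bB})=\iota_{\bB}^{\ast-1}\circ(\iota_{\bB})_\ast$, and a direct computation collapses $\widetilde{C}(\iota_{\bB})^{-1}\circ\widetilde{C}(\iota_{\bA})$ to $\res(\iota_{\bB})\circ\res(\iota_{\bA})^{-1}=C(F)$; the functoriality $C(G\circ F)=C(G)\circ C(F)$ noted earlier is consistent with this. The main obstacle I anticipate is precisely this last bookkeeping: carefully matching $\res$ with the appropriate one of $(-)_\ast,(-)^\ast$ for the \emph{strict} inclusions of the gluing construction, since strictness makes several a priori different maps coincide but one must verify this on Taylor coefficients, and keeping track of which maps are quasi-isomorphisms on the nose versus only spans in $\cD(\Bbbk)$ requires some care. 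Everything else is a formal consequence of \Cref{PropositionInverseFunctionTheorem}, \Cref{CorollaryNaturalMapsDerivationPreserveWeightFiltrations}, and two-out-of-three.
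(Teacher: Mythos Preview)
Your treatment of functoriality and of the homotopy-invertible case in (iii) is essentially the paper's argument: $(-)_\ast$ and $(-)^\ast$ are functorial, commute, depend only on homotopy classes, and are filtered by \Cref{CorollaryNaturalMapsDerivationPreserveWeightFiltrations}. That part is fine.

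The gap is in your approach to (i). You try to reduce weak invertibility to homotopy invertibility via rectification, but this cannot work as stated: replacing $\bA,\bB$ by quasi-isomorphic models changes nothing about $F$ being merely a quasi-equivalence rather than a quasi-isomorphism, and by \Cref{PropositionInverseFunctionTheorem} only quasi-isomorphisms are homotopy invertible. Your two-out-of-three argument is then circular: to conclude $(G\circ F)_\ast$ is a quasi-isomorphism from $G\circ F\approx\operatorname{Id}_{\bA}$ you already need to know that $(-)_\ast$ is invariant under weak equivalence, which is exactly what you are trying to prove. The paper avoids this by using the natural transformation itself: given an isomorphism $\eta:F\to G$ in $\HH^0(\Fun(\bA,\bB))$, the post- and precomposition $A_\infty$-functors \eqref{EquationPostcompositionFunctors}--\eqref{EquationPrecompositionFunctors} together with \eqref{EquationNaturalTransformationSquare} produce cochain maps $\eta_\ast,\eta^\ast$ fitting into a homotopy-commutative diamond connecting $F_\ast,G_\ast,F^\ast,G^\ast$ through $\Fun(\bA,\bB)(F,G)$. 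Since $\eta$ is invertible, $\eta_\ast$ and $\eta^\ast$ are homotopy equivalences, whence the spans $\widetilde C(F)$ and $\widetilde C(G)$ agree in $\cD(\Bbbk)$. Only then does functoriality plus $\widetilde C(V\circ U)\simeq\widetilde C(\operatorname{Id})$ force $\widetilde C(U)$ to be an isomorphism. This $\eta$-diagram is the missing ingredient in your proposal.

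For (ii), you have the right instinct but the bookkeeping is off: $\res(\iota_{\bA})$ and $\iota_{\bA}^\ast$ do not agree --- they have different codomains. The correct identity, valid for any strict fully faithful inclusion $\iota$, is
\[
\iota_\ast\circ\res(\iota)=\iota^\ast
\]
on the nose at the level of complexes (just compare Taylor coefficients). This immediately gives $\res(\iota)=\iota_\ast^{-1}\circ\iota^\ast=\widetilde C(\iota)^{-1}$ in $\cD(\Bbbk)$, and applying it symmetrically to $\iota_{\bA}$ and $\iota_{\bB}$ yields $C(F)=\res(\iota_{\bB})\circ\res(\iota_{\bA})^{-1}=\widetilde C(\iota_{\bB})^{-1}\circ\widetilde C(\iota_{\bA})$ directly. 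There is no asymmetry between the two inclusions.
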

\begin{proof}Functoriality of $\widetilde{C}$ is a direct consequence of the same property of $(-)_{\ast}$ and $(-)^{\ast}$ and the fact that they commute. Let $F, G: \bA \rightarrow \bB$ be $A_\infty$-functors and let $\eta: F \rightarrow G$  be a natural transformation. By \eqref{EquationNaturalTransformationSquare}, the post- and pre-composition functors \eqref{EquationPostcompositionFunctors} and \eqref{EquationPrecompositionFunctors} yield a homotopy commutative diagram in the category of cochain complexes
\begin{displaymath}
\begin{tikzcd}[row sep=3em]
	& \Fun(\bA, \bB)(F,F) \arrow{d}{\eta_{\ast}} \\
\Fun(\bA, \bA)\big(\operatorname{Id}_{\bA}, \operatorname{Id}_{\bA}\big) \arrow{ur}{F_{\ast}} \arrow{dr}[swap]{G_{\ast}} & \Fun(\bA, \bB)(F, G) & \Fun(\bB, \bB)\big(\operatorname{Id}_{\bB}, \operatorname{Id}_{\bB}\big) \arrow{ul}[swap]{F^{\ast}} \arrow{dl}{G^{\ast}}\\
  & \Fun(\bA, \bB)(G, G) \arrow{u}[swap]{\eta^{\ast}}
\end{tikzcd}
\end{displaymath}
\noindent  If $\bA$ and  $\bB$ are cohomologically unital and $\eta$ is invertible in $\HH^0\big(\Fun(\bA, \bB)\big)$, then $\eta^{\ast}$ and $\eta_{\ast}$ are homotopy equivalences as they are functorial in $\eta$ and only depend on the homotopy class of $\eta$ which follows from \eqref{EquationNaturalTransformationSquare} and its subsequent paragraph. Consequently, $\widetilde{F}$ and $\widetilde{G}$ are equivalent spans. Hence, if $U: \bU \rightarrow \bV$, $V: \bV \rightarrow \bU$ are mutually weakly inverse $A_\infty$-functors, then $\widetilde{C}(V) \circ \widetilde{C}(U) \simeq \widetilde{C}(V \circ U)=\widetilde{C}(\operatorname{Id}_{\bU})$ as spans in $\cD(\Bbbk)$. The latter represents an isomorphism and hence so do $\widetilde{C}(U)$ and $\widetilde{C}(V)$. This proves i). If $\iota$ is any strict inclusion between $A_\infty$-categories, one  easily sees that 
\begin{equation}\label{EquationRelationRestriction}
{\iota}_{\ast} \circ \res(\iota)=\iota^{\ast}
\end{equation}
\noindent on the level of complexes and hence $\res(\iota)=\iota_{\ast}^{-1} \circ \iota^{\ast}$ in $\cD(\Bbbk)$ if $\iota$ is a quasi-equivalence. An application of this to $\iota \in \{\iota_{\bA}, \iota_{\bB}\}$ shows ii). It remains to show that $\widetilde{C}(F)$ induces isomorphisms between the cohomological weight filtrations. Because its inverse in $\cD(\Bbbk)$ is of the same form (we assume that $F$ is homotopy or weakly invertible), it suffices to show that it respects the weight filtrations which follows from \Cref{CorollaryNaturalMapsDerivationPreserveWeightFiltrations}.
\end{proof}
\begin{cor}\label{CorollaryQuasiEquivalenceFilteredQuasiIso}
For every quasi-equivalence $F: \bA \rightarrow \bB$ between cohomologically unital $A_\infty$-categories, $C(F): C(\bA) \rightarrow C(\bB)$ is an isomorphism in $\operatorname{Ho}(B_\infty)$ and induces an isomorphism between the cohomological weight filtrations.
\end{cor}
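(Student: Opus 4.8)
The plan is to deduce the corollary directly from \Cref{PropositionMapsAgree}, using that $C(F)$ is built from restriction maps along strict fully faithful inclusions. First I would observe that since $F: \bA \to \bB$ is a quasi-equivalence, both strict fully faithful inclusions $\iota_{\bA}: \bA \hookrightarrow \bA \cup_F \bB$ and $\iota_{\bB}: \bB \hookrightarrow \bA \cup_F \bB$ are themselves quasi-equivalences, hence weakly invertible, so that \Cref{PropositionMapsAgree} applies to each of them.

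For the claim that $C(F)$ represents an isomorphism in $\operatorname{Ho}(B_\infty)$, the point I would make is that $\res(\iota_{\bA})$ and $\res(\iota_{\bB})$ are not merely morphisms of brace algebras — as recorded in the restricted-naturality discussion — but morphisms of $B_\infty$-algebras: restriction along a strict fully faithful inclusion carries the structure element of $\bA \cup_F \bB$ to the structure elements of $\bA$ and $\bB$ respectively, so the induced $A_\infty$-structures are compatible and the brace-morphism property upgrades to a $B_\infty$-morphism property. By \eqref{EquationRelationRestriction} one has $\res(\iota) = \iota_{\ast}^{-1} \circ \iota^{\ast} = \widetilde{C}(\iota)^{-1}$ in $\cD(\Bbbk)$ for $\iota \in \{\iota_{\bA}, \iota_{\bB}\}$, and part iii) of \Cref{PropositionMapsAgree} shows that these are quasi-isomorphisms. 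Hence $C(F) = \res(\iota_{\bB}) \circ \res(\iota_{\bA})^{-1}$ is a zig-zag of $B_\infty$-quasi-isomorphisms, that is, an isomorphism in $\operatorname{Ho}(B_\infty)$.

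For the filtration statement I would invoke part ii) of \Cref{PropositionMapsAgree}, which identifies $C(F)$ with $\widetilde{C}(\iota_{\bB})^{-1} \circ \widetilde{C}(\iota_{\bA})$ in $\cD(\Bbbk)$, together with part iii), which guarantees that $\widetilde{C}(\iota_{\bA})$ and $\widetilde{C}(\iota_{\bB})$ — being induced by weakly invertible functors — are quasi-isomorphisms restricting to isomorphisms of the cohomological weight filtrations; the composite $C(F)$ then inherits the same property. This is the direct argument promised after the definition of $C(F)$, replacing the route through Keller's \cite{KellerInvarianceHigherStructures}. I do not expect a genuine obstacle here: the corollary is essentially a repackaging of \Cref{PropositionMapsAgree}, and the only points needing care — that the restriction maps are honest $B_\infty$-morphisms and that they are quasi-isomorphisms — both reduce immediately to facts already in place, namely compatibility of restriction with structure elements along strict inclusions and \eqref{EquationRelationRestriction} combined with \Cref{PropositionMapsAgree}.
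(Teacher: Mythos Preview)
Your proposal is correct and follows the paper's intended approach: the $\operatorname{Ho}(B_\infty)$-isomorphism claim was already noted in the discussion preceding \Cref{PropositionMapsAgree} (restriction along the strict inclusions $\iota_{\bA},\iota_{\bB}$ yields $B_\infty$-quasi-isomorphisms), and the filtration statement is precisely the content of parts ii) and iii) of \Cref{PropositionMapsAgree} applied to $\iota_{\bA}$ and $\iota_{\bB}$. Your identification $\res(\iota)=\widetilde{C}(\iota)^{-1}$ via \eqref{EquationRelationRestriction} is exactly the link the paper uses.
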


\subsection{Naturality and proofs of \Cref{IntroTheoremA} and \Cref{IntroTheoremNaturality}}\ \medskip

\noindent We show that the exponential map is natural with respect to quasi-equivalences and prove the general case of \Cref{CorollaryExponentialInjectiveHomotopyClassesFunctors}. If $F:\bA \rightarrow \bB$ is a homotopy or weakly invertible $A_\infty$-functor between cohomologically unital categories, we denote by $F_{\ast}$ the induced isomorphisms $\Aut^{\infty, h}(\bA) \rightarrow \Aut^{\infty, h}(\bB)$ and $\Aut^{\infty}(\bA) \rightarrow \Aut^{\infty}(\bB)$ respectively.
\begin{thm}\label{TheoremNaturalityExponentialQuasiIsos}Let $\bA, \bB$ be cohomologically unital $A_\infty$-categories over a field of characteristic $0$.
	\begin{enumerate}
		\item The exponential descends to maps $\exp_{\bA}^h:\HHH^1_+(\bA,\bA) \rightarrow \Aut^{\infty, h}(\bA)$ and $\exp_{\bA}:\HHH^1_+(\bA,\bA) \rightarrow \Aut^{\infty}(\bA)$ which are related by the projection $\Aut^{\infty, h}(\bA) \twoheadrightarrow \Aut^{\infty}(\bA)$.
		\item  Every quasi-equivalence $F: \bA \rightarrow \bB$ induces a commutative diagram
		\begin{displaymath}
			\begin{tikzcd}[row sep=3em]
				\HHH^1_+(\bA,\bA)  \arrow[twoheadrightarrow]{rr}{\exp_{\bA}}[swap]{} \arrow{d}{\sim}[swap]{\HH^0(C(F))} &&  \Aut^{\infty}_+(\bA) \arrow{d}{F_\ast}[swap]{\sim} \\
				\HHH^1_+(\bB,\bB) \arrow[twoheadrightarrow]{rr}{\exp_{\bB}}[swap]{} &&  \Aut^{\infty}_+(\bB). 
			\end{tikzcd}
		\end{displaymath}
		\noindent consisting of surjective group homomorphisms with respect to composition of functors and the Baker-Campbell-Hausdorff product.
		
			\item  Every quasi-isomorphism $F: \bA \rightarrow \bB$ induces a commutative diagram
		\begin{displaymath}
		\begin{tikzcd}[row sep=3em]
		\HHH^1_+(\bA,\bA)  \arrow{rr}{\exp_{\bA}^h}[swap]{\sim} \arrow{d}{\sim}[swap]{\HH^0(C(F))} &&  \Aut^{\infty, h}_+(\bA) \arrow{d}{F_\ast}[swap]{\sim} \\
		\HHH^1_+(\bB,\bB) \arrow{rr}{\exp_{\bB}^h}[swap]{\sim} &&  \Aut^{\infty, h}_+(\bB). 
		\end{tikzcd}
		\end{displaymath}
		\noindent consisting of group isomorphisms with respect to the same group structures as in (2).
	\end{enumerate}  
\end{thm}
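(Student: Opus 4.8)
The plan is to reduce the whole statement to the case of \emph{semi-free} dg categories, where \Cref{CorollaryExponentialInjectiveHomotopyClassesFunctors} already furnishes the exponential isomorphism onto $\Aut^{\infty,h}_+$, and then to propagate it along quasi-equivalences via the $B_\infty$-invariance of the Hochschild complex. For a general cohomologically unital $\bA$ I would fix a semi-free dg category $\bU$ joined to $\bA$ by a zig-zag of quasi-isomorphisms (rectify $\bA$ to a dg category by \Cref{PropositionYonedaRectification}, then take an object-fixing semi-free resolution). Transporting $\exp^h_{\bU}\colon \HHH^1_+(\bU,\bU)\xrightarrow{\sim}\Aut^{\infty,h}_+(\bU)$ along the induced isomorphisms — on the Hochschild side via the $\operatorname{Ho}(B_\infty)$-isomorphism $C(-)$, which is a weight-filtered quasi-isomorphism by \Cref{CorollaryQuasiEquivalenceFilteredQuasiIso}, and on the automorphism side via conjugation by the connecting (homotopy-invertible) functors — defines $\exp^h_{\bA}$; then $\exp_{\bA}\coloneqq \pi\circ\exp^h_{\bA}$ where $\pi\colon\Aut^{\infty,h}(\bA)\twoheadrightarrow\Aut^{\infty}(\bA)$ is the canonical surjection. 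Since $\pi$ sends the homotopy class of an $A_\infty$-isotopy to its weak equivalence class, the image of $\exp_{\bA}$ is exactly $\Aut^{\infty}_+(\bA)$, and $\Aut^{\infty,h}_+(\bA)=\Aut^{\infty,h}_{\mathbf 1}(\bA)$ ensures that conjugation by a quasi-isomorphism does carry $\Aut^{\infty,h}_+$ to $\Aut^{\infty,h}_+$ (the $\HH^{\bullet}$ of a conjugate of an isotopy is the identity functor). This gives part (1) once independence of the model is checked.

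\textbf{The semi-free naturality lemma.} The heart of the proof is naturality for a quasi-equivalence $F\colon\bU\to\bU'$ between semi-free dg categories, in the stronger homotopy-level form when $F$ is moreover a quasi-isomorphism. The exponential $\HHH^1_+(V)\xrightarrow{\sim}\overline{\MC}(V_+)$ of \Cref{TheoremWeakEquivalenceGroupoids} is tautologically natural for morphisms of complete brace algebras that preserve the Maurer–Cartan element, being defined purely through the pre-Lie power series. Now $C(F)$ is represented by the span $C(\bU)\xleftarrow{\res(\iota_{\bU})}C(\bU\cup_F\bU')\xrightarrow{\res(\iota_{\bU'})}C(\bU')$ of brace-algebra quasi-isomorphisms, and each leg preserves the Maurer–Cartan element because $\iota_{\bU}$ and $\iota_{\bU'}$ are strict inclusions. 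Hence $C(F)$ induces a square relating $\exp^h_{\bU}$ and $\exp^h_{\bU'}$ whose right-hand vertical arrow is the span-induced map on $\IsoInfty(C(-))\cong\Aut^{\infty,h}_+(-)$, the identification being \Cref{LemmaIsotopiesVSMaurerCartanElements} together with \Cref{LemmaDGGaugeEquivalenceGaugleEquivalence} in the semi-free dg case. It then remains to recognize this span-induced map as conjugation $F_\ast$: here I would invoke \Cref{PropositionMapsAgree}, rewriting $C(F)=\widetilde C(\iota_{\bU'})^{-1}\circ\widetilde C(\iota_{\bU})$ with $\widetilde C(G)=G^{\ast -1}\circ G_\ast$ built from the pre- and post-composition $A_\infty$-functors \eqref{EquationPostcompositionFunctors} and \eqref{EquationPrecompositionFunctors}; restricted to the full $A_\infty$-subcategories of $A_\infty$-isotopies these functors implement exactly conjugation, so the span-induced map is $F_\ast$. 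Functoriality $C(G\circ F)=C(G)\circ C(F)$ in $\operatorname{Ho}(B_\infty)$ and functoriality of $(-)_\ast$ then show that the construction of the first paragraph is independent of the semi-free model, completing part (1).

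\textbf{Assembly and main obstacle.} For a general quasi-equivalence (resp.\ quasi-isomorphism) $F\colon\bA\to\bB$ I would choose semi-free models for $\bA$ and $\bB$ and paste the required diagram out of the definition squares of $\exp^h_{\bA}$ and $\exp^h_{\bB}$, the naturality squares of the semi-free lemma for the connecting quasi-equivalences, and the square attached to $F$; commutativity of the outer rectangle follows from commutativity of the pieces plus functoriality of $C(-)$ in $\operatorname{Ho}(B_\infty)$ and of $(-)_\ast$. That all horizontal maps are group homomorphisms intertwining the Baker–Campbell–Hausdorff product with composition of functors, that $\exp^h_{\bA}$ is an isomorphism onto $\Aut^{\infty,h}_+(\bA)$ and $\exp_{\bA}$ surjects onto $\Aut^{\infty}_+(\bA)$, and that the left-hand verticals $\HH^0(C(F))$ are isomorphisms are inherited from the semi-free case via \Cref{CorollaryExponentialInjectiveHomotopyClassesFunctors} and from \Cref{CorollaryQuasiEquivalenceFilteredQuasiIso}. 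The main obstacle is the step in the semi-free lemma identifying the abstract transport induced by the $\operatorname{Ho}(B_\infty)$-isomorphism $C(F)$ — a composite of restriction maps out of the glued category $\bU\cup_F\bU'$ — with the concrete conjugation action $F_\ast$ on $A_\infty$-isotopies; this forces one to track the mixed Taylor components of isotopies of $\bU\cup_F\bU'$ through $F$, and it is precisely where \Cref{PropositionMapsAgree} and the explicit shape of the gluing construction have to be combined.
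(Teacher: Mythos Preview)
Your outline uses the same ingredients as the paper: the glued category $\bD=\bA\cup_F\bB$, brace-algebra naturality of the pre-Lie exponential along the restriction legs $\res(\iota_{\bA})$ and $\res(\iota_{\bB})$, \Cref{PropositionMapsAgree} to identify the span-induced transport with conjugation, and reduction to the semi-free dg case via \Cref{CorollaryExponentialInjectiveHomotopyClassesFunctors}. The ``main obstacle'' you isolate is the right one; the paper resolves it by exhibiting the explicit monoid isomorphism $\End^{\infty}(\bD)\to\End^{\infty}(\bA)$, $H\mapsto \widehat{G}\circ H\circ\iota_{\bA}$ (with $\widehat{G}$ the gluing-extension of a weak inverse $G$ of $F$) and checking that this coincides with the span $(\iota_{\bA})_\ast^{-1}\circ\iota_{\bA}^\ast$, which is the same content as your appeal to \Cref{PropositionMapsAgree} but makes the identification with conjugation transparent without chasing Taylor components.

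There is one organizational wrinkle that could become a gap. You \emph{define} $\exp^h_{\bA}$ by transport from a semi-free model, whereas the theorem asserts that the \emph{intrinsic} pre-Lie exponential on $C(\bA)$ descends to $\HHH^1_+(\bA,\bA)$. These agree only once you prove the naturality square for the connecting zig-zag $\bA\leftrightsquigarrow\bU$, and that zig-zag is not between semi-free dg categories, so your ``semi-free naturality lemma'' as stated does not cover it. The paper sidesteps this by proving, for an \emph{arbitrary} quasi-equivalence $F$, the commutative rectangle
\[
\begin{tikzcd}[column sep=2.5em]
Z^0\big(C_+(\bA)\big) \arrow{r}{\exp} & \Theta(\bA) \arrow{r} & \End^{\infty}(\bA) \\
Z^0\big(C_+(\bD)\big) \arrow{r}{\exp} \arrow{u}{\res} \arrow{d}[swap]{\res} & \Theta(\bD) \arrow{r} \arrow{u} \arrow{d} & \End^{\infty}(\bD) \arrow{u} \arrow{d} \\
Z^0\big(C_+(\bB)\big) \arrow{r}{\exp} & \Theta(\bB) \arrow{r} & \End^{\infty}(\bB)
\end{tikzcd}
\]
at the cocycle level, and then observing that the property ``the composite $Z^0(C_+(-))\to\End^{\infty}(-)$ descends to cohomology'' is transported along this diagram. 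Your brace-naturality observation already gives the left squares for any $F$; the fix is simply to state your naturality lemma at the cocycle level for general cohomologically unital $\bA,\bB$, and only invoke the semi-free hypothesis at the single point where $\IsoInfty(C(-))$ is identified with $\Aut^{\infty,h}_+(-)$. With that adjustment your argument and the paper's coincide.
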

\begin{proof}
First, we observe that both vertical maps in the first commutative diagram only depend on the weak equivalence class of $F$. Let $G$ be a weak inverse of $F$ and let $\bD \coloneqq \bA \cup_F \bB$ with its strict inclusions $\iota_{\bA}: \bA \hookrightarrow \bD$ and $\iota_{\bB}: \bB \hookrightarrow \bD$. From  \Cref{SectionGluingAInfinityCategories}, we recall the definition of an  $A_\infty$-functor $\widehat{H}: \bD \rightarrow \bD$ associated to a functor $H: \bB \rightarrow \bD$. 
By \Cref{PropositionMapsAgree}, $C(F)$ is a span $\res(\iota_{\bB})\circ \res(\iota_{\bA})^{-1}$ of quasi-isomorphisms which induce isomorphisms between the cohomological weight filtrations. For any $A_\infty$-category $\bE$, let us write $\Theta(\bE)$ for the group of group-like elements $\operatorname{Id}_{\bE}+C_+^0(\bE)$ of $W_1C(\bE)$ equipped with the $\odot$-product. There exists a natural inclusion $\Theta(\bD) \hookrightarrow C^0(\bD)$ and restriction maps $\Theta(\bD) \rightarrow \Theta(\bA)$ and $\Theta(\bD) \rightarrow \Theta(\bB)$ given by the restriction of $\res(\iota_{\bA})$ and $\res(\iota_{\bB})$ along these inclusions. The assignment $H \mapsto \widehat{G} \circ H \circ  \iota_{\bA}$ induces an isomorphism of multiplicative monoids
\begin{equation}\label{EquationMapEndomorphisms}
\begin{tikzcd}
\End^{\infty}(\bD) \arrow{r}{\sim} & \End^{\infty}(\bA),
\end{tikzcd}
\end{equation}
\noindent between the monoids of weak equivalence classes of $A_\infty$-endofunctors of $\bD$ and $\bA$, that is, $\End^{\infty}(\bD)=\Hom_{\Hqe}(\bD, \bD)$. Here, we used the following observations. First, $\widehat{G} \circ \iota_{\bA}=G \circ F \approx \operatorname{Id}_{\bA}$. Because $\iota_{\bA}$ is weakly invertible (being a quasi-equivalence)  and weak inverses are unique up to weak equivalence, it follows that $\iota_{\bA} \circ \widehat{G} \approx \operatorname{Id}_{\bD}$. The two equalities then also imply that the  inverse of \eqref{EquationMapEndomorphisms} is realized by the assignment $H \mapsto \iota_{\bA} \circ H \circ \widehat{G}$. In the same way, by composing with $\iota_{\bB}$ and $\widehat{\operatorname{Id}_{\bB}}$ instead, we obtain an isomorphism of monoids $\End^{\infty}(\bD) \rightarrow \End^{\infty}(\bB)$. The resulting composition $\End^{\infty}(\bA) \rightarrow \End^{\infty}(\bD) \rightarrow \End^{\infty}(\bB)$ is realized by $H \mapsto \big(\widehat{\operatorname{Id}_{\bB}} \circ \iota_{\bA}\big) \circ H \circ \big( \widehat{G} \circ \iota_{\bB}\big)= F \circ H \circ G$. Similar to \eqref{EquationRelationRestriction}, we see that $\End^{\infty}(\bD) \rightarrow \End^{\infty}(\bA)$ also coincides with the span of isomorphisms
\begin{displaymath}
\begin{tikzcd}
\End^{\infty}(\bD) \arrow{rr}{{\iota_{\bA}}^{\ast}} && \Hom_{\Hqe}(\bA, \bD) && \arrow{ll}[swap]{{(\iota_{\bA})}_{\ast}}   \End^{\infty}(\bA),
\end{tikzcd}
\end{displaymath}
\noindent and similar for the other monoid map. Therefore, keeping in mind that $\res(\iota_{\bA})$ and $\res(\iota_{\bB})$ are morphisms of brace algebras, it is now straightforward to verify that the following diagram commutes:
\begin{displaymath}
	\begin{tikzcd}[row sep=3em]
		Z^0\big(C_+(\bA)\big)  \arrow{rr}{\exp}[swap]{\sim}  && \Theta(\bA) \arrow{rr} &&  \End^{\infty}(\bA)  \arrow[xshift=+0.0em, bend left=7em]{dd}{F_{\ast}=G_{\ast}^{-1}}   \\
			Z^0\big(C_+(\bD)\big)  \arrow{rr}{\exp}[swap]{\sim} \arrow[swap]{d}{\res(\iota_{\bB})} \arrow{u}{\res(\iota_{\bA})} &&  \Theta(\bD) \arrow[swap]{u}{\res} \arrow{d}{\res} \arrow{rr} && \End^{\infty}(\bD) \arrow{u} \arrow{d} \\
			Z^0\big(C_+(\bB)\big) \arrow{rr}{\exp}[swap]{\sim}  && \Theta (\bB) \arrow[]{rr} &&  \End^{\infty}(\bB). 
	\end{tikzcd}
\end{displaymath}
\noindent All vertical and horizontal maps between objects in the two leftmost columns are group homomorphisms with respect to the $\odot$-product and the Baker-Campbell-Hausdorff product. Moreover, all other horizontal and vertical maps are morphisms of monoids with respect to the $\odot$-product and composition of functors.
We now observe that if the composite map $Z^0(C_+(\bB)) \rightarrow \End^{\infty}(\bB)$ in the lower row of the diagram descends to a map $\HHH^1_+(\bB, \bB) \rightarrow  \End^{\infty}(\bB)$ on cohomology, then so does the composite map of the upper row. To see this, let $a \in Z^0(C_+(\bA))$ be a cocycle. We choose a cocycle $b \in C^0_+(\bB)$ whose cohomology class $[b]$ corresponds to $[a]$ under the isomorphism $\HHH^1_+(\bA, \bA) \cong \HHH^1_+(\bB, \bB)$ induced by $\res(\iota_{\bA})$ and $\res(\iota_{\bB})$. By assumption, $\exp(b) \sim \operatorname{Id}_{\bB}$ if $[b]=0$. This happens if and only if $[a]=0$ and by commutativity of the above diagram, the weak equivalence class of $\exp(a)$ coincides with $F_{\ast}\big(\exp([b])\big)$. Because $F_\ast$ is an isomorphism of groups, the latter is trivial if  $[b]=0$ which happens if and only if $[a]=0$. The same arguments also show that the resulting map $\HHH^1_+(\bA, \bA) \rightarrow \Aut^{\infty}(\bA)$ is injective if and only if this is the case for $\HHH^1_+(\bB, \bB) \rightarrow \Aut^{\infty}(\bB)$. If $F$ is a quasi-isomorphism, then by using similar arguments as before we also obtain an analogous commutative diagram where every instance of $\End^{\infty}(-)$ is replaced by $\End^{\infty, h}(-)$, the monoid of homotopy classes of $A_\infty$-endofunctors. The only difference is that $G$ now has to be a homotopy inverse of $F$, which exists by assumption.

The final step is to establish the existence and injectivity of the map $\HHH^1_+(\bA, \bA) \rightarrow \Aut^{\infty, h}(\bA)$. By \Cref{PropositionYonedaRectification}, $\bA$ is canonically quasi-isomorphic to a dg category. The category $\mathsf{dgcat}_S$ of dg categories with a fixed set $S$ of objects is a model category with weak equivalences given by quasi-isomorphisms and cofibrant objects given by the retracts of semi-free dg categories with objects $S$. The proof of the latter is analogous to the proof of \cite[Proposition 2.40]{KarabasLee} for general dg categories and follows from the description of generating cofibrations  in $\mathsf{dgcat}_S$ via tensor categories as found in \cite[Section 3]{MuroDwyerKanModelStructure}.  By taking a cofibrant replacement in $\mathsf{dgcat}_S$, it follows that there exists a quasi-isomorphism $\bA \rightarrow \bA'$ to a semi-free dg category $\bA'$. By \Cref{CorollaryExponentialInjectiveHomotopyClassesFunctors}, the exponential descends to an injective map $\HHH^1_+(\bA', \bA') \hookrightarrow \Aut^{\infty,h}(\bA')$. This finishes the proof.
\end{proof}

\begin{cor}\label{CorollaryExponentialInjectiveMainTheorem}
Let $\bA$ be a cohomologically unital $A_\infty$-category over a field of characteristic $0$. If the image of $\Pi^0: \HHH^{0}(\bA, \bA) \rightarrow \sfZ^0_{\gr}(\HH^0(\bA))$ contains all invertible elements, then $\exp: \HHH^1_+(\bA,\bA) \rightarrow \DPic(\bA)$ is an embedding of groups with image $\Aut^{\infty}_+(\bA)$.
\end{cor}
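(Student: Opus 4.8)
The plan is to assemble the statement from the machinery already in place, since essentially all the content has been packaged into \Cref{TheoremNaturalityExponentialQuasiIsos} and \Cref{PropositionComparisonHomotopyWeakEquivalence}. By \Cref{TheoremNaturalityExponentialQuasiIsos} the exponential descends to a group homomorphism $\exp_{\bA}\colon \HHH^1_+(\bA,\bA) \to \Aut^{\infty}(\bA)$ which factors as $\exp_{\bA} = p \circ \exp_{\bA}^h$, where $\exp_{\bA}^h\colon \HHH^1_+(\bA,\bA) \to \Aut^{\infty,h}(\bA)$ and $p\colon \Aut^{\infty,h}(\bA) \twoheadrightarrow \Aut^{\infty}(\bA)$ is the canonical projection. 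Combining part (3) of that theorem (naturality along quasi-isomorphisms by group isomorphisms) with \Cref{CorollaryExponentialInjectiveHomotopyClassesFunctors} applied to a semi-free dg model of $\bA$ identifies $\exp_{\bA}^h$ with an isomorphism $\HHH^1_+(\bA,\bA) \xrightarrow{\sim} \Aut^{\infty,h}_+(\bA)$; feeding this through part (2) identifies $\exp_{\bA}$ with a surjection onto $\Aut^{\infty}_+(\bA)$. Postcomposing with the natural embedding $\Aut^{\infty}(\bA) \hookrightarrow \DPic(\bA)$ then already gives the asserted map into $\DPic(\bA)$ with image $\Aut^{\infty}_+(\bA)$, so the only remaining point is injectivity.

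First I would reduce injectivity of $\exp_{\bA}$ to a statement about $A_\infty$-isotopies. Since $\exp_{\bA}^h$ is injective, $\exp_{\bA} = p \circ \exp_{\bA}^h$ is injective if and only if the restriction of $p$ to $\Aut^{\infty,h}_+(\bA)$ is injective. By construction $p$ sends the homotopy class of an $A_\infty$-isotopy $F$ to its weak equivalence class, so this restriction is injective precisely when any two $A_\infty$-isotopies $F, G$ of $\bA$ with $F \approx G$ already satisfy $F \sim G$; together with \Cref{LemmaHomotopicFunctorsWeaklyEquivalent} this says exactly that homotopy and weak equivalence coincide on the set of $A_\infty$-isotopies. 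But this is the content of \Cref{PropositionComparisonHomotopyWeakEquivalence} under the hypothesis that the image of $\Pi^0$ in $\sfZ^0_{\gr}(\HH^0(\bA))$ contains all invertible elements of the center, which completes the proof.

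I do not expect a genuine obstacle here, only the need to invoke the previous results in the right order; the one point that deserves a sentence is that the various reductions used along the way — to a minimal strictly unital model inside \Cref{PropositionComparisonHomotopyWeakEquivalence}, and to a semi-free dg model inside \Cref{TheoremNaturalityExponentialQuasiIsos} — are all compatible with the hypothesis, because $\Pi^0$, the cohomological weight filtration, and the $B_\infty$-structure on the Hochschild complex are invariant under quasi-isomorphisms of $A_\infty$-categories (\Cref{CorollaryQuasiEquivalenceFilteredQuasiIso} and the naturality of the characteristic morphism). Thus the hypothesis can be checked on any convenient quasi-isomorphic model, and the statement follows.
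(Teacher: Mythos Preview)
Your proof is correct and follows essentially the same route as the paper, which treats the corollary as an immediate consequence of \Cref{TheoremNaturalityExponentialQuasiIsos} and \Cref{PropositionComparisonHomotopyWeakEquivalence} without spelling out a separate argument. The only minor redundancy is your appeal to \Cref{CorollaryExponentialInjectiveHomotopyClassesFunctors} and the semi-free model: this step is already absorbed into the statement of \Cref{TheoremNaturalityExponentialQuasiIsos}(3), whose horizontal maps are asserted to be isomorphisms, so you can simply cite that isomorphism directly.
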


\section{Necessary conditions for uniqueness of lifts}\label{SectionObstructionUniquenessLifting}

\noindent We show that the vanishing of $\HHH^1_+(\bA, \bA)$ is a necessary condition of the uniqueness of lifts of equivalences between homotopy categories to $A_\infty$-functors. Throughout this section, all $A_\infty$-categories and $A_\infty$-functors will be cohomologically unital and over a field of characteristic zero.
\begin{definition}
Let $\bA, \bB$ be $A_\infty$-categories. A \textbf{lift} of a $\Bbbk$-linear functor $H: \HH^{0}(\bA) \rightarrow \HH^{0}(\bB)$ is an $A_\infty$-functor $F: \bA \rightarrow \bB$ such that $\HH^{0}(F)$ and $H$ are naturally isomorphic.
\end{definition}

\begin{thm}[{\Cref{IntroTheoremC}}]\label{TheoremUniquenessLiftingVanishing}
Let $f: \HH^0(\bA) \rightarrow \HH^0(\bB)$ be an equivalence which has a lift. If $\exp_{\bA}$ is injective and if any two lifts of $f$ are weakly equivalent, then $\HHH^1_+(\bA, \bA)=0=\HHH^1_+(\bB,\bB)$.
\end{thm}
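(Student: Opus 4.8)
The plan is to reduce the statement to the assertion that every $A_\infty$-isotopy of $\bA$ is weakly equivalent to $\operatorname{Id}_\bA$. Indeed, by \Cref{CorollaryExponentialInjectiveMainTheorem} (and part (2) of \Cref{TheoremNaturalityExponentialQuasiIsos}) the exponential $\exp_\bA$ always surjects onto $\Aut^\infty_+(\bA)$, so its injectivity makes it an isomorphism $\HHH^1_+(\bA,\bA)\xrightarrow{\sim}\Aut^\infty_+(\bA)$; thus $\HHH^1_+(\bA,\bA)=0$ is equivalent to $\Aut^\infty_+(\bA)=\{1\}$, and the same discussion identifies the hypothesis with the image of $\Pi^0\colon\HHH^0(\bA,\bA)\to\sfZ^0_{\gr}(\HH^0(\bA))$ containing all invertible central elements. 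First I would pass to minimal, strictly unital models of $\bA$ and $\bB$ via \Cref{PropositionExistenceMinimalModel} and \Cref{PropositionYonedaRectification}; this is harmless because $\HHH^1_+(-,-)$, the characteristic morphism, weak equivalence of functors, and the set of weak equivalence classes of lifts of a fixed functor are all invariant under quasi-isomorphism (the last via \Cref{TheoremInternalHomThroughAInfinityFunctors}). Now fix a lift $F\colon\bA\to\bB$ of $f$; it is cohomologically unital since $\HH^0(F)\cong f$ is an equivalence. For any $A_\infty$-isotopy $G$ of $\bA$ one has $\HH^0(F\circ G)=\HH^0(F)\circ\HH^0(G)=\HH^0(F)\cong f$, so $F\circ G$ is again a lift of $f$, and the uniqueness hypothesis gives $F\circ G\approx F$; symmetrically $H\circ F\approx F$ for every $A_\infty$-isotopy $H$ of $\bB$.

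The heart of the argument is to upgrade $F\circ G\approx F$ to $G\approx\operatorname{Id}_\bA$, and this is where I expect the main difficulty to lie: $F$ is only a lift of an equivalence and in general is \emph{not} a quasi-equivalence, so one cannot simply invert it. Instead, choose a natural quasi-isomorphism $\eta\colon F\to F\circ G$. Since $G$ is an isotopy, $(F\circ G)^0=F^0$ and $(F\circ G)^1=F^1$, so each $\eta^0(A)\in\bB(F^0A,F^0A)^0$ represents an invertible element of the endomorphism ring of $F^0A$ in $\HH^0(\bB)$; because $\HH^0(F)$ is fully faithful, $F^1$ induces an isomorphism on cohomology of endomorphisms, so $[\eta^0(A)]$ is the image of a unique invertible $\theta_A$ in $\HH^{\bullet}(\bA)(A,A)^0$, and naturality of $\eta$ (through the square \eqref{EquationNaturalTransformationSquare}) together with faithfulness of $\HH^0(F)$ forces $\theta=(\theta_A)_A$ to be an invertible element of the degree‑zero graded centre of $\HH^{\bullet}(\bA)$. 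Here the hypothesis enters decisively: $\theta$ lies in $\Img\Pi^0$, hence lifts to a natural (quasi-)isomorphism $\vartheta\colon\operatorname{Id}_\bA\to\operatorname{Id}_\bA$ with $\vartheta^0(A)=\theta_A$. Replacing $\eta$ by $\eta\circ F_*(\vartheta)^{-1}$, where $F_*$ is postcomposition \eqref{EquationPostcompositionFunctors}, produces a natural quasi-isomorphism $F\to F\circ G$ whose degree‑zero components are all identities in the strictly unital model; running the standard argument used in \Cref{LemmaHomotopicFunctorsWeaklyEquivalent} and in the proof of \Cref{PropositionComparisonHomotopyWeakEquivalence} then converts it into a homotopy $F\sim F\circ G$ which is transported along the fully faithful $\HH^0(F)$ to a homotopy $\operatorname{Id}_\bA\sim G$. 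Consequently $[G]=1$ in $\Aut^\infty_+(\bA)$, whence $\Aut^\infty_+(\bA)=\{1\}$ and $\HHH^1_+(\bA,\bA)=0$.

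For the statement about $\bB$ one runs the symmetric argument starting from $H\circ F\approx F$: the components $\eta^0(A)$, for $A\in\Ob\bA$, are transported along the isomorphisms supplied by essential surjectivity of $\HH^0(F)$ to all objects of $\bB$, they assemble into an invertible natural endomorphism of the functor $\HH^0(F)$ and hence, via the equivalence $\HH^0(F)$, into an invertible element of the degree‑zero graded centre of $\HH^{\bullet}(\bB)$; one then needs the analogue for $\bB$ of the $\Pi^0$‑condition, which I would derive from the one for $\bA$ by noting that the two characteristic morphisms are compatible with the comparison maps $F_*$ and $F^{\ast}$ of \Cref{SectionGelinasBriggs} and that the equivalence $\HH^0(F)$ identifies the relevant centres. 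Lifting this central element and rectifying as before yields $H\approx\operatorname{Id}_\bB$, so $\Aut^\infty_+(\bB)=\{1\}$; since $\exp^h_\bB\colon\HHH^1_+(\bB,\bB)\to\Aut^{\infty,h}_+(\bB)$ is always an isomorphism (\Cref{CorollaryExponentialInjectiveHomotopyClassesFunctors} together with the quasi-invariance in \Cref{TheoremNaturalityExponentialQuasiIsos}) and, under the transferred $\Pi^0$‑condition, $\Aut^{\infty,h}_+(\bB)=\Aut^\infty_+(\bB)$ by \Cref{PropositionComparisonHomotopyWeakEquivalence}, we conclude $\HHH^1_+(\bB,\bB)=0$ as well. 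The technically delicate points are the ``descent'' of a weak equivalence from $\bB$ to $\bA$ (using only full faithfulness of $\HH^0(F)$ and the $A_\infty$-centre hypothesis, not invertibility of $F$) and the transfer of the $\Pi^0$‑condition to $\bB$; everything else is bookkeeping with the results already established.
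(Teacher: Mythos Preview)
The paper's proof is three lines: it asserts that any lift $F$ of the equivalence $f$ is \emph{weakly invertible}; then for nonzero $h\in\HHH^1_+(\bA,\bA)$ injectivity of $\exp_\bA$ gives $\exp_\bA(h)\not\approx\operatorname{Id}_\bA$, hence $F\circ\exp_\bA(h)\not\approx F$ after cancelling $F$, yet both are lifts of $f$---contradiction. The vanishing for $\bB$ is then automatic, since a quasi-equivalence induces $\HHH^1_+(\bA,\bA)\cong\HHH^1_+(\bB,\bB)$ via \Cref{CorollaryQuasiEquivalenceFilteredQuasiIso}.

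Your proposal parts ways with the paper exactly at the sentence ``$F$ \ldots in general is \emph{not} a quasi-equivalence, so one cannot simply invert it''. The paper does invert it. On a literal reading of the hypothesis (only $\HH^0(F)$, not $\HH^\bullet(F)$, is an equivalence) your concern is legitimate and the paper's cancellation step would need justification; but the elaborate repair you propose has a genuine gap of its own. After producing a homotopy $F\sim F\circ G$ you claim it can be ``transported along the fully faithful $\HH^0(F)$ to a homotopy $\operatorname{Id}_\bA\sim G$''. A homotopy $F\sim F\circ G$ is an $(F,F\circ G)$-coderivation with values in the morphism spaces of $\bB$; turning it into an $(\operatorname{Id}_\bA,G)$-coderivation with values in $\bA$ amounts to inverting $F$ on cochains, and full faithfulness of $\HH^0(F)$---which controls only degree-$0$ cohomology, not higher degrees and certainly not the chain level---provides no such mechanism. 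The same objection applies to your $\bB$-side argument. (A smaller point: the $\Pi^0$ condition in \Cref{CorollaryExponentialInjectiveMainTheorem} is \emph{sufficient} for injectivity of $\exp_\bA$, not equivalent to it, so your opening identification of hypotheses overreaches.) In short, either accept the paper's reading and use weak invertibility directly, or acknowledge that the stronger statement you are trying to prove needs a different idea than the transport step you sketch.
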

\begin{proof}
	Suppose $h \in \HHH^1_+(\bA, \bA)$ is non-zero. Then by injectivity of $\exp_{\bA}$, $\exp_{\bA}(h) \in \Aut_+^{\infty}(\bA)$ is not weakly equivalent to $\operatorname{Id}_{\bA}$. Let $F$ be any lift of $f$. In particular, because $F$ is weakly invertible, $F' \coloneqq F \circ \exp_{\bA}(h)$ cannot be weakly equivalent to $F$. But by construction we have $\HH^0(F')=\HH^0(F) \circ \HH^0(\exp(h))=\HH^0(F)$ in contradiction to our assumptions.
\end{proof}
\noindent The previous theorem is line with the sufficient conditions for uniqueness by Genovese \cite[Theorem 1.4]{GenoveseUniquenessLifting}. He proves uniqueness of lifts for certain functors which includes all equivalences whose domain $\bB$ is the pretriangulated hull of a full $\Bbbk$-linear subcategory $\bA \subset \bB$, that is, $\HH^0(\bB)$ is generated as a triangulated category by its full subcategory $\HH^0(\bA)$. It is not difficult to see that $\HHH^1_+(\bE, \bE)=0$ for all $\Bbbk$-linear categories $\bE$ and hence \Cref{LemmaRestrictionInjeciveHochschild} implies $\HHH^1_+(\bB, \bB)=0$. 

\begin{rem}\label{RemarkGeneralizationNecessaryCondition}
	We expect an analogue of \Cref{TheoremUniquenessLiftingVanishing} for general liftable functors $\HH^0(\bA) \rightarrow \HH^0(\bB)$ which states that in order for any liftable functor $f:\HH^0(\bA) \rightarrow \HH^0(\bB)$ to have unique lifts, $\HHH^1_+(\bA, \bA)$ needs to vanish. We did not verify the details but believe that the proof should look as follows. Given a lift $F: \bA \rightarrow \bB$ of $f$, we consider its relative Hochschild complex $C_F=\Fun(\bA, \bB)(F,F)$. This admits the structure of a right brace module over the brace algebra $C(\bA)=\Fun(\bA, \bA)(\operatorname{Id}_{\bA}, \operatorname{Id}_{\bA})$. Then, analogous to \cite[Proposition 3]{DotsenkoShadrinVallettePreLie}, one should be able to prove that 
	\begin{displaymath}
		\exp(r_{h})(F)= F \odot \exp(h),
	\end{displaymath}
	\noindent where $r_h: C_F \rightarrow C_F$ is the endomorphism determined by the right action of an element $h \in C(\bA)$ on $C_F$ and where $F$ is considered as an element of $C_F$ via its Taylor coefficients. The operation $\odot$ is defined analogously by means of the right action. If $h$ is a Hochschild $1$-cocycle, then a comparison of definitions should exhibit $F \odot \exp(h)$ as $F \circ \exp(h)$ and a similar calculation as in \Cref{PropositionBijectionMaurerCartanIsotopies} would show that $F \circ \exp(h) - F$ is a Maurer-Cartan element of the $A_\infty$-algebra $C_F$. We expect that via a similar route as in the proof of \Cref{TheoremWeakEquivalenceGroupoids}, one can show that the assignment $h \mapsto F \circ \exp(h)$ induces an embedding of $\HHH^1_+(\bA, \bA)$ into the set of homotopy classes of functors $\bA \rightarrow \bB$. As in the proof of \Cref{TheoremUniquenessLiftingVanishing} and by definition we have $\HH^0(F \circ \exp(h))=\HH^0(F)=f$. Under injectivity assumptions on $\exp$ this then again necessitates the vanishing of $\HHH^1_+(\bA, \bA)$ in order for $F$ to be the unique lift of $f$ up to weak equivalence.
\end{rem}

\section{Applications to derived Picard groups of Fukaya categories}\label{SectionApplicationsFukayaCategories}
\subsection{Fukaya categories of cotangent bundles and plumbings}
\subsubsection{Wrapped and compact Fukaya categories and Koszul duality} \ \medskip

\noindent  Throughout this section, we fix a field $\Bbbk$ of characteristic zero. Let $M$ be a connected, compact smooth manifold and let $T^{\ast}M$ denote its cotangent bundle. The latter is a Liouville domain, a special type of symplectic manifold, and we denote by $\cW$ its  wrapped Fukaya $A_\infty$-category in the sense of \cite{AbouzaidSeidelOpenStringAnalogue, GanatraPardonShendeCovariantlyFunctorialWrappedFukayaCategory}. To be precise, the definition of $\cW$ implicitly depends on the choice of a background class $b \in \HH^2(T^{\ast}M, \mathbb{Z}_2)$ for which ones takes the canonical choice, namely the pullback of the second Stiefel-Whitney class of $M$. The objects of $\cW$ are certain Lagrangian submanifolds of $T^{\ast}M$ which are possibly non-compact and which behave well near the non-compact ends of $T^{\ast}M$.

 If $M$ is closed, results by Abouzaid \cite{AbouzaidGenerationFukayaCategory} show that $\Perf \cW$ is generated by a single object, the cotangent fiber $F=T^{\ast}_qM \in \cW$ for any fixed $q \in M$. He showed further \cite{AbouzaidWrappedFukayaCategoryBasedLoops} that $\Hom_{\cW}(F,F)$ is quasi-equivalent to the dg algebra of singular chains
\begin{displaymath}
C_{-\ast}\coloneqq C_{-\ast}(\Omega_q M, \Bbbk),
\end{displaymath}
\noindent on the based (Moore) loop space $\Omega_q M$ of $M$ endowed with the Pontryagin product. Both results were later generalized to the non-closed case by  Chantraine-Dimitroglou Rizell-Ghiggini-Golovko \cite{ChantraineDimitroglouRizellGhigginiGolovko} and also by Ganatra-Pardon-Shende, cf.~\cite[Corollary 6.1]{GanatraPardonShendeMicrolocalMorseTheory} and \cite[Example 1.15]{GanatraPardonShendeSectorialDescent}. When endowed with the natural augmentation, the classical Eilenberg-Moore equivalence shows that the dg algebra of singular cochains $C^{\ast}\coloneqq C^{\ast}(M ,\Bbbk)$ on $M$ is determined by $C_{-\ast}$ through the quasi-isomorphism
\begin{displaymath}
C^{\ast} \simeq \RHom_{C_{-\ast}}(\Bbbk, \Bbbk),
\end{displaymath}
\noindent If $M$ is simply connected, one can switch the roles of $C^{\ast}$ and $C_{-\ast}$ in the above equivalence and $C^{\ast}$ and $C_{-\ast}$ are Koszul dual in this case. However, in general $C_{-\ast}$ is a stronger invariant than $C^{\ast}$ as illustrated by the example of lens spaces whose homotopy types can be told apart by $C_{-\ast}$ but not by $C^{\ast}$.  Geometrically, Koszul duality is realized by the passage from the cotangent fiber $F$ to the zero section $M \cong Z \subseteq T^{\ast}M$ as observed in \cite{EtguLekiliKoszulDualityPatterns, EkholmLekiliDuality}. If $M$ is closed and simply-connected, then the zero section generates the \textit{compact Fukaya category} $\cF$ of $T^{\ast}M$ which is by definition, the full subcategory of $\cW$ consisting of closed Lagrangian submanifolds of $T^{\ast}M$. This follows from \cite{FukayaSeidelSmithCategoricalViewpoint} where it was shown that all objects in $\cF$ are isomorphic to $Z$ in $\cD(\cF)$. Under the previous assumptions the duality also gives rise to quasi-equivalences
 \begin{displaymath}
 \begin{array}{ccc}
 	\Perf \cW \simeq \Dfd{\cF} & \text{and} & \Dfd{\cW} \simeq \Perf \cF,
 \end{array}
 \end{displaymath} where $\Dfd{-}$ denotes the subcategory of derived category consisting of all dg modules whose underlying cochain complex has finite dimensional total cohomology.

 \subsubsection{Derived Picard groups for cotangent bundles}\ \medskip

\noindent  The dg algebra $C_{-\ast}$ from the previous section satisfies $\HH^0(C_{-\ast})\cong \Bbbk$ and is concentrated in non-positive degrees showing that $F \subseteq \Perf \cW$ is a silting object and the triple $(\Perf \cF, \Perf \cW, F)$ is a $(\dim M +1)$-Calabi-Yau triple in the sense of \cite[Section 5.1]{IyamaYangSiltingReduction} as shown in \cite[Theorem 4.4]{BaeJeongKimCluster}. On the other hand $C^{\ast}$ is an $E_\infty$-algebra and \Cref{IntroTheoremA} therefore yields the following.
\begin{cor}\label{CorollaryTheoremE}Let $M$ be a compact smooth manifold. Then $\exp_{\cW}$ induces an embedding of groups
	\begin{displaymath}
		\begin{tikzcd}
			\HHH^1_+(C_{-\ast}, C_{-\ast}) \cong \HHH^1_+(\cW, \cW) \arrow[hookrightarrow]{r} & \DPic\big(\cW),
		\end{tikzcd}
	\end{displaymath}
\noindent and if $M$ is further closed and simply-connected, then $\exp_{\cF}$ also induces an embedding
	\begin{displaymath}
	\begin{tikzcd}
		\HHH^1_+(C^{\ast}, C^{\ast}) \cong \HHH^1_+(\cF,\cF)\arrow[hookrightarrow]{r} & \DPic\big(\cF).
	\end{tikzcd}
\end{displaymath}
\end{cor}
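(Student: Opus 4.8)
The plan is to deduce Corollary~\ref{CorollaryTheoremE} directly from \Cref{IntroTheoremA} (i.e.~\Cref{CorollaryExponentialInjectiveMainTheorem}) by checking, for each of the two Fukaya categories in question, that the ambient $A_\infty$-category admits a generator whose endomorphism algebra is quasi-isomorphic to $C_{-\ast}$ (resp.~$C^{\ast}$), and that the injectivity hypothesis on $\Pi^0$ holds. First I would invoke the generation results recalled in the preceding subsection: for $M$ closed, the cotangent fibre $F = T^{\ast}_qM$ split-generates $\Perf\cW$ with $\Hom_{\cW}(F,F) \simeq C_{-\ast}$ by Abouzaid \cite{AbouzaidWrappedFukayaCategoryBasedLoops, AbouzaidGenerationFukayaCategory}, and for $M$ closed and simply-connected the zero section $Z$ split-generates $\cF$ with $\Hom_{\cF}(Z,Z) \simeq C^{\ast}$ by \cite{FukayaSeidelSmithCategoricalViewpoint}. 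Since Hochschild cohomology together with its weight filtration is invariant under quasi-isomorphism of $A_\infty$-categories (\Cref{IntroTheoremNaturality}, or the Briggs--Gelinas result cited there) and under passage to the perfect hull at the level of the $W_2$-piece by \Cref{LemmaRestrictionInjeciveHochschild}, this gives the asserted identifications $\HHH^1_+(\cW,\cW) \cong \HHH^1_+(C_{-\ast}, C_{-\ast})$ and $\HHH^1_+(\cF,\cF) \cong \HHH^1_+(C^{\ast}, C^{\ast})$, and likewise $\DPic(\cW) = \DPic(C_{-\ast})$, $\DPic(\cF) = \DPic(C^{\ast})$.

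Next I would verify the injectivity condition of \Cref{CorollaryExponentialInjectiveMainTheorem} for each algebra. For $C_{-\ast}$: this is the Hochschild cochain algebra of the $A_\infty$-category $\cW$ up to quasi-isomorphism, so it carries an $E_2$-structure (indeed it \emph{is} a Hochschild complex), and the bulleted list following \Cref{IntroTheoremA} records that for $E_2$-algebras the image of $\Pi^0$ contains all units --- this is exactly the observation from \cite{BriggsGelinas} referenced in the paragraphs after \cite[Corollary 3.13]{BriggsGelinas}. Alternatively, and more elementarily, $\HH^0(C_{-\ast}) \cong \Bbbk$ (since $\Omega_q M$ is connected), so $Z^0_{\gr}(\HH^0(C_{-\ast})) \cong \Bbbk$, which is the second bullet condition. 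For $C^{\ast} = C^{\ast}(M,\Bbbk)$: this is the singular cochain dg algebra of a space, hence an $E_\infty$-algebra, which is the third bullet in the list; so again $\Pi^0$ is surjective onto $Z^0_{\gr}$ and in particular hits all units. Thus \Cref{CorollaryExponentialInjectiveMainTheorem} applies and produces the two embeddings $\HHH^1_+(C_{-\ast},C_{-\ast}) \hookrightarrow \DPic(C_{-\ast}) = \DPic(\cW)$ and $\HHH^1_+(C^{\ast},C^{\ast}) \hookrightarrow \DPic(C^{\ast}) = \DPic(\cF)$.

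The one genuinely delicate point --- the ``main obstacle'' --- is not the algebra but the \emph{geometry}: making sure that the quasi-isomorphism between the relevant Fukaya category (or rather its perfect hull) and the dg algebra $C_{-\ast}$ or $C^{\ast}$ is one of $A_\infty$-categories in the strict sense required to transport $\HHH^1_+$ and $\DPic$. For $\cW$ this is fine because the generation statement is exactly ``$F$ split-generates $\Perf\cW$'' together with a quasi-isomorphism of $A_\infty$-algebras $\Hom_{\cW}(F,F)\simeq C_{-\ast}$, and one then applies \Cref{LemmaRestrictionInjeciveHochschild} with $\bA$ the one-object subcategory on $F$ and $\bB = \Perf\cW$; the only subtlety is the sign/grading conventions and the background class in the definition of $\cW$, which affect $\Hom_{\cW}(F,F)$ only up to quasi-isomorphism and hence do not disturb the argument. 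For $\cF$ one must additionally be careful that closedness \emph{and} simple-connectedness are both used --- closedness so that $\cF$ is the full subcategory of closed Lagrangians and $Z$ is an object of it, and simple-connectedness so that \cite{FukayaSeidelSmithCategoricalViewpoint} gives $Z \simeq$ any object of $\cF$ in $\cD(\cF)$ and the Koszul-duality identification $\Hom_{\cF}(Z,Z) \simeq C^{\ast}$ holds; I would flag that without simple-connectedness $C_{-\ast}$ rather than $C^{\ast}$ is the relevant invariant, which is exactly why the second embedding carries the extra hypothesis. Modulo citing these geometric inputs verbatim, the corollary then follows by assembling the pieces, and I would write the proof in three short sentences: cite generation and the endomorphism-algebra identifications; apply \Cref{LemmaRestrictionInjeciveHochschild} and the quasi-isomorphism invariance to reduce to $C_{-\ast}$, resp.~$C^{\ast}$; and apply \Cref{CorollaryExponentialInjectiveMainTheorem} using the $E_2$- resp.~$E_\infty$-structure (equivalently $\HH^0 \cong \Bbbk$) to verify injectivity.
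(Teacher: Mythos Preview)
Your overall strategy matches the paper's: the sentence preceding the corollary says exactly that $\HH^0(C_{-\ast})\cong \Bbbk$ (so the second bullet after \Cref{IntroTheoremA} applies) and that $C^{\ast}$ is an $E_\infty$-algebra (third bullet), and then \Cref{IntroTheoremA} gives the result. Your use of generation, \Cref{LemmaRestrictionInjeciveHochschild}, and naturality to transport $\HHH^1_+$ and $\DPic$ is the right scaffolding, and your treatment of $C^{\ast}$ is exactly the paper's.

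There is one genuine error. You write that $C_{-\ast}$ ``is the Hochschild cochain algebra of the $A_\infty$-category $\cW$ up to quasi-isomorphism, so it carries an $E_2$-structure (indeed it \emph{is} a Hochschild complex)''. This is false: $C_{-\ast}\simeq \Hom_{\cW}(F,F)$ is the \emph{endomorphism algebra of an object}, not the Hochschild complex of $\cW$. Morita invariance gives $\HHH^{\bullet}(\cW,\cW)\cong \HHH^{\bullet}(C_{-\ast},C_{-\ast})$, but that is an isomorphism of the Hochschild cohomologies, not an identification of $C_{-\ast}$ itself with a Hochschild complex; there is no reason for $C_{-\ast}$ to be $E_2$. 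So your first justification for injectivity on the $\cW$ side does not work, and you must rely on your alternative, $\HH^0(C_{-\ast})\cong \Bbbk$, which is what the paper uses. One caution there: your parenthetical ``since $\Omega_qM$ is connected'' is only correct when $M$ is simply-connected, as $\pi_0(\Omega_qM)\cong \pi_1(M)$; the paper simply asserts $\HH^0(C_{-\ast})\cong \Bbbk$ without further comment, so you are in the same position as the paper once you drop the incorrect $E_2$ argument, but you should not cite connectedness of the loop space as the reason in the general compact case.
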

\noindent Because $C^{\ast}$ and $C_{-\ast}$ are Koszul dual if $M$ is simply-connected, there exists an isomorphism of graded Lie algebras \begin{displaymath}
\HHH^{\bullet}(C^{\ast}, C^{\ast}) \cong \HHH^{\bullet}(C_{-\ast}, C_{-\ast}),
\end{displaymath}
\noindent in this case which interchanges the weight filtration on either side with the so-called \textit{shearing filtration} on the opposite side of the isomorphism, cf.~\cite[Theorem 3.18]{BriggsGelinas}. In particular, if $M$ is \textit{formal}, that is, $C^{\ast}$ is formal, we obtain an embedding of a subset of $\HHH^1(\HH^{\bullet}(M), \HH^{\bullet}(M))$ into $\DPic(\cF)$ and \Cref{Conjecture} states that $\Aut_{\circ}^\infty(\HH^{\bullet}(C^{\star}))$ is a Morita invariant which describes the identity component of $\DPic(\cF)$.
 As $\Bbbk$ is of characteristic zero, a famous result by Deligne-Griffiths-Morgan-Sullivan \cite{DeligneGriffithsMorganSullivan} states that all compact K\"{a}hler manifolds are formal. Other examples are rationally elliptic spaces with positive Euler characteristic, e.g.~Grassmanians and other homogeneous spaces as well as all closed, simply-connected manifolds of dimension at most $6$, see \cite[Example 2.2]{BerglundStoll} for these and other examples. There is a dual story for $\DPic(\cW)$ if $M$ is \textit{coformal}, which is equivalent \cite{Saleh} to $C_{-\ast}$ being formal. Examples are spheres as well as any combination of their connected sums and products.

\subsubsection{Derived Picard groups for plumbings of cotangent bundles}\ \medskip

\noindent As version of \Cref{CorollaryTheoremE} is true for plumbings of cotangent bundles of spheres along a tree. For a tree $Q$ and $d \geq 2$, let $\cW_{d, Q}$ (res.~$\cF_{d, Q}$) denote the wrapped (resp.~compact) Fukaya category of the plumbing of cotangent bundles of the $d+1$-dimensional sphere along $Q$ in the sense of \cite{EtguLekiliKoszulDualityPatterns, BaeJeongKimCluster}. Then according to Theorem \cite[Theorem 4.6.]{BaeJeongKimCluster}, $\Perf \cW_{d, Q}$ and  $\Perf \cF_{d, Q}$ belong to a $(d+1)$-Calabi-Yau-triple and $\Perf \cW_{d, Q}$ is generated by a silting object whose indecomposable direct summands all have endomorphism ring isomorphic to $\Bbbk$. Moreover, $\Perf \cF_{d,Q}$ is generated by the associated simple-minded collection. In particular, by Theorem \Cref{IntroTheoremA}, we obtain embeddings of the respective subspaces of Hochschild cohomology of this silting object and the simple-minded collection into the derived Picard groups of $\cW_{d,Q}$ and $\cF_{d,Q}$. For plumbings of $2$-spheres, the endomorphism algebra of the silting object is quasi-isomorphic to the $3$-Calabi-Yau Ginzburg dg algebra $\Gamma_{Q}$ associated with $Q$, cf.~\cite{EtguLekiliKoszulDualityPatterns}. If $Q$ is non-Dynkin, $\HH^{\bullet}(\Gamma_{Q})$ is concentrated in degree $0$ and formal by \cite{Hermes}, see also \cite[Corollary 21]{EtguLekiliKoszulDualityPatterns} for a different proof. Moreover  its Koszul dual is given by the \textit{zigzag algebra} of $Q$, cf.~\cite{HuerfanoKhovanov}.
\begin{cor}\label{CorollaryPlumbings}
Let $Q$ be a tree. Then the exponential map induces an embedding of groups
\begin{displaymath}
	\begin{tikzcd}
		\HHH^1_+(\Gamma_Q, \Gamma_Q) \arrow[hookrightarrow]{r} & \DPic(\cW_{2, Q}).
\end{tikzcd}
\end{displaymath}
\end{cor}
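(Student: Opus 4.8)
The plan is to derive the statement from \Cref{CorollaryExponentialInjectiveMainTheorem} applied to a small $A_\infty$-category whose objects are all cohomologically schurian, using $\Gamma_Q$ only to name the relevant Hochschild cohomology. Recall from the discussion above, which rests on \cite{EtguLekiliKoszulDualityPatterns, BaeJeongKimCluster}, that $\Perf \cW_{2, Q}$ is split-generated by a silting object $T = \bigoplus_i T_i$ whose indecomposable summands $T_i$ satisfy $\End_{\HH^{0}(\Perf \cW_{2, Q})}(T_i) \cong \Bbbk$ and whose total endomorphism algebra $\End_{\cW_{2, Q}}(T)$ is quasi-isomorphic to the dg algebra $\Gamma_Q$. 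First I would let $\bA \subseteq \Perf \cW_{2, Q}$ be the full $A_\infty$-subcategory on the objects $\{T_i\}$. Since $\bA$ split-generates the perfect category $\Perf \cW_{2, Q}$, the universal property of the perfect hull (\Cref{PropositionUniversalPropertiesHQE}) yields a quasi-equivalence $\Perf(\bA) \simeq \Perf \cW_{2, Q}$, and since $\Aut^{\infty}$ is invariant under quasi-equivalence (\Cref{TheoremNaturalityExponentialQuasiIsos}) this identifies $\DPic(\bA) = \Aut^{\infty}(\Perf(\bA))$ with $\Aut^{\infty}(\Perf \cW_{2, Q}) = \DPic(\cW_{2, Q})$.

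The next step is to verify the injectivity hypothesis of \Cref{IntroTheoremA} for $\bA$. The equalities $\End_{\HH^{0}(\bA)}(T_i) \cong \Bbbk$ say exactly that every object of $\bA$ is cohomologically schurian, and in that situation the characteristic morphism $\Pi^0 \colon \HHH^{0}(\bA, \bA) \to \sfZ^0_{\gr}(\HH^{0}(\bA))$ is surjective, as recorded in the list of examples following \Cref{PropositionComparisonHomotopyWeakEquivalence}; in particular its image contains all units. Hence \Cref{CorollaryExponentialInjectiveMainTheorem} applies: $\exp_{\bA} \colon \HHH^1_+(\bA, \bA) \hookrightarrow \DPic(\bA)$ is an embedding of groups, the source being equipped with the Baker--Campbell--Hausdorff product.

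It remains to identify the source with $\HHH^1_+(\Gamma_Q, \Gamma_Q)$. I would use the canonical inclusion of brace algebras $C(\bA) \hookrightarrow C(\Bbbk[\bA])$ from \Cref{SectionBraceAlgebraStructureHochschildSpace} (already considered in \cite{KellerInvarianceHigherStructures}): it preserves the weight filtration, and a standard splitting argument using the orthogonal idempotents of the category algebra $\Bbbk[\bA]$ shows that it induces an isomorphism of graded Lie algebras $\HHH^1_+(\bA, \bA) \cong \HHH^1_+(\Bbbk[\bA], \Bbbk[\bA])$. Since $\Bbbk[\bA] = \End_{\cW_{2, Q}}(T)$ is quasi-isomorphic to $\Gamma_Q$ as a dg algebra, the invariance of the cohomological weight filtration under quasi-isomorphisms of $A_\infty$-algebras, due to Briggs--Gelinas \cite{BriggsGelinas} (see also \Cref{CorollaryQuasiEquivalenceFilteredQuasiIso}), gives a further isomorphism $\HHH^1_+(\Bbbk[\bA], \Bbbk[\bA]) \cong \HHH^1_+(\Gamma_Q, \Gamma_Q)$ of graded Lie algebras. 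Composing these isomorphisms with $\exp_{\bA}$ and with the identification $\DPic(\bA) \cong \DPic(\cW_{2, Q})$ produces the desired group embedding $\HHH^1_+(\Gamma_Q, \Gamma_Q) \hookrightarrow \DPic(\cW_{2, Q})$.

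The only genuinely external inputs are the geometric facts quoted at the outset — that $\Perf \cW_{2, Q}$ is split-generated by a silting object whose indecomposable summands are cohomologically schurian and whose endomorphism algebra is $\Gamma_Q$ — taken from \cite{EtguLekiliKoszulDualityPatterns, BaeJeongKimCluster}. The only step requiring some care on our side is the comparison between the Hochschild cohomology of the $A_\infty$-category $\bA$ and that of its category algebra $\Bbbk[\bA]$ \emph{compatibly with the weight filtration}; this is standard, and once it is in place the corollary is a direct consequence of \Cref{CorollaryExponentialInjectiveMainTheorem}. I expect this weight-filtered comparison, rather than any deeper issue, to be the main thing to pin down.
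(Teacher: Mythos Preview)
Your proposal is correct and follows essentially the same route as the paper, which states the corollary without a separate proof and treats it as an immediate consequence of the preceding discussion: the silting object generating $\Perf \cW_{2,Q}$ has indecomposable summands with endomorphism ring $\Bbbk$ (so the schurian criterion in the list after \Cref{PropositionComparisonHomotopyWeakEquivalence} applies) and its total endomorphism algebra is quasi-isomorphic to $\Gamma_Q$, whence \Cref{CorollaryExponentialInjectiveMainTheorem} yields the embedding. Your write-up simply makes explicit the two identifications the paper leaves implicit, namely $\DPic(\bA)\cong\DPic(\cW_{2,Q})$ via \Cref{PropositionUniversalPropertiesHQE} and $\HHH^1_+(\bA,\bA)\cong\HHH^1_+(\Gamma_Q,\Gamma_Q)$ via the category-algebra comparison together with \Cref{CorollaryQuasiEquivalenceFilteredQuasiIso}.
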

\noindent The Hochschild cohomology of $\Gamma_{Q}$ and its Lie algebra structure, for Dynkin and non-Dynkin trees was computed in \cite{CrawleyBoeveyEtingofGinzburg, EtguLekiliKoszulDualityPatterns, LekiliUeda}. As before,\Cref{Conjecture} says in the non-Dynkin case that $\Aut_{+}^{\infty}(\HH^0(\Gamma_Q)) \rtimes \OutO(\HH^0(\Gamma_Q))$ should describe the identity component of the derived Picard group of $\cW_{2, Q}$. With the results of the very recent work by Karabas-Lee \cite{KarabasLeeWrappedFukayaPlumbings} one might be able to prove similar results for a much wider range of wrapped Fukaya categories of plumbings.

\subsection{Partially wrapped Fukaya categories after Haiden-Katzarkov-Kontsevich}\label{SectionPartiallyWrapped}\ \medskip

\noindent We give an outlook on the applications of \Cref{IntroTheoremA} to the computation of derived Picard groups of partially wrapped Fukaya categories of surfaces by the author \cite{OpperGradedGentle}.

\noindent  Let $\Sigma$ be a oriented smooth compact surface and let $\emptyset \neq \cM \subsetneq \partial\Sigma$ be a compact subset of its boundary such that $B \cap \cM \neq \emptyset$ for all components $B \subseteq \partial \Sigma$. Components $B$ such that $B \subset \cM$ will be called \textit{fully marked}, the others \textit{partially marked}. We also choose a line field $\eta$ on $\Sigma$, that is, a section $\eta: \Sigma \rightarrow \mathbb{P}(T\Sigma)$  of the projectivized tangent bundle, which amounts to a smoothly varying choice of $1$-dimensional subspaces $\eta_x \subseteq T_x\Sigma$ at each point $x \in \Sigma$. For example, the subspaces spanned by the vectors of any nowhere vanishing vector field form a line field. Not every line field arises in this way but much like vector fields, $\eta$ allows us to associate a \textit{winding number} $\omega_{\eta}(B) \in \mathbb{Z}$ to every boundary component $B$ of $\Sigma$.

To a triple $(\Sigma, \cM, \eta)$, Haiden-Katzarkov-Kontsevich \cite{HaidenKatzarkovKontsevich} associated a Morita class of triangulated $A_\infty$-categories called the \textbf{partially wrapped Fukaya category} $\Fuk(\Sigma, \cM, \eta)$.
These categories admit generators with formal endomorphism algebra which is  homologically smooth\footnote{A dg algebra $C$ is \textbf{homologically smooth} if its diagonal bimodule is perfect, that is, $C \in \Perf(C \otimes C^{\operatorname{op}})$.} but possibly infinite-dimensional and which belongs to the class of \textit{graded gentle algebras}. In the finite-dimensional ungraded case, that is, when the algebra is concentrated in degree $0$, these type of algebras first appeared in work by Assem-Skowronski \cite{AssemSkowronski}. They can be defined as quotients of path algebras which are obtained combinatorially from an \textit{arc system}, a collection of pairwise disjoint embedded paths on $\Sigma$ with endpoints in $\cM$. Every graded homologically smooth gentle algebra $A$ arises as a formal generator of some $\Fuk(\Sigma_A, \cM_A, \eta_A)$ as shown by Lekili-Polishchuk \cite{LekiliPolishchukGentle} (see also \cite{OpperPlamondonSchroll}).

One often distinguishes between two different cases of graded gentle algebras: finite-dimensional (``proper'') ones which are not necessarily homologically smooth and homologically smooth ones which are possibly infinite-dimensional.  The two cases are related by Koszul duality in the following way. Given a finite-dimensional graded gentle algebra $A$, there exist functors
\begin{displaymath}
\begin{array}{ccc}
	 \Perf(A) \hookrightarrow \Dfd{A^{\invex}} & \text{and} & \Perf(A^{\invex}) \rightarrow \Dfd{A},
\end{array}
\end{displaymath}
\noindent  where $A^{\invex}=\Omega A^{\vee}$ denotes the ``left'' Koszul dual over the semi-simple quotient $A/\rad(A)$ which is defined by the cobar construction of the $A/\rad(A)$-linear dual $A^{\vee}$ of $A$.  The algebra $A^{\invex}$ is quasi-isomorphic to an explicit homologically smooth graded gentle algebra. The first functor is a fully-faithful embedding whereas the second essentially surjective and both functors are equivalences if and only if $\Sigma_A$ contains no fully marked boundary component with vanishing winding number. In particular, this is always the case if $A$ is concentrated in degree $0$. Likewise, starting with a homologically smooth graded gentle algebra $C$, the (usual) Koszul dual $C^!$ is quasi-isomorphic to an explicit finite-dimensional graded gentle algebra.

 The derived Picard group of the bounded derived category of any ungraded finite-dimensional gentle algebra has been studied by the author in \cite{OpperDerivedEquivalences}. For such an algebra $A$, the central tool in loc.cit.~for the study of $\DPic(A)$ is the construction of a surjective group homomorphism 
\begin{displaymath}
	\begin{tikzcd}
		\Psi: \Aut\big(\cD^b(A)\big) \arrow[twoheadrightarrow]{r} & \MCG(\Sigma_A, \cM_A, \eta_A),
	\end{tikzcd}
\end{displaymath}
\noindent from the group of triangulated autoequivalences of $\cD^b(A)$ to the \textit{mapping class group} of a triple $(\Sigma_A, \cM_A, \eta_A)$ which admits an explicit construction\footnote{The setup in \cite{OpperPlamondonSchroll, OpperDerivedEquivalences} is slightly different in that every connected component $C$ of $\cM$ is replaced by a point in a finite set  $\cM' \subseteq \Sigma$ of distinguished points. The points which correspond to components $C \cong S^1$ then become interior points (``punctures'').} from $A$, cf.~\cite{OpperPlamondonSchroll}. Here, the mapping class group refers to the group of isotopy classes of diffeomorphisms $f: \Sigma \rightarrow \Sigma$ which restrict to a permutation of $\cM$ and such that the homotopy class of $\eta$ is invariant under pullback along $f$. A similar technique for derived equivalences also lead to a derived equivalence classification in the ungraded case in \cite{OpperDerivedEquivalences}, see also \cite{AmiotPlamondonSchroll, JinSchrollWang} for related classification results via different techniques.  The surjectivity of $\Psi$, exhibits $\Aut\big(\cD^b(A)\big)$ as an extension of the mapping class group by the kernel of $\Psi$. It seems difficult to determine this kernel in general but the problem becomes much more tractable if one restricts to its intersection with the derived Picard group, that is, triangulated functors which admit an $A_\infty$-lift. Under the assumption that $\Bbbk$ is algebraically closed and $\operatorname{char} \Bbbk \neq 2$, it was shown in \cite{OpperDerivedEquivalences} that
\begin{equation}\label{EquationKernel}
\ker \Psi \cap \DPic\big(\cD^b(A)\big) \cong \mathbb{Z} \times \big(\HH^1(\Sigma, \Bbbk^{\times}) \ltimes \mathbb{G}_a^{b}\big),
\end{equation}
\noindent where $\mathbb{Z}$ acts through the shift functor, $\mathbb{G}_a=(\Bbbk, +)$ denotes the additive group and $b$ denotes the number of boundary components $B$ on $\Sigma_A$ with vanishing winding number such that $B \cap \cM \cong [0,1]$ topologically. The subgroup $\HH^1(\Sigma, \Bbbk^{\times}) \ltimes \mathbb{G}_a^{b}$ in \eqref{EquationKernel} is realized as an explicit subgroup $\cK \subseteq \Out_{\circ}(A)$. The subgroup $\HH^1(\Sigma, \Bbbk^{\times})$ acts through its identification with local systems which are used to multiply arrows in the underlying quiver of $A$ by scalars. The automorphisms of $A$ which correspond to a copy of $\mathbb{G}_a$ have the form of an exponential $\exp(\lambda f)=\operatorname{Id}_A + \lambda f$, where $\lambda \in \Bbbk$ and $f \in \End_{\Bbbk}(A)$ is an nilpotent endomorphism with $f^2=0$.

The structure of $\cK$ is closely connected to the structure of $\HHH^1(A,A)$ (of which the author was unaware at the time of writing \cite{OpperDerivedEquivalences}). Ignoring certain exceptions\footnote{The exceptions are graded gentle algebras $A$ such that $\Dfd{A}\simeq \cD^b(\mathbb{A}^1_{\Bbbk})$ or  $\Dfd{A}\simeq \cD^b(\mathbb{P}^1_{\Bbbk})$. Equivalently, these are all graded gentle algebras whose associated surface is a cylinder with $\cM \cong S^1 \sqcup [0,1]$ or $\cM \cong [0,1] \sqcup [0,1]$ and whose boundary components have vanishing winding number.} with known derived Picard groups, one deduces that there is an isomorphism
\begin{displaymath}
\HHH^1(A,A) \cong  \HH_1(\Sigma, \Bbbk) \ltimes \Bbbk^b
\end{displaymath}
\noindent where $\HH_1(\Sigma, \Bbbk)$ and $\Bbbk$ are considered as abelian Lie algebras. Because $A$ is concentrated in degree $0$, one has $\HHH^1_+(A,A)=0$ and the exponential gives no further information. The structure of $\HHH^1(A,A)$ remains largely unchanged in the graded case and the elements of $\cK$ admit natural generalisations to (not necessarily strict) $A_\infty$-functors yielding an analogous subgroup of $\DPic(\Dfd{A})$ in this case, even in positive characteristic. A new phenomenon in the graded case are fully marked boundary components with winding number $0$, something which is impossible in the ungraded case. For example, over a field of characteristic zero, the contribution to $\HHH^1_+(A,A)$ of a fully marked component with vanishing winding number is a copy of the positive part of the completed \textit{Witt algebra} $W$. This is the Lie algebra $W=\prod_{n > 0}\Bbbk e_i$ with the unique continuous Lie bracket determined by the relation 
\begin{displaymath}
[e_m, e_n]= (m-n) e_{m+n}.
\end{displaymath}
\noindent Over $\Bbbk=\mathbb{C}$, its uncompleted counterpart  $\oplus_{n > 0}\Bbbk e_i$ is isomorphic to a Lie algebra of holomorphic vector fields on the complex plane with $e_m$ corresponding to the vector field $z \mapsto -z^{m+1}\frac{\partial}{\partial z}$. Ignoring technical details and the same exceptions as before, \Cref{IntroTheoremA} can be used to show that each copy of $W$ contributes a subgroup to $\DPic(\Dfd{A})$. By extending the homomorphism $\Phi$ to the graded setting and keeping track of shifts this leads to a description of the derived Picard group of graded gentle algebras which are homologically smooth or proper. For example, for any proper graded gentle algebra $A$, one obtains
\begin{displaymath}
	\DPic\big(\Dfd{A}\big) \cong \Aut^{\infty}_{\circ}(A) \rtimes \MCG_{\operatorname{gr}}(\Sigma_A, \cM_A, \eta_A),
\end{displaymath}
\noindent where $\Aut^{\infty}_{\circ}(A)$ denotes the ``identity component'' from \Cref{Conjecture} and $\MCG_{\operatorname{gr}}(\Sigma_A, \cM_A, \eta_A)$ denotes the \textit{graded mapping class group}: a central extension of  $\MCG(\Sigma_A, \cM_A, \eta_A)$ which incorporates a notion of shift. Thanks to the exponential map from \Cref{IntroTheoremA}, $\Aut^{\infty}_{\circ}(A)$ admits an explicit description in terms of the Lie algebra $\HHH^1(A,A)$ over a field of characteristic $0$ and for fields with positive odd characteristic if $\partial \Sigma_A$ contains no fully marked components.
\bibliography{Bibliography}{}
\bibliographystyle{alpha}

\end{document}